\numberwithin{equation}{subsection}
\newtheorem{thm}{Theorem}[subsection]
\newtheorem{cor}[thm]{Corollary}
\newtheorem{lem}[thm]{Lemma}
\newtheorem{pro}[thm]{Proposition}
\newtheorem*{thm*}{Theorem}
\newtheorem{opq}[thm]{Problem}
\newtheorem*{opq*}{Problem}
\theoremstyle{remark}
\newtheorem{rem}[thm]{Remark}
\theoremstyle{definition}
\newtheorem{exa}[thm]{Example}
\newtheorem{dfn}[thm]{Definition}
\DeclareMathOperator{\E}{e}
\newcommand*{\ascr}{\mathscr{A}}
\newcommand*{\borel}[1]{{\mathfrak B}(#1)}
\newcommand*{\bscr}{\mathscr{B}}
\newcommand*{\cbb}{\mathbb C}
\newcommand*{\D}{\mathrm{d}}
\newcommand*{\dbb}{{\mathbb D}}
\newcommand*{\fscr}{\mathscr F}
\newcommand*{\Ge}{\geqslant}
\newcommand*{\gammab}{\boldsymbol \gamma}
\newcommand*{\gqb}{\mathcal{Q}}
\newcommand*{\gqbh}{\mathcal{Q}_{\hh_1,\hh_2}}
\newcommand*{\hh}{\mathcal H}
\newcommand*{\I}{{\mathrm i\hspace{.1ex}}}
\newcommand*{\idealo}{\mathscr I}
\newcommand*{\ideal}{\mathscr I_{T}}
\newcommand*{\is}[2]{\langle#1,#2\rangle}
\newcommand*{\jd}[1]{\mathscr N(#1)}
\newcommand*{\kk}{\mathcal K}
\newcommand*{\Le}{\leqslant}
\newcommand*{\mscr}{\mathscr M}
\newcommand*{\nbb}{\mathbb N}
\newcommand*{\ogr}[1]{\boldsymbol B(#1)}
\newcommand*{\ob}[1]{{\mathscr R}(#1)}
\newcommand*{\lrangle}[1]{\langle #1 \rangle}
\newcommand*{\rbb}{\mathbb R}
\newcommand*{\supp}[1]{\mathrm{supp}(#1)}
\newcommand*{\tbb}{\mathbb T}
\newcommand*{\zbb}{\mathbb Z}
\begin{document}
   \title[Conditional positive definiteness
in operator theory]{Conditional positive
definiteness in operator theory}
   \author[Z.\ J.\ Jab{\l}o\'nski]{Zenon Jan
Jab{\l}o\'nski}
   \address{Instytut Matematyki,
Uniwersytet Jagiello\'nski, ul.\ \L ojasiewicza 6,
PL-30348 Kra\-k\'ow, Poland}
\email{Zenon.Jablonski@im.uj.edu.pl}
   \author[I.\ B.\ Jung]{Il Bong Jung}
   \address{Department of Mathematics, Kyungpook National University,
Da\-egu 41566, Korea} \email{ibjung@knu.ac.kr}
   \author[J.\ Stochel]{Jan Stochel}
\address{Instytut Matematyki, Uniwersytet
Jagiello\'nski, ul.\ \L ojasiewicza 6, PL-30348
Kra\-k\'ow, Poland} \email{Jan.Stochel@im.uj.edu.pl}
   \thanks{The second author was supported by Basic Science
Research Program through the National Research
Foundation of Korea (NRF) funded by the Ministry
of Education (NRF-2021R111A1A01043569).}
   \subjclass[2020]{Primary 47B20, 44A60;
Secondary 47A20, 47A60}

\keywords{Conditional positive definiteness,
positive definiteness, subnormality, functional
calculus}
   \maketitle
   \begin{abstract}
In this paper we extensively investigate the
class of conditionally positive definite
operators, namely operators generating
conditionally positive definite sequences. This
class itself contains subnormal operators, $2$-
and $3$-isometries, complete hypercontractions
of order $2$ and much more beyond them. Quite a
large part of the paper is devoted to the study
of conditionally positive definite sequences of
exponential growth with emphasis put on finding
criteria for their positive definiteness, where
both notions are understood in the semigroup
sense. As a consequence, we obtain semispectral
and dilation type representations for
conditionally positive definite operators. We
also show that the class of conditionally
positive definite operators is closed under the
operation of taking powers. On the basis of
Agler's hereditary functional calculus, we build
an $L^{\infty}(M)$-functional calculus for
operators of this class, where $M$ is an
associated semispectral measure. We provide a
variety of applications of this calculus to
inequalities involving polynomials and analytic
functions. In addition, we derive new necessary
and sufficient conditions for a conditionally
positive definite operator to be a subnormal
contraction (including a telescopic one).
   \end{abstract}
   \setcounter{tocdepth}{2}
\tableofcontents

   \section{Introduction}
   \subsection{Motivation}
The concepts of positive and conditional
positive definiteness (at least in the group
setting) have their origins in stochastic
processes that are stationary or which have
stationary increments
\cite{Kol41,Ma72,Ml83,Bi-Sa00,Sa13}. It seems
that conditional positive definiteness appeared
in operator theory for the first time on the
occasion of investigating subnormal operators
(see \cite{Sto}). Later it appeared sporadically
in the context of complete hyperexpansivity and
complete hypercontractivity of finite order,
both related to $m$-tuples of commuting
operators \cite{At2,Cha-Sh,Cha-Sh17}. The main
goal of the present paper is to exploit
conditional positive definiteness in the
semigroup setting to study a class of operators
which is large enough to subsume subnormal
operators \cite{Hal50,Con91} (which are
integrally tied to positive definiteness), $2$-
and $3$-isometries \cite{Ag-St1,Ag-St2,Ag-St3},
complete hypercontractions of order $2$
\cite{Cha-Sh}, certain algebraic operators which
are neither subnormal nor $m$-isometric, and
much more. Below we give a more detailed
discussion on this.

Throughout this paper $\hh$ stands for a complex
Hilbert space and $\ogr{\hh}$ for the
$C^*$-algebra of all bounded linear operators on
$\hh$. An operator $T\in \ogr{\hh}$ is said to
be {\em subnormal} if there exist a complex
Hilbert space $\kk$ and a normal operator $N\in
\ogr{\kk}$, called a {\em normal extension} of
$T$, such that $\hh \subseteq \kk$ (isometric
embedding) and $Th=Nh$ for all $h\in \hh$. A
sequence $\{\gamma_n\}_{n=0}^{\infty}$ of real
numbers is said to be {\em positive definite}
({\em PD} for brevity) if
   \begin{align} \label{virek}
\sum_{i,j=0}^k \gamma_{i+j} \lambda_i
\bar\lambda_j \Ge 0
   \end{align}
for all finite sequences of complex numbers
$\lambda_0, \ldots, \lambda_k$. The celebrated
Lambert's characterization of subnormality
\cite{lam} can be adapted to the context of not
necessarily injective operators as follows (for
(i)$\Leftrightarrow$(ii) see \cite[Theorem~
7]{St-Sz89}, while for
(ii)$\Leftrightarrow$(iii) apply
Theorem~\ref{Stiech} substituting $Th$ in place
of $h$).
   \begin{thm} \label{lamb}
If $T\in \ogr{\hh}$, then the following conditions are
equivalent{\em :}
   \begin{enumerate}
   \item[(i)] $T$ is subnormal,
   \item[(ii)] the sequence $\{\|T^n
h\|^2\}_{n=0}^{\infty}$ is a Stieltjes moment sequence
for every $h\in \hh$,
   \item[(iii)] the sequence $\{\|T^n
h\|^2\}_{n=0}^{\infty}$ is PD for every $h\in
\hh$.
   \end{enumerate}
   \end{thm}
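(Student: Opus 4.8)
The plan is to run the two reductions indicated in the statement: treat (i)$\Leftrightarrow$(ii) as Lambert's subnormality criterion, and derive (ii)$\Leftrightarrow$(iii) from Theorem~\ref{Stiech} together with a shift of the generating vector.

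For (i)$\Rightarrow$(ii) I would argue directly from the spectral theorem. If $N\in\ogr{\kk}$ is a normal extension of $T$ with spectral measure $E$ on $\cbb$, then $T^nh=N^nh$ for $h\in\hh$, whence
\[
\|T^nh\|^2=\langle N^{*n}N^nh,h\rangle=\int_{\cbb}|z|^{2n}\,\D\langle E(z)h,h\rangle=\int_0^\infty t^n\,\D\mu_h(t),
\]
where $\mu_h$ is the transport of the finite positive measure $\langle E(\cdot)h,h\rangle$ under $z\mapsto|z|^2$. Thus $\{\|T^nh\|^2\}_{n=0}^\infty$ is a Stieltjes moment sequence. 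The reverse implication (ii)$\Rightarrow$(i) is the substantial one: from the pointwise representing measures $\{\mu_h\}_{h\in\hh}$ one must assemble a single normal operator extending $T$. Rather than rebuild this dilation, I would quote \cite[Theorem~7]{St-Sz89}; its crux is that the scalar moment data, once polarized, determine a positive definite kernel on the nonnegative integers from which the minimal normal extension is constructed.

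For (ii)$\Leftrightarrow$(iii) the decisive identity is $\|T^{n+1}h\|^2=\|T^n(Th)\|^2$. Setting $\gamma_n(h):=\|T^nh\|^2$, this says $\gamma_{n+1}(h)=\gamma_n(Th)$, so the once-shifted sequence attached to $h$ is exactly the unshifted sequence attached to $Th$. Condition (iii) asserts PD of $\{\gamma_n(h)\}_n$ for \emph{every} vector; applying it simultaneously to $h$ and to $Th$ makes the Hankel matrices $[\gamma_{i+j}(h)]_{i,j}$ and $[\gamma_{i+j+1}(h)]_{i,j}$ both positive semidefinite. These are precisely the two positivity conditions that Theorem~\ref{Stiech} converts into a Stieltjes moment representation of $\{\gamma_n(h)\}_n$, which is (ii). The converse (ii)$\Rightarrow$(iii) is immediate, since a Stieltjes---indeed any Hamburger---moment sequence satisfies \eqref{virek}.

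The one genuine obstacle is the dilation step (ii)$\Rightarrow$(i): manufacturing a single normal operator whose power-norms reproduce all the prescribed moment sequences at once, i.e.\ upgrading a family of scalar representing measures to one operator-valued spectral measure. Everything else is either the elementary spectral computation above or the bookkeeping made possible by substituting $Th$ for $h$.
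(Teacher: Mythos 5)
Your proposal is correct and follows essentially the same route as the paper: the equivalence (i)$\Leftrightarrow$(ii) is delegated to \cite[Theorem~7]{St-Sz89}, and (ii)$\Leftrightarrow$(iii) is obtained by applying Theorem~\ref{Stiech} with $Th$ substituted for $h$, via the identity $\|T^{n+1}h\|^2=\|T^n(Th)\|^2$. The only (harmless) addition is your explicit spectral-theorem computation for (i)$\Rightarrow$(ii), which the paper subsumes in the citation (and which reappears there as \eqref{tobemom}).
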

   The above theorem, which fails for unbounded
operators (see \cite{J-J-S12-jfa,B-J-J-S17}),
turned out to be very useful when studying the
concrete classes of bounded operators (see
\cite{Lam88,J-J-S12,B-J-J-S15,St-St17,B-J-J-S18}).
Some of them are associated with the set $\fscr$
of nonconstant entire functions with nonnegative
Taylor's coefficients at $0$. The question of
characterizing the subnormality of composition
operators with matrix symbols on $L^2(\rbb^d,
\rho(x)dx)$ with a density function $\rho$
coming from $\varPhi \in \fscr$ (see
\cite{Sto90}) led to the following problem,
which for thirty years remains unsolved even for
second-degree monomials (see \cite[p.\
237]{Sto}).
   \begin{opq} \label{prob-haha}
Let $T\in \ogr{\hh}$ be a contraction and
$\varPhi\in \fscr$. Is it true that if
$\{\varPhi(\|T^n h\|^2)\}_{n=0}^{\infty}$ is a
PD sequence for every $h\in \hh,$ then $T$ is
subnormal\/$?$
   \end{opq}
The answer to the question in
Problem~\ref{prob-haha} is in the affirmative as
long as $\varPhi'(0)$, the derivative of
$\varPhi$ at $0$, is positive or $T$ is
algebraic (see \cite[Theorems~5.1 and
6.3]{Sto}). Problem~\ref{prob-haha} without the
assumption that $T$ is contractive has a
negative solution (see \cite[Example~5.4]{Sto}).
Note also that the converse implication in
Problem~\ref{prob-haha} is true even if $T$ is
not contractive (see the proof of
\cite[Theorem~5.1]{Sto}).

Before we continue the discussion, let us give a
necessary definition. A sequence
$\{\gamma_n\}_{n=0}^{\infty}$ of real numbers is
said to be {\em conditionally positive definite}
({\em CPD} for brevity) if inequality
\eqref{virek} holds for all finite sequences of
complex numbers $\lambda_0, \ldots, \lambda_k$
such that $\sum_{i=0}^k \lambda_i=0$. Continuing
our discussion, we note that if $T \in
\ogr{\hh}$ and $\varPhi= \exp$ (which is a
member of $\fscr$ with $\varPhi'(0) > 0$), then,
by the Schoenberg characterization of CPD
sequences (see Lemma~\ref{cpdpd2}), the sequence
$\{\exp(\|T^n h\|^2)\}_{n=0}^{\infty}$ is PD for
every $h\in \hh$ if and only if the sequence
$\{\|T^n h\|^2\}_{n=0}^{\infty}$ is CPD for
every $h\in \hh$. The situation becomes more
complex if the function $\exp$ is replaced by an
arbitrary member $\varPhi$ of $\fscr$; then the
hypothesis that the sequence $\{\varPhi(\|T^n
h\|^2)\}_{n=0}^{\infty}$ is PD for every $h\in
\hh$ implies that for some positive integer $j$
(depending only on $\varPhi$) and for every
$h\in \hh$, the sequence $\{\|T^n
h\|^{2j}\}_{n=0}^{\infty}$ is CPD (see
\cite[Lemma~5.2]{Sto}). It was shown in
\cite[Theorem~4.1]{Sto} that if $T$ is a
contraction, then $T$ is subnormal if and only
if the sequence $\{\|T^n h\|^2\}_{n=0}^{\infty}$
is CPD for every $h\in \hh$. The contractivity
hypothesis cannot be removed (see
\cite[Example~5.4]{Sto}).

The above-mentioned results of \cite{Sto} were
obtained by using {\em ad hoc} methods. The main
goal of the present paper is to systematically
and rigorously study operators $T\in\ogr{\hh}$
having the property that for every $h\in \hh$,
the sequence $\{\|T^n h\|^2\}_{n=0}^{\infty}$ is
CPD. Such operators are called here {\em
conditionally positive definite} ({\em CPD} for
brevity). In view of Theorem~ \ref{lamb} and the
fact that PD sequences are CPD, subnormal
operators are CPD but not conversely (see
\cite[Example~ 5.4]{Sto}). Our investigations
are preceded by developing harmonic analysis of
CPD functions of (at most) exponential growth on
the additive semigroup of nonnegative integers.
As a consequence, we gain, among other things, a
deeper insight into the subtle relationship
between subnormality and conditional positive
definiteness.
   \subsection{Intuition} To develop an intuition about CPD
operators, we answer a few natural simple
questions that are usually asked when
considering new classes of operators.
   \begin{enumerate}
   \item \label{circ1} Is the class of CPD operators on $\hh$
closed\footnote{Using the fact that the multiplication in
$\ogr{\hh}$ is sequentially continuous in {\sc SOT} (the
strong operator topology), one can show that the class of CPD
operators is sequentially SOT-closed. The question of whether
it is SOT-closed remains open. This is related to the
celebrated Bishop theorem stating that the class of subnormal
operators is the SOT-closure of the set of normal operators
(see \cite{Bis57}; see also \cite[Theorem~II.1.17]{Con91}).}
in the operator norm?
   \item \label{circ2}
Is the orthogonal sum of CPD operators CPD?
   \item \label{circ3} Is the restriction of a CPD
operator to its invariant subspace CPD?
   \item \label{circ6} Are positive integer powers of CPD operators still CPD?
   \item \label{circ7} Is the inverse of an invertible CPD operator CPD?
   \item \label{circ4} Is the tensor product of two CPD
operators CPD?
   \item \label{circ9}
   Is it true that if $T$ is CPD, then $T +
\lambda I$ is CPD for any complex number
$\lambda$, where $I$ stands for the identity
operator?
   \item \label{circ10}
Is it true that if $T$ is CPD, then $\lambda T$
is CPD for any complex number $\lambda$?
   \end{enumerate}
The answers to questions
(\ref{circ1})-(\ref{circ7}) are in the
affirmative. The rest of the questions are
answered in the negative. The affirmative
answers to questions (\ref{circ1}) and
(\ref{circ2}) follow from the fact that the
class of CPD sequences is closed in the topology
of pointwise convergence. In turn, the
affirmative answer to question (\ref{circ3}) is
a direct consequence of the definition. The
affirmative answer to question (\ref{circ6}) is
given in Theorem~\ref{pow-dwa}. The negative
answer to question (\ref{circ9}) implies that
the algebraic sum of two commuting CPD operators
may not be CPD. The negative answer to question
(\ref{circ10}) implies that the product of two
commuting CPD operators may not be CPD.

Now, we show that the answer to question
(\ref{circ7}) is in the affirmative.
   \begin{pro}
If $T\in \ogr{\hh}$ is a CPD operator which is
invertible in $\ogr{\hh}$, then its inverse
$T^{-1}$ is CPD.
   \end{pro}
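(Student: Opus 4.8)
The plan is to verify the defining inequality directly: I would fix an arbitrary $h\in\hh$ and show that the sequence $\{\|T^{-n}h\|^2\}_{n=0}^{\infty}$ is CPD, i.e. that
\[
\sum_{i,j=0}^k \|T^{-(i+j)}h\|^2 \lambda_i\bar\lambda_j \Ge 0
\]
for every finite system $\lambda_0,\ldots,\lambda_k\in\cbb$ satisfying $\sum_{i=0}^k\lambda_i=0$. The governing idea is to reduce this to the CPD inequality already available for $T$ itself, using invertibility to convert the negative powers of $T$ into nonnegative ones.

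First I would fix $h$ and $k$ and introduce the auxiliary vector $g:=T^{-2k}h$, so that $h=T^{2k}g$. Since the exponents $i+j$ appearing in the sum range over $\{0,1,\ldots,2k\}$, each term becomes
\[
T^{-(i+j)}h = T^{\,2k-(i+j)}g = T^{(k-i)+(k-j)}g,
\]
where the total exponent $2k-(i+j)$ is nonnegative. Consequently $\|T^{-(i+j)}h\|^2=\|T^{(k-i)+(k-j)}g\|^2$, and all occurrences of $T^{-1}$ have been eliminated.

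Next I would perform the index reversal $i\mapsto k-i$. Setting $\mu_m:=\lambda_{k-m}$ for $m=0,\ldots,k$, the sum rewrites as
\[
\sum_{i,j=0}^k \|T^{-(i+j)}h\|^2 \lambda_i\bar\lambda_j
= \sum_{p,q=0}^k \|T^{p+q}g\|^2 \mu_p\bar\mu_q .
\]
The decisive point is that the annihilation constraint survives the reversal: $\sum_{p=0}^k\mu_p=\sum_{i=0}^k\lambda_i=0$. Because $T$ is CPD, the sequence $\{\|T^n g\|^2\}_{n=0}^{\infty}$ is CPD, so the right-hand side is nonnegative; this yields the desired inequality and completes the argument.

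I expect the only genuinely delicate part to be the exponent bookkeeping — checking that the shift by $2k$ is exactly what is needed to make every exponent nonnegative, and that the substitution $\mu_m=\lambda_{k-m}$ carries the zero-sum condition on $\{\lambda_i\}$ precisely onto the zero-sum condition on $\{\mu_p\}$. Once these two points are in place, the result is a direct transcription of the CPD inequality for $g$, with no analytic input beyond the invertibility of $T$.
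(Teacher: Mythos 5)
Your proposal is correct and is essentially the paper's own proof: both write $h=T^{2k}g$ (the paper obtains $g$ from surjectivity, you from invertibility), rewrite $\|T^{-(i+j)}h\|^2$ as $\|T^{(k-i)+(k-j)}g\|^2$, and invoke the conditional positive definiteness of $\{\|T^ng\|^2\}_{n=0}^{\infty}$. The only cosmetic difference is that you carry out the index reversal $\mu_m=\lambda_{k-m}$ explicitly, whereas the paper leaves it implicit via the remark that CPD sequences satisfy the defining inequality for arbitrary index systems $\{n_i\}\subseteq\zbb_+$.
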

   \begin{proof}
Let $\lambda_0, \ldots, \lambda_k$ be a finite
sequence of complex numbers such that
$\sum_{j=0}^k \lambda_j=0$ and let $h\in \hh$.
Since $T$ is surjective, there exists $f\in \hh$
such that $h=T^{2k}f$. Using the assumption that
$T$ is CPD, we conclude that
   \begin{align*}
\sum_{i,j=0}^k \|(T^{-1})^{i+j} h\|^2 \lambda_i
\bar \lambda_j = \sum_{i,j=0}^k
\|(T^{(k-i)+(k-j)} f\|^2 \lambda_i \bar
\lambda_j \Ge 0,
   \end{align*}
which completes the proof.
   \end{proof}
The negative answer to question (\ref{circ4}) is
justified in Example~\ref{tenzinpr} below, which
is essentially based on the concept of a strict
$m$-isometry. Following \cite{Ag90}, we call an
operator $T\in \ogr{\hh}$ an $m$-isometry, where
$m$ is a positive integer, if
   \begin{align*}
\sum_{k=0}^m (-1)^k {m \choose k}{T^*}^kT^k = 0.
   \end{align*}
An $m$-isometry $T$ is strict if $m=1$ and
$\hh\neq \{0\}$, or $m\Ge 2$ and $T$ is not an
$(m-1)$-isometry. It is worth noting that any
non-isometric $3$-isometry is CPD, but not
subnormal. This fact can be deduced from
Proposition~\ref{sub-mzero-n} and
\cite[Proposition~4.5]{Sh-At}. Since any
$m$-isometry is $(m+1)$-isometry (see \cite[p.\
389]{Ag-St1}), we conclude that any strict
$2$-isometry is CPD, but not subnormal.
   \begin{exa} \label{tenzinpr}
In this example, we use the following fact,
which can be deduced from
\cite[Corollary~3.5]{J-J-S20}.
   \begin{align*}
   \begin{minipage}{70ex}
{\em The tensor product of a strict
$m_1$-isometry and a strict $m_2$-isometry is a
strict $(m_1+m_2-1)$-isometry.}
   \end{minipage}
   \end{align*}
Hence, the tensor product of a strict
$2$-isometry and a strict $3$-isometry is a
strict $4$-isometry, which, according to
Proposition~\ref{sub-mzero-n}, is not CPD.
However, by the same proposition, $2$- and
$3$-isometries are CPD. This gives the negative
answer to question (\ref{circ4}). A similar
conclusion can be drawn considering the tensor
product of two strict $3$-isometries (the
resulting tensor product is a strict
$5$-isometry). We refer the reader to
\cite[Proposition~ 8]{At91} for examples of
strict $2$- and $3$-isometries, which are
unilateral weighted shifts.
   \hfill $\diamondsuit$
   \end{exa}
That the answers to questions (\ref{circ9}) and
(\ref{circ10}) are in the negative is shown in
the following example (see also
Remark~\ref{asty}).
   \begin{exa} \label{zplusnil}
Let $N\in \ogr{\hh}$ be a nonzero operator such
that $N^2=0$. Fix $\theta\in [0,2\pi)$ and set
$T_{\theta}=N - \E^{\I \theta} I$. Note that
   \begin{align} \label{tuplu}
T_{\theta} + \lambda I = N + (\lambda - \E^{\I
\theta})I \quad \text{for any complex number
$\lambda$.}
   \end{align}
It follows from \cite[Theorem~2.2]{Ber-Mar-No13}
that $T_{\theta}$ is a $3$-isometry, so by
Proposition~\ref{sub-mzero-n}, $T_{\theta}$ is
CPD. Denote by $\varXi_{T_{\theta}}$ the set of
all complex numbers $\lambda$ for which the
operator $T_{\theta} + \lambda I$ is CPD. Since
the class of CPD operators is closed in the
operator norm, we see that $\varXi_{T_{\theta}}$
is a closed subset of the complex plane. If
$\lambda$ is a complex number such that
$|\lambda - \E^{\I \theta}| < 1$, then by
\eqref{tuplu} and Corollary~\ref{quasi-1-nil},
$\lambda \notin \varXi_{T_{\theta}}$. This gives
the negative answer to question (\ref{circ9}).
Observe also that if $|\lambda - \E^{\I \theta}|
= 1$, then by \eqref{tuplu},
\cite[Theorem~2.2]{Ber-Mar-No13} and
Proposition~\ref{sub-mzero-n}, $\lambda \in
\varXi_{T_{\theta}}$.

Denote by $\widetilde \varXi_{T_{\theta}}$ the
set of all complex numbers $\lambda$ for which
the operator $\lambda T_{\theta}$ is CPD. As
above, we verify that $\widetilde
\varXi_{T_{\theta}}$ is a closed subset of the
complex plane. Since $\lambda T_{\theta}=
(\lambda N) - \lambda \E^{\I\theta} I$, we infer
from Corollary~\ref{quasi-1-nil} that $\lambda
\notin \widetilde \varXi_{T_{\theta}}$ whenever
$0 < |\lambda| < 1$. This answers question
(\ref{circ10}) in the negative. Let us also
notice that if $|\lambda| \in \{0,1\}$, then by
the definition of a CPD operator, $\lambda \in
\widetilde \varXi_{T_{\theta}}$.
   \hfill $\diamondsuit$
   \end{exa}
   \begin{rem} \label{asty}
Regarding question (\ref{circ10}), note that by
Corollary~\ref{cpdalp-c}, for every
non-subnor\-mal CPD operator $T$, $r(T) \Ge 1$
and $\lambda T$ is not CPD for any complex
number $\lambda$ such that $0 < |\lambda| <
\frac{1}{r(T)}$, where $r(T)$ stands for the
spectral radius of $T$. Note that in
Example~\ref{zplusnil}, $r(T_{\theta})=1$.

Using the description of CPD algebraic operators
as in \cite{J-J-S21p}, one can show that
   \allowdisplaybreaks
   \begin{align*}
\varXi_{T_{\theta}} & = \{\lambda\colon \lambda
\text{ is a complex number and } |\lambda -
\E^{\I \theta}| = 1\},
   \\
\widetilde \varXi_{T_{\theta}} & =
\{\lambda\colon \lambda
 \text{ is a complex number and } |\lambda| =
1\} \cup \{0\},
   \end{align*}
where the sets $\varXi_{T_{\theta}}$ and
$\widetilde \varXi_{T_{\theta}}$ are as in
Example~\ref{zplusnil}.
   \hfill $\diamondsuit$
   \end{rem}
According to Proposition~\ref{sub-mzero-n}, the
only $m$-isometries that are CPD are
$3$-isometries. Since subnormal operators are
also CPD, this raises another natural question.
   \begin{enumerate}
   \item[(9)]
Are there CPD operators that are not orthogonal sums of a
subnormal operator and a $3$-isometry?
   \end{enumerate}
As shown in Example~\ref{niesuim}, the answer to
question (9) is in the affirmative.
   \begin{exa} \label{niesuim}
Let $a\in (1,\infty)$ and let $W_a$ be the
unilateral weighted shift on $\ell^2$ as in
Example~\ref{prz-do-na}. Then
   \begin{align} \label{wittsh}
   \begin{minipage}{70ex}
$W_{a}$ is CPD, but neither subnormal nor
$3$-isometric.
   \end{minipage}
   \end{align}
Suppose to the contrary that $W_{a}$ is an
orthogonal sum of a number of subnormal
operators and a number of $3$-isometries. This
implies that there exists a nonzero closed
subspace of $\ell^2$ which reduces $W_{a}$
either to a subnormal operator or to a
$3$-isometry. Since (injective) unilateral
weighted shifts are irreducible (see
\cite[(3.0)]{Ml88}), $W_{a}$ is either subnormal
or $3$-isometric, which contradicts
\eqref{wittsh}.
   \hfill $\diamondsuit$
   \end{exa}
As is well known, the class of unilateral
weighted shifts is an important research area,
providing useful tools for constructing examples
and counterexamples (see \cite{Shi74}). This is
also true for our paper, as seen in
Examples~\ref{tenzinpr}, \ref{niesuim} and
\ref{prz-do-na} and Remark~\ref{manyrem} (see
also Propositions~\ref{cpd-exo} and
\ref{chyp-exo}), where we make extensive use of
weighted shifts. In particular, CPD operators
that are neither subnormal nor $3$-isometric can
be implemented as unilateral weighted shifts
(see \eqref{wittsh}). A natural question then is
to characterize unilateral weighted shifts that
are CPD. An in-depth study of CPD unilateral
weighted shifts based on a L\'{e}vy-Khinchin
type formula (cf.\ \cite[Theorem~4.3.19]{B-C-R})
is carried out in the forthcoming paper
\cite{J-J-L-S21}. It gives explicit methods to
construct weighted shifts of this class and
solves the flatness and the $n$-step backward
extension problems in this class.
   \subsection{\label{Subs.1.3}Ideas and concise description}
In this paper we provide several
characterizations of CPD operators. For the
reader's convenience, we make an excerpt from
characterizations that are contained in
Theorems~\ref{cpdops}, \ref{dyltyprep}
and~\ref{dyl-an} (see also
Theorem~\ref{boundiff}).
   \begin{thm} \label{cpdoppry}
Let $T\in \ogr{\hh}$. Then the following
conditions are equivalent{\em :}
   \begin{enumerate}
   \item[(i)] $T$ is CPD,
   \item[(ii)] there exist $B,C\in \ogr{\hh}$ and a
$\ogr{\hh}$-valued Borel semispectral measure
$F$ on $[0,\infty)$ with compact support such
that $B=B^*$, $C\Ge 0$, $F(\{1\})=0$~and
   \begin{align} \label{titun}
T^{*n}T^n = I + n B + n^2 C + \int_{[0,\infty)}
Q_n(x) F(\D x), \quad n = 0,1,2, \ldots,
   \end{align}
where $Q_n$ is the polynomial as in {\em
\eqref{klaud}},
   \item[(iii)] there exists a $\ogr{\hh}$-valued Borel
semispectral measure $M$ on $[0,\infty)$ with
compact support such that
   \begin{align} \label{titir}
T^{*n}(I-2T^*T+T^{*2}T^{2})T^n =
\int_{[0,\infty)} x^n M(\D x), \quad n = 0,1,2,
\ldots.
   \end{align}
   \end{enumerate}
Moreover, the triplet $(B,C,F)$ in {\em (ii)}
and the measure $M$ in {\em (iii)} are unique,
and
   \begin{align} \label{Ibt1}
B & = T^*T - I - \frac{1}{2} M(\{1\}),
   \\ \label{Ibt2}
C & =\frac{1}{2} M(\{1\}),
   \\ \label{Ibt3}
F(\varDelta) &
=(1-\chi_{\varDelta}(1))M(\varDelta), \quad
\text{$\varDelta$ is a Borel subset of
$[0,\infty)$}.
   \end{align}
   \end{thm}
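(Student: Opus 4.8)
The plan is to read condition (iii) as a statement about the \emph{second difference} of the operator sequence $A_n := T^{*n}T^n$. Since $A_{n+k} = T^{*n}A_k T^n$, we have $T^{*n}(I - 2T^*T + T^{*2}T^2)T^n = A_{n+2} - 2A_{n+1} + A_n$, so \eqref{titir} asserts precisely that the second difference of $\{A_n\}$ is an operator-valued Stieltjes moment sequence. This dovetails with the scalar principle underlying the whole argument: a real sequence $\{\gamma_n\}$ is CPD if and only if $\{\gamma_{n+2} - 2\gamma_{n+1} + \gamma_n\}$ is PD. First I would prove this principle via the substitution $\lambda = \sum_{i\Ge 1}\mu_i(\mathbf e_i - \mathbf e_{i-1})$, which parametrizes exactly the vectors with $\sum_i\lambda_i = 0$; a short reindexing then turns the constrained Hankel form $\sum \gamma_{i+j}\lambda_i\bar\lambda_j$ into the unconstrained form $\sum_{p,q}(\gamma_{p+q+2} - 2\gamma_{p+q+1}+\gamma_{p+q})\mu_{p+1}\bar\mu_{q+1}$, whence the equivalence.

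Granting this, (iii) $\Rightarrow$ (i) is immediate: testing \eqref{titir} against $h$ shows that the second difference of $\{\|T^n h\|^2\} = \{\is{A_n h}{h}\}$ equals $\int_{[0,\infty)} x^n\,\D\is{M(\cdot)h}{h}$, a Stieltjes moment sequence and hence PD, so $\{\|T^n h\|^2\}$ is CPD for every $h$ and $T$ is CPD. For (i) $\Rightarrow$ (iii) I would set $S_n := A_{n+2} - 2A_{n+1} + A_n$ and arrange $\{S_n\}$ so that the operator Stieltjes moment theorem (Theorem~\ref{Stiech}) applies. The scalar principle gives, for each $h$, that $\{\is{S_n h}{h}\}$ is PD; the extra input forcing the representing measure to live on $[0,\infty)$ rather than $\rbb$ is the identity $S_{n+1} = T^*S_n T$, which yields $\is{S_{n+1}h}{h} = \is{S_n(Th)}{Th}$, again the (PD) second difference of the CPD sequence $\{\|T^n(Th)\|^2\}$; thus $\{\is{S_n h}{h}\}$ and its shift are both PD, i.e.\ it is a \emph{Stieltjes} moment sequence. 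Boundedness of $T$ bounds these moments by a constant times $\|T\|^{2n}$, confining all representing measures to a fixed compact subinterval of $[0,\infty)$. Feeding this into Theorem~\ref{Stiech} produces the compactly supported semispectral measure $M$ with \eqref{titir} and delivers its uniqueness.

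It remains to connect (ii) with (iii) and to verify \eqref{Ibt1}--\eqref{Ibt3}; this part is purely algebraic and rests on the defining properties of the polynomials $Q_n$ in \eqref{klaud}, namely $Q_0 = Q_1 \equiv 0$ and $Q_{n+2} - 2Q_{n+1} + Q_n = x^n$. Given (iii), I would set $C := \frac12 M(\{1\})\Ge 0$, let $F$ be $M$ with its atom at $1$ deleted (so $F(\{1\}) = 0$, which is \eqref{Ibt3}), and put $B := T^*T - I - C$, self-adjoint by construction. Applying the second difference to the right-hand side of \eqref{titun} annihilates $I$ and $nB$, sends $n^2C$ to $2C$ and $\int Q_n\,\D F$ to $\int x^n\,\D F$; since $M = 2C\,\delta_1 + F$, this equals $\int x^n\,\D M$, matching \eqref{titir}. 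As both sides of \eqref{titun} also agree at $n = 0,1$ (using $Q_0 = Q_1 = 0$ and the definitions of $B,C$), and a sequence is determined by its first two terms together with its second difference, \eqref{titun} holds for all $n$; reversing the steps gives (ii) $\Rightarrow$ (iii), while \eqref{Ibt1} and \eqref{Ibt2} fall out and uniqueness of $(B,C,F)$ is inherited from that of $M$.

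I anticipate the main obstacle to be the passage (i) $\Rightarrow$ (iii): converting the family of diagonal scalar Stieltjes representations into a single $\ogr{\hh}$-valued semispectral measure. The identity $S_{n+1} = T^*S_n T$ is what secures the correct half-line support and lets me invoke Theorem~\ref{Stiech} under the diagonal hypothesis alone, rather than a full operator positive-definiteness condition; care is also needed to check the uniform compact support and to keep the atom bookkeeping in \eqref{Ibt1}--\eqref{Ibt3} consistent.
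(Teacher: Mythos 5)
Your proposal is correct, and it takes a genuinely different route from the paper's. The paper never proves this theorem in one piece: it assembles it from Theorem~\ref{cpdops} (the equivalence (i)$\Leftrightarrow$(ii)) together with Theorems~\ref{dyltyprep} and \ref{dyl-an} (the equivalence with (iii) and the formulas \eqref{Ibt1}--\eqref{Ibt3}). The engine there is the L\'evy--Khinchin-type integral representation of scalar CPD sequences (Theorem~\ref{BCR-n}, quoted from Berg--Christensen--Ressel), sharpened to the exponential-growth setting in Theorem~\ref{cpd-expon}; the operator data $(B,C,F)$ are then assembled by polarization, and condition (iii) is obtained only afterwards through Agler's hereditary calculus. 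You instead make (iii) primary: your reparametrization $\lambda_j=\mu_j-\mu_{j+1}$, which identifies the constrained Hankel form of $\gammab$ with the free Hankel form of $\triangle^2\gammab$, gives the equivalence ``$\gammab$ CPD $\Leftrightarrow$ $\triangle^2\gammab$ PD'' by pure algebra and with no growth hypothesis (the paper states this only under exponential growth, as Proposition~\ref{traj-pd}(i)$\Leftrightarrow$(ii), and proves it via the representation theorems); condition (ii) is then recovered from (iii) algebraically using $Q_0=Q_1=0$, the identity $(\triangle^2Q_{(\cdot)}(x))_n=x^n$ (the paper's \eqref{del2}), and the fact that a sequence is determined by its first two terms and its second difference. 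Your route is more elementary and self-contained; what the paper's heavier machinery buys is the quantitative by-products needed later (the support/spectral-radius relations \eqref{fontan}--\eqref{fontan6}, the growth dichotomies, the dilation triplet $(\kk,R,S)$). Note that your device for forcing half-line support, namely $\is{S_{n+1}h}{h}=\is{S_nTh}{Th}$ so that the sequence and its shift are both PD, is exactly the paper's step \eqref{buf2} inside the proof of Theorem~\ref{cpdops}, so that portion is parallel.

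One step you should not leave as a one-liner: ``feeding this into Theorem~\ref{Stiech}'' only produces, for each fixed $h$, a scalar compactly supported representing measure $\mu_h$ of $\{\is{S_nh}{h}\}_{n=0}^{\infty}$; a scalar theorem cannot by itself produce the operator measure $M$. To pass from the diagonal family $\{\mu_h\}_{h\in\hh}$ to a single semispectral measure you need either Bisgaard's operator moment theorem (\cite[Theorem~2]{Bi94}, which the paper invokes for precisely this purpose in Proposition~\ref{traj-pd-op}) or the polarization argument carried out in the proof of Theorem~\ref{cpdops}: determinacy (Lemma~\ref{csmad}, available because each $\mu_h$ lives in the fixed interval $[0,\|T\|^2]$) makes $h\mapsto\mu_h(\varDelta)$ a quadratic form, the bound $\mu_h(\rbb_+)=\is{\bscr_2(T)h}{h}\Le\|\bscr_2(T)\|\,\|h\|^2$ makes it bounded, and one then gets $M(\varDelta)\in\ogr{\hh}_+$ together with {\sc wot} $\sigma$-additivity; the same determinacy-plus-polarization argument is what actually yields the uniqueness of $M$, and hence of $(B,C,F)$. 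Since you explicitly flagged this passage as the main obstacle and named the right ingredients, this is a matter of writing it out rather than a gap in the idea.
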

Denote by $\mathit{CPD}_{\hh}$ the class of all
CPD operators on $\hh$ and by $\mathscr T_{\hh}$
the class of all triplets $(B,C,F)$, where
$B,C\in \ogr{\hh}$ are such that $B=B^*$ and
$C\Ge 0$, and $F$ is a $\ogr{\hh}$-valued Borel
semispectral measure on $[0,\infty)$ with
compact support such that $F(\{1\})=0$. In view
of Theorem~\ref{cpdoppry}, the mapping
   \begin{align} \label{psur}
\varPsi_{\hh} \colon \mathit{CPD}_{\hh}
\longrightarrow {\mathscr T}_{\hh} \text{ given
by } \varPsi_{\hh}(T) = (B,C,F),
   \end{align}
where $(B,C,F)\in \mathscr T_{\hh}$ satisfies
\eqref{titun}, is well defined. The mapping
$\varPsi_{\hh}$ is never injective (provided
$\hh\neq \{0\}$). Indeed, an operator $T\in
\ogr{\hh}$ is an isometry if and only if
\eqref{titun} holds with $(B,C,F)=(0,0,0)$, so
   \begin{align} \label{sosua}
\text{$\varPsi_{\hh}^{-1}(\{(0,0,0)\})$ is the
class of all isometries on $\hh$.}
   \end{align}
More generally, if $T\in \ogr{\hh}$ is a CPD
operator and $V\in \ogr{\hh}$ is any isometry
that commutes with $T$, then the operator $VT$
is CPD and $\varPsi_{\hh}(T)=\varPsi_{\hh}(VT)$.
Moreover, the mapping $\varPsi_{\hh}$ is not
surjective in general. To have an example,
consider the triplet $(B,C,0) \in \mathscr
T_{\hh}$. Then it can happen that
$\varPsi_{\hh}^{-1}(\{(B,C,0)\}) = \emptyset$
(e.g., when $\hh=\cbb$ and $(B,C)\neq (0,0)$),
which means that in this case the expression on
the right-hand side of the equality in
\eqref{titun} is a polynomial in $n$ with
operator coefficients, but there is no $T\in
\ogr{\hh}$ satisfying \eqref{titun}. In other
words, in view of \cite[Corollary~3.5]{J-J-S20},
the lack of surjectivity appears not only in the
class of CPD operators but also in the class of
$m$-isometries. What is more, this drawback also
applies to other classes of operators, not to
mention subnormal ones (cf.\ \eqref{tobemom}).

As for the second characterization of CPD
operators given in Theorem~\ref{cpdoppry}(iii),
it reduces the number of parameters to one, but
at the cost of increasing the complexity of the
expression on the left-hand side of
\eqref{titir}. Denoting by $\widetilde{\mathscr
T}_{\hh}$ the class of all $\ogr{\hh}$-valued
Borel semispectral measures on $[0,\infty)$ with
compact support, observe that according to
Theorem~\ref{cpdoppry}, the mapping
   \begin{align} \label{vsur}
\widetilde \varPsi_{\hh} \colon
\mathit{CPD}_{\hh} \to \widetilde{\mathscr
T}_{\hh} \text{ given by } \widetilde
\varPsi_{\hh}(T) = M,
   \end{align}
where $M \in \widetilde{\mathscr{T}}_{\hh}$
satisfies \eqref{titir}, is well defined. Since
   \begin{align} \label{sosub}
\text{$\widetilde \varPsi_{\hh}^{-1}(\{0\})$ is
the class of all $2$-isometries,}
   \end{align}
the mapping $\widetilde \varPsi_{\hh}$ is never
injective (provided $\hh\neq \{0\}$). Moreover,
it is not surjective in general (e.g., if
$\hh=\cbb$ and $c\in (0,\infty)$, then
$\widetilde \varPsi_{\hh}^{-1}(\{c \delta_1
I\})=\emptyset$). To compare the surjectivity of
$\varPsi_{\hh}$ and $\widetilde \varPsi_{\hh}$,
note that if $\varPsi_{\hh}^{-1}(\{(B,C,F)\})
\neq \emptyset$, then by Theorem~\ref{cpdoppry}
any $T\in \varPsi_{\hh}^{-1}(\{(B,C,F)\}$ is CPD
and $T\in \widetilde \varPsi_{\hh}^{-1}(\{M\})$,
where $M$ is defined by \eqref{Ibt2} and
\eqref{Ibt3}. And {\em vice versa}, if
$\widetilde \varPsi_{\hh}^{-1}(\{M\}) \neq
\emptyset$, then any $T\in \widetilde
\varPsi_{\hh}^{-1}(\{M\})$ is CPD and $T\in
\varPsi_{\hh}^{-1}(\{(B,C,F)\})$, where $B$, $C$
and $F$ are defined by
\eqref{Ibt1}-\eqref{Ibt3}. Note also that if
$(B,C,F)=(0,0,0)$, then $M=0$ corresponds to
$(B,C,F)$ via \eqref{Ibt2} and \eqref{Ibt3} and
by \eqref{sosua} and \eqref{sosub}, we have
   \begin{align*}
\varPsi_{\hh}^{-1}(\{(0,0,0)\}) \varsubsetneq
\widetilde \varPsi_{\hh}^{-1}(\{0\}) \quad
\text{(provided $\dim \hh\Ge \aleph_0$).}
   \end{align*}
The reader must be aware of the fact that there
are operators $T\in \widetilde
\varPsi_{\hh}^{-1}(\{0\})$ such that $T\in
\varPsi_{\hh}^{-1}(\{(B,0,0)\})$ with
$B=T^*T-I\neq 0$, where $(B,0,0)$ corresponds to
$M=0$ via \eqref{Ibt1}-\eqref{Ibt3}. These are
exactly non-isometric 2-isometries. It is worth
mentioning that the ranges of the mappings
$\varPsi_{\hh}$ and $\widetilde \varPsi_{\hh}$
when restricted to operators of class $\gqb$ can
be described explicitly (see
Remark~\ref{imprym}). However, the problem of
describing the ranges of the mappings
$\varPsi_{\hh}$ and $\widetilde\varPsi_{\hh}$ in
full generality is highly non-trivial.

\bigskip

The organization of this paper is as follows. We
begin by introducing notation and terminology in
Subsection~\ref{Sec1.2} and collecting more or
less known facts about PD and CPD (scalar)
sequences in Subsection~\ref{Subs2.1}. The
remainder of Section~\ref{Sec2} is devoted to
systematic study of CPD sequences. In
Subsection~\ref{Sec2.2} we provide an integral
representation for a CPD sequence of exponential
growth and relate the rate of its growth to the
``size'' of the closed support of its
representing measure (see
Theorem~\ref{cpd-expon}). We also compare the
integral representations for PD and CPD
sequences (see Theorem~\ref{dyszcz3}).
Theorem~\ref{Gyeon}, which is the main result of
this subsection, states that a sequence
$\{\gamma_n\}_{n=0}^{\infty}$ of exponential
growth is PD if and only if $0$ is an
accumulation point of the set of all $\theta \in
(0,\infty)$ for which the sequence $\{\theta^n
\gamma_n\}_{n=0}^{\infty}$ is CPD. In
Subsection~\ref{Sec2.3} we characterize CPD
sequences of exponential growth for which the
sequence of consecutive differences is either
convergent or bounded from above plus some
additional constraints (see
Theorems~\ref{boundiff-scalar} and
\ref{boundiff-scalar2}). As a consequence, we
show that, subject to some mild constraints,
convergent CPD sequences of exponential growth
are PD (see Corollary~\ref{pd2cpd}).

Starting from Section~\ref{Sec3}, we begin the
study of CPD operators. In
Subsection~\ref{Sec3.1} we give a semispectral
integral representation for such operators and
relate their spectral radii to the closed
supports of representing semispectral measures
(see Theorem~\ref{cpdops}). Certain semispectral
integral representations for completely
hypercontractive and completely hyperexpansive
operators of finite order appeared in
\cite{Ja02,Cha-Sh,Cha-Sh17} with the
representing semispectral measures concentrated
on the closed interval $[0,1]$. In our case
there is no limitation on the size of the
support (the reader should be aware of the fact
that CPD operators are not scalable in general,
see Corollary~\ref{scalcpd}).
Theorem~\ref{boundiff} offers yet another
semispectral integral representation for CPD
operators satisfying a telescopic-like
condition. Theorem~\ref{dyltyprep}, which is the
main result of Subsection~\ref{Sec3.2}, provides
a dilation representation for CPD operators
based on Agler's hereditary functional calculus
and relates their spectral radii to the norms of
positive operators appearing in the dilation
representation. Subsection~\ref{Sec3.3-n}
contains simplified semispectral and dilation
representations of CPD operators (see
Theorem~\ref{dyl-an}). As an application, we
show that the class of CPD operators is closed
under the operation of taking powers (see
Theorem~\ref{pow-dwa}). We also completely
characterize CPD operators of class $\gqb$ (see
Theorem~\ref{cpd-q}). In both cases, we describe
explicitly the corresponding semispectral
integral and dilation representations. In
Theorem~\ref{subn-1} we give necessary and
sufficient conditions for a CPD operator $T$ to
be subnormal written in terms of the
semispectral integral representation of $T$.
Theorem~\ref{glow-main}, which is the main
result of Subsection~\ref{Sec4.1}, provides
several characterizations of subnormal
contractions via conditional positive
definiteness including the one appealing to the
telescopic condition. This is a generalization
of \cite[Theorem~4.1]{Sto}. On the basis of
earlier results, we characterize conditional
positive definiteness of a (bounded) operator
$T$ on $\hh$ by subnormality of (in general
unbounded) unilateral weighted shifts $W_{T,h}$,
$h\in \hh$, canonically associated with $T$ (see
Proposition~\ref{cpd-exo}). If the sequence
$\{T^{*(n+1)}T^{n+1} -
T^{*n}T^n\}_{n=0}^{\infty}$ is not convergent in
the weak operator topology, then some (or even
all except $h=0$) weighted shifts $W_{T,h}$ may
be unbounded. If the limit exists and is
nonzero, then all $W_{T,h}$ are bounded, but $T$
is not subnormal. Finally, if the limit exists
and is equal to zero, then $T$ is a subnormal
contraction.

In Subsection~\ref{Sec4.1-n}, we construct an
$L^{\infty}(M)$-functional calculus for CPD
operators, where $M$ is a semispectral measure
on the closed half-line $[0,\infty)$ associated
to $T$ (see Theorem~\ref{dyl-an2}). As a
consequence, we obtain a variety of estimates on
norms of polynomial and analytic expressions
coming from operators in question (see
Corollary~\ref{wni-pol} and
Subsection~\ref{appl-to}). The last subsection
of this paper is devoted to characterizing CPD
operators for which the closed support of the
associated semispectral measure is one of the
three sets $\emptyset$, $\{1\}$ and $\{0\}$. It
is shown that the first two cases completely
characterize CPD $m$-isometries (see
Proposition~\ref{sub-mzero-n}). The third case
leads to CPD operators that are beyond the
classes of subnormal and $m$-isometric operators
(see Proposition~\ref{kop-2izo} and
Example~\ref{prz-do-na}).
   \subsection{\label{Sec1.2}Notation and terminology}
   We denote by $\rbb$ and $\cbb$ the fields of real
and complex numbers, respectively. Since we consider
suprema of subsets of $\rbb$ which may be empty, we
adhere to the often-used convention that
   \begin{align} \label{konw-1}
\sup{\emptyset} = \sup_{x\in \emptyset} f(x):= -\infty
\quad \text{whenever $f\colon \rbb \to \rbb$.}
   \end{align}
We write $\nbb$, $\zbb_+$ and $\rbb_+$ for the sets of
positive integers, nonnegative integers and
nonnegative real numbers, respectively. As usual,
$\cbb[X]$ stands for the ring of all polynomials in
indeterminate $X$ with complex coefficients. We
customarily identify members of $\cbb[X]$ with
polynomial functions of one real variable. The unique
involution on $\cbb[X]$ which sends $X$ to itself is
denoted by ${}^{*}$, that is, if $p=\sum_{i\Ge 0}
\alpha_i X^i \in \cbb[X]$, then $p^*=\sum_{i\Ge 0}
\overline{\alpha_i} X^i$, or in the language of
polynomial functions $p^*(x)=\overline{p(x)}$ for all
$x \in \rbb$. If no ambiguity arises, the
characteristic function of a subset $\varOmega_1$ of a
set $\varOmega$ is denoted by $\chi_{\varOmega_1}$.
Given a compact topological Hausdorff space
$\varOmega,$ let $C(\varOmega)$ stand for the Banach
space of all continuous complex functions on
$\varOmega$ with the supremum~norm
   \begin{align*}
\|f\|_{C(\varOmega)}=\sup_{x\in \varOmega} |f(x)|,
\quad f \in C(\varOmega).
   \end{align*}
We write $\borel{\varOmega}$ for the $\sigma$-algebra
of all Borel subsets of a topological Hausdorff space
$\varOmega.$ If not stated otherwise, measures
considered in this paper are assumed to be positive.
The {\em closed support} of a finite Borel measure
$\mu$ on $\rbb$ (or $\cbb$) is denoted by $\supp{\mu}$
(recall that $\supp{\mu}$ exists because $\mu$ is
automatically regular, see
\cite[Theorem~2.18]{Rud87}). Given $x\in \rbb$, we
write $\delta_x$ for the Borel probability measure on
$\rbb$ such that $\supp{\delta_x} = \{x\}$.

All Hilbert spaces considered in this paper are
assumed to be complex. Given Hilbert spaces
$\hh$ and $\kk$, we denote by $\ogr{\hh,\kk}$
the Banach space of all bounded linear operators
from $\hh$ to $\kk$. We abbreviate
$\ogr{\hh,\hh}$ to $\ogr{\hh}$ and denote by
$\ogr{\hh}_+$ the convex cone $\{T \in
\ogr{\hh}\colon T \Ge 0\}$ of nonnegative
operators on $\hh$. We write $I_{\hh}$ (or
simply $I$ if no ambiguity arises) for the
identity operator on $\hh$. Let $T \in
\ogr{\hh}$. In what follows, $\jd{T}$, $\ob{T}$,
$\sigma(T)$, $\sigma_{\mathrm{p}}(T)$, $r(T)$
and $|T|$ stand for the kernel, the range, the
spectrum, the point spectrum, the spectral
radius and the modulus of $T$, respectively. To
comply with Gelfand's formula for spectral
radius, we adhere to the convention that $r(T) =
0$ if $\hh= \{0\}.$ We say that $T$ is {\em
normaloid} if $r(T)=\|T\|$, or equivalently, by
Gelfand's formula for spectral radius, if and
only if $\|T^n\|=\|T\|^n$ for all $n\in\nbb$.
Let us recall the following basic fact (see
\cite[Proposition~II.4.6]{Con91}, see also
\cite[p.\ 116]{Fur}).
   \begin{align} \label{subn-norm}
\text{\em Any subnormal operator is normaloid.}
   \end{align}
This will be used several times in this
article. Given an operator $T\in
\ogr{\hh}$, we set
   \begin{align} \label{bmt}
\bscr_m(T) = \sum_{k=0}^m (-1)^k {m \choose k}{T^*}^kT^k,
\quad m\in \zbb_+.
   \end{align}
Recall that if $m\in \nbb$ and $\bscr_m(T)=0$,
then $T$ is called an {\em $m$-isometry} (see
\cite[p.\ 11]{Ag90} and
\cite{Ag-St1,Ag-St2,Ag-St3}). An $m$-isometry
$T$ is said to be {\em strict} if $m=1$ and
$\hh\neq \{0\}$, or $m\Ge 2$ and $T$ is not an
$(m-1)$-isometry; in both cases $\hh\neq \{0\}$
(see \cite{Bo-Ja}). Examples of strict
$m$-isometries for each $m\Ge 2$ are given in
\cite[Proposition~ 8]{At91}. We say that $T$ is
{\em $2$-hyperexpansive} if $\bscr_2(T) \Le 0$
(see \cite{Rich}). We call $T$ {\em completely
hyperexpansive} if $\bscr_m(T) \Le 0$ for all
$m\in \nbb$ (see \cite{At}).

Let $F\colon \ascr \to \ogr{\hh}$ be a {\em
semispectral measure} on a $\sigma$-algebra $\ascr$ of
subsets of a set $\varOmega$, i.e., $F$ is
$\sigma$-additive in the weak operator topology
(briefly, {\sc wot}) and $F(\varDelta)\Ge 0$ for all
$\varDelta\in \ascr$. Denote by $L^1(F)$ the linear
space of all complex $\ascr$-measurable functions
$\zeta$ on $\varOmega$ such that $\int_{\varOmega}
|\zeta(x)| \is{F(\D x)h}h < \infty$ for all $h\in
\hh$. Then for every $\zeta\in L^1(F)$, there exists a
unique operator $\int_\varOmega \zeta \D F \in
\ogr{\hh}$ such that (see e.g., \cite[Appendix]{Sto3})
   \begin{align}  \label{form-ua}
\Big\langle\int_\varOmega \zeta \D F h, h\Big\rangle =
\int_\varOmega \zeta(x) \is{F(\D x)h}h, \quad h\in\hh.
   \end{align}
If $\varOmega=\rbb, \cbb$ and $F\colon
\borel{\varOmega} \to \ogr{\hh}$ is a semispectral
measure, then its closed support is denoted by
$\supp{F}$ (recall that such $F$ is automatically
regular so $\supp{F}$ exists). By a {\em semispectral
measure} of a subnormal operator $T\in \ogr{\hh}$ we
mean a nor\-malized compactly supported semispectral
measure $G\colon \borel{\cbb} \to \ogr{\hh}$ defined
by $G(\varDelta) = PE(\varDelta)|_{\hh}$ for
$\varDelta \in\borel{\cbb}$, where $E\colon
\borel{\cbb} \to \ogr{\kk}$ is the spectral measure of
a minimal normal extension $N\in \ogr{\kk}$ of $T$ and
$P\in \ogr{\kk}$ is the orthogonal projection of $\kk$
onto $\hh$ (the minimality means that $\kk$ has no
proper closed vector subspace that reduces $N$ and
contains $\hh$). It follows from \cite[Proposition~
5]{Ju-St} and \cite[Proposition~II.2.5]{Con91} that a
subnormal operator has exactly one semispectral
measure. It is also easily seen that $T^{*n}T^n =
\int_{\cbb} |z|^{2n} G(\D z)$ for all $n\in \zbb_+$.
Applying \eqref{form-ua} and the measure transport
theorem (cf.\ \cite[Theorem~ 1.6.12]{Ash}) yields
   \begin{align}  \label{tobemom}
T^{*n}T^n = \int_{\rbb_+} x^n G\circ \phi^{-1}(\D x),
\quad n\in \zbb_+,
   \end{align}
where $\phi\colon \cbb\to \rbb_+$ is defined by
$\phi(z)=|z|^2$ for $z\in \cbb$ and
$G\circ\phi^{-1}\colon \borel{\rbb_+} \to \ogr{\hh}$
is the semispectral measure defined by
$G\circ\phi^{-1}(\varDelta) = G(\phi^{-1}(\varDelta))$
for $\varDelta \in \borel{\rbb_+}$. We refer the
reader to \cite{Con91} for the foundations of the
theory of subnormal operators.
   \section{\label{Sec2}Conditionally positive definite sequences}
   \subsection{\label{Subs2.1}Basic facts}
Let $\gammab = \{\gamma_n\}_{n=0}^{\infty}$ be a
sequence of real numbers. Recall that $\gammab$
is PD if \eqref{virek} holds for all finite
sequences $\{\lambda_i\}_{i=0}^{k} \subseteq
\cbb$; $\gammab$ is CPD if \eqref{virek} holds
for all finite sequences
$\{\lambda_i\}_{i=0}^{k} \subseteq \cbb$ such
that $\sum_{i=0}^k \lambda_i=0$. It is a matter
of routine to verify that $\gammab$ is PD
(resp., CPD) if and only if \eqref{virek} holds
for all finite sequences
$\{\lambda_i\}_{i=0}^{k} \subseteq \rbb$ (resp.,
for all finite sequences
$\{\lambda_i\}_{i=0}^{k} \subseteq \rbb$ such
that $\sum_{i=0}^k \lambda_i=0$). Another
important observation (cf.\
\cite[Remark~3.1.2]{B-C-R}) is that $\gammab$ is
PD (resp., CPD) if and only if
   \begin{align*}
\sum_{i,j=1}^k \gamma_{n_i+n_j} \lambda_i
\bar\lambda_j \Ge 0
   \end{align*}
for all finite sequences $\{n_i\}_{i=1}^k
\subseteq \zbb_+$ and $\{\lambda_i\}_{i=1}^k
\subseteq \cbb$ (resp., for all finite sequences
$\{n_i\}_{i=1}^k \subseteq \zbb_+$ and
$\{\lambda_i\}_{i=1}^k \subseteq \cbb$ such that
$\sum_{i=1}^{k} \lambda_i=0$). This shows that
our definitions of positive definiteness and
conditional positive definiteness are consistent
with those in \cite[Section~3.1]{B-C-R}. Let us
mention further that according to the
terminology in \cite{B-C-R}, $\gammab$ is CPD if
and only if $-\gammab$ is ``negative definite''.
It follows from the definition that if $\gammab$
is PD (resp., CPD), then so is the sequence
$\{\gamma_{n+2k}\}_{n=0}^{\infty}$ for every
$k\in \zbb_+$. However, it may happen that
$\gammab$ is PD but
$\{\gamma_{n+1}\}_{n=0}^{\infty}$ is not (e.g.,
$\gamma_n=(-1)^n$ for $n\in \zbb_+$).

The following fundamental characterization of
conditional positive definiteness in terms of positive
definiteness is essentially due to Schoenberg.
   \begin{lem}[\mbox{\cite[Lemma~1.7]{Pa-Sch}},
\mbox{\cite[Theorem~ 3.2.2]{B-C-R}}] \label{cpdpd2} If
$\gammab = \{\gamma_n\}_{n=0}^{\infty}$ is a sequence of
real numbers, then the following conditions are
equivalent{\em :}
   \begin{enumerate}
   \item[(i)] $\gammab$ is CPD,
   \item[(ii)] $\{\E^{t \gamma_n}\}_{n=0}^{\infty}$ is
PD for every positive real number $t$.
   \end{enumerate}
   \end{lem}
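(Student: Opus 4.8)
The plan is to prove the two implications separately, with (ii)$\Rightarrow$(i) reducing to a one-line differentiation and (i)$\Rightarrow$(ii) carrying the real content. For (ii)$\Rightarrow$(i), I would fix $\{\lambda_i\}_{i=0}^k\subseteq\cbb$ with $\sum_{i=0}^k\lambda_i=0$ and study $f(t)=\sum_{i,j=0}^k\E^{t\gamma_{i+j}}\lambda_i\bar\lambda_j$ on $[0,\infty)$. By hypothesis $f(t)\Ge0$ for $t>0$, while the sum-zero constraint forces $f(0)=\big|\sum_{i=0}^k\lambda_i\big|^2=0$; hence $0$ minimizes $f$ on $[0,\infty)$, the right derivative obeys $f'(0)\Ge0$, and since $f'(0)=\sum_{i,j=0}^k\gamma_{i+j}\lambda_i\bar\lambda_j$ this is exactly inequality \eqref{virek}. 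So $\gammab$ is CPD.

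For (i)$\Rightarrow$(ii) the first step is to convert conditional positivity into genuine positivity. I would show that if $\gammab$ is CPD then the kernel $K(m,n)=\gamma_{m+n}-\gamma_m-\gamma_n+\gamma_0$ is positive definite, meaning $\sum_{i,j}K(n_i,n_j)c_i\bar c_j\Ge0$ for all finite $\{n_i\}\subseteq\zbb_+$ and unconstrained $\{c_i\}\subseteq\cbb$. The trick is to adjoin an extra node equal to $0$ carrying weight $c_0=-\sum_i c_i$, so that the enlarged family of weights sums to zero; feeding it into the CPD inequality (in the arbitrary-exponent form recorded just before the lemma) and expanding via $c_0=-\sum_i c_i$ collapses precisely to $\sum_{i,j}K(n_i,n_j)c_i\bar c_j\Ge0$.

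The second step assembles the exponential: for fixed $t>0$ I would use
\[
\E^{t\gamma_{n_i+n_j}}=\E^{-t\gamma_0}\,\E^{tK(n_i,n_j)}\,\E^{t\gamma_{n_i}}\,\E^{t\gamma_{n_j}}.
\]
Here $\E^{-t\gamma_0}$ is a positive constant, $\E^{t\gamma_{n_i}}\E^{t\gamma_{n_j}}=d_i\bar d_j$ with $d_i=\E^{t\gamma_{n_i}}>0$ is a rank-one positive definite kernel, and the entrywise exponential $\E^{tK(n_i,n_j)}$ is positive definite because $tK$ is and its entrywise exponential expands as $\sum_{p\Ge0}\frac{t^p}{p!}K^{\circ p}$, a convergent sum of Hadamard powers each positive definite by the Schur product theorem. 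As the Hadamard product of positive definite kernels is positive definite and a positive scalar preserves this, $\{\E^{t\gamma_n}\}_{n=0}^\infty$ is PD. The crux is this normalization-plus-Schur step: recognizing that conditional positivity upgrades to \emph{unconstrained} positivity of $K$ is exactly what makes the entrywise exponential accessible to the Schur product theorem, and it is here that the sum-zero hypothesis is spent.
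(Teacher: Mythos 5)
Your proof is correct in both directions. Note that the paper itself gives no proof of this lemma---it is quoted from the literature (\cite[Lemma~1.7]{Pa-Sch}, \cite[Theorem~3.2.2]{B-C-R})---so there is no in-paper argument to compare against; what you have written is precisely the classical Schoenberg argument from those references: the one-sided differentiation at $t=0$ of $f(t)=\sum_{i,j}\E^{t\gamma_{i+j}}\lambda_i\bar\lambda_j$ (which is real-valued, since the underlying matrix is real symmetric, and vanishes at $t=0$ by the sum-zero constraint) gives (ii)$\Rightarrow$(i), while the balancing-node trick upgrading the CPD hypothesis to unconstrained positive definiteness of $K(m,n)=\gamma_{m+n}-\gamma_m-\gamma_n+\gamma_0$, followed by the factorization $\E^{t\gamma_{n_i+n_j}}=\E^{-t\gamma_0}\,\E^{tK(n_i,n_j)}\,\E^{t\gamma_{n_i}}\,\E^{t\gamma_{n_j}}$ and the Schur product theorem applied to the Hadamard-power expansion of the entrywise exponential, gives (i)$\Rightarrow$(ii). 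Your appeal to the arbitrary-exponent form of the CPD condition is also legitimate, as the paper records exactly that equivalence in the paragraph preceding the lemma.
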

A sequence $\gammab = \{\gamma_n\}_{n=0}^{\infty}$ of
real numbers is said to be a {\em Hamburger} (resp.,
{\em Stieltjes}, {\em Hausdorff}\/) {\em moment
sequence} if there exists a Borel measure $\mu$ on
$\mathbb R$ (resp., $\rbb_+$, $[0,1]$) such that
   \begin{align} \label{hamb}
\gamma_n = \int t^n d \mu(t), \quad n\in \zbb_+.
   \end{align}
A Borel measure $\mu$ on $\rbb$ satisfying
\eqref{hamb} is called a {\em representing measure} of
$\gammab$. If $\gammab$ is a Hamburger moment sequence
which has a unique representing measure on $\rbb$,
then we say that $\gammab$ is {\em determinate}. Note
that by \cite[Ex.\ 4(e), p.\ 71]{Rud87}, the
Weierstrass theorem (see \cite[Theorem~7.26]{Rud76})
and the Riesz representation theorem (see
\cite[Theorem~2.14]{Rud87}) the following holds.
   \begin{lem} \label{csmad}
A Hamburger moment sequence
$\gammab=\{\gamma_n\}_{n=0}^{\infty}$ of real numbers
has a compactly supported representing measure if and
only if $\theta:=\limsup_{n\to \infty}
|\gamma_n|^{1/n} < \infty$. Moreover, if this is the
case, then $\gammab$ is determinate and $\supp{\mu}
\subseteq [-\theta, \theta]$, where $\mu$ is a unique
representing measure of $\gammab$.
   \end{lem}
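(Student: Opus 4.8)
The plan is to show that $\theta$ coincides with the $\mu$-essential supremum of the absolute-value function, for any representing measure $\mu$ of $\gammab$; everything then follows from this single identity. Since $\gammab$ is a Hamburger moment sequence, fix a representing measure $\mu$ on $\rbb$. It is finite, because $\mu(\rbb)=\gamma_0<\infty$, and the function $g(t)=|t|$ belongs to $L^2(\mu)$ since $\int g^2\,\D\mu=\gamma_2<\infty$. Hence by \cite[Ex.~4(e), p.~71]{Rud87} we have $\|g\|_{L^p(\mu)}\to\|g\|_{L^\infty(\mu)}$ as $p\to\infty$, where $\|g\|_{L^\infty(\mu)}$ is the smallest $R\Ge 0$ with $\mu(\rbb\setminus[-R,R])=0$ (possibly $+\infty$).

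First I would establish the identity $\theta=\|g\|_{L^\infty(\mu)}$ by two opposite inequalities. For $\theta\Le\|g\|_{L^\infty(\mu)}$, I dominate $|\gamma_n|=\bigl|\int t^n\,\D\mu\bigr|\Le\int|t|^n\,\D\mu=\|g\|_{L^n(\mu)}^n$, so that $|\gamma_n|^{1/n}\Le\|g\|_{L^n(\mu)}$; letting $n\to\infty$ gives the bound. For the reverse inequality I pass to the even-indexed subsequence: since $\gamma_{2n}=\int t^{2n}\,\D\mu=\|g\|_{L^{2n}(\mu)}^{2n}\Ge 0$, we get $|\gamma_{2n}|^{1/(2n)}=\|g\|_{L^{2n}(\mu)}\to\|g\|_{L^\infty(\mu)}$, whence $\theta=\limsup_n|\gamma_n|^{1/n}\Ge\|g\|_{L^\infty(\mu)}$. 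This yields $\theta=\|g\|_{L^\infty(\mu)}$.

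The equivalence and the support bound are now immediate. If some representing measure has $\supp{\mu}\subseteq[-R,R]$, then $|\gamma_n|\Le R^n\gamma_0$ forces $\theta\Le R<\infty$. Conversely, if $\theta<\infty$, then the fixed $\mu$ satisfies $\|g\|_{L^\infty(\mu)}=\theta<\infty$, i.e.\ $\mu(\rbb\setminus[-\theta,\theta])=0$, so $\supp{\mu}\subseteq[-\theta,\theta]$ and $\mu$ is compactly supported. As the identity applies to every representing measure, each one is concentrated on $[-\theta,\theta]$.

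It remains to prove determinacy. Let $\mu_1,\mu_2$ be representing measures of $\gammab$; by the previous step both are supported in the compact interval $K=[-\theta,\theta]$, and they agree on all monomials, hence on all of $\cbb[X]$. By the Weierstrass approximation theorem \cite[Theorem~7.26]{Rud76}, $\cbb[X]$ is dense in $C(K)$, so $\int f\,\D\mu_1=\int f\,\D\mu_2$ for every $f\in C(K)$; the uniqueness part of the Riesz representation theorem \cite[Theorem~2.14]{Rud87} then gives $\mu_1=\mu_2$, so $\gammab$ is determinate. The crux of the argument is the identity $\theta=\|g\|_{L^\infty(\mu)}$—in particular the passage to the even subsequence for the lower bound, needed because $|\gamma_n|^{1/n}$ itself need not converge—while the determinacy and support claims are then routine consequences of compact support together with Weierstrass and Riesz.
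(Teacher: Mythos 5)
Your proof is correct and follows exactly the route the paper intends: the paper gives no detailed argument but derives the lemma from \cite[Ex.~4(e), p.~71]{Rud87} (the fact that $\|g\|_{L^p(\mu)}\to\|g\|_{L^\infty(\mu)}$), the Weierstrass theorem, and the uniqueness part of the Riesz representation theorem, which are precisely the three ingredients you use. Your identity $\theta=\|g\|_{L^\infty(\mu)}$, with the passage to even indices for the lower bound, is the intended way of combining them, and the determinacy argument via density of polynomials in $C([-\theta,\theta])$ matches the paper's citation of Weierstrass and Riesz.
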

In particular, a Hausdorff moment sequence is always
determinate. For our later needs, we recall a theorem
due to Stieltjes.
   \begin{thm}[\mbox{\cite{Sti},\cite[Theorem~ 6.2.5]{B-C-R}}] \label{Stiech}
A sequence $\{\gamma_n\}_{n=0}^{\infty}
\subseteq \rbb$ is a Stieltjes moment sequence
if and only if the sequences
$\{\gamma_n\}_{n=0}^{\infty}$ and
$\{\gamma_{n+1}\}_{n=0}^{\infty}$ are PD.
   \end{thm}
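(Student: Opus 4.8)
The plan is to prove the two implications separately: necessity by a direct integral estimate, and sufficiency by combining a classical description of polynomials nonnegative on the half-line with a Riesz--Haviland type representation theorem, where the substance lies. For necessity I would assume $\gammab=\{\gamma_n\}_{n=0}^{\infty}$ is a Stieltjes moment sequence, so $\gamma_n=\int_{\rbb_+}t^n\,\D\mu(t)$ for a Borel measure $\mu$ on $\rbb_+$. Then for any finite $\{\lambda_i\}_{i=0}^k\subseteq\cbb$, setting $q(t)=\sum_{i=0}^k\lambda_i t^i$, I would compute
\[
\sum_{i,j=0}^k\gamma_{i+j}\lambda_i\bar\lambda_j=\int_{\rbb_+}|q(t)|^2\,\D\mu(t)\Ge 0,\qquad \sum_{i,j=0}^k\gamma_{i+j+1}\lambda_i\bar\lambda_j=\int_{\rbb_+}t\,|q(t)|^2\,\D\mu(t)\Ge 0,
\]
the second inequality holding because $t\Ge 0$ on $\rbb_+$; this shows that both $\{\gamma_n\}_{n=0}^{\infty}$ and $\{\gamma_{n+1}\}_{n=0}^{\infty}$ are PD.

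For the sufficiency (the harder direction) I would introduce the $\rbb$-linear functional $L\colon\rbb[X]\to\rbb$ determined by $L(X^n)=\gamma_n$. For $q=\sum_i\lambda_i X^i\in\cbb[X]$ the polynomial $|q|^2$ has real coefficients, and $L(|q|^2)=\sum_{i,j}\gamma_{i+j}\lambda_i\bar\lambda_j$ while $L(X|q|^2)=\sum_{i,j}\gamma_{i+j+1}\lambda_i\bar\lambda_j$. Since in one variable the sums of squares in $\rbb[X]$ are precisely the polynomials $|q|^2$ with $q\in\cbb[X]$, the two PD hypotheses amount to
\[
L(\sigma)\Ge 0 \quad\text{and}\quad L(X\sigma)\Ge 0 \quad\text{for every sum of squares }\sigma\in\rbb[X].
\]
Next I would invoke the classical fact that every $p\in\rbb[X]$ nonnegative on $\rbb_+$ decomposes as $p=\sigma_0+X\sigma_1$ with $\sigma_0,\sigma_1$ sums of squares (by factoring $p$ over $\rbb$: positive real roots occur with even multiplicity, conjugate complex roots give factors $(x-a)^2+b^2$, and each remaining factor $x+a$ with $a\Ge 0$ feeds the $X\sigma_1$ term). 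This yields $L(p)=L(\sigma_0)+L(X\sigma_1)\Ge 0$ for every $p$ nonnegative on $\rbb_+$, after which the Riesz--Haviland theorem produces a positive Borel measure $\mu$ on $\rbb_+$ with $\gamma_n=L(X^n)=\int_{\rbb_+}t^n\,\D\mu(t)$, so that $\gammab$ is a Stieltjes moment sequence.

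I expect the main obstacle to be this final existence step, namely upgrading positivity of $L$ on the full cone of polynomials nonnegative on $\rbb_+$ to an integral representation; this needs both the half-line decomposition above and the Riesz--Haviland theorem (a Riesz representation over the non-compact space $\rbb_+$). A conceivable alternative is to first solve the Hamburger problem from PD of $\{\gamma_n\}$ alone, obtaining a representing measure on $\rbb$, and then to use PD of $\{\gamma_{n+1}\}$ to push the mass onto $\rbb_+$; but controlling the support in the presence of possible non-uniqueness is delicate, so I would prefer the direct representation over $\rbb_+$.
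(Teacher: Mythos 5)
The paper does not prove this theorem at all: it is quoted as a classical result of Stieltjes, with the proof delegated to the cited literature ([Sti] and [B-C-R, Theorem~6.2.5]), so there is no internal argument to compare yours against. Judged on its own merits, your proof is correct and is essentially the modern ``Positivstellensatz plus Haviland'' proof (the one found, e.g., in Schm\"udgen's book on the moment problem), as opposed to the semigroup-theoretic route of Berg--Christensen--Ressel or the operator-theoretic route via the Friedrichs extension of the multiplication operator. The necessity direction is the standard integral computation and is fine. In the sufficiency direction, all three ingredients are used correctly: the identification (valid only in one variable) of sums of squares in $\rbb[X]$ with the polynomials $|q|^2$, $q\in\cbb[X]$; the half-line decomposition $p=\sigma_0+X\sigma_1$ for $p\Ge 0$ on $\rbb_+$; and Haviland's theorem over the closed (non-compact) set $\rbb_+$. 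Two minor points you should make explicit if you write this up in full: first, your factorization sketch needs the multiplicative closure of the cone $\{\sigma_0+X\sigma_1\}$, i.e.\ the identity
\begin{align*}
(\sigma_0+X\sigma_1)(\tau_0+X\tau_1)=(\sigma_0\tau_0+X^2\sigma_1\tau_1)+X(\sigma_0\tau_1+\sigma_1\tau_0),
\end{align*}
to assemble the factors coming from negative roots, roots at $0$ of odd multiplicity, and the quadratic factors; second, the leading coefficient of a nonzero $p\Ge 0$ on $\rbb_+$ is automatically positive, which is what lets the factorization close up. Finally, your caution about the alternative route is well placed: in the Stieltjes-indeterminate case a Hamburger representing measure of a Stieltjes moment sequence need not be supported in $\rbb_+$, so ``solve Hamburger first, then push the mass onto the half-line'' genuinely breaks down, and the direct representation over $\rbb_+$ is the right way to argue.
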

We refer the reader to \cite{B-C-R,sim} for the
fundamentals of the theory of moment problems.
   \subsection{\label{Sec2.2}Exponential growth} In this
subsection we give an integral representation
for CPD sequences of (at most) exponential
growth (see Theorem~ \ref{cpd-expon}). PD
sequences of exponential growth are
characterized by means of parameters appearing
in the above-mentioned integral representation
(see Theorem~\ref{dyszcz3}). Theorem~
\ref{Gyeon} states that a sequence
$\{\gamma_n\}_{n=0}^{\infty}$ of exponential
growth is PD if and only if the sequences
$\{\theta^n \gamma_n\}_{n=0}^{\infty}$, $\theta
\in \rbb$, are CPD.

We begin by introducing the difference transformation
$\triangle$ which plays an important role in further
considerations. Denote by $\cbb^{\zbb_+}$ the complex
vector space of all complex sequences
$\{\gamma_n\}_{n=0}^{\infty}$ with linear operations
defined coordinatewise. The difference transformation
$\triangle \colon \cbb^{\zbb_+} \to \cbb^{\zbb_+}$ is
given by
   \begin{align*}
(\triangle \gammab)_n = \gamma_{n+1} - \gamma_n, \quad n\in
\zbb_+, \, \gammab = \{\gamma_n\}_{n=0}^{\infty} \in
\cbb^{\zbb_+}.
   \end{align*}
Clearly, $\triangle$ is a linear. Denote by $\triangle^k$
the $k$th composition power of $\triangle$, i.e.,
$\triangle^0$ is the identity transformation of
$\cbb^{\zbb_+}$ and $\triangle^{k+1}\gammab =
\triangle^k(\triangle \gammab)$ for $\gammab =
\{\gamma_n\}_{n=0}^{\infty} \in \cbb^{\zbb_+}$.

   Given $n\in \zbb_+$, we define the polynomial $Q_n
\in \cbb[X]$ by
   \begin{align} \label{klaud}
Q_n(x) =
   \begin{cases}
0 & \text{if } x \in \rbb \text{ and } n=0,1,
   \\
\sum_{j=0}^{n-2} (n -j -1) x^j & \text{if } x \in \rbb
\text{ and } n=2,3,4,\dots.
   \end{cases}
   \end{align}
Below, for fixed $k\in \nbb$ and $x\in \rbb$, we
write $\triangle^k Q_{(\cdot)}(x)$ to denote the
action of the transformation $\triangle^k$ on
the sequence $\{Q_{n}(x)\}_{n=0}^{\infty}.$
   \begin{lem} \label{rnx}
The polynomials $Q_n$ have the following properties{\em :}
   \allowdisplaybreaks
   \begin{align}  \label{rnx-1}
Q_n(x) & = \frac{x^n-1 - n (x-1)}{(x-1)^2}, \quad n \in
\zbb_+, \, x\in \rbb\setminus \{1\},
   \\ \label{rnx-0}
Q_{n+1}(x) & = x Q_n(x) + n, \quad n \in \zbb_+, \, x\in
\rbb,
   \\ \label{monot-1}
\frac{Q_n(x)}{n} & \Le \frac{Q_{n+1}(x)}{n+1}, \quad n\in
\nbb, \, x \in [0,1],
   \\ \label{monot-2}
\lim_{n\to \infty} \frac{Q_n(x)}{n} & = \frac{1}{1-x},
\quad x\in [0,1),
   \\ \label{del1}
(\triangle Q_{(\cdot)}(x))_n & =
   \begin{cases}
0 & \text{if } n=0, \, x\in \rbb,
   \\
\sum_{j=0}^{n-1} x^j & \text{if } n \in \nbb, \, x\in
\rbb,
   \end{cases}
   \\ \label{del2}
(\triangle^2 Q_{(\cdot)}(x))_n & = x^n, \quad n\in \zbb_+,
\, x\in \rbb.
   \end{align}
   \end{lem}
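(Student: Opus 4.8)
The plan is to build everything on the three-term recurrence \eqref{rnx-0}, which is the combinatorial heart of the definition \eqref{klaud}. First I would prove \eqref{rnx-0} directly by index shifting: comparing $Q_{n+1}(x)=\sum_{j=0}^{n-1}(n-j)x^j$ with $x Q_n(x)=\sum_{i=1}^{n-1}(n-i)x^{i}$ leaves exactly the $j=0$ term, namely $n$, while the cases $n=0,1$ are checked by hand against the values $Q_0=Q_1=0$ and $Q_2=1$. With \eqref{rnx-0} in hand, the closed form \eqref{rnx-1} follows by a routine induction on $n$: the base cases $n=0,1$ both give $0$, and the inductive step amounts to verifying the algebraic identity $x\cdot\frac{x^n-1-n(x-1)}{(x-1)^2}+n=\frac{x^{n+1}-1-(n+1)(x-1)}{(x-1)^2}$, a one-line computation after clearing the common denominator $(x-1)^2$.

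Next I would treat the difference identities. Since $Q_{n+1}$, $Q_n$ and the sums on the right-hand sides are all polynomials in $x$, it suffices to establish the identities on $\rbb\setminus\{1\}$ and invoke that two polynomials agreeing on an infinite set coincide; this disposes of the apparent singularity at $x=1$ painlessly. Using \eqref{rnx-1}, the difference $(\triangle Q_{(\cdot)}(x))_n=Q_{n+1}(x)-Q_n(x)$ collapses, after subtracting the two fractions, to $\frac{x^{n+1}-x^{n}-(x-1)}{(x-1)^2}=\frac{x^n-1}{x-1}=\sum_{j=0}^{n-1}x^j$, which is \eqref{del1} (the value $0$ at $n=0$ being the empty sum). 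Then \eqref{del2} is immediate from $\triangle^2 Q=\triangle(\triangle Q)$, as subtracting two consecutive geometric sums in \eqref{del1} telescopes to $x^n$.

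The remaining two assertions, the monotonicity \eqref{monot-1} and the limit \eqref{monot-2}, both rest on one representation that I would extract by telescoping \eqref{del1}: writing $b_i(x)=\sum_{j=0}^{i-1}x^j$, one gets $Q_n(x)=\sum_{i=1}^{n-1}b_i(x)$ for every $n\in\nbb$, and for $x\in[0,1]$ the sequence $\{b_i(x)\}$ is nondecreasing in $i$ because $b_{i+1}(x)-b_i(x)=x^i\Ge 0$. This is the only place where genuine estimation, rather than formal manipulation, enters, and it is the step I expect to be the crux. For \eqref{monot-1}, multiplying the claim by $n(n+1)>0$ and using $Q_{n+1}=Q_n+b_n$ from \eqref{del1} reduces it to $n\,b_n(x)\Ge Q_n(x)$; and indeed $Q_n(x)=\sum_{i=1}^{n-1}b_i(x)\Le (n-1)b_n(x)\Le n\,b_n(x)$, the first inequality holding termwise because $b_i(x)\Le b_n(x)$ for $i\Le n-1$ and $b_n(x)\Ge 0$.

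Finally, for \eqref{monot-2} I would note that for $x\in[0,1)$ one has $b_i(x)=\frac{1-x^i}{1-x}\to\frac{1}{1-x}$ as $i\to\infty$, so the Ces\`{a}ro averages $\frac{Q_n(x)}{n}=\frac1n\sum_{i=1}^{n-1}b_i(x)$ converge to the same limit $\frac{1}{1-x}$; alternatively, the limit can be read off \eqref{rnx-1} directly, since $x^n\to 0$ forces $\frac{Q_n(x)}{n}=\frac{x^n-1-n(x-1)}{n(x-1)^2}\to\frac{-(x-1)}{(x-1)^2}=\frac{1}{1-x}$.
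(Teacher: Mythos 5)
Your proof is correct, but its architecture is inverted relative to the paper's. The paper establishes the closed form \eqref{rnx-1} first, in a single chain of equalities: for $n\Ge 2$ and $x\in\rbb\setminus\{1\}$,
$\frac{x^n-1-n(x-1)}{(x-1)^2}=\frac{(\sum_{i=0}^{n-1}x^i)-n}{x-1}=\sum_{i=0}^{n-1}\frac{x^i-1}{x-1}=\sum_{i=1}^{n-1}\sum_{j=0}^{i-1}x^j=\sum_{j=0}^{n-2}(n-j-1)x^j$,
and then simply asserts that \eqref{rnx-0}, \eqref{del1}, \eqref{del2} follow from \eqref{klaud} and the definition of $\triangle$, while \eqref{monot-1} and \eqref{monot-2} ``can be deduced from \eqref{rnx-1}.'' You instead prove the recurrence \eqref{rnx-0} from \eqref{klaud} by an index shift, obtain \eqref{rnx-1} by induction on $n$, and reach \eqref{del1}, \eqref{del2} through \eqref{rnx-1} together with the polynomial-identity principle to absorb the excluded point $x=1$. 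Both routes are equally rigorous and of comparable length; what yours buys is that it makes explicit the steps the paper waves at. In particular, your telescoped representation $Q_n(x)=\sum_{i=1}^{n-1}b_i(x)$ with $b_i(x)=\sum_{j=0}^{i-1}x^j$ nondecreasing in $i$ on $[0,1]$ --- which is, incidentally, exactly the middle expression $\sum_{i=1}^{n-1}\sum_{j=0}^{i-1}x^j$ appearing in the paper's chain of equalities --- yields a clean proof of the monotonicity \eqref{monot-1}, the one item of the lemma that requires genuine estimation and that the paper leaves entirely to the reader; your Ces\`{a}ro-average argument (or the direct limit read off \eqref{rnx-1}) likewise settles \eqref{monot-2} completely.
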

   \begin{proof}
Suppose $n \Ge 2$. Then
   \begin{align*}
\frac{x^n-1 - n (x-1)}{(x-1)^2} &=
\frac{(\sum_{i=0}^{n-1} x^i) - n}{x-1} =
\sum_{i=0}^{n-1} \frac{x^i - 1}{x-1}
   \\
& = \sum_{i=1}^{n-1} \sum_{j=0}^{i-1} x^j =
\sum_{j=0}^{n-2} (n -j -1) x^j, \quad x\in
\rbb\setminus \{1\}.
   \end{align*}
This implies \eqref{rnx-1}. Identities
\eqref{rnx-0}, \eqref{del1} and \eqref{del2}
follow from \eqref{klaud} and the definition of
$\triangle$, while \eqref{monot-1} and
\eqref{monot-2} can be deduced from
\eqref{rnx-1}.
   \end{proof}
   Below, we denote by $|\mu|$ the total variation
measure of a complex Borel measure $\mu$ on $\rbb$.
Recall that a complex Borel measure on $\rbb$ is
automatically regular, i.e., its total variation
measure is regular (see \cite[Theorem~2.18]{Rud87}).
   \begin{lem} \label{uniq}
Suppose $a,b,c\in \cbb$ and $\mu$ is a complex Borel
measure on $\rbb$ such that $\mu(\{1\})=0$, the
measure $|\mu|$ is compactly supported and
   \begin{align*}
a+bn+cn^2+ \int_{\rbb} Q_n(x) \D\mu(x) = 0, \quad
n\in\zbb_+.
   \end{align*}
Then $a=b=c=0$ and $\mu=0$.
   \end{lem}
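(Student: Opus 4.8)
The plan is to strip away the scalar parameters by applying the difference transformation $\triangle$ twice, which turns the hypothesis into a statement purely about the moments of $\mu$, and then to appeal to the uniqueness of a compactly supported measure determined by its moments. Concretely, I would set $f(n) = a + bn + cn^2 + \int_{\rbb} Q_n(x)\,\D\mu(x)$ for $n\in\zbb_+$, so that $f\equiv 0$ by assumption. Since $\triangle$ is linear and acts through finite linear combinations of consecutive terms, it commutes with integration against $\mu$; there is no convergence issue here because each $Q_n$ is a polynomial and $|\mu|$ is a finite, compactly supported measure, so $Q_n\in L^1(|\mu|)$. Applying $\triangle^2$ to $f$ and using \eqref{del2}, which gives $(\triangle^2 Q_{(\cdot)}(x))_n = x^n$, together with the elementary computation $\triangle^2(a+bn+cn^2)=2c$, I obtain
   \begin{align*}
2c + \int_{\rbb} x^n \,\D\mu(x) = 0, \quad n\in\zbb_+.
   \end{align*}
The whole point of squaring $\triangle$ is exactly that it annihilates the affine part $a+bn$ and collapses the polynomials $Q_n$ to the monomials $x^n$.

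This identity says that the moments of $\mu$ agree with those of the measure $-2c\,\delta_1$, since $\int x^n\,\D\delta_1 = 1$ for every $n$. Because $\mu$ and $-2c\,\delta_1$ are both compactly supported complex Borel measures on $\rbb$, their difference $\nu := \mu + 2c\,\delta_1$ is a compactly supported complex Borel measure all of whose moments vanish. By the Weierstrass approximation theorem the polynomials are uniformly dense in $C(\supp{|\nu|})$, hence $\int \varphi\,\D\nu = 0$ for every $\varphi\in C(\supp{|\nu|})$, and the uniqueness part of the Riesz representation theorem then forces $\nu = 0$. Therefore $\mu = -2c\,\delta_1$.

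Finally, the hypothesis $\mu(\{1\})=0$ gives $-2c = \mu(\{1\}) = 0$, so $c=0$ and $\mu=0$. Substituting $\mu=0$ and $c=0$ back into $f\equiv 0$ leaves $a+bn=0$ for all $n\in\zbb_+$, and evaluating at $n=0$ and $n=1$ yields $a=b=0$, as desired. The two points needing genuine care are the passage of $\triangle^2$ under the integral sign and the moment-uniqueness step; the latter is the only real obstacle, since $\mu$ is a \emph{complex} measure and so Lemma~\ref{csmad} (stated for positive measures) does not apply directly. Instead one must run the underlying Weierstrass--Riesz argument on $\nu$, which is precisely where compactness of the support of $|\mu|$ is used.
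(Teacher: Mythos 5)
Your proof is correct and follows essentially the same route as the paper's: apply $\triangle^2$ (using \eqref{del2}) to reduce the hypothesis to the vanishing of all moments of $\mu+2c\delta_1$, invoke the Weierstrass theorem together with the uniqueness part of the Riesz representation theorem to conclude $\mu+2c\delta_1=0$, then evaluate at $\{1\}$ to get $c=0$ and hence $\mu=0$, $a=b=0$. Your remark that Lemma~\ref{csmad} cannot be applied directly to a complex measure, so one must run the Weierstrass--Riesz argument instead, is exactly the point the paper's proof implements.
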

   \begin{proof}
Define $\gammab\in \cbb^{\zbb_+}$ by $\gamma_n=a+bn+cn^2+
\int_{\rbb} Q_n(x) \D\mu(x)$ for $n\in\zbb_+$. It follows
from \eqref{del2} that
   \begin{align*}
0=(\triangle^2 \gammab)_n = \int_{K} x^n \D
(\mu+2c\delta_1)(x), \quad n\in \zbb_+,
   \end{align*}
where $K:=\supp{|\mu+2c\delta_1|}$ is a compact subset
of $\rbb$. This implies that
   \begin{align*}
\int_{K} p(x) \D (\mu+2c\delta_1)(x) =0, \quad p\in
\cbb[X].
   \end{align*}
Applying the Weierstrass theorem and the uniqueness
part in the Riesz Representation Theorem (see
\cite[Theorem~6.19]{Rud87}), we deduce that
$(\mu+2c\delta_1)(\varDelta) = 0$ for all $\varDelta
\in \borel{\rbb}$. Substituting $\varDelta=\{1\}$, we
get $c=0$, and consequently $\mu=0$. Clearly,
$a=\gamma_0=0$. Putting all this together gives $b=0$,
completing the proof.
   \end{proof}
Now, for the reader's convenience we state
explicitly the fundamental characterization of
CPD sequences. Recall that a Borel measure on a
Hausdorff topological space is said to be {\em
Radon} if it is finite on compact sets and inner
regular with respect to compact sets.
   \begin{thm}[\mbox{\cite[Theorem 6.2.6]{B-C-R}}] \label{BCR} A sequence
$\gammab=\{\gamma_n\}_{n=0}^{\infty} \subseteq
\rbb$ is CPD if and only if it has a
representation of the form
   \begin{align*}
\gamma_n = \gamma_0 + bn + c n^2 + \int_{\rbb
\setminus \{1\}} (x^n-1 - n (x-1)) \D\mu(x), \quad
n\in \zbb_+,
   \end{align*}
where $b\in \rbb$, $c\in \rbb_+$ and $\mu$ is a Radon
measure on $\rbb\setminus \{1\}$ such that
   \allowdisplaybreaks
   \begin{gather*}
\int_{0 < |x-1| < 1} (x-1)^2 \D \mu(x) < \infty,
   \\
\int_{|x-1| \Ge 1} |x|^n \D \mu(x) < \infty, \quad
n\in \zbb_+.
   \end{gather*}
   \end{thm}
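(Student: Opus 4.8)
The plan is to prove both implications directly, reducing the conditional positive definiteness of $\gammab$ to the ordinary positive definiteness of its second difference $\triangle^2\gammab$ and then invoking the classical Hamburger moment theorem. For the sufficiency, suppose $\gammab$ has the displayed representation and fix $\{\lambda_i\}_{i=0}^k\subseteq\cbb$ with $\sum_{i=0}^k\lambda_i=0$. Since $x^n-1-n(x-1)=(x-1)^2 Q_n(x)$ by \eqref{rnx-1}, the two integrability hypotheses (the $(x-1)^2$ control near $x=1$ and the moment control on $\{|x-1|\Ge 1\}$) make $\int|x^{i+j}-1-(i+j)(x-1)|\,\D|\mu|<\infty$, so I may interchange the finite sum in \eqref{virek} with the integral. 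The $\gamma_0$ contribution is $\gamma_0|\sum_i\lambda_i|^2=0$, the $bn$ contribution is $b[(\sum_i i\lambda_i)(\sum_j\bar\lambda_j)+(\sum_i\lambda_i)(\sum_j j\bar\lambda_j)]=0$, and the same cancellations annihilate the $-1$ and $-n(x-1)$ parts of the kernel; what remains is $2c\,|\sum_i i\lambda_i|^2+\int_{\rbb\setminus\{1\}}|\sum_i\lambda_i x^i|^2\,\D\mu(x)\Ge 0$, since $c\Ge 0$ and $\mu\Ge 0$. Hence $\gammab$ is CPD.

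For the necessity, I would first show that $\triangle^2\gammab$ is PD. Given finite sequences $\{n_i\}\subseteq\zbb_+$ and $\{\lambda_i\}\subseteq\cbb$, set $P(x)=\sum_i\lambda_i(x^{n_i+1}-x^{n_i})=\sum_p c_p x^p$; the coefficient family $\{c_p\}$ is zero-sum because $P(1)=0$. Expanding $(x^{n_i+1}-x^{n_i})(x^{n_j+1}-x^{n_j})$ shows $\sum_{p,q}\gamma_{p+q}c_p\bar c_q=\sum_{i,j}(\triangle^2\gammab)_{n_i+n_j}\lambda_i\bar\lambda_j$, and the left-hand side is nonnegative by the CPD property of $\gammab$ applied to the zero-sum family $\{c_p\}$. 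Thus $\triangle^2\gammab$ is PD, and by the Hamburger moment theorem (see \cite{B-C-R}) there is a positive Radon measure $\tilde\nu$ on $\rbb$, with finite moments of every order, such that $(\triangle^2\gammab)_n=\int_\rbb x^n\,\D\tilde\nu(x)$ for all $n\in\zbb_+$.

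It then remains to antidifference twice and identify the pieces. Since $(\triangle^2 Q_{(\cdot)}(x))_n=x^n$ and $Q_0=Q_1=0$ by \eqref{klaud} and \eqref{del2}, one has $Q_n(x)=\sum_{k=0}^{n-1}\sum_{l=0}^{k-1}x^l$, whence summing the moment relation twice gives $\gamma_n=\gamma_0+(\gamma_1-\gamma_0)n+\int_\rbb Q_n(x)\,\D\tilde\nu(x)$. Splitting $\tilde\nu=2c\,\delta_1+\nu_0$ with $\nu_0(\{1\})=0$ and $c:=\tfrac12\tilde\nu(\{1\})\Ge 0$, and using $Q_n(1)=\binom{n}{2}$, the atom contributes $c(n^2-n)$; setting $\D\mu(x)=(x-1)^{-2}\D\nu_0(x)$ on $\rbb\setminus\{1\}$ and $b:=\gamma_1-\gamma_0-c$ turns the rest into $\int_{\rbb\setminus\{1\}}(x^n-1-n(x-1))\,\D\mu(x)$ by \eqref{rnx-1}, giving exactly the claimed representation with $b\in\rbb$ and $c\in\rbb_+$. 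Finiteness of $\tilde\nu$ on bounded sets yields $\int_{0<|x-1|<1}(x-1)^2\D\mu=\int_{0<|x-1|<1}\D\nu_0<\infty$, while $(x-1)^2\Ge 1$ on $\{|x-1|\Ge 1\}$ together with the finite moments of $\tilde\nu$ yields $\int_{|x-1|\Ge 1}|x|^n\D\mu<\infty$. The hard part will be precisely this final bookkeeping: correctly peeling off the atom at $x=1$ to produce the genuine quadratic term $cn^2$ (and hence the sign constraint $c\Ge 0$), and verifying the two Radon conditions in the general, not-necessarily-compactly-supported, Hamburger representation.
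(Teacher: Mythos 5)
Your proof is correct. Note that there is nothing in the paper to compare it with: Theorem~\ref{BCR} is imported verbatim from \cite[Theorem~6.2.6]{B-C-R} and never proved there, so what you have produced is a self-contained derivation of the cited result whose only external input is Hamburger's theorem (\cite[Theorem~6.2.2]{B-C-R}, also taken as known in the paper). Both halves of your argument are sound: in the sufficiency direction the zero-sum cancellations do annihilate the $\gamma_0$, $bn$, $-1$ and $-n(x-1)$ contributions, leaving $2c\big|\sum_i i\lambda_i\big|^2+\int_{\rbb\setminus\{1\}}\big|\sum_i\lambda_i x^i\big|^2\D\mu\Ge 0$, and the interchange of the finite sum with the integral is legitimate because each kernel $x^n-1-n(x-1)=(x-1)^2Q_n(x)$ is $\mu$-integrable under the two stated hypotheses; in the necessity direction the identity $\sum_{p,q}\gamma_{p+q}c_p\bar c_q=\sum_{i,j}(\triangle^2\gammab)_{n_i+n_j}\lambda_i\bar\lambda_j$ for the coefficient family of $\sum_i\lambda_i(x^{n_i+1}-x^{n_i})$ is exactly right (the family is zero-sum since the polynomial vanishes at $1$), Hamburger supplies a representing measure $\tilde\nu$ that is finite ($\tilde\nu(\rbb)=(\triangle^2\gammab)_0$) with all moments finite, and your bookkeeping with the atom at $1$ and with $\D\mu=(x-1)^{-2}\D\nu_0$ verifies both Radon conditions. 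It is worth observing that your method is the same circle of ideas the paper deploys downstream of Theorem~\ref{BCR}, but run without circularity: Proposition~\ref{traj-pd} proves ``$\triangle^2\gammab$ Hamburger $\Rightarrow$ $\gammab$ CPD'' by precisely your double antidifferencing (written there via Newton's binomial formula), yet its converse is deduced from the BCR representation through \eqref{rca1} rather than proved directly, and both directions there are confined to sequences of exponential growth (as in Theorem~\ref{cpd-expon}, where the representing measure becomes compactly supported and unique). Your coefficient trick for ``CPD $\Rightarrow$ $\triangle^2\gammab$ PD'' closes that loop independently and with no growth assumption, which is exactly what a proof of the full theorem requires.
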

For our purpose, we need the following equivalent
variant of Theorem~ \ref{BCR}.
   \begin{thm}
\label{BCR-n} A sequence
$\gammab=\{\gamma_n\}_{n=0}^{\infty}$ of real
numbers is CPD if and only if it has a
representation of the form
   \begin{align} \label{abc1}
\gamma_n = \gamma_0 + bn + c n^2 + \int_{\rbb} Q_n(x)
\D\nu(x), \quad n\in \zbb_+,
   \end{align}
where $b\in \rbb$, $c\in \rbb_+$ and $\nu$ is a Borel
measure on $\rbb$ such that $\nu(\{1\})=0$ and
   \begin{gather}  \label{abc2}
\int_{\rbb} |x|^n \D \nu(x) < \infty, \quad n\in
\zbb_+.
   \end{gather}
   \end{thm}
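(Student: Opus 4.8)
The plan is to deduce Theorem~\ref{BCR-n} from Theorem~\ref{BCR} by a change of the representing measure governed by the algebraic identity \eqref{rnx-1}. That identity says precisely that $x^n - 1 - n(x-1) = Q_n(x)\,(x-1)^2$ for $x\in\rbb\setminus\{1\}$, so the only difference between the two representations is that the factor $(x-1)^2$ is moved from the integrand into the measure. Concretely, I expect the correspondence to be $\D\nu(x) = (x-1)^2\,\D\mu(x)$ on $\rbb\setminus\{1\}$ together with $\nu(\{1\})=0$, the parameters $b$ and $c$ being carried over unchanged. The whole argument then reduces to checking that this correspondence is a bijection between the admissible measures of Theorem~\ref{BCR} and those of Theorem~\ref{BCR-n}, i.e.\ that the integrability conditions match up in both directions.

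For the forward implication, I would start from a CPD sequence $\gammab$, invoke Theorem~\ref{BCR} to obtain $b\in\rbb$, $c\in\rbb_+$ and a Radon measure $\mu$ on $\rbb\setminus\{1\}$ with the stated properties, and define $\nu$ on $\rbb$ by $\D\nu(x)=(x-1)^2\,\D\mu(x)$ on $\rbb\setminus\{1\}$ and $\nu(\{1\})=0$. Applying \eqref{rnx-1} rewrites the integrand, so that $\int_{\rbb\setminus\{1\}}(x^n-1-n(x-1))\,\D\mu(x)=\int_{\rbb}Q_n(x)\,\D\nu(x)$ (the singleton $\{1\}$ contributing nothing, since $\nu(\{1\})=0$), which yields \eqref{abc1}. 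The remaining point is to verify \eqref{abc2}, and here I would split $\int_{\rbb}|x|^n\,\D\nu$ over the two regions $0<|x-1|<1$ and $|x-1|\Ge 1$. On the first region $|x|<2$, so $|x|^n(x-1)^2\Le 2^n(x-1)^2$, which is integrable against $\mu$ by the first hypothesis of Theorem~\ref{BCR}; on the second region $(x-1)^2\Le(|x|+1)^2$ bounds the integrand by a polynomial in $|x|$ of degree $n+2$, each of whose powers is integrable against $\mu$ by the second hypothesis.

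For the converse, assuming \eqref{abc1}--\eqref{abc2}, I would define $\mu$ on $\rbb\setminus\{1\}$ by $\D\mu(x)=(x-1)^{-2}\,\D\nu(x)$; this is a legitimate Radon measure because $\nu(\{1\})=0$ and $(x-1)^{-2}$ is continuous and bounded on each compact subset of $\rbb\setminus\{1\}$. Identity \eqref{rnx-1} again converts \eqref{abc1} into the form required by Theorem~\ref{BCR}, and the two integrability conditions on $\mu$ follow directly: $\int_{0<|x-1|<1}(x-1)^2\,\D\mu=\nu(\{0<|x-1|<1\})\Le\nu(\rbb)<\infty$ (finiteness of $\nu$ being \eqref{abc2} with $n=0$), while $\int_{|x-1|\Ge 1}|x|^n\,\D\mu\Le\int_{|x-1|\Ge 1}|x|^n\,\D\nu<\infty$ because $(x-1)^2\Ge 1$ there. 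Theorem~\ref{BCR} then certifies that $\gammab$ is CPD.

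The substantive part of the argument is entirely contained in these integrability checks: the weight $(x-1)^2$ degenerates exactly at $x=1$, and the content of the equivalence is that its vanishing there is precisely absorbed into (respectively released from) the measure, while at infinity the polynomial discrepancy between $(x-1)^2|x|^n$ and $|x|^n$ is harmless because all moments are assumed finite. I expect no genuine obstacle beyond organizing this two-region estimate cleanly; the algebra of passing between the integrands is immediate from \eqref{rnx-1}.
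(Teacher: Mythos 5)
Your proposal is correct and follows essentially the same route as the paper: both transfer the measure of Theorem~\ref{BCR} via the weight $(x-1)^2$ (setting $\D\nu=(x-1)^2\,\D\mu$ with $\nu(\{1\})=0$, and inversely $\D\mu=(x-1)^{-2}\,\D\nu$), using the identity \eqref{rnx-1} to pass between the integrands, with $b$ and $c$ unchanged. The only difference is that you spell out the two-region integrability estimates and the Radon property explicitly, which the paper compresses into a citation of Lemma~\ref{rnx} and \cite[Theorem~2.18]{Rud87}.
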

   \begin{proof}
To prove the ``only if'' part apply
Theorem~ \ref{BCR} and define the
finite Borel measure $\nu$ on $\rbb$ by
   \begin{align*}
\nu(\varDelta) = \int_{\varDelta \cap (\rbb \setminus
\{1\})} (x-1)^2 \D \mu(x), \quad \varDelta \in
\borel{\rbb}.
   \end{align*}
Then, by Lemma~ \ref{rnx}, conditions
\eqref{abc1} and \eqref{abc2} are satisfied
(with the same $b,c$). The converse implication
goes through by applying Theorem~ \ref{BCR} to
the Radon measure $\mu$ defined by
   \begin{align*}
\mu(\varDelta) = \int_{\varDelta} (x-1)^{-2} \D
\nu(x), \quad \varDelta \in \borel{\rbb\setminus
\{1\}}.
   \end{align*}
That the so-defined $\mu$ is a Radon measure follows
from \cite[Theorem~2.18]{Rud87}.
   \end{proof}
CPD sequences of (at most) exponential growth
can be characterized as follows (below we use
the convention \eqref{konw-1}).
   \begin{thm} \label{cpd-expon}
Let $\gammab=\{\gamma_n\}_{n=0}^{\infty}$ be a
sequence of real numbers. Then the following
conditions are equivalent{\em :}
   \begin{enumerate}
   \item[(i)] $\gammab$ is CPD and there exist $\alpha,\theta \in \rbb_+$ such
that
   \begin{align*}
|\gamma_n| \Le \alpha\, \theta^n, \quad n\in \zbb_+,
   \end{align*}
   \item[(ii)] $\gammab$ is CPD and $\limsup_{n\to \infty}|\gamma_n|^{1/n} <
\infty$,
   \item[(iii)] there exist  $b\in \rbb$,
$c\in \rbb_+$ and a finite compactly supported Borel
measure $\nu$ on $\rbb$ such that $\nu(\{1\})=0$ and
   \begin{align} \label{cdr4}
\gamma_n = \gamma_0 + bn + c n^2 + \int_{\rbb} Q_n(x)
\D\nu(x), \quad n\in \zbb_+.
   \end{align}
   \end{enumerate}
Moreover, if {\em (iii)} holds, then the triplet
$(b,c,\nu)$ is unique and
   \allowdisplaybreaks
   \begin{gather} \label{limsup3}
\limsup_{n\to \infty}|\gamma_n|^{1/n} =\inf\Big\{\theta \in
\rbb_+\colon \exists \alpha\in \rbb_+ \, \forall n\in
\zbb_+ \; |\gamma_n| \Le \alpha\, \theta^n\Big\},
   \\ \label{limsup2}
c>0 \implies \limsup_{n\to \infty}|\gamma_n|^{1/n} \Ge 1,
   \\ \label{limsup}
\supp{\nu} \subseteq \Big[-\limsup_{n\to
\infty}|\gamma_n|^{1/n}, \limsup_{n\to
\infty}|\gamma_n|^{1/n}\Big],
   \\  \label{limsup1.5}
\sup_{x \in \supp{\nu}}|x| \Ge 1 \implies \limsup_{n\to
\infty}|\gamma_n|^{1/n} = \sup_{x \in \supp{\nu}}|x|,
   \\ \label{limsup1.6}
\limsup_{n\to \infty}|\gamma_n|^{1/n} \Le \max\bigg\{1,
\sup_{x \in \supp{\nu}}|x|\bigg\}.
   \end{gather}
   \end{thm}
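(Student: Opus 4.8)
The plan is to establish the cycle of implications (i)$\Rightarrow$(ii)$\Rightarrow$(iii)$\Rightarrow$(i), and then to prove the ``moreover'' assertions under the standing assumption that (iii) holds. The implication (i)$\Rightarrow$(ii) is immediate, since $|\gamma_n|\Le\alpha\theta^n$ gives $|\gamma_n|^{1/n}\Le\alpha^{1/n}\theta$, whence $\limsup_{n\to\infty}|\gamma_n|^{1/n}\Le\theta<\infty$. For (iii)$\Rightarrow$(i), the representation \eqref{cdr4} is of the form in Theorem~\ref{BCR-n} (a finite compactly supported measure trivially satisfies \eqref{abc2}), so $\gammab$ is CPD; the required exponential bound then comes from the estimate on $Q_n$ described below (it is in fact \eqref{limsup1.6}). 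Thus the substance lies in (ii)$\Rightarrow$(iii).

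For (ii)$\Rightarrow$(iii) I would first invoke Theorem~\ref{BCR-n} to obtain $b\in\rbb$, $c\in\rbb_+$ and a Borel measure $\nu$ on $\rbb$ with $\nu(\{1\})=0$ satisfying \eqref{abc1}--\eqref{abc2}; what remains is to show that $\nu$ is finite and compactly supported. The key device is the second difference $\triangle^2$. Since $\triangle^2$ annihilates $\gamma_0+bn$ and sends $cn^2$ to the constant $2c$, while \eqref{del2} gives $(\triangle^2 Q_{(\cdot)}(x))_n=x^n$, one obtains
   \begin{align*}
(\triangle^2\gammab)_n = 2c + \int_{\rbb} x^n \D\nu(x) = \int_{\rbb} x^n \D(\nu+2c\delta_1)(x), \quad n\in\zbb_+.
   \end{align*}
Hence $\{(\triangle^2\gammab)_n\}_{n=0}^{\infty}$ is a Hamburger moment sequence with representing measure $\nu+2c\delta_1$. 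From $|\gamma_n|\Le\alpha\theta^n$ one gets $|(\triangle^2\gammab)_n|\Le\alpha(\theta+1)^2\theta^n$, so $\limsup_{n\to\infty}|(\triangle^2\gammab)_n|^{1/n}<\infty$. By Lemma~\ref{csmad} this moment sequence is then determinate, and its unique representing measure is compactly supported; since $\nu+2c\delta_1$ is a representing measure, determinacy forces it to coincide with that compactly supported one. Therefore $\nu+2c\delta_1$, hence $\nu$, is finite and compactly supported with $\nu(\{1\})=0$, which is exactly (iii).

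For the ``moreover'' part I assume (iii). Uniqueness of $(b,c,\nu)$ follows at once from Lemma~\ref{uniq}: the difference of two representations yields $0=\tilde b\,n+\tilde c\,n^2+\int_{\rbb} Q_n\,\D\tilde\mu$ with $\tilde\mu=\nu_1-\nu_2$ a compactly supported complex measure vanishing at $\{1\}$, whence $\tilde b=\tilde c=0$ and $\tilde\mu=0$. Formula \eqref{limsup3} is the standard identification of $\limsup|\gamma_n|^{1/n}$ with the infimal exponential growth rate. For \eqref{limsup1.6} I would use $Q_n(x)=\sum_{j=0}^{n-2}(n-j-1)x^j$ from \eqref{klaud} to get, with $R=\sup_{x\in\supp{\nu}}|x|$ and $\rho=\max\{1,R\}$, the bound $|Q_n(x)|\Le\frac{n(n-1)}{2}\rho^{n-2}$ for $|x|\Le R$; inserting this together with the polynomial terms into \eqref{cdr4} bounds $|\gamma_n|$ by a polynomial times $\rho^n$, so $\limsup|\gamma_n|^{1/n}\Le\rho$. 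For \eqref{limsup}, Lemma~\ref{csmad} gives $\supp{\nu+2c\delta_1}\subseteq[-\theta',\theta']$ with $\theta'=\limsup|(\triangle^2\gammab)_n|^{1/n}\Le\limsup|\gamma_n|^{1/n}$ (the last inequality by the growth estimate above and \eqref{limsup3}); since $\nu\Le\nu+2c\delta_1$, we get $\supp{\nu}\subseteq\supp{\nu+2c\delta_1}\subseteq[-\limsup|\gamma_n|^{1/n},\limsup|\gamma_n|^{1/n}]$. Then \eqref{limsup1.5} is immediate: \eqref{limsup} gives $\sup_{x\in\supp{\nu}}|x|\Le\limsup|\gamma_n|^{1/n}$, and when $R\Ge1$, \eqref{limsup1.6} gives the reverse inequality $\limsup|\gamma_n|^{1/n}\Le\max\{1,R\}=R$. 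Finally, \eqref{limsup2} follows by contraposition: if $\limsup|\gamma_n|^{1/n}<1$, then by \eqref{limsup} the measure $\nu$ is supported in a set bounded away from $1$, so $(x-1)^2$ is bounded below there, \eqref{rnx-1} yields $\int_{\rbb} Q_n\,\D\nu=O(n)$, and \eqref{cdr4} reduces to $\gamma_n=cn^2+O(n)$; if $c>0$ this forces polynomial growth of order two, i.e.\ $\limsup|\gamma_n|^{1/n}=1$, a contradiction.

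The main obstacle is the step in (ii)$\Rightarrow$(iii) that upgrades the a priori possibly infinite, non-compactly supported measure $\nu$ produced by Theorem~\ref{BCR-n} to a finite compactly supported one. The clean resolution is the $\triangle^2$ reduction to a genuine Hamburger moment sequence, where determinacy in Lemma~\ref{csmad} pins down $\nu+2c\delta_1$ as the unique, automatically compactly supported, representing measure; the only supporting bookkeeping is to check that the exponential growth rate of $\{(\triangle^2\gammab)_n\}$ does not exceed that of $\gammab$, which also feeds directly into the support estimate \eqref{limsup}.
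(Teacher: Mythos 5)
Your proposal is correct and follows essentially the same route as the paper: the crucial implication (ii)$\Rightarrow$(iii) rests on the identity $(\triangle^2\gammab)_n=\int_{\rbb} x^n\,\D(\nu+2c\delta_1)(x)$ combined with Lemma~\ref{csmad} (determinacy and compact support for Hamburger moment sequences of exponential growth), and the ``moreover'' part uses Lemma~\ref{uniq} for uniqueness and the elementary estimates on $Q_n$, exactly as in the paper's proof. The only deviations are cosmetic reorganizations: you prove \eqref{limsup1.6} first via the unified bound $|Q_n(x)|\Le\tfrac{n(n-1)}{2}\max\{1,R\}^{n-2}$ and then read off \eqref{limsup1.5}, whereas the paper argues in the reverse order, and you obtain \eqref{limsup2} by contraposition (growth $cn^2+O(n)$ forces $\limsup|\gamma_n|^{1/n}=1$) while the paper deduces it from the support bound on $\nu+2c\delta_1$, which carries the atom $2c\delta_1$.
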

   \begin{proof}
It is a matter of routine to show that
conditions (i) and (ii) are equivalent.

(ii)$\Rightarrow$(iii) By Theorem~ \ref{BCR-n},
there exist $b\in \rbb$, $c\in \rbb_+$ and a
finite Borel measure $\nu$ on $\rbb$ that
satisfy conditions \eqref{abc1} and \eqref{abc2}
and the equality $\nu(\{1\})=0$. It follows from
\eqref{del2} and \eqref{abc1} that
   \begin{align} \label{rca1}
(\triangle^2 \gammab)_n = \int_{\rbb} x^n \D
(\nu+2c\delta_1)(x), \quad n\in \zbb_+.
   \end{align}
Noting that
   \begin{align} \label{li-le}
\limsup_{n\to \infty}|(\triangle^2\gammab)_n|^{1/n}
\Le \limsup_{n\to \infty}|\gamma_n|^{1/n}
   \end{align}
and using Lemma~\ref{csmad}, we infer from
\eqref{rca1} that
   \begin{align} \label{rca2}
(\nu+2c\delta_1)\Big(\Big\{x\in \rbb\colon |x| >
\limsup_{n\to \infty}|\gamma_n|^{1/n}\Big\}\Big) = 0.
   \end{align}
This implies \eqref{limsup}, which gives (iii).

(iii)$\Rightarrow$(ii) The conditional
positive definiteness of $\gammab$
follows from Theorem~ \ref{BCR-n},
while the inequality $\limsup_{n\to
\infty}|\gamma_n|^{1/n} < \infty$ can
be deduced straightforwardly from
\eqref{klaud} and \eqref{cdr4}.

It remains to complete the proof of the
``moreover'' part. The uniqueness of the triplet
$(b,c,\nu)$ in (iii) follows from
Lemma~\ref{uniq}. Identity \eqref{limsup3} is a
well-known fact in analysis. Condition
\eqref{limsup2} can be deduced from
\eqref{rca2}. To prove \eqref{limsup1.5}, assume
that $R:=\sup_{x \in \supp{\nu}}|x| \Ge 1$ (then
$\supp{\nu} \neq \emptyset$). It is a matter of
routine to deduce from \eqref{klaud} and
\eqref{cdr4} that there exists a constant
$\alpha\in \rbb_+$ such that
   \begin{align*}
|\gamma_n| \Le \alpha \, n^2 R^n, \quad n\in\nbb.
   \end{align*}
This implies that $\limsup_{n\to
\infty}|\gamma_n|^{1/n} \Le R$. Combined with
\eqref{limsup}, this yields $\limsup_{n\to
\infty}|\gamma_n|^{1/n}=R$. Hence \eqref{limsup1.5}
holds. In view of \eqref{limsup1.5}, to prove
\eqref{limsup1.6}, it suffices to consider the case
when $\nu(\rbb\setminus [-1,1])=0$. Note that
   \begin{align*}
|Q_n(x)| \overset{\eqref{klaud}}\Le
\sum_{j=0}^{n-2} (n -j -1) |x|^j \Le
n^2, \quad x \in [-1,1], \, n\Ge 2.
   \end{align*}
Combined with \eqref{cdr4}, this implies that
$|\gamma_n| \Le \alpha \cdot n^2$ for all $n\in \nbb$
with $\alpha=|\gamma_0| + |b| + c + \nu(\rbb)$, so
$\limsup_{n\to \infty}|\gamma_n|^{1/n} \Le 1$. This
completes the proof.
   \end{proof}
   \begin{dfn} \label{deftryp}
If $\gammab=\{\gamma_n\}_{n=0}^{\infty}$ is a
CPD sequence such that
   \begin{align*}
\limsup_{n\to \infty}|\gamma_n|^{1/n} < \infty
   \end{align*}
and $b$, $c$ and $\nu$ are as in statement (iii)
of Theorem~ \ref{cpd-expon}, we call $(b,c,\nu)$
the {\em representing triplet} of $\gammab$, or
we simply say that $(b,c,\nu)$ {\em represents}
$\gammab$.
   \end{dfn}
The following example shows that the converse to
the implication~\eqref{limsup1.5} in
Theorem~\ref{cpd-expon} is not true in general.
   \begin{exa}
For $\theta \in (0,1),$ let
$\gammab=\{\gamma_n\}_{n=0}^{\infty}$ be the sequence
of real numbers defined by
   \begin{align*}
\gamma_n= \frac{\theta^n}{(\theta-1)^2}
\overset{\eqref{rnx-1}}= \frac{1}{(\theta-1)^2} +
\frac{n}{\theta-1} + Q_n(\theta), \quad n\in \zbb_+.
   \end{align*}
Then the sequence $\gammab$ is PD and thus CPD.
Its representing triplet $(b,c,\nu)$ takes the
form $b=\frac{1}{\theta-1}$, $c=0$ and
$\nu=\delta_{\theta}$. Moreover, we have
   \begin{align*}
\limsup_{n\to \infty}|\gamma_n|^{1/n} = \sup_{x \in
\supp{\nu}}|x| = \theta <1,
   \end{align*}
as required.
   \hfill $\diamondsuit$
   \end{exa}
Here are more examples of CPD sequences of
exponential growth.
   \begin{exa}
It is a matter of direct computation to see that
if $\mu$ is a finite Borel measure on $\rbb_+$,
then the sequence $\left\{\int_{[0,1)}
\frac{x^n-1}{1-x} \D
\mu(x)\right\}_{n=0}^{\infty}$ is CPD and
   \begin{align*}
\int_{[0,1)} \frac{x^n-1}{1-x} \D \mu(x)
\overset{\eqref{rnx-1}}= - n \mu([0,1)) +
\int_{\rbb_+} Q_n(x) \D\nu(x), \quad n \in \zbb_+,
   \end{align*}
where
   \begin{align*}
\nu(\varDelta) = \int_{\varDelta \cap [0,1)} (1-x)
\D\mu(x), \quad \varDelta \in \borel{\rbb_+}.
   \end{align*}
Similarly, if $\mu$ is a finite compactly
supported Borel measure on $\rbb_+$, then the
sequence $\left\{\int_{(1,\infty)}
\frac{x^n-1}{x-1} \D
\mu(x)\right\}_{n=0}^{\infty}$ is CPD and
   \begin{align*}
\int_{(1,\infty)} \frac{x^n-1}{x-1} \D \mu(x)
\overset{\eqref{rnx-1}}= n \mu((1,\infty)) +
\int_{\rbb_+} Q_n(x) \D\nu(x), \quad n \in \zbb_+,
   \end{align*}
where
   \begin{align*}
\nu(\varDelta) = \int_{\varDelta \cap (1,\infty)}
(x-1) \D\mu(x), \quad \varDelta \in \borel{\rbb_+}.
\tag*{$\diamondsuit$}
   \end{align*}
   \end{exa}
Yet another characterization of CPD sequences of
exponential growth is given below. Let us
mention that in view of
\cite[Theorem~4.6.11]{B-C-R} sequences
$\gammab=\{\gamma_n\}_{n=0}^{\infty} \subseteq
\rbb$ for which $\triangle^2\gammab$ is a
Hausdorff moment sequence coincide with
completely monotone sequences of order $2$
introduced in \cite{Cha-Sh}.
   \begin{pro}\label{traj-pd}
Let $\gammab=\{\gamma_n\}_{n=0}^{\infty}$ be a
sequence of real numbers such that $\limsup_{n\to
\infty}|\gamma_n|^{1/n} < \infty$. Then the following
conditions are equivalent{\em :}
   \begin{enumerate}
   \item[(i)] $\gammab$ is CPD
$($resp., $\gammab$ is CPD with the representing
triplet $(b,c,\nu)$ such that $\supp{\nu}
\subseteq \rbb_+$$)$,
   \item[(ii)] $\triangle^2\gammab$ is PD $($resp., $\triangle^2\gammab$ and
$\{(\triangle^2\gammab)_{n+1}\}_{n=0}^{\infty}$
are PD$)$,
   \item[(iii)] $\triangle^2\gammab$ is a Hamburger
moment sequence $($resp., $\triangle^2\gammab$ is a
Stieltjes moment sequence$)$.
   \end{enumerate}
Moreover, if $\gammab$ is CPD and has a
representing triplet $(b,c,\nu)$ such that
$\supp{\nu} \subseteq \rbb_+$, then the sequence
$\triangle\gammab$ is monotonically increasing.
   \end{pro}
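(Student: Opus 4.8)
The plan is to establish the equivalences by passing through the operator $\triangle^2$, exploiting the already-proven identity \eqref{del2}, which says $(\triangle^2 Q_{(\cdot)}(x))_n = x^n$, and the integral representation \eqref{cdr4} from Theorem~\ref{cpd-expon}. The key observation is that applying $\triangle^2$ to a CPD sequence of the form \eqref{cdr4} annihilates the affine-plus-quadratic part $\gamma_0 + bn + cn^2$ up to a point mass: since $\triangle^2(n^2) = 2$ is constant and $\triangle^2$ kills $\gamma_0 + bn$, one computes directly from \eqref{del2} that $(\triangle^2\gammab)_n = \int_{\rbb} x^n \, \D(\nu + 2c\delta_1)(x)$, exactly as in \eqref{rca1}. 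This single formula is the engine driving all three equivalences.

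First I would prove (i)$\Leftrightarrow$(iii). If $\gammab$ is CPD with $\limsup|\gamma_n|^{1/n}<\infty$, then by Theorem~\ref{cpd-expon} it has a representing triplet $(b,c,\nu)$ with $\nu$ finite, compactly supported and $c\Ge 0$, so the formula $(\triangle^2\gammab)_n = \int_{\rbb} x^n\, \D(\nu+2c\delta_1)(x)$ exhibits $\triangle^2\gammab$ as a Hamburger moment sequence with representing measure $\nu + 2c\delta_1$. Conversely, if $\triangle^2\gammab$ is a Hamburger moment sequence, then because $\limsup|\gamma_n|^{1/n}<\infty$ forces $\limsup|(\triangle^2\gammab)_n|^{1/n}<\infty$ (as in \eqref{li-le}), Lemma~\ref{csmad} gives a unique compactly supported representing measure $\rho$; I would set $c = \tfrac12\rho(\{1\})$ and let $\nu$ be the restriction of $\rho$ to $\rbb\setminus\{1\}$, then recover $\gammab$ in the form \eqref{cdr4} by "integrating" the moment formula twice, invoking Theorem~\ref{cpd-expon}(iii)$\Rightarrow$(i) to conclude CPD. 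For the parenthetical refinement, $\supp{\nu}\subseteq\rbb_+$ corresponds exactly to $\rho$ being supported in $\rbb_+$, i.e.\ to $\triangle^2\gammab$ being a \emph{Stieltjes} moment sequence.

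Next, (ii)$\Leftrightarrow$(iii) is immediate from the classical moment-problem facts already quoted: a sequence is a Hamburger moment sequence iff it is PD (the Hamburger theorem), and Theorem~\ref{Stiech} (Stieltjes) says a sequence is a Stieltjes moment sequence iff both it and its shift are PD. Here the compact-support hypothesis, which holds because $\limsup|(\triangle^2\gammab)_n|^{1/n}<\infty$, guarantees the moment sequences are determinate via Lemma~\ref{csmad}, so no solvability subtleties arise. Finally, for the "moreover" statement, when $\supp{\nu}\subseteq\rbb_+$ I would compute $(\triangle\gammab)_{n+1}-(\triangle\gammab)_n = (\triangle^2\gammab)_n = \int_{\rbb_+} x^n\,\D(\nu+2c\delta_1)(x) \Ge 0$, since the integrand is nonnegative on $\rbb_+$ and the measure is positive; hence $\triangle\gammab$ is monotonically increasing.

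The main obstacle I anticipate is the converse direction (iii)$\Rightarrow$(i), specifically the bookkeeping of the point mass at $1$: the representing measure $\rho$ of $\triangle^2\gammab$ carries a possibly nonzero atom at $x=1$, which must be cleanly separated into the quadratic coefficient $c=\tfrac12\rho(\{1\})$ and the measure $\nu=\rho|_{\rbb\setminus\{1\}}$ with $\nu(\{1\})=0$, after which one must verify that the doubly-summed reconstruction really reproduces $\gammab$ in the exact form \eqref{cdr4} with the correct $b$ determined by the first difference at $0$. This is precisely the inversion underlying Lemma~\ref{uniq} and Theorem~\ref{cpd-expon}, so the cleanest route is to reduce to those results rather than to redo the reconstruction by hand.
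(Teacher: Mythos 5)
Your proposal is correct and follows essentially the same route as the paper: the identity $(\triangle^2\gammab)_n=\int_\rbb x^n\,\D(\nu+2c\delta_1)(x)$ for one direction, Lemma~\ref{csmad} plus the splitting of the atom at $1$ into $c=\tfrac12\mu(\{1\})$ and $\nu=\mu|_{\rbb\setminus\{1\}}$ for the converse, the Hamburger and Stieltjes theorems for (ii)$\Leftrightarrow$(iii), and nonnegativity of $\triangle^2\gammab$ for the ``moreover'' part. The only difference is cosmetic: where you say ``integrate the moment formula twice,'' the paper carries out this reconstruction explicitly via Newton's binomial formula $\gamma_n=\sum_{k=0}^n\binom{n}{k}(\triangle^k\gammab)_0$, which is the step that actually puts $\gammab$ into the form \eqref{cdr4} before Theorem~\ref{cpd-expon} can be invoked.
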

   \begin{proof}
(i)$\Rightarrow$(iii) Apply \eqref{rca1} to both
versions.

(iii)$\Rightarrow$(i) Suppose that
$\triangle^2\gammab$ is a Hamburger moment
sequence. Let $\mu$ be a representing measure of
$\triangle^2 \gammab$. Using Lemma~\ref{csmad}
and \eqref{li-le} we deduce that $\mu$ is
compactly supported. Note that
   \begin{align} \label{by-sie}
(\triangle^k \gammab)_n = (\triangle^{k-2}
(\triangle^2\gammab))_n = \int_{\rbb}
(x-1)^{k-2}x^n \D \mu(x), \quad n\in \zbb_+, k
\Ge 2.
   \end{align}
Let $\nu$ be the finite compactly supported Borel
measure on $\rbb$ given by
   \begin{align} \label{duzo-duzo}
\nu(\varDelta) = \mu(\varDelta \setminus \{1\}), \quad
\varDelta \in \borel{\rbb}.
   \end{align}
Applying Newton's binomial formula to
$\triangle$ (see \cite[(2.2)]{J-J-S20}), we
obtain
   \allowdisplaybreaks
   \begin{align*}
\gamma_n & = \sum_{k=0}^n \binom{n}{k}
(\triangle^{k} \gammab)_0
      \\
& \hspace{-2.2ex}
\overset{\eqref{by-sie}}=\gamma_0+ n (\gamma_1 -
\gamma_0) + \int_{\rbb} \sum_{k=2}^n
\binom{n}{k} (x-1)^{k-2} \D \mu(x)
   \\
& = \gamma_0+ n (\gamma_1 - \gamma_0) +
\frac{n(n-1)}{2} \mu(\{1\}) + \int_{\rbb}
\frac{\sum_{k=2}^n \binom{n}{k}
(x-1)^k}{(x-1)^2} \D \nu(x)
   \\
& \hspace{-1.7ex}\overset{\eqref{rnx-1}}=
\gamma_0 + n \Big(\gamma_1 - \gamma_0 -
\frac{1}{2}\mu(\{1\})\Big) + \frac{n^2}{2}
\mu(\{1\}) + \int_{\rbb} Q_n(x) \D \nu(x), \quad
n\Ge 2.
   \end{align*}
This implies that condition (iii) of
Theorem~\ref{cpd-expon} holds with $\nu$ as in
\eqref{duzo-duzo} and the parameters $b$ and $c$
defined by
   \begin{align*}
b=\gamma_1 - \gamma_0 - \frac{1}{2}\mu(\{1\})
\quad \text{and} \quad c=\frac{1}{2} \mu(\{1\}).
   \end{align*}
Thus $\gammab$ is CPD with the representing
triplet $(b,c,\nu)$. Clearly, by
\eqref{duzo-duzo}, $\supp{\mu} \subseteq \rbb_+$
if and only if $\supp{\nu} \subseteq \rbb_+$.
All this together proves both versions of the
implication (iii)$\Rightarrow$(i).

(ii)$\Leftrightarrow$(iii) Use
\cite[Theorem~6.2.2]{B-C-R} (resp.,
Theorem~\ref{Stiech}).

Since $\triangle\gammab$ is monotonically increasing
if and only if $\triangle^2\gammab \Ge 0$, the
``moreover'' part follows from \eqref{by-sie} applied
to $k=2$ and \eqref{duzo-duzo}.
   \end{proof}
Now we give necessary and sufficient conditions
for a CPD sequence to have a polynomial growth
of degree at most $2$.
   \begin{pro} \label{grown}
Let $\gammab=\{\gamma_n\}_{n=0}^{\infty}$ be a
CPD sequence with the representing triplet
$(b,c,\nu).$ Then the following conditions are
equivalent{\em :}
   \begin{enumerate}
   \item[(i)] there exists $\alpha\in \rbb_+$ such that
   \begin{align} \label{Vang0-skal}
|\gamma_n| \Le \alpha \cdot n^2, \quad n\in \nbb,
   \end{align}
   \item[(ii)] $\limsup_{n\to \infty}|\gamma_n|^{1/n} \Le
1,$
   \item[(iii)] $\supp \nu \subseteq [-1,1].$
   \end{enumerate}
Moreover, {\em (iii)} implies \eqref{Vang0-skal} with
$\alpha=|\gamma_0|+|b|+c+\nu(\rbb)$.
   \end{pro}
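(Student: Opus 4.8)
The plan is to prove the cycle (iii)$\Rightarrow$(i)$\Rightarrow$(ii)$\Rightarrow$(iii), since the ``moreover'' clause already hands me the implication (iii)$\Rightarrow$\eqref{Vang0-skal}, and that same estimate will carry most of the work.

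First I would establish (iii)$\Rightarrow$(i), which is essentially the computation in the last lines of the proof of Theorem~\ref{cpd-expon}. Assuming $\supp{\nu} \subseteq [-1,1]$, I use the bound $|Q_n(x)| \Le \sum_{j=0}^{n-2}(n-j-1)|x|^j \Le n^2$ valid for $x \in [-1,1]$ and $n \Ge 2$, coming from \eqref{klaud}. Applying \eqref{cdr4} together with the triangle inequality for the integral, I obtain $|\gamma_n| \Le |\gamma_0| + |b|n + cn^2 + n^2 \nu(\rbb)$ for $n \Ge 2$. Since $n \Le n^2$ for $n \in \nbb$, this collapses to \eqref{Vang0-skal} with $\alpha = |\gamma_0|+|b|+c+\nu(\rbb)$, simultaneously verifying the ``moreover'' assertion. (For $n=1$ one checks the bound directly from $\gamma_1 = \gamma_0 + b$ since $Q_1 = 0$.)

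Next I would handle (i)$\Rightarrow$(ii), which is immediate: if $|\gamma_n| \Le \alpha n^2$ for all $n \in \nbb$, then $|\gamma_n|^{1/n} \Le \alpha^{1/n} n^{2/n}$, and since both $\alpha^{1/n} \to 1$ and $n^{2/n} \to 1$ as $n \to \infty$, taking $\limsup$ gives $\limsup_{n\to\infty}|\gamma_n|^{1/n} \Le 1$. The remaining implication (ii)$\Rightarrow$(iii) is where I would invoke the structure already recorded in Theorem~\ref{cpd-expon}. Condition \eqref{limsup} states that $\supp{\nu} \subseteq [-\limsup_n|\gamma_n|^{1/n}, \limsup_n|\gamma_n|^{1/n}]$; under the hypothesis $\limsup_n|\gamma_n|^{1/n} \Le 1$ of (ii), this interval is contained in $[-1,1]$, yielding $\supp{\nu} \subseteq [-1,1]$, which is exactly (iii).

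I do not anticipate a genuine obstacle here, as each link reduces to an elementary estimate or to a direct citation of \eqref{limsup} from the already-established Theorem~\ref{cpd-expon}; the only point demanding a little care is keeping the endpoint cases $n=0,1$ (where $Q_n = 0$) consistent with the uniform bound, and confirming that the representing triplet $(b,c,\nu)$ referenced in the statement is the one guaranteed unique by Theorem~\ref{cpd-expon}, so that \eqref{limsup} applies to precisely this $\nu$. Because the growth estimate in (i) is stated for $n \in \nbb$ rather than $n \in \zbb_+$, the $n=0$ term does not even need separate treatment, which streamlines the argument.
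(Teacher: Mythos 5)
Your proof is correct and takes essentially the same route as the paper, which likewise establishes the cycle (i)$\Rightarrow$(ii)$\Rightarrow$(iii)$\Rightarrow$(i) by declaring (i)$\Rightarrow$(ii) obvious, citing \eqref{limsup} for (ii)$\Rightarrow$(iii), and repeating the estimate from the proof of \eqref{limsup1.6} (with the same constant $\alpha=|\gamma_0|+|b|+c+\nu(\rbb)$) for (iii)$\Rightarrow$(i). The only blemish is your side remark that $\gamma_1=\gamma_0+b$: by \eqref{cdr4} one has $\gamma_1=\gamma_0+b+c$ (only the integral term vanishes, since $Q_1=0$), though the bound $|\gamma_1|\Le\alpha$ holds regardless.
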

   \begin{proof}
(i)$\Rightarrow$(ii) This implication is obvious.

(ii)$\Rightarrow$(iii) It suffices to apply
\eqref{limsup}.

(iii)$\Rightarrow$(i) Arguing as in the proof of
\eqref{limsup1.6}, one can verify that
inequality \eqref{Vang0-skal} holds with
$\alpha:=|\gamma_0|+|b|+c+\nu(\rbb)$, which
completes the proof.
   \end{proof}
The above lemma enables us to prove the following.
   \begin{pro} \label{ojoj1}
Let $p$ be a polynomial in one indeterminate with real
coefficients. Then the following conditions are
equivalent{\em :}
   \begin{enumerate}
   \item[(i)] the sequence $\{p(n)\}_{n=0}^{\infty}$
is CPD,
   \item[(ii)] either $\deg p \Le 1$ or $\deg p = 2$ and the leading
coefficient of $p$ is positive.
   \end{enumerate}
   \end{pro}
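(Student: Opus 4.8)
The plan is to translate the CPD condition for the polynomial sequence $\gamma_n := p(n)$ into a statement about its second difference, where the answer becomes transparent. Since $p$ has polynomial growth, $\limsup_{n\to\infty}|\gamma_n|^{1/n}\Le 1<\infty$, so the hypotheses of Proposition~\ref{grown} and Proposition~\ref{traj-pd} are automatically met. The key algebraic observation I would record first is that $\triangle^2$ sends the polynomial sequence $\{p(n)\}$ to $\{(\triangle^2 p)(n)\}$, where $\triangle^2 p = p(\cdot+2)-2p(\cdot+1)+p(\cdot)$ has degree $\deg p-2$; in particular $\triangle^2 p = 0$ when $\deg p\Le 1$, and $\triangle^2 p$ is the constant $2a$ when $\deg p = 2$ with leading coefficient $a$.

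For the implication (i)$\Rightarrow$(ii) I would first assume $\gammab$ is CPD and invoke Proposition~\ref{grown}: because $\limsup_{n\to\infty}|\gamma_n|^{1/n}\Le 1$, there is $\alpha\in\rbb_+$ with $|p(n)| = |\gamma_n|\Le \alpha\, n^2$ for all $n\in\nbb$. A real polynomial whose modulus is $O(n^2)$ along the positive integers cannot have degree exceeding $2$, so $\deg p\Le 2$. If $\deg p = 2$, then its leading coefficient $a$ is nonzero and $\triangle^2\gammab$ is the constant sequence $\{2a\}$; by Proposition~\ref{traj-pd} this sequence is PD, and a constant sequence is PD precisely when its value is nonnegative, whence $2a\Ge 0$ and therefore $a>0$. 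This gives (ii).

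The converse (ii)$\Rightarrow$(i) I would settle by the same reduction run backwards: when $\deg p\Le 1$ the sequence $\triangle^2\gammab$ is identically zero, and when $\deg p = 2$ with $a>0$ it is the constant sequence $\{2a\}$ with positive value; either way $\triangle^2\gammab$ is PD, so Proposition~\ref{traj-pd} yields that $\gammab$ is CPD.

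The only genuine point, and the part I expect to require the $\triangle^2$ reduction rather than a bare growth estimate, is fixing the sign of the leading coefficient in the borderline degree-$2$ case: a negative leading coefficient produces a second difference equal to a negative constant, which fails to be PD, and this is exactly what Proposition~\ref{traj-pd} converts into failure of CPD-ness. Everything else is a routine degree bound coming from Proposition~\ref{grown}. An alternative would be to feed $\{p(n)\}$ directly into the representing-triplet representation of Theorem~\ref{cpd-expon}, but that forces one to control the growth of $\int Q_n\,\D\nu$ near $x = 1$, which the passage to $\triangle^2$ neatly sidesteps via identity~\eqref{del2}.
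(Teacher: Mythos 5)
Your proposal is correct, and its first half is exactly the paper's argument: the paper also deduces $\deg p \Le 2$ from Proposition~\ref{grown} applied to the CPD sequence $\{p(n)\}_{n=0}^{\infty}$, using $\limsup_{n\to\infty}|p(n)|^{1/n}\Le 1$. Where you diverge is in the finish. The paper disposes of the remaining points (the sign of the leading coefficient when $\deg p=2$, and the whole converse) with unstated ``straightforward computations,'' which amount to the elementary identity, valid for $p=aX^2+bX+c$ and any $\lambda_0,\dots,\lambda_k\in\cbb$ with $\sum_{i=0}^k\lambda_i=0$,
\begin{align*}
\sum_{i,j=0}^k p(i+j)\,\lambda_i\bar\lambda_j \;=\; 2a\,\Big|\sum_{i=0}^k i\lambda_i\Big|^2,
\end{align*}
so that CPD-ness of $\{p(n)\}_{n=0}^{\infty}$ for $\deg p\Le 2$ is equivalent to $a\Ge 0$. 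You instead route both the sign argument and the converse through Proposition~\ref{traj-pd}: since $\triangle^2\{p(n)\}_{n=0}^{\infty}$ is the constant sequence $\{2a\}$ (or the zero sequence when $\deg p\Le 1$), CPD-ness is equivalent to positive definiteness of that constant sequence, i.e.\ to $2a\Ge 0$; this is valid, as the growth hypothesis of Proposition~\ref{traj-pd} is met. The trade-off: your route is conceptually uniform, with every step filtered through the single equivalence ``$\gammab$ CPD $\iff$ $\triangle^2\gammab$ PD,'' and it cleanly isolates why a negative leading coefficient fails; the paper's implicit computation is more elementary and self-contained, needing nothing beyond the definition of conditional positive definiteness, whereas Proposition~\ref{traj-pd} rests on the representing-triplet machinery of Theorem~\ref{cpd-expon}. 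Both are complete proofs of the same length in practice.
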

   \begin{proof}
Suppose that the sequence
$\{p(n)\}_{n=0}^{\infty}$ is CPD. Since
   \begin{align*}
\limsup_{n\to \infty}|p(n)|^{1/n} \Le 1,
   \end{align*}
we infer from Proposition~\ref{grown} that there
exists $\alpha\in \rbb_+$ such that $|p(n)| \Le
\alpha \cdot n^2$ for all $n\in \nbb$. As a
consequence, $\deg p \Le 2$. Straightforward
computations complete the proof.
   \end{proof}
PD sequences of exponential growth can be
characterized by means of parameters describing
conditional positive definiteness given in
Theorem~ \ref{cpd-expon}(iii).
   \begin{thm} \label{dyszcz3}
Let $\gammab=\{\gamma_n\}_{n=0}^{\infty}$ be a
sequence of real numbers such that
$\limsup_{n\to \infty}|\gamma_n|^{1/n} <
\infty$. Then the following conditions are
equivalent\/\footnote{If the inequality in (iii)
holds, then by the Cauchy-Schwarz inequality,
\mbox{$\frac{1}{x-1}\in L^1(\nu).$}}{\em :}
   \begin{enumerate}
   \item[(i)] $\gammab$ is PD,
   \item[(ii)] $\gammab$ is a Hamburger moment sequence,
   \item[(iii)] $\gammab$ is CPD,
$\int_{\rbb} \frac{1}{(x-1)^2} \D \nu(x) \Le
\gamma_0$, $b=\int_{\rbb} \frac{1}{x-1} \D
\nu(x)$ and $c=0$, where $(b,c,\nu)$ is a
representing triplet of $\gammab$.
   \end{enumerate}
Moreover, if {\em (iii)} holds, then $\gammab$
is a determinate Hamburger moment sequence, its
unique representing measure $\mu$ is compactly
supported, and the following identities~hold{\em
:}
   \allowdisplaybreaks
   \begin{align} \label{zeg1}
\mu(\varDelta) & =\int_{\varDelta}
\frac{1}{(x-1)^2} \D \nu(x) + \Big(\gamma_0 -
\int_{\rbb} \frac{1}{(x-1)^2} \D \nu(x)\Big)
\delta_1(\varDelta), \;\; \varDelta \in
\borel{\rbb},
   \\ \label{zeg3}
b & = \int_{\rbb} (x-1) \D \mu(x),
   \\ \label{zeg2}
\nu(\varDelta) & = \int_{\varDelta} (x-1)^2 \D
\mu(x), \quad \varDelta \in \borel{\rbb}.
   \end{align}
   \end{thm}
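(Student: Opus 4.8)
The plan is to prove Theorem~\ref{dyszcz3} by establishing the cycle of implications (i)$\Rightarrow$(ii)$\Rightarrow$(iii)$\Rightarrow$(i), treating the ``moreover'' part in the course of proving (iii)$\Rightarrow$(i). The equivalence (i)$\Leftrightarrow$(ii) should come almost for free: since $\limsup_{n\to\infty}|\gamma_n|^{1/n}<\infty$, any Hamburger moment sequence here is automatically determinate with compactly supported representing measure by Lemma~\ref{csmad}, and conversely a PD sequence of this growth is a Hamburger moment sequence. More precisely, $\gammab$ is PD iff the quadratic forms in \eqref{virek} are nonnegative, which by the solution of the Hamburger moment problem (applied to a sequence whose growth guarantees a compactly supported, hence determinate, representing measure) is equivalent to $\gammab$ being a Hamburger moment sequence. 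I would cite the moment-problem machinery underlying Lemma~\ref{csmad} to close this equivalence quickly.

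The substantive content is the equivalence with (iii), and here the bridge is the representing triplet $(b,c,\nu)$ from Theorem~\ref{cpd-expon}. Assume (ii), so $\gamma_n=\int_{\rbb} x^n \,\D\mu(x)$ for a compactly supported measure $\mu$ (Lemma~\ref{csmad}). The idea is to \emph{derive} the representing triplet directly from $\mu$. Applying $\triangle^2$ gives $(\triangle^2\gammab)_n=\int_{\rbb}(x-1)^2 x^n\,\D\mu(x)$, and comparing with \eqref{rca1} forces $\nu+2c\delta_1$ to be $(x-1)^2\,\D\mu(x)$; since the latter measure annihilates $\{1\}$, the point mass at $1$ must vanish, giving $c=0$ and $\nu(\varDelta)=\int_{\varDelta}(x-1)^2\,\D\mu(x)$, which is \eqref{zeg2}. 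This identity is integrable against $(x-1)^{-2}$ precisely because it reconstitutes a finite part of $\mu$; the footnote's Cauchy--Schwarz remark then secures $\frac{1}{x-1}\in L^1(\nu)$. Plugging $\D\mu=(x-1)^{-2}\D\nu$ (away from $1$) into $\gamma_0=\int\D\mu$, $\gamma_1-\gamma_0=\int(x-1)\D\mu=b+\text{(contribution from }\{1\})$, and matching against \eqref{abc1} pins down $b=\int_{\rbb}\frac{1}{x-1}\,\D\nu$ and the inequality $\int_{\rbb}\frac{1}{(x-1)^2}\,\D\nu=\mu(\rbb\setminus\{1\})\Le\mu(\rbb)=\gamma_0$. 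Inverting this relation yields \eqref{zeg1}: $\mu$ must return the mass $(x-1)^{-2}\D\nu$ off $\{1\}$ plus the leftover mass $\gamma_0-\int\frac{1}{(x-1)^2}\D\nu$ at the atom $1$ (which is nonnegative exactly by the (iii) inequality), and \eqref{zeg3} is then just the identity $b=\int(x-1)\D\mu$ read back through \eqref{zeg1}.

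For (iii)$\Rightarrow$(i), I would run the construction in reverse: given a CPD $\gammab$ with triplet $(b,c,\nu)$ satisfying $c=0$, the summability $\frac{1}{(x-1)^2}\in L^1(\nu)$, the inequality $\int\frac{1}{(x-1)^2}\D\nu\Le\gamma_0$, and $b=\int\frac{1}{x-1}\D\nu$, \emph{define} the candidate measure $\mu$ by \eqref{zeg1}. Its nonnegativity as a measure is immediate off $\{1\}$ and at $\{1\}$ follows from the displayed inequality, so $\mu$ is a genuine finite positive Borel measure, and compact support is inherited from $\supp\nu$ via \eqref{limsup}. The remaining task is to verify $\gamma_n=\int x^n\,\D\mu$ for all $n$, i.e.\ that $\mu$ represents $\gammab$; substituting \eqref{zeg1} into $\int x^n\D\mu$, expanding $x^n=1+n(x-1)+(x-1)^2 Q_n(x)$ (which is \eqref{rnx-1}), and using the constraints on $b$, $c=0$, and the defining formula \eqref{abc1}/\eqref{cdr4} reproduces $\gamma_0+bn+\int Q_n\,\D\nu=\gamma_n$ term by term. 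Once $\mu$ represents $\gammab$, positive definiteness of $\gammab$ is automatic, giving (i); determinacy and compact support of $\mu$ are then restatements of Lemma~\ref{csmad}, completing the ``moreover'' part.

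The main obstacle I anticipate is \emph{bookkeeping the atom at $1$} consistently across the three identities. The whole theorem hinges on correctly tracking how mass of $\mu$ at the point $1$ is invisible to $\nu$ (since $\nu(\{1\})=0$) but shows up both in the value of $b$ and in the slack of the inequality $\int\frac{1}{(x-1)^2}\D\nu\Le\gamma_0$. I would therefore be careful to split every integral against $\mu$ into its restriction to $\rbb\setminus\{1\}$, where $\D\mu=(x-1)^{-2}\D\nu$, and its point mass at $1$, and to confirm that the expressions $\int\frac{1}{x-1}\D\nu$ and $\int\frac{1}{(x-1)^2}\D\nu$ are finite. The finiteness of the first follows from finiteness of the second by Cauchy--Schwarz (the footnote), and the finiteness of the second is equivalent to $\mu(\{1\})\Ge 0$ being well-defined; getting these integrability checks in the right logical order, rather than assuming them, is the one place where the argument could go wrong.
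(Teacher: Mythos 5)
Your proposal is correct and follows essentially the same route as the paper's proof: the equivalence (i)$\Leftrightarrow$(ii) via the Hamburger moment machinery, then (ii)$\Rightarrow$(iii) by applying $\triangle^2$, invoking determinacy of compactly supported measures (Lemma~\ref{csmad}) to identify $\nu+2c\delta_1$ with $(x-1)^2\,\D\mu$, hence $c=0$ and \eqref{zeg2}, and finally (iii)$\Rightarrow$(i) by defining $\mu$ through \eqref{zeg1} and verifying $\gamma_n=\int x^n\,\D\mu$ via the expansion $x^n=1+n(x-1)+(x-1)^2Q_n(x)$. Your bookkeeping of the atom at $1$, the derivation of $b$ by matching $\gamma_1-\gamma_0$ (slightly more direct than the paper's computation, which runs the full identity \eqref{gamm} and then Rudin's change-of-density theorem), and the ordering of the integrability checks all agree with the paper's argument.
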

   \begin{proof}
The equivalence (i)$\Leftrightarrow$(ii) follows from
\cite[Theorem~ 6.2.2]{B-C-R}.

(ii)$\Rightarrow$(iii) Clearly, $\gammab$ is
CPD. Denote by $(b,c,\nu)$ the representing
triplet of $\gammab$. Let $\mu$ be a
representing measure of $\gammab$, that is
   \begin{align} \label{csmad2}
\gamma_n = \int_{\rbb} x^n \D \mu(x), \quad n\in
\zbb_+.
   \end{align}
By Lemma~\ref{csmad}, $\gammab$ is determinate and
$\mu$ is compactly supported. Note that
   \begin{align*}
\int_{\rbb} x^n(x-1)^2 \D\mu(x)
\overset{\eqref{csmad2}}= (\triangle^2 \gammab)_n
\overset{\eqref{rca1}} = \int_{\rbb} x^n \D
(\nu+2c\delta_1)(x), \quad n\in \zbb_+.
   \end{align*}
Since the measure $\nu+2c\delta_1$ is compactly
supported, we infer from Lemma~\ref{csmad} that
   \begin{align} \label{csmad3}
\int_{\varDelta} (x-1)^2 \D \mu(x) =
(\nu+2c\delta_1)(\varDelta), \quad \varDelta \in
\borel{\rbb}.
   \end{align}
Substituting $\varDelta=\{1\}$ into \eqref{csmad3}, we
deduce that $c=0$. Combined with \eqref{csmad3}, this
implies \eqref{zeg2}. As a consequence of \eqref{zeg2}
and $\nu(\{1\})=0$, we have
   \begin{align} \label{zeg1+}
\mu(\varDelta) & =\int_{\varDelta} \frac{1}{(x-1)^2}
\D \nu(x) + \mu(\{1\}) \delta_1(\varDelta), \quad
\varDelta \in \borel{\rbb}.
   \end{align}
Since $1$ is a common root of the polynomials $X^n -1
- n (X-1)$, where $n\in \zbb_+$, and $\nu(\{1\})=0$,
it follows from Lemma~\ref{rnx} that
   \allowdisplaybreaks
   \begin{align} \notag
\gamma_n &\overset{\eqref{cdr4}}= \gamma_0 + b n +
\int_{\rbb} \frac{x^n-1 - n (x-1)}{(x-1)^2} \D \nu(x)
   \\ \notag
& \overset{\eqref{zeg2}} = \gamma_0 + b n +
\int_{\rbb} \Big(x^n-1 - n (x-1)\Big)\D \mu(x)
   \\ \notag
& \hspace{2.2ex}= (\gamma_0 - \mu(\rbb)) +
\Big(b-\int_{\rbb}(x-1)\D \mu(x)\Big) n + \int_{\rbb}
x^n \D\mu(x)
      \\ \label{gamm}
&\overset{\eqref{csmad2}}= (\gamma_0 - \mu(\rbb)) +
\Big(b-\int_{\rbb}(x-1)\D \mu(x)\Big) n + \gamma_n,
\quad n\in \zbb_+.
   \end{align}
Hence, we have
   \begin{gather} \label{zeg4}
\gamma_0 = \mu(\rbb) \overset{\eqref{zeg1+}}=
\int_{\rbb} \frac{1}{(x-1)^2} \D \nu(x) + \mu(\{1\})
\Ge \int_{\rbb} \frac{1}{(x-1)^2} \D \nu(x),
   \end{gather}
and $b=\int_{\rbb}(x-1)\D \mu(x),$ which yields
\eqref{zeg3} and the inequality in (iii). Using
\eqref{zeg3}, \eqref{zeg1+} and
\cite[Theorem~1.29]{Rud87}, we deduce that $b =
\int_{\rbb} \frac{1}{x-1} \D \nu(x)$.
Summarizing, we have proved that (iii) holds. It
follows from \eqref{zeg4} that
   \begin{align*}
\mu(\{1\}) = \gamma_0 - \int_{\rbb} \frac{1}{(x-1)^2}
\D \nu(x).
   \end{align*}
Combined with \eqref{zeg1+}, this implies
\eqref{zeg1}. This also justifies the ``moreover''
part.

(iii)$\Rightarrow$(ii) It follows from the inequality
in (iii) that the formula
   \begin{align} \label{zim1}
\mu(\varDelta) =\int_{\varDelta} \frac{1}{(x-1)^2} \D
\nu(x) + \bigg(\gamma_0 - \int_{\rbb}
\frac{1}{(x-1)^2} \D \nu(x)\bigg) \delta_1(\varDelta),
\quad \varDelta \in \borel{\rbb},
   \end{align}
defines a finite compactly supported Borel measure
$\mu$ on $\rbb$. Arguing as in the first three lines
of \eqref{gamm} and using \eqref{zim1} instead of
\eqref{zeg2}, we verify that \eqref{csmad2} is
satisfied. This completes the proof.
   \end{proof}
In view of the Schur product theorem (see
\cite[p.\ ~14]{sch} or \cite[Theorem
7.5.3]{Hor-Joh}), the product of two PD
sequences is PD; this is no longer true for CPD
sequences, e.g., the powers
$\{n^{2k}\}_{n=0}^{\infty}$, $k=2,3, \ldots$, of
the CPD sequence $\{n^2\}_{n=0}^{\infty}$ are
not CPD (see Proposition~\ref{ojoj1}). As a
consequence, if
$\gammab=\{\gamma_n\}_{n=0}^{\infty}$ is a PD
sequence, then the product sequence
$\{\xi_n\gamma_n\}_{n=0}^{\infty}$ is CPD for
every PD sequence $\{\xi_n\}_{n=0}^{\infty}$.
Below, we show that the converse implication is
true for sequences $\gammab$ of exponential
growth. What is more, the above equivalence
remains true if the class of all PD sequences
$\{\xi_n\}_{n=0}^{\infty}$ is reduced
drastically to the class of the sequences of the
form $\{\theta^n\}_{n=0}^{\infty}$, where
$\theta \in \rbb$.
   \begin{thm}\label{Gyeon}
Suppose that $\{\gamma_n\}_{n=0}^{\infty}$ is a
sequence of real numbers such that
$\limsup_{n\to\infty} |\gamma_n|^{1/n} < \infty$. Then
the following conditions are equivalent{\em :}
   \begin{enumerate}
   \item[(i)] the sequence $\{\gamma_n\}_{n=0}^{\infty}$ is PD,
   \item[(ii)] the sequence  $\{\theta^n \gamma_n\}_{n=0}^{\infty}$ is CPD for all $\theta \in \rbb$,
   \item[(iii)] zero is an accumulation point
of the set of all $\theta \in \rbb\setminus
\{0\}$ for which the sequence $\{\theta^n
\gamma_n\}_{n=0}^{\infty}$ is CPD,
   \item[(iv)] there exists $\theta \in \rbb\setminus \{0\}$
such that $|\theta| \cdot \limsup_{n\to\infty}
|\gamma_n|^{1/n}< 1$ and the sequence
$\{\theta^n \gamma_n\}_{n=0}^{\infty}$ is CPD.
   \end{enumerate}
   \end{thm}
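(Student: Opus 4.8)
The plan is to prove the cyclic chain of implications (i)$\Rightarrow$(ii)$\Rightarrow$(iii)$\Rightarrow$(iv)$\Rightarrow$(i), with essentially all the content concentrated in the final step. For (i)$\Rightarrow$(ii) I would use that for every $\theta\in\rbb$ the geometric sequence $\{\theta^n\}_{n=0}^{\infty}$ is the moment sequence of $\delta_\theta$, hence PD; since the entrywise product of two PD sequences is PD (Schur product theorem, as recalled above), the product $\{\theta^n\gamma_n\}_{n=0}^{\infty}$ is PD, and every PD sequence is CPD. The implication (ii)$\Rightarrow$(iii) is immediate: under (ii) the set of admissible $\theta$ is all of $\rbb\setminus\{0\}$, for which $0$ is trivially an accumulation point. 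For (iii)$\Rightarrow$(iv), writing $L:=\limsup_{n}|\gamma_n|^{1/n}$ (finite by hypothesis), I would select from the cluster of admissible $\theta$ accumulating at $0$ one $\theta\neq 0$ with $|\theta|<1/L$ (any admissible $\theta$ if $L=0$); then $|\theta| L<1$, as required.

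The core is (iv)$\Rightarrow$(i). Put $\xi_n:=\theta^n\gamma_n$, so that $\{\xi_n\}$ is CPD with $\limsup_n|\xi_n|^{1/n}=|\theta| L<1$. Applying Theorem~\ref{cpd-expon} I obtain the representing triplet $(b,c,\nu)$ with $\xi_n=\xi_0+bn+cn^2+\int_{\rbb}Q_n\,\D\nu$; the strict bound $|\theta|L<1$ forces $c=0$ by \eqref{limsup2} and $\supp{\nu}\subseteq[-|\theta|L,|\theta|L]\subseteq(-1,1)$ by \eqref{limsup}. The decisive gain from the \emph{strict} inequality is that $\supp{\nu}$ is bounded away from $1$, so that $\tfrac{1}{x-1}$ and $\tfrac{1}{(x-1)^2}$ lie in $L^1(\nu)$. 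Using the closed form \eqref{rnx-1} of $Q_n$ on $\supp{\nu}$ I would then rewrite
\[
\xi_n = A + Bn + \int_{\rbb} x^n \,\D\rho(x), \qquad A:=\xi_0-\int_{\rbb} \tfrac{\D\nu(x)}{(x-1)^2}, \quad B:=b-\int_{\rbb}\tfrac{\D\nu(x)}{x-1},
\]
where $\D\rho:=\tfrac{1}{(x-1)^2}\D\nu$ is a finite positive measure with $\supp{\rho}=\supp{\nu}\subseteq[-|\theta|L,|\theta|L]$, and $A,B\in\rbb$.

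Now I would extract positive definiteness from the growth hypothesis. Since $\supp{\rho}\subseteq[-|\theta|L,|\theta|L]$, the moment term satisfies $\big|\int_{\rbb} x^n\,\D\rho\big|\le\rho(\rbb)\,(|\theta|L)^n$, so its $n$-th root limsup is $<1$; combined with $\limsup_n|\xi_n|^{1/n}<1$ this yields $\limsup_n|A+Bn|^{1/n}<1$. But a nonzero affine function $n\mapsto A+Bn$ has $n$-th root limsup equal to $1$, whence necessarily $A=B=0$. Therefore $\xi_n=\int_{\rbb} x^n\,\D\rho(x)$ is a Hamburger moment sequence, and consequently $\gamma_n=\theta^{-n}\xi_n=\int_{\rbb}(x/\theta)^n\,\D\rho(x)$ is the moment sequence of the pushforward of $\rho$ under $x\mapsto x/\theta$. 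Being a Hamburger moment sequence, $\gammab$ is PD (equivalently, invoke Theorem~\ref{dyszcz3}(ii)$\Rightarrow$(i)), which is (i).

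The step I expect to carry the real idea — and thus the main obstacle — is the passage through Theorem~\ref{cpd-expon} in (iv)$\Rightarrow$(i): it is precisely the strict inequality $|\theta| L<1$ that simultaneously annihilates the quadratic coefficient $c$, pushes $\supp{\nu}$ strictly inside $(-1,1)$ (guaranteeing the integrability of $\tfrac{1}{x-1}$ and $\tfrac{1}{(x-1)^2}$ that makes the rewriting legitimate), and — through the elementary observation that a nonzero polynomial in $n$ has $n$-th root limsup $1$ — forces the affine remainder $A+Bn$ to vanish, turning $\{\xi_n\}$ into a genuine moment sequence. By contrast, the integrability bookkeeping for $\rho$, the measure-transport argument for the pushforward, and the Schur-product step in (i)$\Rightarrow$(ii) are all routine.
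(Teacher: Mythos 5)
Your proposal is correct, and its skeleton coincides with the paper's: the cyclic chain with the Schur product theorem for (i)$\Rightarrow$(ii), trivial middle implications, and all real work in (iv)$\Rightarrow$(i) via the representing triplet of Theorem~\ref{cpd-expon}, with $\supp{\nu}$ pushed strictly inside $(-1,1)$ by the hypothesis $|\theta|\cdot\limsup_n|\gamma_n|^{1/n}<1$. The difference is the mechanism of the final step. The paper normalizes to a CPD sequence with $\limsup_{n\to\infty}|\gamma_n|^{1/n}<1$, notes $\gamma_n\to 0$ and hence $(\triangle^j\gammab)_n\to 0$ for all $j$, and applies Lebesgue's dominated convergence theorem three times (to $\triangle^2\gammab$, $\triangle\gammab$ and $\gammab$) to obtain $c=0$, $b=\int\frac{1}{x-1}\D\nu$ and $\gamma_0=\int\frac{1}{(x-1)^2}\D\nu$, at which point Theorem~\ref{dyszcz3}(iii)$\Rightarrow$(i) delivers positive definiteness. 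You instead get $c=0$ directly from \eqref{limsup2}, absorb the integral term into the finite measure $\D\rho=\frac{1}{(x-1)^2}\D\nu$, and annihilate the affine remainder $A+Bn$ by the elementary observation that a nonzero affine polynomial in $n$ has $n$-th root limsup equal to $1$, whereas your remainder has limsup strictly less than $1$; this exhibits the representing measure of $\{\theta^n\gamma_n\}_{n=0}^{\infty}$ explicitly, and the pushforward under $x\mapsto x/\theta$ then represents $\gammab$ itself. In effect you verify exactly the same identities as the paper (your $A=B=0$ is the paper's pair of conditions, with equality in the first), but by a growth/degree argument rather than by limit-taking, which trades three dominated-convergence passages for one line; a side benefit is that you make explicit the return from $\{\theta^n\gamma_n\}_{n=0}^{\infty}$ to $\{\gamma_n\}_{n=0}^{\infty}$ (via the pushforward, equivalently a Schur product with the PD sequence $\{\theta^{-n}\}_{n=0}^{\infty}$), a point the paper leaves implicit in its ``replacing \dots if necessary'' reduction, while the paper's route through Theorem~\ref{dyszcz3} additionally records the structural formulas relating $(b,c,\nu)$ to the representing measure, which it reuses elsewhere.
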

   \begin{proof}
The implication (i)$\Rightarrow$(ii) is a direct
consequence of the Schur product theorem. The implications
(ii)$\Rightarrow$(iii) and (iii)$\Rightarrow$(iv) are
obvious.

(iv)$\Rightarrow$(i) Replacing
$\{\gamma_n\}_{n=0}^{\infty}$ by
$\{\theta^n\gamma_n\}_{n=0}^{\infty}$ if
necessary, we can assume that
$\{\gamma_n\}_{n=0}^{\infty}$ is CPD and
   \begin{align*}
r:=\limsup_{n\to\infty} |\gamma_n|^{1/n} < 1.
   \end{align*}
Then $\lim_{n\to\infty} \gamma_n=0$ and consequently
   \begin{align} \label{ditn}
\lim_{n\to\infty} (\triangle^j\gammab)_n=0, \quad j\in
\zbb_+,
   \end{align}
where $\gammab:=\{\gamma_n\}_{n=0}^{\infty}$. Let
$(b,c,\nu)$ be the representing triplet of $\gammab$. It
follows from \eqref{limsup} that $\supp{\nu} \subseteq
[-r,r]$. Thus, by \eqref{rca1}, we have
   \begin{align*}
(\triangle^2\gammab)_n = 2c + \int_{[-r,r]} x^n \D
\nu(x), \quad n\in \zbb_+.
   \end{align*}
Using \eqref{ditn} for $j=2$ and Lebesgue's dominated
convergence theorem, we deduce that $c=0$. In view of
\eqref{del1} and \eqref{cdr4}, we get
   \begin{align}  \label{dziadzia}
(\triangle\gammab)_n = b + \int_{[-r,r]}
\frac{1-x^n}{1-x} \D \nu(x), \quad n\in \zbb_+.
   \end{align}
Since $r<1$, we see that $\frac{1}{(1-x)^j} \in
L^{\infty}(\nu) \subseteq L^1(\nu)$ for all $j\in
\zbb_+$. Hence, it follows from \eqref{ditn} for
$j=1$, \eqref{dziadzia} and Lebesgue's dominated
convergence theorem that $b=\int_{[-r,r]}
\frac{1}{x-1} \D \nu(x)$. According to \eqref{rnx-1}
and \eqref{cdr4}, we have \allowdisplaybreaks
   \begin{align*}
\gamma_n & = \gamma_0 + \int_{[-r,r]}
\bigg(\frac{n}{x-1} + \frac{x^n-1 - n
(x-1)}{(x-1)^2}\bigg) \D\nu(x)
   \\
& = \gamma_0 + \int_{[-r,r]} \frac{x^n-1}{(x-1)^2}
\D\nu(x), \quad \quad n\in \zbb_+.
   \end{align*}
Using \eqref{ditn} for $j=0$ and Lebesgue's
dominated convergence theorem, we conclude that
$\int_{[-r,r]} \frac{1}{(x-1)^2} \D\nu(x) =
\gamma_0$. Applying Theorem~\ref{dyszcz3} shows
that $\gammab$ is PD. This completes the proof.
   \end{proof}
   \subsection{\label{Sec2.3}Additional constraints}
In this subsection we characterize CPD sequences
$\gammab=\{\gamma_n\}_{n=0}^{\infty}$ of
exponential growth for which the sequence of
consecutive differences $\triangle \gammab$ is
either convergent (see
Theorem~\ref{boundiff-scalar}) or bounded from
above plus some additional constraints (see
Theorem~\ref{boundiff-scalar2}). As a
consequence, under slightly stronger hypotheses
than those of Theorem~\ref{boundiff-scalar2}, we
show that CPD sequences $\gammab$ of exponential
growth with $\lim_{n\to \infty} (\triangle
\gammab)_n = 0$ are PD (see
Corollary~\ref{pd2cpd}).

   We begin by proving a simple lemma on backward
growth estimates for powers of the difference
transformation $\triangle$.
   \begin{lem} \label{backest}
Let $\gammab=\{\gamma_n\}_{n=0}^{\infty}$ be a sequence of
real $($resp., complex$)$ numbers and $k\in \nbb$ be such
that
   \begin{align} \label{supk-a}
\sup_{n\in \nbb} (\triangle^k\gammab)_n < \infty \quad
\bigg(\text{resp., }\sup_{n\in \nbb}
|(\triangle^k\gammab)_n| < \infty\bigg).
   \end{align}
Then
   \begin{align} \label{supk}
\sup_{n\in \nbb} \frac{(\triangle^j \gammab)_n}{n^{k-j}} <
\infty \quad \bigg(\text{resp., }\sup_{n\in \nbb}
\frac{|(\triangle^j \gammab)_n|}{n^{k-j}} < \infty\bigg),
\quad j=0, \ldots, k.
   \end{align}
   \end{lem}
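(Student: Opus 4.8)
The plan is to prove the claim by a backward (reverse) induction on $j$, descending from $j=k$ down to $j=0$. The hypothesis \eqref{supk-a} is exactly the statement \eqref{supk} at the top level $j=k$ (where $n^{k-j}=n^0=1$), so that serves as the base case. The inductive step will be to assume that $\sup_{n\in \nbb} \frac{(\triangle^{j+1}\gammab)_n}{n^{k-j-1}} < \infty$ (resp.\ with absolute values) for some $0 \Le j \Le k-1$, and deduce the corresponding bound for $\triangle^j \gammab$ with exponent $k-j$.

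**The core estimate.**

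To carry out the step I would reconstruct $(\triangle^j\gammab)_n$ from $(\triangle^j\gammab)_1$ by telescoping: for $n\Ge 1$ one has
   \begin{align*}
(\triangle^j \gammab)_n = (\triangle^j \gammab)_1 + \sum_{m=1}^{n-1} (\triangle^{j+1}\gammab)_m.
   \end{align*}
By the inductive hypothesis there is a constant $C$ with $(\triangle^{j+1}\gammab)_m \Le C\, m^{k-j-1}$ for all $m\in\nbb$ (in the real case; $|(\triangle^{j+1}\gammab)_m| \Le C\, m^{k-j-1}$ in the complex case). Summing and using the elementary estimate $\sum_{m=1}^{n-1} m^{k-j-1} \Le n^{k-j}$ (or a comparable polynomial bound, valid since $k-j-1 \Ge 0$), one gets $(\triangle^j\gammab)_n \Le |(\triangle^j\gammab)_1| + C\, n^{k-j}$, which after absorbing the constant term into the $n^{k-j}$ bound (for $n\Ge 1$) yields $\sup_{n\in \nbb} \frac{(\triangle^j\gammab)_n}{n^{k-j}} < \infty$. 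The complex case is identical with the triangle inequality applied to the sum.

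**Where the subtlety lies.**

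The one point demanding a little care is the real (non-absolute-value) case: there the hypothesis bounds $(\triangle^{j+1}\gammab)_m$ only from \emph{above}, not in modulus, so the telescoping argument must be arranged to propagate a one-sided bound upward through the summation. This works because summing an upper bound still produces an upper bound, so the telescope $\sum_{m=1}^{n-1}(\triangle^{j+1}\gammab)_m \Le C\sum_{m=1}^{n-1} m^{k-j-1}$ goes through verbatim and only ever uses the upper estimate. Thus the real and complex versions really do run in parallel, and the only thing to check is that the polynomial-sum inequality $\sum_{m=1}^{n-1} m^{k-j-1} \Le n^{k-j}$ is chosen with the correct exponent so that the induction closes. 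I expect this bookkeeping of exponents — making sure the power drops by exactly one at each descent so that after $k-j$ steps we land on $n^{k-j}$ — to be the main (minor) obstacle; the analytic content is entirely contained in the trivial summation estimate.
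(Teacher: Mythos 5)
Your proof is correct and takes essentially the same approach as the paper: backward induction on $j$, the one-step telescoping identity, and the elementary polynomial-sum estimate, with the one-sided (upper) bound propagated through the summation in the real case. The only cosmetic difference is that you start the telescope at index $1$ rather than $0$, which sidesteps the $m=0$ term that the paper instead absorbs into its constant $\eta$.
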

   \begin{proof}
Because of the similarity of proofs, we
concentrate on the real case. We use the
backward induction on $j$. By the first
inequality in \eqref{supk-a}, the first
inequality in \eqref{supk} holds for $j=k$. If
the first inequality in \eqref{supk} holds for a
fixed $j\in \{1,\ldots,k\}$, there exists
$\eta\in \rbb_+$ such that
   \allowdisplaybreaks
   \begin{align} \notag
(\triangle^{j-1}\gammab)_n =
(\triangle^{j-1}\gammab)_0 + \sum_{m=0}^{n-1}
(\triangle^{j}\gammab)_{m} & \Le
(\triangle^{j-1}\gammab)_0 + \eta \sum_{m=0}^{n-1}
(m+1)^{k-j}
   \\  \label{te-skop}
& \Le (\triangle^{j-1}\gammab)_0 + \eta \, n^{k-j+1},
\quad n\in \nbb.
   \end{align}
Hence the first inequality in \eqref{supk} holds for
$j-1$ in place of $j$.
   \end{proof}
Next, we characterize CPD sequences $\gammab$
for which the sequence $\triangle\gammab$ is
convergent.
   \begin{thm} \label{boundiff-scalar}
Let $\gammab=\{\gamma_n\}_{n=0}^{\infty}$ be a
sequence of real numbers. Then the following
statements are equivalent\/\footnote{\label{stop-a1}By
Lemma~\ref{backest} applied to $k=1$ and $j=0$,
\eqref{adycto} implies that $\limsup_{n\to
\infty}|\gamma_n|^{1/n} \Le 1$.}{\em :}
   \begin{enumerate}
   \item[(i)] $\gammab$ is CPD
and
   \begin{align} \label{adycto}
\text{the sequence $\triangle\gammab$ is convergent in
$\rbb$,}
   \end{align}
   \item[(ii)]  there exist a finite Borel measure $\nu$ on
$\rbb$ and $d\in \rbb$ such that
   \begin{enumerate}
   \item[(ii-a)] $\nu(\rbb\setminus (-1,1))=0$,
   \item[(ii-b)] $\frac{1}{1-x}\in L^1(\nu)$,
   \item[(ii-c)] $\gamma_n = \gamma_0 +  n d  -  \int_{(-1,1)}
\frac{1-x^n}{(1-x)^2} \D \nu(x)$ for all $n\in \zbb_+$.
   \end{enumerate}
   \end{enumerate}
Moreover, the following statements are satisfied{\em :}
   \begin{enumerate}
   \item[(iii)] if {\em (i)} holds
and $(b,c,\nu)$ represents $\gammab$, then $c=0$,
$\frac{1}{1-x}\in L^1(\nu)$ and the pair $(d,\nu)$
with $d=b+\int_{(-1,1)} \frac{1}{1-x} \D \nu(x)$ is a
unique pair satisfying {\em (ii)},
   \item[(iv)] if {\em (ii)} holds, then $d=\lim_{n\to
\infty}(\triangle{\gammab})_n$ and $(b,0,\nu)$
represents $\gammab$ with $b=d-\int_{(-1,1)}
\frac{1}{1-x} \D \nu(x)$.
   \end{enumerate}
   \end{thm}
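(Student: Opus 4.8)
The plan is to prove the implications (i)$\Rightarrow$(iii)$\Rightarrow$(ii)$\Rightarrow$(iv)$\Rightarrow$(i), which simultaneously yields the equivalence (i)$\Leftrightarrow$(ii) and both ``moreover'' statements (iii) and (iv). The implications (iii)$\Rightarrow$(ii) and (iv)$\Rightarrow$(i) are immediate: (iii) exhibits a pair $(d,\nu)$ realizing (ii-a)--(ii-c), while (iv) produces a representing triplet (hence CPD) together with the convergence of $\triangle\gammab$. Thus the real content lies in (i)$\Rightarrow$(iii) and (ii)$\Rightarrow$(iv).

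For (i)$\Rightarrow$(iii) I would first invoke the footnote observation: since $\triangle\gammab$ converges it is bounded, so Lemma~\ref{backest} (with $k=1$, $j=0$) gives $\limsup_{n}|\gamma_n|^{1/n}\Le 1$. Hence $\gammab$ is a CPD sequence of exponential growth, Theorem~\ref{cpd-expon} supplies its representing triplet $(b,c,\nu)$, and \eqref{limsup} forces $\supp{\nu}\subseteq[-1,1]$. The key step is to kill the quadratic coefficient and the endpoint atoms: by \eqref{rca1}, $(\triangle^2\gammab)_n=\int_{\rbb}x^n\,\D(\nu+2c\delta_1)(x)$, and convergence of $\triangle\gammab$ gives $(\triangle^2\gammab)_n\to0$. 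Evaluating along even $n$ and applying dominated convergence on the finite measure $\nu+2c\delta_1$ (supported in $[-1,1]$), the integrals tend to $(\nu+2c\delta_1)(\{-1,1\})=\nu(\{-1\})+2c$; since the limit is $0$ and both terms are nonnegative, I obtain $c=0$ and $\nu(\{-1\})=0$, whence (ii-a). With $c=0$ and $\nu$ living on $(-1,1)$, formula \eqref{del1} gives $(\triangle\gammab)_n=b+\int_{(-1,1)}\frac{1-x^n}{1-x}\,\D\nu(x)$; monotone convergence on $[0,1)$ together with bounded convergence on $(-1,0]$ shows this tends to $b+\int_{(-1,1)}\frac{1}{1-x}\,\D\nu(x)$, and finiteness of $\lim_n(\triangle\gammab)_n$ forces this integral to be finite, giving (ii-b) and $d=b+\int_{(-1,1)}\frac{1}{1-x}\,\D\nu(x)$. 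Finally, rewriting $Q_n(x)=\frac{n}{1-x}-\frac{1-x^n}{(1-x)^2}$ via \eqref{rnx-1} and splitting the integral in \eqref{cdr4} (each piece being $\nu$-integrable once $\frac{1}{1-x}\in L^1(\nu)$) produces exactly (ii-c).

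For (ii)$\Rightarrow$(iv) I would run this computation in reverse: setting $b:=d-\int_{(-1,1)}\frac{1}{1-x}\,\D\nu$ and recombining $\frac{n}{1-x}$ and $\frac{1-x^n}{(1-x)^2}$ into $Q_n$, (ii-c) becomes the representation \eqref{cdr4} with triplet $(b,0,\nu)$; since $\nu$ is finite, compactly supported and $\nu(\{1\})=0$, Theorem~\ref{cpd-expon} certifies that $\gammab$ is CPD and that $(b,0,\nu)$ is its unique representing triplet. Differencing (ii-c) gives $(\triangle\gammab)_n=d-\int_{(-1,1)}\frac{x^n}{1-x}\,\D\nu(x)$, and dominated convergence (dominant $\frac{1}{1-x}\in L^1(\nu)$) yields $(\triangle\gammab)_n\to d$, establishing (iv) and, in particular, (i). The uniqueness asserted in (iii) then follows at once: any pair satisfying (ii) forces $d=\lim_n(\triangle\gammab)_n$ by (iv), while its measure coincides with the representing measure of $\gammab$, which is unique by Theorem~\ref{cpd-expon}.

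The hard part is the passage near the endpoints $\pm1$, where the kernels $\frac{1-x^n}{1-x}$ and $\frac{1-x^n}{(1-x)^2}$ either blow up (at $1$) or oscillate (at $-1$). The even-moment argument is what cleanly removes the quadratic term $c$ and the potential atom of $\nu$ at $-1$, and the split of $(-1,1)$ at the origin into a monotone-convergence regime on $[0,1)$ and a bounded-convergence regime on $(-1,0]$ is what legitimizes exchanging limit and integral to secure $\frac{1}{1-x}\in L^1(\nu)$; everything else is bookkeeping with the identities for $Q_n$.
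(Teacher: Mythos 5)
Your proposal is correct and follows essentially the same route as the paper's proof: the growth bound via Lemma~\ref{backest} and \eqref{limsup}, killing $c$ and the atom at $-1$ from $(\triangle^2\gammab)_n\to 0$, splitting the first-difference integral at $0$ (monotone convergence on $[0,1)$, dominated convergence on $(-1,0)$) to secure $\frac{1}{1-x}\in L^1(\nu)$, and the $Q_n$ rewriting via \eqref{rnx-1} in both directions. The only cosmetic difference is that you detect $c=0$ and $\nu(\{-1\})=0$ by passing to even $n$ (where $x^{2m}\to\chi_{\{-1,1\}}$), whereas the paper isolates the term $2c+(-1)^n\nu(\{-1\})$ and uses its oscillation; both are valid.
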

   \begin{proof}
(i)$\Rightarrow$(ii) It follows from
footnote~\ref{stop-a1} and \eqref{limsup} that
$\supp{\nu} \subseteq [-1,1]$, where $(b,c,\nu)$
represents $\gammab$. By using \eqref{rca1}, we
get
   \begin{align} \label{backinDa}
(\triangle^2\gammab)_n = 2c + (-1)^n \nu(\{-1\}) +
\int_{(-1,1)} x^n \D \nu(x), \quad n\in \zbb_+.
   \end{align}
It follows from \eqref{adycto} that $\lim_{n\to
\infty} \triangle^2\gammab = 0$. By Lebesgue's
dominated convergence theorem, the third term on
the right-hand side of the equality in
\eqref{backinDa} converges to $0$. This together
with \eqref{backinDa} implies that $c=0$ and
$\nu(\{-1\})=0$, which gives \mbox{(ii-a)}. Now,
using \eqref{del1} and \eqref{cdr4}, we obtain
   \begin{align} \label{backinDa2}
(\triangle \gammab)_n = b + \int_{(-1,0)} \frac{1-x^n}{1-x}
\D \nu(x) + \int_{[0,1)} \frac{1-x^n}{1-x} \D \nu(x), \quad
n\in \zbb_+.
   \end{align}
Applying Lebesgue's dominated and monotone
convergence theorems to the second and the third
terms on the right-hand side of the equality in
\eqref{backinDa2} respectively, we infer from
\eqref{adycto} that \mbox{(ii-b)} holds and
   \begin{align} \label{adycto2}
b=d-\int_{(-1,1)} \frac{1}{1-x} \D \nu(x),
   \end{align}
where $d:=\lim_{n\to \infty}(\triangle{\gammab})_n$. Using
again \eqref{cdr4}, we get
   \allowdisplaybreaks
   \begin{multline}  \label{adycto3}
\gamma_n = \gamma_0 + bn + \int_{(-1,1)} Q_n(x) \D\nu(x)
   \\
\overset{(*)} = \gamma_0 + n d + \int_{(-1,1)}
\bigg(\frac{n}{x-1} + \frac{x^n-1 - n
(x-1)}{(x-1)^2} \bigg) \D\nu(x)
   \\
= \gamma_0 + n d - \int_{(-1,1)}
\frac{1-x^n}{(1-x)^2} \D \nu(x), \quad n\in
\zbb_+,
   \end{multline}
where $(*)$ follows from \eqref{adycto2} and
\eqref{rnx-1}. This implies (ii) and (iii) except for
the uniqueness of $(d,\nu)$.

(ii)$\Rightarrow$(i) Using (ii-b) and (ii-c) and arguing as
in \eqref{adycto3}, we see that
   \begin{align} \label{adycto5}
\gamma_n = \gamma_0 + bn + \int_{(-1,1)} Q_n(x) \D\nu(x),
\quad n\in \zbb_+,
   \end{align}
where $b$ is as in \eqref{adycto2}. Hence, by
Theorem \ref{cpd-expon} the sequence $\gammab$
is CPD and $\limsup_{n\to
\infty}|\gamma_n|^{1/n} < \infty$. By
\mbox{(ii-a)}, $(b,0,\nu)$ represents $\gammab$.
It follows from \eqref{del1} and \eqref{adycto5}
that
   \begin{align} \label{adycto6}
(\triangle \gammab)_n = b + \int_{(-1,1)} \frac{1-x^n}{1-x}
\D \nu(x), \quad n\in \zbb_+.
   \end{align}
Using \mbox{(ii-b)} and applying Lebesgue's dominated
convergence theorem to \eqref{adycto6}, we see that
\eqref{adycto} holds and $d=\lim_{n\to
\infty}(\triangle{\gammab})_n$. Summarizing, we have proved
that (i) and (iv) hold. As a consequence, this yields the
uniqueness of $(d,\nu)$ in (iii), which completes the
proof.
   \end{proof}
Under some additional constraints, CPD sequences
$\gammab$ for which the sequence
$\triangle\gammab$ is bounded from above can be
characterized as follows.
   \begin{thm} \label{boundiff-scalar2}
Let $\gammab=\{\gamma_n\}_{n=0}^{\infty}$ be a sequence of
real numbers such that
   \begin{align} \label{zal-scalar3}
\inf_{n\in\zbb_+} \gamma_n > -\infty.
   \end{align}
Then the following statements are
equivalent\/\footnote{\label{stop-a2} Applying
Lemma~\ref{backest} to $k=1$ and $j=0$, we verify that
\eqref{zal-scalar3} and \eqref{zal-scalar2} imply that
$\limsup_{n\to \infty}|\gamma_n|^{1/n} \Le 1$.}{\em :}
   \begin{enumerate}
   \item[(i)] $\gammab$ is CPD and
   \allowdisplaybreaks
   \begin{gather} \label{zal-scalar2}
\sup_{n\in \zbb_+} (\triangle\gammab)_n < \infty,
   \\ \label{zal-scalar4}
\supp{\nu} \subseteq \rbb_+,
 \end{gather}
where $\nu$ is the measure appearing in the representing
triplet of $\gammab$,
   \item[(ii)] there exist a finite Borel measure $\nu$ on $\rbb$
and $d\in \rbb$ such that
   \begin{enumerate}
   \item[(ii-a)] $\nu(\rbb\setminus [0,1))=0$,
   \item[(ii-b)] $\frac{1}{1-x}\in L^1(\nu)$,
   \item[(ii-c)] $\gamma_n = \gamma_0 +  n d   -  \int_{[0,1)}
\frac{1-x^n}{(1-x)^2} \D \nu(x)$ for all $n\in \zbb_+$.
   \end{enumerate}
   \end{enumerate}
Moreover, the following statements are satisfied{\em :}
   \begin{enumerate}
   \item[(iii)] if {\em (i)} holds and $(b,c,\nu)$
represents $\gammab$, then $c=0$, $\frac{1}{1-x}\in
L^1(\nu)$ and the pair $(d,\nu)$ with
$d=b+\int_{[0,1)} \frac{1}{1-x} \D \nu(x)$ is a unique
pair satisfying {\em (ii)},
   \item[(iv)] if {\em (ii)} holds, then
$d\Ge 0$, the sequence $\triangle\gammab$ is
monotonically increasing to $d$ and $(b,0,\nu)$
represents $\gammab$ with $b=d-\int_{[0,1)}
\frac{1}{1-x} \D \nu(x)$.
   \end{enumerate}
   \end{thm}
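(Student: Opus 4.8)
The plan is to run the argument in close parallel with the proof of Theorem~\ref{boundiff-scalar}, the single new phenomenon being that the one-sided bound \eqref{zal-scalar2} combines with the support condition \eqref{zal-scalar4} to \emph{upgrade} to genuine convergence of $\triangle\gammab$; once that is secured, the remaining bookkeeping is essentially identical, with all integrals now living on $[0,1)$ so that only the monotone convergence theorem is needed.

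For (i)$\Rightarrow$(ii) I would first invoke footnote~\ref{stop-a2}: Lemma~\ref{backest} with $k=1$, $j=0$ turns \eqref{zal-scalar2} into $\sup_n \gamma_n/n < \infty$, and together with the standing hypothesis \eqref{zal-scalar3} this gives $\limsup_{n\to\infty}|\gamma_n|^{1/n}\Le1$. Hence \eqref{limsup} yields $\supp{\nu}\subseteq[-1,1]$; intersecting with \eqref{zal-scalar4} and recalling $\nu(\{1\})=0$ gives $\nu(\rbb\setminus[0,1))=0$, which is (ii-a). The decisive step is that, since $\supp{\nu}\subseteq\rbb_+$, the ``moreover'' part of Proposition~\ref{traj-pd} makes $\triangle\gammab$ monotonically increasing; being bounded above by \eqref{zal-scalar2} it converges to $d:=\sup_n(\triangle\gammab)_n$, so $\triangle^2\gammab\to0$. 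Feeding this into \eqref{rca1}, which now reads $(\triangle^2\gammab)_n=2c+\int_{[0,1)}x^n\D\nu$, and letting $n\to\infty$ (the integral vanishes by dominated convergence) forces $c=0$. With $c=0$, equations \eqref{del1} and \eqref{cdr4} give $(\triangle\gammab)_n=b+\int_{[0,1)}\frac{1-x^n}{1-x}\D\nu$; since $\frac{1-x^n}{1-x}\uparrow\frac{1}{1-x}$ on $[0,1)$, the monotone convergence theorem converts the finite limit $d$ of the left side into (ii-b), $\frac{1}{1-x}\in L^1(\nu)$, together with $d=b+\int_{[0,1)}\frac1{1-x}\D\nu$. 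Substituting $b=d-\int_{[0,1)}\frac{1}{1-x}\D\nu$ back into \eqref{cdr4} and simplifying via \eqref{rnx-1} (exactly as in \eqref{adycto3}, but over $[0,1)$) yields (ii-c). This simultaneously establishes (iii) except for the uniqueness claim.

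For (ii)$\Rightarrow$(i) and (iv) I would reverse the computation: from (ii-c), using (ii-b) and \eqref{rnx-1}, rewrite $\gamma_n=\gamma_0+bn+\int_{[0,1)}Q_n(x)\D\nu$ with $b=d-\int_{[0,1)}\frac{1}{1-x}\D\nu$, so Theorem~\ref{cpd-expon} shows $\gammab$ is CPD with representing triplet $(b,0,\nu)$, and (ii-a) gives \eqref{zal-scalar4}. Applying \eqref{del1} and monotone convergence to $(\triangle\gammab)_n=b+\int_{[0,1)}\frac{1-x^n}{1-x}\D\nu$ shows $\triangle\gammab$ increases to $d$, giving \eqref{zal-scalar2}. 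To get $d\Ge0$ I would argue by contradiction: if $d<0$ then $(\triangle\gammab)_n\Le d<0$ for all $n$, whence $\gamma_n\Le\gamma_0+nd\to-\infty$, contradicting \eqref{zal-scalar3}. Finally, uniqueness of $(d,\nu)$ in (iii) follows because (iv) identifies $d=\lim_n(\triangle\gammab)_n$ intrinsically from $\gammab$, while $\nu$ is pinned down by the uniqueness of the representing triplet in Theorem~\ref{cpd-expon}.

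The only genuinely delicate point I anticipate is the passage from ``bounded above'' to ``convergent'': it is precisely here that \eqref{zal-scalar4} is indispensable, for the monotonicity of $\triangle\gammab$ supplied by Proposition~\ref{traj-pd} is what promotes an upper bound into a limit. Without it, neither the clean deduction $c=0$ nor the monotone-convergence identification of $d$ would go through. Everything else reduces to the same integral manipulations on $[0,1)$ already carried out in Theorem~\ref{boundiff-scalar}.
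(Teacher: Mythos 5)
Your proposal is correct and follows essentially the same route as the paper's proof: footnote~\ref{stop-a2} together with \eqref{limsup} and \eqref{zal-scalar4} give \mbox{(ii-a)}, Lebesgue's monotone convergence theorem on $[0,1)$ yields \mbox{(ii-b)} and the identity $b=d-\int_{[0,1)}\frac{1}{1-x}\,\D\nu(x)$, the manipulation of \eqref{adycto3} gives \mbox{(ii-c)}, the telescopic estimate combined with \eqref{zal-scalar3} gives $d\Ge 0$, and the converse direction and uniqueness reduce to the computations of Theorem~\ref{boundiff-scalar} and the uniqueness of representing triplets. The only (harmless) divergence is your derivation of $c=0$: you first invoke the ``moreover'' part of Proposition~\ref{traj-pd} to get monotonicity of $\triangle\gammab$, hence convergence and $\triangle^2\gammab\to 0$, and then pass to the limit in \eqref{rca1}, whereas the paper reads off $c=0$, $\frac{1}{1-x}\in L^1(\nu)$ and the convergence of $\triangle\gammab$ simultaneously from the single formula $(\triangle\gammab)_n=b+c(2n+1)+\int_{[0,1)}\frac{1-x^n}{1-x}\,\D\nu(x)$, without needing Proposition~\ref{traj-pd} at all.
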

   \begin{proof}
We begin by proving the implication
(i)$\Rightarrow$(ii). Suppose (i) holds. By
footnote~\ref{stop-a2}, $\limsup_{n\to\infty}
|\gamma_n|^{1/n}\Le 1$. Hence by \eqref{limsup}
and \eqref{zal-scalar4}, \mbox{(ii-a)} holds.
Applying \eqref{del1} and \eqref{cdr4}, we
obtain
   \begin{align} \label{plagg}
(\triangle\gammab)_n = b + c(2n+1) + \int_{[0,1)}
\frac{1-x^n}{1-x} \D \nu(x), \quad n\in \zbb_+,
   \end{align}
where $(b,c,\nu)$ represents $\gammab$. By
Lebesgue's monotone convergence theorem, the
third term on the right-hand side of the
equality in \eqref{plagg} is monotonically
increasing to $\int_{[0,1)} \frac{1}{1-x} \D
\nu(x)$. Since $c\Ge 0$, we deduce from
\eqref{zal-scalar2} and \eqref{plagg} that
$c=0,$ $\frac{1}{1-x} \in L^1(\nu)$ (which
yields \mbox{(ii-b)}), the sequence
$\triangle\gammab$ is monotonically increasing
and convergent in $\rbb$ and
   \begin{align} \label{adycto8}
b=d-\int_{[0,1)} \frac{1}{1-x} \D \nu(x),
   \end{align}
where $d=\lim_{n\to \infty}(\triangle{\gammab})_n$.
Using \eqref{cdr4} and \eqref{adycto8} and arguing as
in \eqref{adycto3}, we deduce that \mbox{(ii-c)}
holds. Since $d=\sup_{n\in
\zbb_+}(\triangle{\gammab})_n$, the telescopic
argument (cf.\ \eqref{te-skop}) shows that
   \begin{align} \label{wyr-ic}
\gamma_n \Le \gamma_0 + n d, \quad n\in \nbb.
   \end{align}
Applying \eqref{zal-scalar3}, we conclude that $d\Ge
0$. This proves (ii) and (iii) except for the
uniqueness of $(d,\nu)$.

A close inspection of the proof of the
implication (ii)$\Rightarrow$(i) of
Theorem~\ref{boundiff-scalar} shows that (ii)
implies (i) and that $(d,\nu)$ in (iii) is
unique. By this uniqueness, statement (iv)
follows from the proof of the implication
(i)$\Rightarrow$(ii).
   \end{proof}
   \begin{cor} \label{pd2cpd}
Let $\gammab=\{\gamma_n\}_{n=0}^{\infty}$ be a
CPD sequence such that
   \begin{align*}
\limsup_{n\to \infty}|\gamma_n|^{1/n} < \infty
   \end{align*}
and let $(b,c,\nu)$ be the representing triplet
of $\gammab$. Suppose $\supp{\nu} \subseteq
\rbb_+$ and $\gamma_n \Ge 0$ for $n$ large
enough. Then the following conditions are
equivalent{\em :}
   \begin{enumerate}
   \item[(i)] $\lim_{n\to \infty} (\triangle \gammab)_n =
0$,
   \item[(ii)] the sequence $\gammab$ is monotonically decreasing,
   \item[(iii)] the sequence $\gammab$ is convergent in $\rbb$.
   \end{enumerate}
Moreover, if {\em (i)} holds, then $\gammab$ is
PD and $\gamma_n \Ge 0$ for all $n\in\zbb_+$.
   \end{cor}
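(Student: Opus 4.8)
The plan is to prove the cyclic chain (i)$\Rightarrow$(ii)$\Rightarrow$(iii)$\Rightarrow$(i) first, and then deal with the ``moreover'' assertion. The structural fact that I would put to work throughout is the ``moreover'' part of Proposition~\ref{traj-pd}: since the representing triplet $(b,c,\nu)$ satisfies $\supp{\nu}\subseteq\rbb_+$, the difference sequence $\triangle\gammab$ is monotonically increasing. Setting $d:=\lim_{n\to\infty}(\triangle\gammab)_n=\sup_{n\in\zbb_+}(\triangle\gammab)_n\in(-\infty,+\infty]$, this monotonicity is exactly what drives the equivalences.

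For (i)$\Rightarrow$(ii): assuming $d=0$, the fact that $\triangle\gammab$ increases to $0$ forces $(\triangle\gammab)_n\Le 0$ for every $n$, so $\gammab$ is monotonically decreasing. For (ii)$\Rightarrow$(iii): a monotonically decreasing sequence that is eventually nonnegative is bounded below (its infimum, attained in the tail, is $\Ge 0$), hence convergent in $\rbb$. For (iii)$\Rightarrow$(i): convergence of $\gammab$ gives $(\triangle\gammab)_n=\gamma_{n+1}-\gamma_n\to 0$ at once. These three steps are short and formal once the monotonicity of $\triangle\gammab$ is available.

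The substance lies in the ``moreover'' part. Granting (i), the equivalences just proved show that $\gammab$ is decreasing and convergent, so $\inf_n\gamma_n>-\infty$ and $\sup_n(\triangle\gammab)_n=d=0<\infty$; combined with $\supp{\nu}\subseteq\rbb_+$ this permits me to apply Theorem~\ref{boundiff-scalar2}. Its conclusions yield $c=0$, $\nu(\rbb\setminus[0,1))=0$, $\frac{1}{1-x}\in L^1(\nu)$, and, since $d=0$, both the identity $b=\int_{[0,1)}\frac{1}{x-1}\D\nu(x)=\int_{\rbb}\frac{1}{x-1}\D\nu(x)$ and the representation $\gamma_n=\gamma_0-\int_{[0,1)}\frac{1-x^n}{(1-x)^2}\D\nu(x)$.

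To conclude, I would let $n\to\infty$ in this representation. On $[0,1)$ the integrand $\frac{1-x^n}{(1-x)^2}$ is nonnegative and increases to $\frac{1}{(1-x)^2}$, so monotone convergence gives $\gamma_0-\lim_n\gamma_n=\int_{[0,1)}\frac{1}{(1-x)^2}\D\nu(x)\in[0,+\infty]$. As $L:=\lim_n\gamma_n$ is finite, the integral is finite and equals $\gamma_0-L$; and since $\gammab$ decreases to $L$ with $\gamma_n\Ge 0$ eventually, we have $L\Ge 0$, whence $\int_{\rbb}\frac{1}{(x-1)^2}\D\nu(x)=\gamma_0-L\Le\gamma_0$. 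The three conditions $c=0$, $b=\int_{\rbb}\frac{1}{x-1}\D\nu(x)$ and $\int_{\rbb}\frac{1}{(x-1)^2}\D\nu(x)\Le\gamma_0$ are precisely hypothesis (iii) of Theorem~\ref{dyszcz3}, so $\gammab$ is PD; and $\gamma_n\Ge L\Ge 0$ for all $n$ since $\gammab$ decreases to $L\Ge 0$. I expect the main obstacle to be exactly this integrability-and-inequality step: one is given only $\frac{1}{1-x}\in L^1(\nu)$ a priori, so the finiteness of $\int\frac{1}{(1-x)^2}\D\nu(x)$ and the bound by $\gamma_0$ must be extracted from the finite nonnegative limit $L$ via monotone convergence rather than assumed.
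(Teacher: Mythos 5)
Your proposal is correct and takes essentially the same route as the paper: the entire substance — applying Theorem~\ref{boundiff-scalar2} (whose hypotheses hold since $\gamma_n\Ge 0$ eventually gives $\inf_n\gamma_n>-\infty$ and (i) gives $\sup_n(\triangle\gammab)_n<\infty$) to obtain $c=0$, $\nu(\rbb\setminus[0,1))=0$, $b=\int_{\rbb}\frac{1}{x-1}\,\D\nu(x)$ and $\gamma_n=\gamma_0-\int_{[0,1)}\frac{1-x^n}{(1-x)^2}\,\D\nu(x)$, then passing to the limit by monotone convergence and invoking Theorem~\ref{dyszcz3} — is exactly the paper's argument. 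The only cosmetic difference is that you get (i)$\Rightarrow$(ii) from the monotonicity of $\triangle\gammab$ in Proposition~\ref{traj-pd} before invoking Theorem~\ref{boundiff-scalar2}, whereas the paper reads the monotone decrease of $\gammab$ directly off the integral representation; both observations are immediate and the two proofs are otherwise identical.
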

   \begin{proof}
(i)$\Rightarrow$(ii) It follows from
Theorem~\ref{boundiff-scalar2} that $\nu(\rbb\setminus
[0,1))=0$, $b=\int_{\rbb} \frac{1}{x-1} \D \nu(x)$,
$c=0$ and
   \begin{align*}
\gamma_n = \gamma_0 - \int_{[0,1)} \frac{1-x^n}{(1-x)^2} \D
\nu(x), \quad n\in \zbb_+,
   \end{align*}
which yields (ii) and consequently implies that
$\gamma_n \Ge 0$ for all $n\in\zbb_+$.
Lebesgue's monotone convergence theorem gives
$\int_{\rbb} \frac{1}{(1-x)^2} \D \nu(x) \Le
\gamma_0$, so by Theorem~\ref{dyszcz3},
$\gammab$ is PD. This proves the ``moreover''
part.

The implications (ii)$\Rightarrow$(iii) and
(iii)$\Rightarrow$(i) are obvious.
   \end{proof}
   \section{\label{Sec3}Representations of conditionally positive definite operators}
   \subsection{\label{Sec3.1}Semispectral integral representations}
Recall that an operator $T\in \ogr{\hh}$ is said
to be {\em CPD} if the sequence $\{\|T^n
h\|^2\}_{n=0}^{\infty}$ is CPD for every $h\in
\hh$. Occasionally, we will use a concise
notation:
   \begin{align} \label{gth}
(\gammab_{T,h})_n:=\|T^nh\|^2, \quad n\in \zbb_+, \,
h\in \hh.
   \end{align}
The class of CPD operators contains the class of
complete hypercontractions of order $2$
introduced by Chavan and Sholapurkar in
\cite{Cha-Sh} (see the paragraph preceding
Proposition~\ref{traj-pd-op} for a more detailed
discussion). The main difference between these
two concepts is that the representing
semispectral measures of complete
hypercontractions of order $2$ are concentrated
on the closed interval $[0,1]$ (see
\cite[Theorem~4.11]{Cha-Sh}), while the
representing semispectral measures of CPD
operators can be concentrated on an arbitrary
finite subinterval of $\rbb_+$ (see
Theorem~\ref{cpdops}). Let us point out that CPD
operators are not scalable in general (see
Corollary~\ref{scalcpd}). We also refer the
reader to \cite{Ja02} for semispectral integral
representations and the corresponding dilations
for completely hypercontractive and completely
hyperexpansive operators (still on $[0,1]$). The
article \cite{Ja02} was an inspiration for the
research carried out in \cite{Cha-Sh,Cha-Sh17}.
It is also worth mentioning that in view of
\cite[Theorem~2]{At2}, an operator $T\in
\ogr{\hh}$ is completely hyperexpansive if and
only if the sequence $\{-\|T^n
h\|^2\}_{n=0}^{\infty}$ is CPD for every $h \in
\hh$.
   \begin{thm} \label{cpdops}
   Let $T\in \ogr{\hh}$. Then the following statements are
equivalent{\em :}
   \begin{enumerate}
   \item[(i)] $T$ is CPD,
   \item[(ii)] there exist operators $B,C\in
\ogr{\hh}$ and a compactly supported semispectral measure
$F\colon \borel{\rbb_+} \to \ogr{\hh}$ such that $B=B^*$,
$C\Ge 0$, $F(\{1\})=0$ and
   \begin{align} \label{cdr5}
T^{*n}T^n = I + n B + n^2 C + \int_{\rbb_+} Q_n(x) F(\D x),
\quad n\in \zbb_+.
   \end{align}
   \end{enumerate}
Moreover, if {\em (ii)} holds, then the triplet
$(B,C,F)$ is unique and
   \begin{gather} \label{fontan}
\supp{F} \subseteq [0,r(T)^2],
   \\ \label{fontan5}
C\neq 0 \implies r(T) \Ge 1,
   \\ \label{fontan6}
\sup \supp{F} \Ge 1 \implies r(T)^2 = \sup \supp{F}.
   \end{gather}
Furthermore, $(\is{Bh}h, \is{Ch}h,
\is{F(\cdot)h}h)$ is the representing triplet of
the CPD sequence $\{\|T^n
h\|^2\}_{n=0}^{\infty}$ for every $h\in \hh$.
   \end{thm}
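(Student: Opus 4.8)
The plan is to transfer everything to the level of scalar sequences through the vectors $\gammab_{T,h}=\{\|T^nh\|^2\}_{n=0}^{\infty}$ and to apply the scalar representation theorem. For every $h$ one has $\limsup_{n}\|T^nh\|^{2/n}\Le r(T)^2<\infty$ by Gelfand's formula, so $T$ is CPD precisely when each $\gammab_{T,h}$ is a CPD sequence of exponential growth, in which case Theorem~\ref{cpd-expon} furnishes a unique representing triplet $(b(h),c(h),\nu_h)$, the zeroth term being $\gamma_0=\is{h}{h}=\|h\|^2$. The implication (ii)$\Rightarrow$(i) is then immediate: reading \eqref{cdr5} against a vector $h$ and setting $b(h)=\is{Bh}{h}\in\rbb$, $c(h)=\is{Ch}{h}\Ge0$ and $\nu_h=\is{F(\cdot)h}{h}$ (a finite compactly supported measure on $\rbb_+$ with $\nu_h(\{1\})=0$) exhibits $\gammab_{T,h}$ in the form of Theorem~\ref{cpd-expon}(iii), hence CPD. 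The real work is in (i)$\Rightarrow$(ii), where the scalar data must be reassembled into a single operator triplet $(B,C,F)$.

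For (i)$\Rightarrow$(ii) I would first produce the semispectral measure. Put $D_n=(\triangle^2(T^{*(\cdot)}T^{(\cdot)}))_n=T^{*(n+2)}T^{n+2}-2T^{*(n+1)}T^{n+1}+T^{*n}T^n$; applying $\triangle^2$ to the representation of $\gammab_{T,h}$ and using \eqref{del2} gives $\is{D_nh}{h}=\int_{\rbb}x^n\,\D(\nu_h+2c(h)\delta_1)(x)$. The crucial point is that this scalar sequence is \emph{Stieltjes}: since $\triangle$ commutes with the shift, $\{\is{D_{n+1}h}{h}\}_n=\triangle^2\gammab_{T,Th}$, which is PD because $Th\in\hh$ and $T$ is CPD (Proposition~\ref{traj-pd}); as $\{\is{D_nh}{h}\}_n=\triangle^2\gammab_{T,h}$ is PD for the same reason, Theorem~\ref{Stiech} shows that $\mu_h:=\nu_h+2c(h)\delta_1\Ge0$ is supported in $\rbb_+$, and in fact in $K:=[0,\|T\|^2]$ by Lemma~\ref{csmad} (the growth of $\triangle^2\gammab_{T,h}$ being controlled by that of $\gammab_{T,h}$). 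Consequently the assignment $x^n\mapsto D_n$ extends to a positive linear map on $C(K)$ which is norm-continuous, because $|\is{p(D)h}{h}|\Le\|p\|_{C(K)}\is{D_0h}{h}\Le\|p\|_{C(K)}\|D_0\|\,\|h\|^2$, so the operator-valued Riesz representation theorem for positive $\ogr{\hh}$-valued maps on $C(K)$ yields a unique compactly supported semispectral measure $M\colon\borel{\rbb_+}\to\ogr{\hh}$ with $D_n=\int x^n\,M(\D x)$ and $\is{M(\varDelta)h}{h}=\mu_h(\varDelta)$.

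With $M$ at hand I would set $C=\frac{1}{2}M(\{1\})\Ge0$ and $F(\varDelta)=M(\varDelta\setminus\{1\})$ (a semispectral measure with $F(\{1\})=0$ and $\is{F(\cdot)h}{h}=\nu_h$), and, reading \eqref{cdr5} at $n=1$ where $Q_1=0$, $B=T^*T-I-C=B^*$. Then for every $n$ and $h$ the value at $h$ of the quadratic form of the right-hand side of \eqref{cdr5} equals $\gamma_0+nb(h)+n^2c(h)+\int_{\rbb_+}Q_n(x)\,\D\nu_h(x)=\|T^nh\|^2$ by Theorem~\ref{cpd-expon}, and polarization (the space being complex) upgrades this to the operator identity \eqref{cdr5}. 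Uniqueness of $(B,C,F)$ goes the same route in reverse: any triplet satisfying \eqref{cdr5} produces, against each $h$, the representing triplet of $\gammab_{T,h}$, which is unique by Theorem~\ref{cpd-expon}, so the operators agree by polarization; this simultaneously proves the ``furthermore'' assertion.

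Finally, the support relations follow from the ``moreover'' part of Theorem~\ref{cpd-expon} applied pointwise. Since $\supp{\nu_h}\subseteq[-r(T)^2,r(T)^2]$ by \eqref{limsup} and $\nu_h$ lives on $\rbb_+$, one gets $\nu_h((r(T)^2,\infty))=0$ for all $h$, whence $F((r(T)^2,\infty))=0$ and \eqref{fontan}. If $C\neq0$ then $c(h)>0$ for some $h$, so \eqref{limsup2} forces $r(T)^2\Ge1$, giving \eqref{fontan5}. For \eqref{fontan6}, writing $R=\sup\supp{F}\Ge1$, the inclusion $\supp{F}\subseteq[0,R]$ together with the estimate $|Q_n(x)|\Le n^2R^n$ on $[0,R]$ (immediate from \eqref{klaud} since $R\Ge1$) yields $\|T^n\|^2\Le\alpha\,n^2R^n$ for some $\alpha\in\rbb_+$ via \eqref{cdr5}, hence $r(T)^2\Le R$; combined with $R\Le r(T)^2$ from \eqref{fontan} this gives $r(T)^2=R$. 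The main obstacle is the middle step---solving the $\ogr{\hh}$-valued Stieltjes moment problem with uniform compact support---and the two ingredients that make it go through are exactly the Stieltjes property obtained via the shift $h\mapsto Th$ and the uniform norm bound coming from $\|T\|$.
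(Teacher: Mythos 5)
Your proposal is correct, and its engine coincides with the paper's: apply the scalar theorem (Theorem~\ref{cpd-expon}) to each $\gammab_{T,h}$, prove that the second differences form a Stieltjes sequence via the shift identity $(\triangle^2\gammab_{T,h})_{n+1}=(\triangle^2\gammab_{T,Th})_n$ combined with Theorem~\ref{Stiech} and Lemma~\ref{csmad}, and pass from quadratic forms to operators by polarization. The differences lie in two places, and both are genuine. First, you assemble $(B,C,F)$ by solving the operator Stieltjes moment problem for $D_n=T^{*n}\bscr_2(T)T^n$: a positive, norm-bounded map on $C(K)$ with $K=[0,\|T\|^2]$, together with the operator-valued Riesz representation theorem, produces the measure $M$, from which you split off the atom at $1$ (setting $C=\frac{1}{2}M(\{1\})$, $F=M(\cdot\setminus\{1\})$, $B=T^*T-I-C$). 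The paper instead polarizes the scalar triplets directly into sesquilinear forms $\hat b$, $\hat c$, $\hat\nu(\varDelta;\cdot,\cdot)$ and bounds them by $\|\bscr_2(T)\|$; the mathematical content is the same (the representation theorem you invoke is proved by exactly that polarization argument), but your route in effect establishes the later Theorem~\ref{dyl-an}(iii) first and deduces Theorem~\ref{cpdops} from it, reversing the paper's logical order --- legitimate, at the cost of importing as a black box a standard theorem that the paper verifies by hand. Second, and more substantively, your proof of \eqref{fontan6} avoids the paper's appeal to Dane\v{s}'s local spectral radius theorem \cite[Corollary~3]{Dan}: rather than converting the vectorwise bounds $\limsup_n\|T^nh\|^{2/n}\Le\max\{1,\sup\supp{F}\}$ (obtained from \eqref{limsup1.6}) into a bound on $r(T)$, you estimate $\|T^n\|^2\Le\alpha\,n^2R^n$ with $R=\sup\supp{F}\Ge 1$ directly from the operator identity \eqref{cdr5} and invoke Gelfand's formula; this is the operator-level lift of the paper's own scalar proof of \eqref{limsup1.5} and makes the argument self-contained. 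The remaining items --- (ii)$\Rightarrow$(i), uniqueness via the scalar uniqueness of Lemma~\ref{uniq}, \eqref{fontan}, \eqref{fontan5}, and the ``furthermore'' identification --- agree with the paper's proof.
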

   \begin{proof}
(i)$\Rightarrow$(ii) By Theorem~ \ref{cpd-expon}, for
every $h\in \hh$ there exists a unique triplet
$(b_h,c_h,\nu_h)$ consisting of a real number $b_h$,
nonnegative real number $c_h$ and a finite compactly
supported Borel measure $\nu_h$ on $\rbb$ such that
$\nu_h(\{1\})=0$ and
   \begin{align} \label{bar1}
(\gammab_{T,h})_n \overset{\eqref{gth}}= \|T^n h\|^2 =
\|h\|^2 + b_hn + c_h n^2 + \int_{\rbb} Q_n(x)
\D\nu_h(x), \quad n\in \zbb_+.
   \end{align}
First we show that
   \begin{align} \label{buff2}
\supp{\nu_h} \subseteq \rbb_+, \quad h\in\hh.
   \end{align}
For this, note that by \eqref{bar1} and \eqref{rca1}
we have
   \begin{align} \label{buf1}
(\triangle^2 \gammab_{T,h})_n = \int_{\rbb} x^n \D
(\nu_h+2c_h\delta_1)(x), \quad n\in \zbb_+, \, \quad
h\in \hh.
   \end{align}
It is a simple matter to verify that the following identity
holds
   \begin{align} \label{buf2}
(\triangle^2 \gammab_{T,h})_{n+1} = (\triangle^2
\gammab_{T,Th})_n, \quad n\in\zbb_+, \, h \in \hh.
   \end{align}
It follows from \eqref{buf1} and \eqref{buf2}
that the sequences $\triangle^2 \gammab_{T,h}$
and $\{(\triangle^2
\gammab_{T,h})_{n+1}\}_{n=0}^{\infty}$ are PD.
Hence, by Theorem~\ref{Stiech}, $\triangle^2
\gammab_{T,h}$ is a Stieltjes moment sequence.
Since the measure $\nu_h+2c_h\delta_1$ is
compactly supported, we infer from \eqref{buf1}
and Lemma~\ref{csmad} that the Stieltjes moment
sequence $\triangle^2 \gammab_{T,h}$ is
determinate (as a Hamburger moment sequence).
Therefore, $\supp{\nu_h+2c_h\delta_1} \subseteq
\rbb_+$ for every $h\in\hh$, which implies
\eqref{buff2}.

Define the functions $\hat b, \hat c\colon \hh \times
\hh \to \cbb$ and $\hat \nu\colon \borel{\rbb_+}
\times \hh \times \hh \to \cbb$ by
   \begin{align*}
\hat b(f,g) = \frac{1}{4}\sum_{k=0}^3 \I^kb_{f+\I^kg},
\; \hat c(f,g) = \frac{1}{4}\sum_{k=0}^3
\I^kc_{f+\I^kg}, \; \hat \nu(\varDelta;f,g) =
\frac{1}{4}\sum_{k=0}^3 \I^k \nu_{f+\I^kg}(\varDelta),
   \end{align*}
where $f,g \in \hh$ and $\varDelta \in
\borel{\rbb_+}$. Clearly, $\hat
\nu(\,\cdot\,;f,g)$ is a complex
measure for all $f,g\in \hh$. It
follows from \eqref{bar1},
\eqref{buff2} and the polarization
formula that
   \begin{align}  \notag
\is{T^n f}{T^n g}& = \is{f}{g} + \hat b(f,g) n + \hat
c(f,g) n^2
   \\ \label{tnf}
& \hspace{3ex} + \int_{\rbb_+} Q_n(x) \, \hat\nu(\D x;
f,g), \quad n\in \zbb_+, \, f,g \in \hh.
   \end{align}
Using \eqref{tnf} and Lemma~\ref{uniq}, one can verify
that $\hat b$ is a Hermitian symmetric sesquilinear
form and the functions $\hat c$ and $\hat
\nu(\varDelta;\cdot,\mbox{-})$, where $\varDelta \in
\borel{\rbb_+}$, are semi-inner products such that for
all $h\in \hh$ and $\varDelta \in \borel{\rbb_+}$,
   \begin{align} \label{Ber1}
\hat b(h,h) = b_h, \; \hat c(h,h) = c_h, \; \hat
\nu(\varDelta;h,h) = \nu_h(\varDelta).
   \end{align}
(cf.\ the proofs of \cite[Proposition~
1]{Sto0} and \cite[Theorem~
4.2]{Ja02}). By \eqref{bar1}, we have
   \begin{align*}
\hat\nu(\varDelta;h,h)+2\hat c(h,h) &
\overset{\eqref{Ber1}}\Le \nu_h(\rbb)+2c_h
   \\
& \hspace{.5ex}\overset{\eqref{buf1}}= (\triangle^2
\gammab_{T,h})_0
   \\
& \hspace{.5ex} \overset{\eqref{bmt}}=
\is{\bscr_2(T)h}{h}
   \\
& \hspace{6ex} \Le \|\bscr_2(T)\| \|h\|^2, \quad h \in
\hh, \, \varDelta \in \borel{\rbb_+}.
   \end{align*}
This implies that the sesquilinear forms $\hat c$ and
$\hat \nu(\varDelta;\cdot,\mbox{-})$, where $\varDelta
\in \borel{\rbb_+}$, are bounded. Hence, there exist
$C, F(\varDelta) \in \ogr{\hh}_+$, where $\varDelta
\in \borel{\rbb_+}$, such that
   \begin{gather} \label{cyk1}
\is{Ch}{h}=\hat c(h,h)\overset{\eqref{Ber1}}=c_h,
\quad h\in \hh,
   \\ \label{cyk2}
\is{F(\varDelta)h}{h}=\hat\nu(\varDelta;h,h)
\overset{\eqref{Ber1}} = \nu_h(\varDelta), \quad
\varDelta \in \borel{\rbb_+}, \, h\in \hh.
   \end{gather}
In view of \eqref{cyk2}, $F$ is a Borel semispectral
measure on $\rbb_+$.

Now we show that the so-constructed $F$ satisfies
\eqref{fontan} and \eqref{fontan6}. It follows from
Gelfand's formula for spectral radius that
   \begin{align} \label{koron-av}
\limsup_{n\to \infty}\|T^nh\|^{1/n} \Le r(T), \quad h\in
\hh.
   \end{align}
This together with \eqref{bar1}, \eqref{cyk2} and Theorem~
\ref{cpd-expon} applied to $\gammab_{T,h}$ yields
   \begin{multline*}
\Big\langle F\Big(\big(r(T)^2,\infty\big)\Big)h,h
\Big\rangle \Le \Big\langle F\Big(\big(\limsup_{n\to
\infty}\|T^nh\|^{2/n},\infty\big)\Big)h, h\Big\rangle
\overset{\eqref{limsup}}= 0, \quad h\in \hh,
   \end{multline*}
which, when combined with \eqref{buff2}, implies
\eqref{fontan}. Hence, we have
   \begin{align} \label{pasr2}
\sup \supp{F} \Le r(T)^2.
   \end{align}
Observing that
   \begin{align*}
\supp{\is{F(\cdot)h}{h}} \subseteq \supp{F}, \quad h\in
\hh,
   \end{align*}
we obtain
   \begin{align*}
\limsup_{n\to \infty} \|T^nh\|^{2/n} &
\overset{\eqref{limsup1.6}} \Le \max\bigg\{1,
\sup\supp{\is{F(\cdot)h}{h}}\bigg\}
   \\
& \hspace{2.1ex} \Le \max\big\{1, \sup\supp{F}\big\}, \quad
h\in \hh.
   \end{align*}
It follows from \cite[Corollary~3]{Dan} that $r(T)^2 \Le
\max\bigg\{1, \sup\supp{F}\bigg\}$, which together with
\eqref{pasr2} gives \eqref{fontan6}.

Our next goal is to construct the operator $B$. By
\eqref{bar1} and \eqref{cyk1}, we have
   \begin{align*}
\|Th\|^2-\|h\|^2=(\triangle \gammab_{T,h})_0
\overset{\eqref{Ber1}}= \hat b(h,h) + \is{Ch}{h},
\quad h\in \hh.
   \end{align*}
As a consequence, $\hat b$ is a bounded
Hermitian symmetric sesquilinear form. This
implies that there exists a selfadjoint operator
$B\in \ogr{\hh}$ such that
   \begin{align} \label{cyk3}
\is{Bh}{h} = \hat b(h,h) \overset{\eqref{Ber1}}= b_h,
\quad h\in \hh.
   \end{align}
Combining \eqref{bar1} with \eqref{cyk1}, \eqref{cyk2}
and \eqref{cyk3} gives (ii).

(ii)$\Rightarrow$(i) This implication is a direct
consequence of Theorem~ \ref{cpd-expon} applied to the
sequences $\gammab_{T,h}$, $h\in \hh$.

It remains to justify the ``moreover'' part.
Suppose (ii) holds. The uniqueness of the
triplet $(B,C,F)$ follows from Theorem~
\ref{cpd-expon}. Assertions \eqref{fontan} and
\eqref{fontan6} were proved above. To show
\eqref{fontan5}, assume that $C\neq 0$. Then the
set $U:= \{h\in \hh\colon \is{Ch}h > 0\}$ is
nonempty. By the ``moreover'' part of
Theorem~\ref{cpd-expon} and \eqref{koron-av}, we
have
   \begin{align*}
r(T) \Ge \limsup_{n\to\infty} \|T^n h\|^{1/n} \Ge 1,
\quad h \in U,
   \end{align*}
which implies \eqref{fontan5}. The last statement of
the theorem is easily seen to be true. This completes
the proof.
   \end{proof}
The following definition is an operator counterpart of
Definition~ \ref{deftryp}.
   \begin{dfn}
If $T\in\ogr{\hh}$ is a CPD operator and $B$,
$C$ and $F$ are as in statement (ii) of
Theorem~\ref{cpdops}, we call $(B,C,F)$ the {\em
representing triplet} of $T$, or we simply say
that $(B,C,F)$ {\em represents} $T$.
   \end{dfn}
   \begin{rem}
Note that if $(B,C,F)$ represents a CPD operator
$T$ on $\hh\neq \{0\}$ and $B\Ge 0$, then by
\eqref{cdr5}, $T^{*n}T^n \Ge I$ for every $n \in
\nbb$ which together with Gelfand's formula for
spectral radius yields $r(T)\Ge 1$.
   \hfill $\diamondsuit$
   \end{rem}
Proposition~\ref{traj-pd-op} below which gives
characterizations of CPD operators is closely
related to Proposition~\ref{traj-pd} (see also
Theorem~\ref{dyl-an} for an alternative
approach). The most important fact we need in
its proof is that a sequence
$\{\gamma_n\}_{n=0}^{\infty} \subseteq
\ogr{\hh}$ of exponential growth is a Hamburger
moment sequence (that is, \eqref{hamb} holds for
some semispectral measure $\mu\colon
\borel{\rbb} \to \ogr{\hh}$) if and only if
$\{\is{\gamma_n h}{h}\}_{n=0}^{\infty}$ is a
Hamburger moment sequence for all $h\in \hh$
(see \cite[Theorem~2]{Bi94}). Similar assertions
are true for Stieltjes and Hausdorff operator
moment sequences. In view of \cite{Sz-N53},
operators $T\in \ogr{\hh}$ for which the
sequence
$\{T^{*n}\bscr_2(T)T^n\}_{n=0}^{\infty}$ is a
Hausdorff moment sequence coincide with complete
hypercontractions of order $2$ introduced in
\cite{Cha-Sh}. On the other hand, by
\cite[Corollary]{Emb73} (see also
Theorem~\ref{lamb}), an operator $T\in
\ogr{\hh}$ is subnormal if and only if the
sequence $\{T^{*n}T^n\}_{n=0}^{\infty}$ is a
Stieltjes moment sequence. We refer the reader
to \cite{Sz-N53,Bi94} for necessary definitions
and facts related to the aforesaid operator
moment problems.
   \begin{pro} \label{traj-pd-op}
For $T\in \ogr{\hh}$, the following conditions are
equivalent{\em :}
   \begin{enumerate}
   \item[(i)] $T$ is CPD,
   \item[(ii)]
$\{T^{*n}\bscr_2(T)T^n\}_{n=0}^{\infty}$ is PD,
   \item[(iii)] $\{T^{*n}\bscr_2(T)T^n\}_{n=0}^{\infty}$
is a Stieltjes moment sequence.
   \end{enumerate}
Moreover, if $T$ is CPD, then
$\{T^{*(n+1)}T^{n+1} -
T^{*n}T^n\}_{n=0}^{\infty}$ is monotonically
increasing and
   \begin{align*}
\inf_{n\in \zbb_+} (\|T^{n+1}h\|^2 - \|T^nh\|^2) =
-\is{\bscr_1(T) h}{h} = \is{Bh}{h} + \is{Ch}{h}, \quad
h \in \hh,
   \end{align*}
where $B$ and $C$ are as in Theorem~{\em
\ref{cpdops}(ii)}.
   \end{pro}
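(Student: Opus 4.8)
The plan is to reduce the whole proposition to the scalar theory of Proposition~\ref{traj-pd} by means of the identity linking the operator sequence $\{T^{*n}\bscr_2(T)T^n\}_{n=0}^{\infty}$ to the second differences of the numerical sequences $\gammab_{T,h}$. Expanding $T^{*n}\bscr_2(T)T^n=T^{*(n+2)}T^{n+2}-2T^{*(n+1)}T^{n+1}+T^{*n}T^n$ and using the definition of $\triangle$, I would first record
\[
(\triangle^2\gammab_{T,h})_n=\is{T^{*n}\bscr_2(T)T^n h}{h},\quad n\in\zbb_+,\; h\in\hh,
\]
together with the shift identity $(\triangle^2\gammab_{T,h})_{n+1}=(\triangle^2\gammab_{T,Th})_n$, which holds because $Th\in\hh$. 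Since Gelfand's formula gives $\limsup_{n}\|T^{*n}\bscr_2(T)T^n\|^{1/n}\Le r(T)^2<\infty$, every sequence occurring here is of exponential growth, so Proposition~\ref{traj-pd} is applicable throughout.

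Next I would establish $\mathrm{(i)}\Leftrightarrow\mathrm{(iii)}$. If $T$ is CPD, then each $\gammab_{T,h}$ is CPD, so $\triangle^2\gammab_{T,h}$ is PD by Proposition~\ref{traj-pd}; by the shift identity its shift equals $\triangle^2\gammab_{T,Th}$, which is PD for the same reason, so Theorem~\ref{Stiech} shows that the scalar sequence $\{\is{T^{*n}\bscr_2(T)T^n h}{h}\}_{n=0}^{\infty}$ is a Stieltjes moment sequence for every $h$. The operator Stieltjes moment theorem for sequences of exponential growth (\cite[Theorem~2]{Bi94}, recalled just before the proposition) then upgrades this to (iii). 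Conversely, (iii) forces each $\{\is{T^{*n}\bscr_2(T)T^n h}{h}\}_{n=0}^{\infty}$ to be a Stieltjes, hence Hamburger, moment sequence, so by Proposition~\ref{traj-pd} each $\gammab_{T,h}$ is CPD, giving (i).

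To close the cycle I would show $\mathrm{(iii)}\Rightarrow\mathrm{(ii)}\Rightarrow\mathrm{(i)}$. For the first implication, writing $T^{*n}\bscr_2(T)T^n=\int_{\rbb_+}x^n M(\D x)$ with $M$ a semispectral measure and setting $g(x)=\sum_{j}x^{j}h_{j}$, one gets $\sum_{i,j}\is{T^{*(i+j)}\bscr_2(T)T^{i+j}h_j}{h_i}=\int_{\rbb_+}\is{M(\D x)g(x)}{g(x)}\Ge 0$ for every finite family $h_0,\dots,h_k\in\hh$, which is positive definiteness in the operator sense. For $\mathrm{(ii)}\Rightarrow\mathrm{(i)}$, specializing the operator positive definiteness to vectors $h_i=\lambda_i h$ yields that $\triangle^2\gammab_{T,h}$ is PD for each $h$, whence $\gammab_{T,h}$ is CPD by Proposition~\ref{traj-pd}, i.e.\ $T$ is CPD.

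Finally, for the ``moreover'' part, the Stieltjes representation in (iii) gives $\is{T^{*n}\bscr_2(T)T^n h}{h}=(\triangle^2\gammab_{T,h})_n\Ge 0$ for all $h$ and $n$, that is $T^{*n}\bscr_2(T)T^n\Ge 0$; as this operator is precisely the increment of $\{T^{*(n+1)}T^{n+1}-T^{*n}T^n\}_{n=0}^{\infty}$, that sequence is monotonically increasing. Consequently each $\triangle\gammab_{T,h}$ is increasing, so its infimum sits at $n=0$, giving $\inf_n(\|T^{n+1}h\|^2-\|T^nh\|^2)=\|Th\|^2-\|h\|^2=-\is{\bscr_1(T)h}{h}$; and reading off the representing triplet $(\is{Bh}{h},\is{Ch}{h},\is{F(\cdot)h}{h})$ of $\gammab_{T,h}$ from Theorem~\ref{cpdops} while using $Q_0=Q_1=0$ gives $(\triangle\gammab_{T,h})_0=\is{Bh}{h}+\is{Ch}{h}$, matching the last expression. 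The step I expect to be the main obstacle is the scalar-to-operator passage in $\mathrm{(i)}\Rightarrow\mathrm{(iii)}$: one must land in the Stieltjes rather than merely Hamburger moment class, which is exactly where the shift identity $(\triangle^2\gammab_{T,h})_{n+1}=(\triangle^2\gammab_{T,Th})_n$ is decisive, and one must verify the exponential-growth hypothesis needed to invoke the operator moment theorem of \cite{Bi94}.
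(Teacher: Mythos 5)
Your proof is correct and follows essentially the paper's own route: the identity $(\triangle^2\gammab_{T,h})_n=\is{T^{*n}\bscr_2(T)T^n h}{h}$, reduction to the scalar Proposition~\ref{traj-pd}, and the operator moment theorem of \cite[Theorem~2]{Bi94} to pass between scalar and operator-valued moment sequences, with the ``moreover'' part read off from monotonicity and \eqref{cdr5} at $n=1$ just as in the paper. The only organizational difference is in the Hamburger-to-Stieltjes upgrade in (i)$\Rightarrow$(iii): the paper obtains $\supp{\is{F(\cdot)h}{h}}\subseteq\rbb_+$ by citing Theorem~\ref{cpdops} and then uses the ``resp.''\ variant of Proposition~\ref{traj-pd}, whereas you re-derive it from the shift identity $(\triangle^2\gammab_{T,h})_{n+1}=(\triangle^2\gammab_{T,Th})_n$ together with Theorem~\ref{Stiech} --- which is precisely the argument \eqref{buf1} and \eqref{buf2} inside the paper's proof of Theorem~\ref{cpdops}, so your version merely inlines that step and is a bit more self-contained.
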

   \begin{proof}
Clearly, the sequence
$\{T^{*n}\bscr_2(T)T^n\}_{n=0}^{\infty}$ is of
exponential growth and
$(\triangle^2\gammab_{T,h})_n =
\is{T^{*n}\bscr_2(T)T^n h}{h}$ for all $n\in
\zbb_+$ and $h \in \hh$. This implies that
$\{T^{*n}\bscr_2(T)T^n\}_{n=0}^{\infty}$ is PD
(resp., a Stieltjes moment sequence) if and only
if $\triangle^2\gammab_{T,h}$ is PD (resp., a
Stieltjes moment sequence) for every $h\in \hh$.
Applying Proposition~\ref{traj-pd} to the
sequences $\gammab_{T,h}$ and using
Theorem~\ref{cpdops}, we deduce that conditions
(i)-(iii) are equivalent. The ``moreover'' part
is a direct consequence of the corresponding
part of Proposition~\ref{traj-pd},
Theorem~\ref{cpdops} and \eqref{cdr5} applied to
$n=1$.
   \end{proof}
Theorem~\ref{boundiff} below can be thought of
as an operator counterpart of
Theorem~\ref{boundiff-scalar2}. Before stating
it, we will discuss the role played by condition
\eqref{zal}, which is an operator counterpart of
\eqref{zal-scalar2}.
   \begin{pro} \label{unif-bund}
Let $T\in \ogr{\hh}$ be a CPD operator. Then the
following conditions are equivalent{\em :}
   \begin{enumerate}
   \item[(i)] the sequence $\{T^{*(n+1)}T^{n+1} -
T^{*n}T^n\}_{n=0}^{\infty}$ is convergent in
{\sc wot},
   \item[(ii)] $\sup_{n\in \zbb_+} \|T^{*(n+1)}T^{n+1} -
T^{*n}T^n\| < \infty$,
   \item[(iii)] $T$ satisfies the following estimate{\em
:}
   \begin{align} \label{zal}
\sup_{n\in \zbb_+} (\|T^{n+1}h\|^2 - \|T^nh\|^2)
< \infty, \quad h\in \hh.
   \end{align}
   \end{enumerate}
   \end{pro}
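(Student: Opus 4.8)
First I would set $A_n := T^{*(n+1)}T^{n+1} - T^{*n}T^n$ for $n\in\zbb_+$, so that each $A_n$ is selfadjoint and, by \eqref{gth},
\begin{align*}
\is{A_n h}{h} = \|T^{n+1}h\|^2 - \|T^n h\|^2 = (\triangle\gammab_{T,h})_n, \quad n\in\zbb_+, \, h\in\hh.
\end{align*}
The one structural fact I would rely on is the ``moreover'' part of Proposition~\ref{traj-pd-op}: since $T$ is CPD, the sequence $\{A_n\}_{n=0}^{\infty}$ is monotonically increasing, whence $A_n \Ge A_0 = T^*T - I \Ge -I$ and thus $\is{A_n h}{h} \Ge -\|h\|^2$ for all $n$ and $h$. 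The plan is then to close the cycle (i)$\Rightarrow$(ii)$\Rightarrow$(iii)$\Rightarrow$(i), with the last implication carrying essentially all of the content.

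The first two implications are soft. For (i)$\Rightarrow$(ii), {\sc wot} convergence of $\{A_n\}$ makes each scalar sequence $\{\is{A_n f}{g}\}_{n=0}^{\infty}$ bounded; applying the uniform boundedness principle twice---first in $g$, to bound $\sup_n\|A_n f\|$ for each fixed $f$, then in $f$---yields $\sup_n\|A_n\|<\infty$. The implication (ii)$\Rightarrow$(iii) is immediate from $\is{A_n h}{h} \Le \|A_n\|\,\|h\|^2$.

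For the decisive step (iii)$\Rightarrow$(i), I would first upgrade (iii) to a uniform norm bound. The lower bound $\is{A_n h}{h}\Ge -\|h\|^2$ together with the hypothesis $\sup_n \is{A_n h}{h}<\infty$ gives $\sup_n|\is{A_n h}{h}|<\infty$ for every $h$; polarizing as in the proof of Theorem~\ref{cpdops} (i.e.\ writing $\is{A_n f}{g}=\frac14\sum_{k=0}^3 \I^k \is{A_n(f+\I^k g)}{f+\I^k g}$) promotes this to $\sup_n|\is{A_n f}{g}|<\infty$ for all $f,g$, and the same two applications of the uniform boundedness principle as above produce $M:=\sup_n\|A_n\|<\infty$. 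Next I would pass to the limit: for each $h$ the real sequence $\is{A_n h}{h}$ is monotonically increasing by Proposition~\ref{traj-pd-op} and bounded above, hence convergent, and polarization again upgrades this to convergence of $\is{A_n f}{g}$ for all $f,g$ to some sesquilinear form $\Phi(f,g)$.

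The main obstacle---and the reason the bound $M$ is indispensable rather than a side remark---is the final passage from scalar convergence to {\sc wot} convergence, where I must check that $\Phi$ is represented by a bounded operator. This is exactly where $M$ enters, since $|\Phi(f,g)|=\lim_n|\is{A_n f}{g}|\Le M\|f\|\,\|g\|$, so by the Riesz representation of bounded sesquilinear forms there is $A\in\ogr{\hh}$ with $\is{Af}{g}=\Phi(f,g)$, and then $A_n\to A$ in {\sc wot}. In summary, the monotonicity supplied by Proposition~\ref{traj-pd-op} is used twice---for the uniform lower bound and for the scalarwise convergence---while the uniform upper bound guarantees that the limiting form is genuinely an operator.
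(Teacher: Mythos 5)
Your proof is correct and follows essentially the same route as the paper's: both hinge on the monotonicity of $\{T^{*(n+1)}T^{n+1}-T^{*n}T^n\}_{n=0}^{\infty}$ supplied by the ``moreover'' part of Proposition~\ref{traj-pd-op}, combined with polarization, the uniform boundedness principle, and the Riesz representation of bounded sesquilinear forms to pass from \eqref{zal} to {\sc wot} convergence. The only difference is cosmetic: you extract the uniform norm bound before taking limits, whereas the paper first obtains scalar convergence and then invokes the uniform boundedness principle.
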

   \begin{proof}
(i)$\Rightarrow$(ii) Apply the uniform
boundedness principle.

(ii)$\Rightarrow$(iii) Obvious.

(iii)$\Rightarrow$(i) Set $D_n=T^{*(n+1)}T^{n+1}
- T^{*n}T^n$ for $n\in \zbb_+$. It follows from
the ``moreover'' part of
Proposition~\ref{traj-pd-op} that the sequence
$\{D_n\}_{n=0}^{\infty}$ is monotonically
increasing. Hence, by \eqref{zal} and the
polarization formula, the sequence $\{\is{D_n
f}{g}\}_{n=0}^{\infty}$ is convergent in $\cbb$
for all $f,g\in \hh$. Using the uniform
boundedness principle again and the Riesz
representation theorem, we deduce that (i) is
valid.
   \end{proof}
Regarding Proposition~\ref{unif-bund}, note that
if an operator $T\in \ogr{\hh}$ is CPD, then by
Proposition~\ref{traj-pd-op}, ``$\sup_{n\in
\zbb_+}$'' in \eqref{zal} can be replaced by
``$\lim_{n\to\infty}$'' (in the extended real
line). If $T$ is CPD and satisfies \eqref{zal},
then by Proposition~\ref{unif-bund}, the
sequence $\{T^{*(n+1)}T^{n+1} -
T^{*n}T^n\}_{n=0}^{\infty}$ is convergent in
{\sc wot}, say to $D\in \ogr{\hh}$. It is worth
mentioning that in view of
Remark~\ref{manyrem}a) and \cite[Proposition~
8]{At91} (see also \cite[Lemma~
6.1(ii)]{Ja-St}), there are CPD unilateral
weighted shifts $T$ such that
   \begin{align*}
\is{Dh}{h}=\sup_{n\in \zbb_+} (\|T^{n+1}h\|^2 -
\|T^nh\|^2) = - \is{\bscr_1(T)h}{h}> 0, \quad h\in \hh
\setminus \{0\}.
   \end{align*}
In turn, there are CPD operators $T$ for which
the only vector $h$ satisfying \eqref{zal} is
the zero vector (see Remark~\ref{manyrem}c)).
   \begin{thm} \label{boundiff}
Let $T\in\ogr{\hh}$. Then the following are equivalent{\em
:}
   \begin{enumerate}
   \item[(i)] $T$ is CPD and
satisfies \eqref{zal},
   \item[(ii)] there exist a semispectral measure $F\colon
\borel{\rbb_+} \to \ogr{\hh}$ and a selfadjoint
operator $D\in \ogr{\hh}$ such that
   \begin{enumerate}
   \item[(ii-a)] $F([1,\infty))=0$,
   \item[(ii-b)] $\frac{1}{1-x} \in L^1(F)$,
   \item[(ii-c)] $T^{*n}T^n = I + n D  -  \int_{[0,1)}
\frac{1-x^n}{(1-x)^2} F(\D x)$ for all $n\in \zbb_+$.
   \end{enumerate}
   \end{enumerate}
Moreover, the following statements are satisfied{\em
:}
   \begin{enumerate}
   \item[(iii)] if {\em (i)} holds and $(B,C,F)$
represents $T$, then $C=0$, $\frac{1}{1-x}\in L^1(F)$
and the pair $(D,F)$ with $D=B+\int_{[0,1)}
\frac{1}{1-x} F(\D x)$ is a unique pair satisfying
{\em (ii)},
   \item[(iv)] if {\em (ii)} holds,
then $D\Ge 0$, $\{T^{*(n+1)}T^{n+1} -
T^{*n}T^n\}_{n=0}^{\infty}$ converg\-es in {\sc
wot} to $D$ and $(B,0,F)$ represents $T$ with
$B=D-\int_{[0,1)} \frac{1}{1-x} F(\D x)$.
   \end{enumerate}
   \end{thm}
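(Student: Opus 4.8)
The plan is to reduce the operator statement entirely to the scalar Theorem~\ref{boundiff-scalar2}, applied to the sequences $\gammab_{T,h}$, $h\in\hh$, and then to reassemble the scalar data into operators using polarization together with the positivity of $F$ and $C$. The starting point is Theorem~\ref{cpdops}: if $T$ is CPD with representing triplet $(B,C,F)$, then for each $h\in\hh$ the triplet $(\is{Bh}h,\is{Ch}h,\is{F(\cdot)h}h)$ represents the CPD sequence $\gammab_{T,h}$, and by \eqref{buff2} one always has $\supp{\is{F(\cdot)h}h}\subseteq\rbb_+$, so condition \eqref{zal-scalar4} is automatic for every $\gammab_{T,h}$. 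Since \eqref{zal} is exactly \eqref{zal-scalar2} for each $\gammab_{T,h}$ and $\gamma_n=\|T^nh\|^2\Ge0$ gives \eqref{zal-scalar3}, each $\gammab_{T,h}$ satisfies statement (i) of Theorem~\ref{boundiff-scalar2} precisely when $T$ satisfies (i) here.

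For (i)$\Rightarrow$(ii) I would apply Theorem~\ref{boundiff-scalar2}(iii) to each $\gammab_{T,h}$. This yields $\is{Ch}h=0$ for all $h$, whence $C=0$ as $C\Ge0$; it gives $\frac{1}{1-x}\in L^1(\nu_h)$ for every $h$, which by the very definition of $L^1(F)$ means $\frac{1}{1-x}\in L^1(F)$, i.e.\ (ii-b); and it gives $\nu_h([1,\infty))=0$, so $\is{F([1,\infty))h}h=0$ and hence (ii-a). I would then set $D=B+\int_{[0,1)}\frac{1}{1-x}F(\D x)$, a well-defined selfadjoint operator by (ii-b), so that $\is{Dh}h=b_h+\int_{[0,1)}\frac{1}{1-x}\D\nu_h(x)$ equals the scalar $d_h$. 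Statement (ii-c) of Theorem~\ref{boundiff-scalar2} then reads $\is{T^{*n}T^nh}h=\is{h}h+n\is{Dh}h-\is{\bigl(\int_{[0,1)}\frac{1-x^n}{(1-x)^2}F(\D x)\bigr)h}h$ for all $h$, and since every operator involved is selfadjoint this is (ii-c). The converse (ii)$\Rightarrow$(i) is symmetric: with $\nu_h:=\is{F(\cdot)h}h$ and $d_h:=\is{Dh}h$, conditions (ii-a)--(ii-c) become the scalar (ii-a)--(ii-c) for $\gammab_{T,h}$, so Theorem~\ref{boundiff-scalar2}(ii)$\Rightarrow$(i) renders each $\gammab_{T,h}$ CPD and yields \eqref{zal}.

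The two ``moreover'' clauses I would read off the same dictionary. For (iii), the scalar uniqueness of $(d,\nu)$ in Theorem~\ref{boundiff-scalar2}(iii), combined with polarization (the forms $f,g\mapsto\is{D'f}g$ and $f,g\mapsto\is{F'(\varDelta)f}g$ are determined by their diagonals $\is{\,\cdot\,h}h$), forces any pair $(D',F')$ fulfilling (ii) to coincide with $(D,F)$. For (iv), the scalar part gives $d_h\Ge0$ for all $h$, hence $D\Ge0$; since (ii)$\Rightarrow$(i), Proposition~\ref{unif-bund} guarantees that $\{T^{*(n+1)}T^{n+1}-T^{*n}T^n\}_{n=0}^{\infty}$ converges in {\sc wot}, and evaluating on diagonal vectors identifies the limit as $D$, because $(\triangle\gammab_{T,h})_n\to d_h=\is{Dh}h$. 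That $(B,0,F)$ represents $T$ with $B=D-\int_{[0,1)}\frac{1}{1-x}F(\D x)$ then follows from the scalar identity $b_h=d_h-\int_{[0,1)}\frac{1}{1-x}\D\nu_h$, the uniqueness of representing triplets in Theorem~\ref{cpdops}, and polarization.

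The only genuinely technical point, where I expect a little care to be needed, is the well-definedness of the operator integral $\int_{[0,1)}\frac{1-x^n}{(1-x)^2}F(\D x)$ as a bounded operator. I would settle this by writing $\frac{1-x^n}{(1-x)^2}=\frac{1}{1-x}\sum_{j=0}^{n-1}x^j$ and observing that $\bigl|\frac{1-x^n}{(1-x)^2}\bigr|\Le\frac{n}{1-x}$ on $[0,1)$, so integrability against $F$ is inherited from (ii-b). Beyond this, the whole argument is a faithful transcription of the scalar Theorem~\ref{boundiff-scalar2} through the diagonal forms $\is{\,\cdot\,h}h$, with positivity and polarization carrying out the passage between the scalar and operator statements.
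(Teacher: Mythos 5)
Your proof is correct and takes essentially the same route as the paper: the paper's own proof of Theorem~\ref{boundiff} consists precisely of invoking the scalar Theorem~\ref{boundiff-scalar2} (with its ``moreover'' part) together with the ``furthermore'' part of Theorem~\ref{cpdops}, which is exactly the diagonal-sequence reduction you carry out. Your additional details (polarization to recover $D$, $F$ and the operator identities from the diagonals, and the bound $\bigl|\frac{1-x^n}{(1-x)^2}\bigr|\Le\frac{n}{1-x}$ on $[0,1)$ to justify the operator integrals) simply make explicit what the paper leaves implicit.
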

   \begin{proof}
Using Theorem~\ref{boundiff-scalar2} (together
with its ``moreover'' part) and Theorem~
\ref{cpdops} (together with its ``furthermore''
part), we deduce that statements (i) and (ii)
are equivalent and statements (iii) and (iv) are
valid.
   \end{proof}
The simple argument given below shows that
condition \eqref{zal} is strong enough to
guarantee that $r(T) \Le 1$.
   \begin{pro} \label{zen-jesz-nie}
If an operator $T\in\ogr{\hh}$ satisfies
condition {\em \eqref{zal}}, then
$\alpha_T(h):=\sup_{n\in \zbb_+} (\|T^{n+1}h\|^2
- \|T^nh\|^2) \Ge 0$ for all $h\in \hh$ and
$r(T)\Le 1$.
   \end{pro}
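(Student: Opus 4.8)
The plan is to derive both assertions from a single telescoping estimate, using only the pointwise hypothesis \eqref{zal} together with the trivial bound $\|T^Nh\|^2\Ge 0$. First I would note that by \eqref{zal} the number $\alpha_T(h)$ is a well-defined (finite) real number for each $h\in\hh$, and that by its definition as a supremum the increments satisfy $\|T^{n+1}h\|^2-\|T^nh\|^2\Le \alpha_T(h)$ for all $n\in\zbb_+$. Summing these inequalities over $n=0,\dots,N-1$ telescopes to
\[
\|T^Nh\|^2-\|h\|^2 \Le N\,\alpha_T(h), \quad N\in\nbb.
\]

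For the first assertion I would argue by contradiction. If $\alpha_T(h)<0$ for some $h\in\hh$, then the right-hand side of the displayed inequality tends to $-\infty$ as $N\to\infty$, whereas the left-hand side is bounded below by $-\|h\|^2$; this contradiction forces $\alpha_T(h)\Ge 0$ for every $h\in\hh$.

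For the spectral radius bound I would rewrite the telescoping estimate as $\|T^Nh\|^2\Le \|h\|^2+N\,\alpha_T(h)$, so that $\|T^Nh\|$ grows at most polynomially in $N$. Taking $N$-th roots and letting $N\to\infty$ yields $\limsup_{N\to\infty}\|T^Nh\|^{1/N}\Le 1$ for every $h\in\hh$. The remaining step is to pass from this pointwise (local spectral radius) bound to the global estimate $r(T)\Le 1$. This is precisely the transition supplied by \cite[Corollary~3]{Dan}, already invoked in the proof of Theorem~\ref{cpdops} to establish \eqref{fontan6}; I expect this passage to be the only nontrivial point, since in general a pointwise bound on $\limsup_{N\to\infty}\|T^Nh\|^{1/N}$ does not control the operator quantity $r(T)=\lim_{N\to\infty}\|T^N\|^{1/N}$ without a uniform-boundedness (Baire category) type input such as Daniluk's result.
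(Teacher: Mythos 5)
Your proposal is correct and follows essentially the same route as the paper's proof: the telescopic estimate $\|T^nh\|^2 \Le \|h\|^2 + n\,\alpha_T(h)$, from which nonnegativity of $\alpha_T(h)$ and the pointwise bound $\limsup_{n\to\infty}\|T^nh\|^{1/n}\Le 1$ follow, and then the passage to $r(T)\Le 1$ via \cite[Corollary~3]{Dan}. You have also correctly identified Daniluk's result as the one genuinely nontrivial ingredient, exactly as the paper uses it.
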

   \begin{proof}
Using the telescopic argument (cf.\ \eqref{wyr-ic})
yields
   \begin{align*}
\|T^nh\|^2 \Le \|h\|^2 + n\alpha_T(h), \quad n \in
\nbb, h \in \hh.
   \end{align*}
Hence, $\alpha_T(h) \Ge 0$ and $\limsup_{n\to\infty}
\|T^nh\|^{1/n} \Le 1$ for all $h\in \hh$. Applying
\cite[Corollary~3]{Dan}, we conclude that $r(T)\Le 1$.
   \end{proof}
We show below that if the operator $D$ in
Theorem~\ref{boundiff} is nonzero, then the spectral
radius of $T$ is equal to $1$. The case $D=0$ is
discussed in Theorem~\ref{glow-main}.
   \begin{thm}\label{rt=1}
Let $T\in \ogr{\hh}$ be a CPD operator
satisfying \eqref{zal}. Suppose that
$\frac{1}{(1-x)^2} \in L^1(F)$ and $D\neq 0$,
where $F$ is as in Theo\-rem~{\em
\ref{cpdops}(ii)} and $D:=\mbox{{\sc
(wot)}}\lim_{n\to \infty}(T^{*(n+1)}T^{n+1} -
T^{*n}T^n)$. Then~$r(T)=1$.
   \end{thm}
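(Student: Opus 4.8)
The plan is to prove the two bounds $r(T)\Le 1$ and $r(T)\Ge 1$ separately. The first is already available: since $T$ satisfies \eqref{zal}, Proposition~\ref{zen-jesz-nie} gives $r(T)\Le 1$ directly. Hence the entire content of the theorem reduces to establishing $r(T)\Ge 1$, and this is precisely where the hypotheses $D\neq 0$ and $\frac{1}{(1-x)^2}\in L^1(F)$ come into play.

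First I would feed the assumptions into Theorem~\ref{boundiff}. Since $T$ is CPD and satisfies \eqref{zal}, statement (ii) of that theorem holds, so that $(B,0,F)$ represents $T$, $D\Ge 0$ by part (iv), and
\begin{align*}
T^{*n}T^n = I + nD - \int_{[0,1)} \frac{1-x^n}{(1-x)^2}\, F(\D x), \quad n\in \zbb_+.
\end{align*}
As $D\neq 0$ and $D\Ge 0$, there is a vector $h\in \hh$ with $\is{Dh}{h}>0$. Testing the displayed identity against such an $h$, and using the elementary estimate $0\Le \frac{1-x^n}{(1-x)^2}\Le \frac{1}{(1-x)^2}$ for $x\in[0,1)$ and $n\in\zbb_+$ together with the hypothesis $\frac{1}{(1-x)^2}\in L^1(F)$, I obtain the linear lower bound
\begin{align*}
\|T^n h\|^2 \Ge \|h\|^2 + n\is{Dh}{h} - M_h, \quad n\in \zbb_+,
\end{align*}
where $M_h := \int_{[0,1)} \frac{1}{(1-x)^2}\is{F(\D x)h}{h}<\infty$.

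Since $\is{Dh}{h}>0$, the right-hand side grows linearly in $n$ and is eventually positive, so that $\liminf_{n\to\infty}\|T^n h\|^{2/n}\Ge 1$, the $n$-th root of a linearly growing positive sequence tending to $1$. Consequently $\limsup_{n\to\infty}\|T^n h\|^{1/n}\Ge 1$, and combining this with $\limsup_{n\to\infty}\|T^n h\|^{1/n}\Le r(T)$ (a standard consequence of Gelfand's formula, cf.\ \eqref{koron-av}) yields $r(T)\Ge 1$. Together with $r(T)\Le 1$ this gives $r(T)=1$. I do not expect any serious obstacle here; the single point needing care is the uniform-in-$n$ domination of $\frac{1-x^n}{(1-x)^2}$ by the $F$-integrable function $\frac{1}{(1-x)^2}$, which is exactly what the hypothesis $\frac{1}{(1-x)^2}\in L^1(F)$ provides and which turns the representation of Theorem~\ref{boundiff} into the clean linear lower bound driving the argument.
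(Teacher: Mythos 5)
Your proposal is correct and follows essentially the same route as the paper's proof: both get $r(T)\Le 1$ from Proposition~\ref{zen-jesz-nie}, invoke Theorem~\ref{boundiff} (statements (iii) and (iv)) to obtain the representation (ii-c) with $D\Ge 0$, pick $h$ with $\is{Dh}{h}>0$, and use finiteness of $\int_{[0,1)}\frac{1}{(1-x)^2}\is{F(\D x)h}{h}$ to show $\|T^n h\|^2$ grows linearly, whence $r(T)\Ge \limsup_{n\to\infty}\|T^n h\|^{1/n}\Ge 1$. The only (immaterial) difference is that you bound the integral term uniformly in $n$ by domination, whereas the paper lets it converge via Lebesgue's monotone convergence theorem before dividing by $n$.
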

   \begin{proof}
By Proposition~\ref{unif-bund}, the limit
$\mbox{{\sc (wot)}}\lim_{n\to
\infty}(T^{*(n+1)}T^{n+1} - T^{*n}T^n)$ exists.
It follows from statements (iii) and (iv) of
Theorem~\ref{boundiff} that the pair $(D,F)$
satisfies condition (ii) of this theorem and
$D\Ge 0$. Hence $\is{Dh_0}{h_0}
> 0$ for some $h_0\in \hh$. In view of
Lebesgue's monotone convergence theorem, the
sequence $\{\int_{[0,1)}\frac{1-x^n}{(1-x)^2}
\is{F(\D x)h_0}{h_0}\}_{n=0}^{\infty}$ converges
to $\int_{[0,1)}\frac{1}{(1-x)^2} \is{F(\D
x)h_0}{h_0}$. Because the last integral is
finite, we infer from equality \mbox{(ii-c)} of
Theorem~\ref{boundiff} that there exists $n_0\in
\nbb$ such that
   \begin{align*}
\frac{\|T^n h_0\|^2}{n} = \frac{\|h_0\|^2 -
\int_{[0,1)} \frac{1-x^n}{(1-x)^2} \is{F(\D
x)h_0}{h_0}}{n} & + \is{Dh_0}{h_0}
   \\
& \Ge \frac{1}{2}\is{Dh_0}{h_0}, \quad n\Ge n_0.
   \end{align*}
Combined with Gelfand's formula for spectral radius,
this implies that
   \begin{align*}
r(T) \Ge \limsup_{n\to \infty} \frac{\|T^n
h_0\|^{1/n}}{n^{1/2n}} \Ge 1.
   \end{align*}
Therefore applying Proposition~\ref{zen-jesz-nie}
yields $r(T)=1$.
   \end{proof}
It follows from
Theorem~\ref{boundiff}\mbox{(ii-c)} that if
$T\in \ogr{\hh}$ is a CPD operator satisfying
\eqref{zal}, then there exists $\alpha \in
\rbb_+$ such that $\|T^n\| \Le \alpha \sqrt{n}$
for all $n\in \nbb$. The next proposition shows
that the powers of a CPD operator with spectral
radius less than or equal to~$1$ have polynomial
growth of degree at most $1$.
   \begin{pro}\label{wzrostkwadr}
Let $T\in \ogr{\hh}$ be a CPD operator with the
representing triplet $(B,C,F).$ Then the
following conditions are equivalent{\em :}
   \begin{enumerate}
   \item[(i)] there exists $\alpha\in \rbb_+$ such that
   \begin{align} \label{Vang01}
\|T^n\| \Le \alpha \cdot n, \quad n\in \nbb,
   \end{align}
   \item[(ii)] $r(T) \Le 1,$
   \item[(iii)] $\supp F \subseteq [0,1].$
   \end{enumerate}
Moreover, {\em (iii)} implies \eqref{Vang01}
with $\alpha=\sqrt{1+\|B\|+\|C\|+\|F([0,1])\|}$.
   \end{pro}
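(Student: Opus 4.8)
The plan is to prove the cyclic chain of implications (i)$\Rightarrow$(ii)$\Rightarrow$(iii)$\Rightarrow$(i), arranging the last implication so that it simultaneously yields the ``moreover'' part. The two opening implications are immediate. For (i)$\Rightarrow$(ii) I would invoke Gelfand's formula for the spectral radius: from \eqref{Vang01} we get $r(T)=\lim_{n\to\infty}\|T^n\|^{1/n}\Le \lim_{n\to\infty}(\alpha n)^{1/n}=1$. For (ii)$\Rightarrow$(iii) I would simply quote \eqref{fontan} of Theorem~\ref{cpdops}, which gives $\supp{F}\subseteq [0,r(T)^2]$; since $r(T)\Le 1$ forces $r(T)^2\Le 1$, this yields $\supp{F}\subseteq[0,1]$.

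The substance of the argument is the implication (iii)$\Rightarrow$(i), and here I would reduce everything to the scalar statement already established in Proposition~\ref{grown}. By the ``furthermore'' clause of Theorem~\ref{cpdops}, for each $h\in\hh$ the triplet $(\is{Bh}{h},\is{Ch}{h},\is{F(\cdot)h}{h})$ is the representing triplet of the CPD sequence $\gammab_{T,h}=\{\|T^nh\|^2\}_{n=0}^{\infty}$. The hypothesis $\supp{F}\subseteq[0,1]$ forces $\supp{\is{F(\cdot)h}{h}}\subseteq\supp{F}\subseteq[0,1]\subseteq[-1,1]$, so condition (iii) of Proposition~\ref{grown} holds for $\gammab_{T,h}$. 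Its ``moreover'' part then delivers the scalar bound $\|T^nh\|^2=(\gammab_{T,h})_n\Le \alpha_h\cdot n^2$ for all $n\in\nbb$, with $\alpha_h=\|h\|^2+|\is{Bh}{h}|+\is{Ch}{h}+\is{F([0,1])h}{h}$; note that $\is{F(\rbb_+)h}{h}=\is{F([0,1])h}{h}$ because $F$ is concentrated on $[0,1]$.

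It then remains to make the constant $\alpha_h$ uniform over unit vectors, which I regard as the only step requiring care. For $\|h\|=1$, self-adjointness of $B$ together with positivity of $C$ and of $F([0,1])$ gives $|\is{Bh}{h}|\Le\|B\|$, $\is{Ch}{h}\Le\|C\|$ and $\is{F([0,1])h}{h}\Le\|F([0,1])\|$, whence $\alpha_h\Le 1+\|B\|+\|C\|+\|F([0,1])\|$. Since $\|T^n\|^2=\sup_{\|h\|=1}\|T^nh\|^2$, taking the supremum over unit vectors yields $\|T^n\|^2\Le (1+\|B\|+\|C\|+\|F([0,1])\|)\,n^2$, that is, \eqref{Vang01} holds with $\alpha=\sqrt{1+\|B\|+\|C\|+\|F([0,1])\|}$. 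This establishes both (i) and the ``moreover'' assertion. I do not expect a genuine obstacle here: the crux is merely that the per-vector constant produced by Proposition~\ref{grown} is dominated by the operator norms of $B$, $C$ and $F([0,1])$, which is exactly what passing to $\sup_{\|h\|=1}$ provides.
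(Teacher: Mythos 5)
Your proof is correct and follows exactly the route the paper indicates (its proof is omitted there with the remark that the result ``can be deduced from Proposition~\ref{grown} and Gelfand's formula for spectral radius''): Gelfand for (i)$\Rightarrow$(ii), the inclusion \eqref{fontan} for (ii)$\Rightarrow$(iii), and Proposition~\ref{grown} applied to each $\gammab_{T,h}$ via the ``furthermore'' part of Theorem~\ref{cpdops} for (iii)$\Rightarrow$(i). The uniformization step over unit vectors, which also yields the stated constant $\alpha=\sqrt{1+\|B\|+\|C\|+\|F([0,1])\|}$, is handled correctly.
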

Proposition~\ref{wzrostkwadr} can be deduced
from Proposition~\ref{grown} and Gelfand's
formula for spectral radius. Its proof is
omitted. According to
Proposition~\ref{sub-mzero-n} and
Remark~\ref{manyrem}d) below, there are CPD
operators having exactly polynomial growth of
degree $1$. Observe that a subnormal operator of
polynomial growth of arbitrary degree, being
normaloid (see \eqref{subn-norm}) is a
contraction, that is, it has polynomial growth
of degree zero.
   \subsection{\label{Sec3.2}A dilation representation}
First, we adapt Agler's hereditary
functional calculus
\cite{Ag85,MP89,Cu-Pu93} to our needs.
For $T\in \ogr{\hh}$, we set
   \begin{align} \label{Il111}
p\lrangle{T} = \sum_{i\Ge 0} \alpha_i T^{*i}T^i\quad
\text{for } p=\sum_{i\Ge 0} \alpha_i X^i \in \cbb[X].
   \end{align}
In particular, we have (see \eqref{bmt})
   \begin{align} \label{nab-bla2}
\bscr_m(T) = (1-X)^m\lrangle{T}, \quad m\in \zbb_+.
   \end{align}
The map $\cbb[X] \ni p \mapsto p\lrangle{T} \in
\ogr{\hh}$ is linear but in general not multiplicative
(e.g., if $T\in \ogr{\hh}$ is a nilpotent operator
with index of nilpotency $2$ and $p=X-1$, then
$p\lrangle{T}^2 \neq (p^2)\lrangle{T}$). However, it
has the following property.
   \begin{align} \label{stand-in}
   \begin{minipage}{70ex}
{\em The map $p \mapsto p\lrangle{T}$ is a unique
linear map from $\cbb[X]$ to $\ogr{\hh}$ such that
$X^0\lrangle{T}=I$ and $(X p)\lrangle{T}
=T^*p\lrangle{T}T$ for all $p \in \cbb[X]$.}
   \end{minipage}
   \end{align}
There is another way of defining $p\lrangle{T}$.
Namely, let us consider the elementary operator
$\nabla_T \colon \ogr{\hh} \to \ogr{\hh}$
defined by
   \begin{align} \label{nubile}
\nabla_T(A)=T^*AT, \quad A\in \ogr{\hh}.
   \end{align}
It is then easily seen that $p\lrangle{T} =
p(\nabla_T)(I)$ for any $p\in \cbb[X]$ and by
\eqref{stand-in},
   \begin{align} \label{nab-bla}
p(\nabla_T)(q\lrangle{T}) = \left((pq)(\nabla_T)\right)(I)
= (pq)\lrangle{T}, \quad p,q\in \cbb[X].
   \end{align}
Although the map $\cbb[X] \ni p \mapsto
p\lrangle{T} \in \ogr{\hh}$ is not
multiplicative, it does have a property
that resembles multiplicativity.
   \begin{lem} \label{ide-fin}
Let $T\in \ogr{\hh}$ and $q_0 \in \cbb[X]$. Then the
set
   \begin{align*}
\idealo=\{q\in \cbb[X]\colon (q_0 q)\lrangle{T}=0\}
   \end{align*}
is a principal ideal in $\cbb[X]$, that is, $\idealo =
\{p w \colon p \in \cbb[X]\}$ for some $w\in \cbb[X]$.
Moreover, if $q_0 = q_0^*$, then $w$ can be chosen to
satisfy $w^*=w$.
   \end{lem}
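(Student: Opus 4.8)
The plan is to recognize $\idealo$ as an ideal of the polynomial ring $\cbb[X]$, invoke the fact that $\cbb[X]$ is a principal ideal domain to extract a single generator $w$, and then, in the self-adjoint case $q_0=q_0^*$, correct $w$ by a scalar of modulus one to make it satisfy $w^*=w$.

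First I would check that $\idealo$ is an ideal. Closure under addition is immediate from the linearity of $p\mapsto p\lrangle{T}$. For absorption, given $q\in\idealo$ and arbitrary $r\in\cbb[X]$, I would use commutativity of $\cbb[X]$ to write $q_0(rq)=r(q_0q)$ and then apply \eqref{nab-bla} in the form $(r(q_0q))\lrangle{T}=r(\nabla_T)\big((q_0q)\lrangle{T}\big)$. Since $q\in\idealo$ means $(q_0q)\lrangle{T}=0$, and since $r(\nabla_T)$ is a linear map on $\ogr{\hh}$ (a polynomial in the linear operator $\nabla_T$) and hence sends $0$ to $0$, I would conclude $(q_0(rq))\lrangle{T}=0$, i.e.\ $rq\in\idealo$. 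This is exactly the step where the non-multiplicativity of $p\mapsto p\lrangle{T}$ could cause trouble, and it is \eqref{nab-bla} (equivalently the defining property \eqref{stand-in}) that makes the argument go through; I regard this as the conceptual core of the lemma. Once $\idealo$ is known to be an ideal, principality is automatic, since $\cbb[X]$ is a principal ideal domain, yielding $\idealo=\{pw\colon p\in\cbb[X]\}$ for some $w\in\cbb[X]$.

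For the ``moreover'' part I would first record how the involution interacts with the hereditary calculus. Because each $T^{*i}T^i$ is self-adjoint, taking adjoints in \eqref{Il111} gives $(p\lrangle{T})^*=p^*\lrangle{T}$ for every $p\in\cbb[X]$. Hence, when $q_0=q_0^*$, the identity $(q_0q)\lrangle{T}=0$ holds if and only if its adjoint $(q_0q)^*\lrangle{T}=(q_0^*q^*)\lrangle{T}=(q_0q^*)\lrangle{T}$ vanishes, which shows that $q\in\idealo$ if and only if $q^*\in\idealo$; that is, $\idealo$ is invariant under the involution.

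Finally I would upgrade the generator. If $w=0$ there is nothing to prove, so assume $w\neq0$. The $*$-invariance of $\idealo$ gives $(w)=(w^*)$, and since two generators of the same nonzero ideal in the principal ideal domain $\cbb[X]$ differ by a unit, i.e.\ a nonzero scalar, there is $c\in\cbb\setminus\{0\}$ with $w^*=cw$. Applying the involution once more yields $w=\bar c\,w^*=|c|^2w$, so $|c|=1$. I would then choose $\lambda\in\cbb$ with $\lambda^2=c$ and $|\lambda|=1$ and replace $w$ by $\lambda w$, which generates the same ideal; the computation $(\lambda w)^*=\bar\lambda w^*=\bar\lambda c\,w=\lambda w$ (using $\bar\lambda c=\bar\lambda\lambda^2=\lambda$) shows the new generator is self-adjoint, completing the argument.
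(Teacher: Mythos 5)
Your proposal is correct and follows essentially the same route as the paper: absorption of $\idealo$ via \eqref{nab-bla} (i.e.\ $0=r(\nabla_T)\big((q_0q)\lrangle{T}\big)=(q_0 rq)\lrangle{T}$), principality from $\cbb[X]$ being a principal ideal domain, and the self-adjoint generator obtained from the identity $(p\lrangle{T})^*=p^*\lrangle{T}$. The only difference is that the paper leaves the last step as a one-line remark, whereas you correctly spell out the unit-correction argument ($w^*=cw$ with $|c|=1$, then replace $w$ by $\lambda w$ with $\lambda^2=c$) that this remark implicitly relies on.
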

   \begin{proof}
First note that $\idealo$ is an ideal. Indeed, if
$q\in \idealo$ and $p\in \cbb[X]$, then
   \begin{align*}
0=p(\nabla_T)\Big(\big(q_0 q\big)\lrangle{T}\Big)
\overset{\eqref{nab-bla}}= (q_0pq)\lrangle{T}.
   \end{align*}
By \cite[Theorem~III.3.9]{hun74}, $\idealo$ is a
principal ideal in $\cbb[X]$. That $w$ can be chosen
to satisfy $w^*=w$, follows from the fact that
$(p\lrangle{T})^*=(p^{*}\lrangle{T})$ for all $p\in
\cbb[X]$.
   \end{proof}
   \begin{rem} \label{m-plus-k}
It follows from \eqref{nab-bla2} and
Lemma~\ref{ide-fin} that if $T\in\ogr{\hh}$ is an
$m$-isometry, that is $(1-X)^m\lrangle{T} =
\bscr_m(T)=0$, then $((1-X)^m q)\lrangle{T} =0$ for
every $q\in \cbb[X]$; in particular,
$\bscr_k(T)=(1-X)^k\lrangle{T} = 0$ for all $k\Ge m$,
which means that $T$ is a $k$-isometry for all $k\Ge
m$ (see \cite[p.\ 389, line 6]{Ag-St1}).
   \hfill $\diamondsuit$
   \end{rem}
Given $a\in \cbb$, we define the linear transformation
$\mathfrak{D}_a \colon \cbb[X] \to \cbb[X]$ by
   \begin{align*}
\mathfrak{D}_a p &=\frac{p - p(a)}{X-a}, \quad p \in
\cbb[X].
   \end{align*}
Using the Taylor series expansion about the
point $a$, it is easily seen that the
transformation $\mathfrak{D}_a$ is well defined
and ($p^{\prime}(a)$ stands for the derivative
of $p$ at $a$)
   \begin{align} \label{gim-11}
\mathfrak{D}_a^2 p&=\frac{p - p(a) - p^{\prime}(a)
(X-a)}{(X-a)^2}, \quad a\in \cbb, \, p\in \cbb[X].
   \end{align}
It is a simple matter to verify that for each $p\in
\cbb[X]$, $\mathfrak{D}_a^n p=0$ whenever $n > \deg
p$.

The following lemma will be used in some proofs of
subsequent results.
   \begin{lem} \label{minim-2}
The following assertions hold{\em :}
   \begin{enumerate}
   \item[(a)] if  $\kk$ is a
Hilbert space, $S\in \ogr{\kk}_+$, $E$ is the spectral
measure of $S$ and $\mscr$ is a vector subspace of
$\kk$, then
   \begin{align} \label{bigubigu}
\bigvee \Big\{E(\varDelta)\mscr \colon \varDelta\in
\borel{\rbb_+}\Big\} = \bigvee \{S^n\mscr \colon n\in
\zbb_+\},
   \end{align}
   \item[(b)] if for $i=1,2$,
$(\kk_i,R_i,S_i)$ consists of a
Hilbert space $\kk_i$ and operators $R_i
\in \ogr{\hh,\kk_i}$ and $S_i\in
\ogr{\kk_i}_+$ such that $\kk_i=\bigvee
\{S_i^n\ob{R_i}\colon n\in \zbb_+\}$ and
$R_1^*S_1^n R_1 = R_2^*S_2^n R_2$ for all
$n\in\zbb_+$, then there exists a
$($unique$)$ unitary isomorphism $U\in
\ogr{\kk_1,\kk_2}$ such that $UR_1=R_2$
and $US_1=S_2U${\em ;} in particular,
$\sigma(S_1) = \sigma(S_2)$ and
$\|S_1\|=\|S_2\|$.
   \end{enumerate}
   \end{lem}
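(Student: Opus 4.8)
\textit{Part (a).} Set $\mathcal{N} := \bigvee\{S^n\mscr : n\in\zbb_+\}$ and $\mathcal{E} := \bigvee\{E(\varDelta)\mscr : \varDelta\in\borel{\rbb_+}\}$; the plan is to prove the two inclusions separately. The inclusion $\mathcal{N}\subseteq\mathcal{E}$ is the routine one: since $S\Ge 0$ is bounded, its spectral measure $E$ is concentrated on the compact set $\sigma(S)\subseteq[0,\|S\|]$ and $S^n=\int_{\rbb_+} x^n\,E(\D x)$, so approximating $x^n$ uniformly on $[0,\|S\|]$ by simple functions expresses $S^n h$ (for $h\in\mscr$) as a norm-limit of finite linear combinations of vectors $E(\varDelta)h\in\mathcal{E}$; as $\mathcal{E}$ is closed, $S^n h\in\mathcal{E}$, whence $\mathcal{N}\subseteq\mathcal{E}$.

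For the reverse inclusion $\mathcal{E}\subseteq\mathcal{N}$ I would exploit self-adjointness. First, $\mathcal{N}$ is invariant under $S$, because $S(S^n h)=S^{n+1}h$ and $S$ is bounded. Since $S=S^*$, invariance of $\mathcal{N}$ under $S$ coincides with invariance under $S^*$, so $\mathcal{N}$ reduces $S$; equivalently, the orthogonal projection $P$ onto $\mathcal{N}$ commutes with $S$. By the spectral theorem, any operator commuting with $S$ commutes with every bounded Borel function of $S$, in particular with each $E(\varDelta)=\chi_{\varDelta}(S)$. Hence $\mathcal{N}$ reduces every $E(\varDelta)$, and since $\mscr=S^0\mscr\subseteq\mathcal{N}$ we obtain $E(\varDelta)\mscr\subseteq E(\varDelta)\mathcal{N}\subseteq\mathcal{N}$ for all $\varDelta$, giving $\mathcal{E}\subseteq\mathcal{N}$. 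The only (standard) subtlety is the passage from $S$-invariance to reducibility, which rests on $S$ being self-adjoint.

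\textit{Part (b).} The strategy is the classical uniqueness-of-minimal-dilation argument. Define $U$ on the dense linear manifold $\mathcal{D}_1:=\lin\{S_1^n R_1 h : n\in\zbb_+,\ h\in\hh\}$ by
\[
U\Big(\sum_k S_1^{n_k} R_1 h_k\Big) = \sum_k S_2^{n_k} R_2 h_k .
\]
The crux is a single Gram-type computation that simultaneously yields well-definedness and isometry: using $S_i=S_i^*$ and the moment identity $R_1^*S_1^{m} R_1=R_2^*S_2^{m}R_2$,
\begin{align*}
\Big\|\sum_k S_2^{n_k} R_2 h_k\Big\|^2
&= \sum_{k,l}\is{S_2^{n_k} R_2 h_k}{S_2^{n_l} R_2 h_l}
= \sum_{k,l}\is{R_2^*S_2^{n_k+n_l}R_2 h_k}{h_l} \\
&= \sum_{k,l}\is{R_1^*S_1^{n_k+n_l}R_1 h_k}{h_l}
= \Big\|\sum_k S_1^{n_k} R_1 h_k\Big\|^2 .
\end{align*}
In particular, if the $\kk_1$-vector is $0$ then so is its image, so $U$ is well defined on $\mathcal{D}_1$ and isometric there. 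Since $\mathcal{D}_1$ is dense in $\kk_1$ by the minimality hypothesis $\kk_1=\bigvee\{S_1^n\ob{R_1}\}$, $U$ extends to an isometry $\kk_1\to\kk_2$; its range is closed and contains the dense manifold $\lin\{S_2^n R_2 h\}$, so $U$ is onto, hence unitary.

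It remains to verify the intertwining relations and uniqueness. Taking $n=0$ gives $UR_1=R_2$, and on $\mathcal{D}_1$ one has $US_1(S_1^n R_1 h)=U(S_1^{n+1}R_1 h)=S_2^{n+1}R_2 h = S_2\,U(S_1^n R_1 h)$, so $US_1=S_2 U$ by continuity. Any unitary $V$ with $VR_1=R_2$ and $VS_1=S_2 V$ satisfies $VS_1^n R_1=S_2^n V R_1=S_2^n R_2$, forcing $V=U$ on $\mathcal{D}_1$ and hence everywhere, which gives uniqueness. Finally, $US_1=S_2 U$ with $U$ unitary yields $S_2=US_1U^*$, so $S_1$ and $S_2$ are unitarily equivalent and therefore $\sigma(S_1)=\sigma(S_2)$ and $\|S_1\|=\|S_2\|$. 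The only genuine obstacle in the entire argument is the displayed Gram identity; everything else is bookkeeping.
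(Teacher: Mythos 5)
Your proposal is correct, and both halves deserve comment. For part (b) there is essentially nothing to compare: the paper dismisses the construction of $U$ as "easily seen" and "a matter of routine," and your Gram-identity argument — defining $U$ on $\lin\{S_1^nR_1h\colon n\in\zbb_+,\,h\in\hh\}$, using $R_1^*S_1^{n_k+n_l}R_1=R_2^*S_2^{n_k+n_l}R_2$ to get well-definedness and isometry at once, then extending, checking surjectivity, intertwining and uniqueness — is exactly the classical minimal-dilation-uniqueness argument being alluded to. For part (a), however, you take a genuinely different route. The paper runs a single orthogonality argument: a vector $g$ is orthogonal to $\bigvee\{S^n\mscr\colon n\in\zbb_+\}$ iff $\int_{[0,\|S\|]}x^n\is{E(\D x)h}{g}=0$ for all $n\in\zbb_+$ and $h\in\mscr$, and then the Weierstrass theorem combined with the uniqueness part of the Riesz representation theorem forces each complex measure $\is{E(\cdot)h}{g}$ to vanish, i.e.\ $g$ is orthogonal to $\bigvee\{E(\varDelta)\mscr\colon\varDelta\in\borel{\rbb_+}\}$; equality of orthocomplements then gives \eqref{bigubigu} in one stroke. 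You instead prove the two inclusions separately: one direction by uniform simple-function approximation of $x^n$ on $[0,\|S\|]$, the other by observing that $\bigvee\{S^n\mscr\}$ is $S$-invariant, hence (by self-adjointness) reducing, so its orthogonal projection commutes with $S$ and therefore with every $E(\varDelta)=\chi_{\varDelta}(S)$. Both arguments are complete; the paper's is more economical and needs only Weierstrass plus Riesz uniqueness, while yours trades that measure-theoretic step for the standard (but nontrivial) commutant fact that an operator commuting with a self-adjoint operator commutes with its spectral measure — a fact of comparable depth, so neither approach is strictly more elementary, and your version has the mild virtue of making the easy inclusion explicit.
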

   \begin{proof}
(a) Since $S$ is bounded, $E(\rbb_+\setminus
[0,r])=0$, where $r:=\|S\|$. Take a vector $g\in \kk$.
Then $g$ is orthogonal to the right-hand side of
\eqref{bigubigu} if and only if
   \begin{align} \label{sentak}
0= \is{S^nh}{g} = \int_{[0,r]} x^n \is{E(\D x)h}{g},
\quad n \in \zbb_+, \, h \in \mscr.
   \end{align}
It follows from the Weierstrass approximation theorem
and the uniqueness part of the Riesz representation
theorem (see \cite[Theorem~6.19]{Rud87}) that
\eqref{sentak} holds if and only if
$\is{E(\varDelta)h}{g}=0$ for all $\varDelta \in
\borel{\rbb_+}$ and $h\in \mscr$, or equivalently if
and only if $g$ is orthogonal to the left-hand side of
\eqref{bigubigu}. This implies \eqref{bigubigu}.

(b) It is easily seen that there exists a unique unitary
isomorphism $U\in \ogr{\kk_1,\kk_2}$ such that $US_1^nR_1 h
= S_2^nR_2 h$ for all $h \in \hh$ and $n\in \zbb_+$. It is
a matter of routine to verify that $U$ has the desired
properties. This completes the proof.
   \end{proof}
For the reader's convenience, we recall a version of the
Naimark dilation theorem needed in this paper.
   \begin{thm}[\mbox{\cite[Theorem~6.4]{Ml78}}] \label{Naim-ark}
If $M\colon\borel{\rbb_+} \to \ogr{\hh}$ is a
semispectral measure, then there exist a Hilbert space
$\kk$, an operator $R\in \ogr{\hh,\kk}$ and a spectral
measure $E\colon \borel{\rbb_+} \to \ogr{\kk}$ such
that
   \allowdisplaybreaks
   \begin{gather} \label{Nai1}
M(\varDelta) = R^*E(\varDelta) R, \quad \varDelta \in
\borel{\rbb_+},
   \\  \label{Nai2}
\kk = \bigvee
\big\{E(\varDelta)\ob{R}\colon
\varDelta\in \borel{\rbb_+}\big\}.
   \end{gather}
   \end{thm}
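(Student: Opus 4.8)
The plan is to carry out the classical Kolmogorov--Naimark (GNS-type) construction, building $\kk$ directly from the semispectral measure $M$. Let $\mathscr{S}$ denote the complex vector space of all $\hh$-valued simple functions on $\rbb_+$, i.e.\ finite sums $\sum_i \chi_{\varDelta_i} h_i$ with $\varDelta_i \in \borel{\rbb_+}$ and $h_i \in \hh$. On $\mathscr{S}$ I would introduce the sesquilinear form
\begin{align*}
\Big\langle \sum_i \chi_{\varDelta_i} h_i, \sum_j \chi_{\varDelta_j'} g_j\Big\rangle_M = \sum_{i,j} \is{M(\varDelta_i \cap \varDelta_j') h_i}{g_j}.
\end{align*}
Since $M(\varDelta) \Ge 0$ for every $\varDelta$, this form is nonnegative; factoring out its null space and completing yields a Hilbert space $\kk$, with $[\,\cdot\,]$ denoting the canonical map $\mathscr{S} \to \kk$.

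Next I would write down the candidate dilation data. Set $Rh = [\chi_{\rbb_+} h]$ for $h \in \hh$; the bound $\|Rh\|^2 = \is{M(\rbb_+)h}{h} \Le \|M(\rbb_+)\|\,\|h\|^2$ shows $R \in \ogr{\hh,\kk}$. For $\varDelta \in \borel{\rbb_+}$ define $E(\varDelta)$ on $\mathscr{S}$ by $E(\varDelta)[\chi_{\varDelta'}h] = [\chi_{\varDelta \cap \varDelta'}h]$, extended linearly. A direct computation gives $\is{E(\varDelta)[\chi_{\varDelta_1}h]}{[\chi_{\varDelta_2}g]} = \is{M(\varDelta\cap\varDelta_1\cap\varDelta_2)h}{g}$, which is symmetric under interchanging the two arguments; combined with the idempotency $E(\varDelta)^2 = E(\varDelta)$ (coming from $\chi_{\varDelta\cap(\varDelta\cap\varDelta')} = \chi_{\varDelta\cap\varDelta'}$), Cauchy--Schwarz for the nonnegative form shows $E(\varDelta)$ is a contraction, hence descends to the quotient and extends to an orthogonal projection on $\kk$. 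The identity $\is{E(\varDelta)Rh}{Rg} = \is{M(\varDelta)h}{g}$ is then immediate, giving $R^*E(\varDelta)R = M(\varDelta)$, i.e.\ \eqref{Nai1}. Because $[\chi_{\varDelta'}h] = E(\varDelta')Rh$ and the simple functions are dense, the classes $E(\varDelta')Rh$ are total in $\kk$, which is exactly \eqref{Nai2}.

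It remains to verify that $E$ is a spectral measure. Finite additivity, multiplicativity $E(\varDelta_1\cap\varDelta_2) = E(\varDelta_1)E(\varDelta_2)$, and the normalization $E(\rbb_+) = I_\kk$ all follow from the corresponding set-theoretic identities for characteristic functions together with the displayed inner-product formula. The one point requiring genuine care---and the step I would regard as the main obstacle---is the strong $\sigma$-additivity of $E$. For pairwise disjoint $\{\varDelta_n\}_{n=1}^{\infty}$ with union $\varDelta$, I would estimate, on a generating vector,
\begin{align*}
\Big\|E(\varDelta)[\chi_{\varDelta'}h] - \sum_{n=1}^{N} E(\varDelta_n)[\chi_{\varDelta'}h]\Big\|^2 = \is{M\big((\varDelta\setminus{\textstyle\bigcup_{n\Le N}}\varDelta_n)\cap\varDelta')h}{h},
\end{align*}
and the right-hand side tends to $0$ as $N\to\infty$ precisely by the {\sc wot}-$\sigma$-additivity of $M$. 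Since the partial sums $\sum_{n\Le N} E(\varDelta_n)$ are uniformly bounded (being projections onto an increasing chain of subspaces) and the vectors $[\chi_{\varDelta'}h]$ are total in $\kk$, a routine $\varepsilon/3$ argument propagates the convergence to all of $\kk$. This establishes that $E\colon \borel{\rbb_+}\to\ogr{\kk}$ is a spectral measure and completes the construction.
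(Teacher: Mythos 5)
The paper offers no proof of Theorem~\ref{Naim-ark} to compare against: it is imported verbatim from Mlak's memoir \cite[Theorem~6.4]{Ml78}. Your proposal is a correct, self-contained proof along the classical Kolmogorov--Naimark lines, and it is essentially the standard argument for this result. Two small points deserve tightening. First, the positivity of the form $\langle\cdot,\cdot\rangle_M$ is not literally immediate from $M(\varDelta)\Ge 0$ for overlapping sets; one should refine a given simple function over the disjoint atoms of the algebra generated by the $\varDelta_i$, after which finite additivity of $M$ reduces the quadratic form to $\sum_k \is{M(A_k)g_k}{g_k}\Ge 0$ (this same refinement also shows the form is well defined, i.e.\ independent of the representation). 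Second, you define $E(\varDelta)$ directly on equivalence classes $[\chi_{\varDelta'}h]$ and ``extend linearly,'' which quietly presupposes that multiplication by $\chi_\varDelta$ maps null vectors of the form to null vectors; the clean order is to define $E(\varDelta)$ on the simple functions themselves as multiplication by $\chi_\varDelta$, observe it is symmetric and idempotent with respect to the form, run your Cauchy--Schwarz estimate $\|E(\varDelta)f\|^2=\langle E(\varDelta)f,f\rangle_M\Le\|E(\varDelta)f\|\,\|f\|$ to get contractivity, and only then pass to the quotient and completion. With that reordering everything you wrote goes through: $R^*E(\varDelta)R=M(\varDelta)$ and the totality statement \eqref{Nai2} follow at once from $E(\varDelta')Rh=[\chi_{\varDelta'}h]$, multiplicativity and normalization of $E$ are checked on generators, and your verification of strong $\sigma$-additivity --- reducing to $\is{M((\varDelta\setminus\bigcup_{n\Le N}\varDelta_n)\cap\varDelta')h}{h}\to 0$ via the {\sc wot} $\sigma$-additivity of $M$, then propagating by uniform boundedness of the projection partial sums --- is exactly right.
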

We are now ready to give a dilation
representation for CPD operators and relate
their spectral radii to the norms of positive
operators appearing in this representation.
Dilation representations for complete
hypercontractions and complete hyperexpansions
were given in \cite{Ja02} and afterwards
generalized to the case of complete
hypercontractions of finite order in
\cite{Cha-Sh}. All aforesaid representations
were built over the closed interval $[0,1]$.
What is more, the dilation representation for
complete hypercontractions of order $2$ (which
are very particular instances of CPD operators)
was proved under a restrictive assumption on the
representing semispectral measure (see
\cite[Theorem~4.20]{Cha-Sh}). Below we use the
convention \eqref{konw-1}.
   \begin{thm} \label{dyltyprep}
Let $T\in \ogr{\hh}$. Then the following conditions are
equivalent{\em :}
   \begin{enumerate}
   \item[(i)] $T$ is CPD,
   \item[(ii)] there exists a
semispectral measure $M\colon \borel{\rbb_+} \to \ogr{\hh}$
with compact support such that
   \begin{align} \label{checpt-2}
p\lrangle{T} = p(1) I - p^{\prime}(1) \bscr_1(T) +
\int_{\rbb_+} (\mathfrak{D}_1^2 p)(x) M(\D x), \quad
p\in \cbb[X],
   \end{align}
   \item[(iii)] there exist a Hilbert space
$\kk$, $R\in \ogr{\hh,\kk}$ and $S\in \ogr{\kk}_+$
such that
   \begin{gather} \label{checpt-3}
p\lrangle{T} = p(1) I - p^{\prime}(1) \bscr_1(T) + R^*
(\mathfrak{D}_1^2 p)(S) R, \quad p\in \cbb[X],
   \end{gather}
   \item[(iv)] there exist a Hilbert space
$\kk$, $R\in \ogr{\hh,\kk}$ and $S\in \ogr{\kk}_+$
such that {\em \eqref{checpt-3}} holds and
   \begin{gather} \label{mini-tr}
\kk =\bigvee \{S^n\ob{R}\colon n\in
\zbb_+\}.
   \end{gather}
   \end{enumerate}
Moreover, if any of conditions {\em (i)-(iv)}
holds, then
   \begin{enumerate}
   \item[(a)] the semispectral measure $M$ in {\em (ii)} is
unique,
   \item[(b)] if $(B,C,F)$ represents
$T$, then $B+C=-\bscr_1(T)$, $C=\frac{1}{2} M(\{1\})$ and
   \begin{align} \label{f2m-semi}
F(\varDelta)=(1-\chi_{\varDelta}(1))M(\varDelta), \quad
\varDelta \in \borel{\rbb_+},
   \end{align}
   \item[(c)] if $(\kk,R,S)$ is as in {\em (iv)}, then
   \begin{align} \label{wid-sup}
\text{$\sigma(S)=\supp{M}$ and
$\|S\|=\max\big\{0,\sup{\supp{M}}\big\} \Le r(T)^2$.}
   \end{align}
   \end{enumerate}
   \end{thm}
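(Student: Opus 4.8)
The plan is to exploit that the hereditary calculus $p\mapsto p\lrangle{T}$ is linear, so each of the identities \eqref{checpt-2} and \eqref{checpt-3} need only be checked on the monomials $X^n$ and then extended by linearity. The arithmetic hinge is that $\mathfrak{D}_1^2(X^n)=Q_n$ for all $n\in\zbb_+$ (immediate from \eqref{gim-11} and \eqref{rnx-1}, since $X^n$ takes the value $1$ and has derivative $n$ at the point $1$), together with $Q_1=0$ and $Q_n(1)=\binom{n}{2}$ read off from \eqref{klaud}. Granting these, for $p=\sum_n\alpha_n X^n$ one has $p(1)=\sum_n\alpha_n$, $p'(1)=\sum_n\alpha_n n$ and $\sum_n\alpha_n Q_n=\mathfrak{D}_1^2 p$ by linearity of $\mathfrak{D}_1^2$, so a formula of the form \eqref{checpt-2} holds for all $p$ as soon as it holds for every $X^n$.

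First I would settle (i)$\Leftrightarrow$(ii). Assuming (i), Theorem~\ref{cpdops} supplies the unique triplet $(B,C,F)$ with $T^{*n}T^n=I+nB+n^2C+\int_{\rbb_+}Q_n(x)\,F(\D x)$; at $n=1$, since $Q_1=0$, this reads $T^*T=I+B+C$, i.e.\ $B+C=T^*T-I=-\bscr_1(T)$ (recall $\bscr_1(T)=(1-X)\lrangle{T}=I-T^*T$). I then set $M:=F+2C\delta_1$, a compactly supported semispectral measure because $F$ is one and $C\Ge 0$. Using $\int_{\rbb_+}Q_n(x)\,(2C\delta_1)(\D x)=2C\,Q_n(1)=C(n^2-n)$ one checks that $I+n(B+C)+\int_{\rbb_+}Q_n(x)\,M(\D x)$ equals $I+nB+n^2C+\int_{\rbb_+}Q_n(x)\,F(\D x)=T^{*n}T^n$, which is \eqref{checpt-2} at $p=X^n$; the reduction above then gives (ii). Conversely, given (ii), evaluate at $p=X^n$, put $C:=\frac{1}{2}M(\{1\})\Ge 0$, let $F$ be the restriction of $M$ to $\rbb_+\setminus\{1\}$ (so $F(\{1\})=0$) and $B:=T^*T-I-C=B^*$; reversing the same computation recovers \eqref{cdr5}, so $T$ is CPD by Theorem~\ref{cpdops}. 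This construction also yields part (b): $B+C=-\bscr_1(T)$, $C=\frac{1}{2}M(\{1\})$ and $F$ is $M$ with its atom at $1$ deleted, which is \eqref{f2m-semi}.

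To connect the integral and dilation forms I would apply the Naimark dilation theorem (Theorem~\ref{Naim-ark}) to $M$, obtaining $\kk$, $R\in\ogr{\hh,\kk}$ and a spectral measure $E$ on $\rbb_+$ with $M(\cdot)=R^*E(\cdot)R$ and the minimality \eqref{Nai2}. Setting $S:=\int_{\rbb_+}x\,E(\D x)\in\ogr{\kk}_+$, the spectral theorem gives $\int_{\rbb_+}q(x)\,M(\D x)=R^*q(S)R$ for every polynomial $q$; taking $q=\mathfrak{D}_1^2 p$ turns \eqref{checpt-2} into \eqref{checpt-3}, while by Lemma~\ref{minim-2}(a) the minimality \eqref{Nai2} coincides with \eqref{mini-tr}. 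This proves (ii)$\Rightarrow$(iv), and (iv)$\Rightarrow$(iii) is trivial. Conversely, from $(\kk,R,S)$ as in (iii) one takes $E$ to be the spectral measure of $S$ and sets $M(\cdot):=R^*E(\cdot)R$, a compactly supported semispectral measure for which \eqref{checpt-3} becomes \eqref{checpt-2}; hence (iii)$\Rightarrow$(ii). Uniqueness of $M$, i.e.\ (a), follows because \eqref{checpt-2} at $p=X^n$ fixes $\int_{\rbb_+}Q_n(x)\,\is{M(\D x)h}{h}$ for each $h\in\hh$; applying $\triangle^2$ in $n$ and using \eqref{del2} fixes all power moments $\int_{\rbb_+}x^n\,\is{M(\D x)h}{h}$, and as these scalar measures are compactly supported they are determinate by Lemma~\ref{csmad}, so $M$ is unique.

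Finally, for (c) I would work under \eqref{mini-tr}, writing $E$ for the spectral measure of $S$, so that $\supp{E}=\sigma(S)$ and $\|S\|=\sup\sigma(S)$, with the degenerate case $\kk=\nul$ (whence $M=0$, $S=0$, $\|S\|=0$) handled by the convention $\sup\emptyset=-\infty$ in \eqref{konw-1}. Since $M(\varDelta)=R^*E(\varDelta)R=(E(\varDelta)R)^*(E(\varDelta)R)$, we have $M(\varDelta)=0$ iff $E(\varDelta)R=0$; and if $E(\varDelta)R=0$ then, as $E(\varDelta)$ commutes with every $S^n$, $E(\varDelta)$ annihilates $\bigvee\{S^n\ob{R}\colon n\in\zbb_+\}=\kk$, forcing $E(\varDelta)=0$. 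Thus $M$ and $E$ have the same null open sets, giving $\supp{M}=\supp{E}=\sigma(S)$ and $\|S\|=\max\{0,\sup\supp{M}\}$. The bound $\|S\|\Le r(T)^2$ then follows from part (b): $\supp{M}=\supp{F}$ if $C=0$ and $\supp{M}=\supp{F}\cup\{1\}$ if $C\neq 0$, where $\supp{F}\subseteq[0,r(T)^2]$ by \eqref{fontan}, and when $C\neq 0$ one has $r(T)\Ge 1$ by \eqref{fontan5}, so the atom $1$ also lies in $[0,r(T)^2]$. The step I expect to be most delicate is this last one in (c): correctly placing the atom at $1$ relative to $r(T)^2$ genuinely needs \eqref{fontan5} rather than \eqref{fontan} alone, and the trivial space $\kk=\nul$ must be tracked through the sup-convention; everything else is a careful operator-valued transcription of the scalar results in Theorems~\ref{cpd-expon} and \ref{cpdops}.
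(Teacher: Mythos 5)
Your proposal is correct and follows essentially the same route as the paper: monomial reduction via Theorem~\ref{cpdops} with $M=F+2C\,\delta_1$ (using $Q_n(1)=\tbinom{n}{2}$ and $\mathfrak{D}_1^2 X^n=Q_n$), Naimark dilation plus the Stone--von Neumann calculus and Lemma~\ref{minim-2}(a) for (iii)/(iv), and \eqref{fontan}, \eqref{fontan5} for the bound in \eqref{wid-sup}, with only cosmetic variations (your two-case split on $C$ streamlines the paper's three cases, and your direct $\triangle^2$/Lemma~\ref{csmad} determinacy proof of (a) replaces the paper's appeal to uniqueness of representing triplets). The one point to make explicit in (ii)$\Rightarrow$(iv) is that $E$ is compactly supported---by the same null-set/minimality argument you give in part (c), applied with $E(\varDelta')$ in place of $S^n$, one gets $\supp{E}=\supp{M}$---so that $S=\int_{\rbb_+} x\,E(\D x)$ is indeed a bounded operator, as the paper notes at that step.
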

   \begin{proof}
(i)$\Rightarrow$(ii) By Theorem~\ref{cpdops}, $T$ has a
representing triplet $(B,C,F)$. Define the compactly
supported semispectral measure $M\colon \borel{\rbb_+} \to
\ogr{\hh}$ by
   \begin{align*}
M(\varDelta) = F(\varDelta) + 2\chi_{\varDelta}(1) C,
\quad \varDelta\in \borel{\rbb_+}.
   \end{align*}
Using \eqref{cdr5} and the fact that
$Q_n(1)=\frac{n(n-1)}{2}$ (see \eqref{klaud}), we deduce
that
   \begin{align} \label{anielgl-1}
T^{*n}T^n = I + n (B+C) + \int_{\rbb_+} Q_n(x) M(\D x),
\quad n\in \zbb_+.
   \end{align}
Since $Q_1=0$, substituting $n=1$ into
\eqref{anielgl-1} yields $B+C=-\bscr_1(T)$. Hence
   \begin{align} \label{anielgl}
T^{*n}T^n = I - n \bscr_1(T) + \int_{\rbb_+} Q_n(x) M(\D
x), \quad n\in \zbb_+.
   \end{align}
Suppose $p\in \cbb[X]$ is of the form $p=\sum_{n\Ge 0}
\alpha_n X^n$, where $\alpha_n\in \cbb$. Multiplying
\eqref{anielgl} by $\alpha_n$ and summing with respect
to $n$, gives
   \begin{align} \label{chpt}
p\lrangle{T} \overset{\eqref{Il111}}= p(1) I -
p^{\prime}(1) \bscr_1(T) + \int_{\rbb_+} \sum_{n\Ge 0}
\alpha_n Q_n(x) M(\D x).
   \end{align}
Notice that
   \begin{align}  \notag
\sum_{n\Ge 0} \alpha_n Q_n(x) &
\overset{\eqref{rnx-1}}= \sum_{n\Ge 0} \alpha_n
\frac{x^n-1 - n (x-1)}{(x-1)^2}
   \\ \label{chpt2}
& \overset{\eqref{gim-11}} = (\mathfrak{D}_1^2 p) (x),
\quad x\in \rbb \setminus \{1\}.
   \end{align}
Combining \eqref{chpt} with \eqref{chpt2} gives
\eqref{checpt-2}.

(ii)$\Rightarrow$(i) Substituting the polynomial
$p=X^n$ into \eqref{checpt-2} and using \eqref{rnx-1},
\eqref{gim-11}, \eqref{Il111} and the fact that
$Q_n(1)=\frac{n(n-1)}{2}$, we get
   \allowdisplaybreaks
   \begin{align} \notag
T^{*n}T^n &= I - n \bscr_1(T) + \int_{\rbb_+} Q_n(x) M(\D
x)
   \\ \notag
&= I - n \bscr_1(T) + Q_n(1) M(\{1\}) +
\int_{\rbb_+} Q_n(x) F(\D x)
   \\ \notag
&= I - n\bigg(\bscr_1(T) + \frac{1}{2} M(\{1\})\bigg)
   \\  \label{uniq-rep}
& \hspace{17ex} + \frac{n^2}{2} M(\{1\}) + \int_{\rbb_+}
Q_n(x) F(\D x), \quad n \in \zbb_+,
   \end{align}
where $F\colon \borel{\rbb_+} \to \ogr{\hh}$ is
the compactly supported semispectral measure
given by \eqref{f2m-semi}. Hence, by
Theorem~\ref{cpdops}, $T$ is CPD. What is more,
using \eqref{f2m-semi}, \eqref{uniq-rep} and the
uniqueness of representing triplets, we easily
verify that (a) and (b) hold.

(ii)$\Rightarrow$(iv) By Theorem~\ref{Naim-ark}, there
exists a triplet $(\kk,R,E)$ satisfying \eqref{Nai1}
and \eqref{Nai2}. Notice that the measure $E$ is
compactly supported. This is a direct consequence of
the identity $\supp{M} = \supp{E}$, which follows from
\eqref{Nai1} and \eqref{Nai2} (see the proof of
\cite[Theorem~4.4]{Ja02}). Set $S=\int_{\rbb_+} x E(\D
x)$. Since $E$ is compactly supported in $\rbb_+$, the
operator $S$ is bounded and positive (see
\cite[Theorem~5.9]{Sch12}). Applying the Stone-von
Neumann functional calculus (cf.\ \cite{B-S87,Sch12}
and \cite{Sto3}), we deduce from \eqref{checpt-2} that
the triplet $(\kk,R,S)$ satisfies \eqref{checpt-3}.
Using \eqref{Nai2} and Lemma~\ref{minim-2}(a), we get
\eqref{mini-tr}, which yields (iv).

(iv)$\Rightarrow$(iii) This is obvious.

(iii)$\Rightarrow$(ii) Applying the Stone-von Neumann
functional calculus to \eqref{checpt-3} yields (ii) with
the semispectral measure $M$ defined by \eqref{Nai1}, where
$E$ is the spectral measure of $S$.

It remains to prove (c). Suppose that $(\kk,R,S)$ is
as in (iv). If $\kk = \{0\},$ then by
\eqref{checpt-2}, \eqref{checpt-3} and (a), we deduce
that $M=0$ which gives \eqref{wid-sup}. Therefore we
can assume that $\kk \neq \{0\}.$ According to the
proof of the implication (iv)$\Rightarrow$(ii), we see
that the semispectral measure $M$ is given by
\eqref{Nai1}, where $E$ is the spectral measure of
$S$. In view of \eqref{mini-tr} and
Lemma~\ref{minim-2}(a), $(\kk,R,E)$ satisfies
\eqref{Nai2}, and so $M\neq 0$. As mentioned above,
$\supp{M} = \supp{E}$. Combined with
\cite[Theorem~5.9, Proposition~5.10]{Sch12}, this
implies~that
   \begin{align*}
\text{$\sigma(S)=\supp{M}$ and
$\|S\|=\sup{\sigma(S)}=\sup{\supp{M}}$.}
   \end{align*}
Thus it suffices to show that $\sup{\supp{M}} \Le r(T)^2$.
Let $(B,C,F)$ be the representing triplet of $T$. Set
$\vartheta=\sup\supp{F}$. Since $\supp{F}$ is compact, we
see that $\vartheta \in \{-\infty\} \cup \rbb_+$. We now
consider three possible cases that are logically disjoint.
The possibility that $\vartheta=-\infty$ (equivalently,
$\supp{F}=\emptyset$) may happen only in Case 2 (see
Corollary~\ref{nofs-sup2} and Remark~\ref{manyrem}d)).

{\sc Case 1.} $\vartheta \Ge 1$.

Then the following equalities hold
   \begin{align*}
r(T)^2 \overset{\eqref{fontan6}} = \sup \supp{F}
\overset{\mathrm{(b)}}= \sup \supp{M}.
   \end{align*}

{\sc Case 2.} $\vartheta < 1$ and $C\neq 0$.

According to \eqref{fontan5}, $r(T) \Ge 1$. Since
$\vartheta < 1$, we obtain
   \begin{align*}
\sup\supp{M} \overset{\mathrm{(b)}} = 1 \Le r(T)^2.
   \end{align*}

{\sc Case 3.} $\vartheta < 1$ and $C = 0$.

First observe that by (b), $\vartheta=\sup{\supp{M}}$.
It follows from \eqref{fontan} that $\vartheta \Le
r(T)^2$, hence $\sup{\supp{M}} \Le r(T)^2$. This
completes the proof.
   \end{proof}
   \begin{cor}
Suppose $T\in \ogr{\hh}$ is a CPD operator and
$(\kk,R,S)$ is as in Theorem~{\em
\ref{dyltyprep}(iv)}. If $1$ is an accumulation
point of $\sigma(S) \cap (0,1)$ or if $\sigma(S)
\cap (1,\infty)\neq \emptyset$, then
$r(T)^2=\|S\|$.
   \end{cor}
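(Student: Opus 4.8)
The plan is to reduce the whole statement to the implication \eqref{fontan6} for the representing measure $F$, using that $\sigma(S)=\supp{M}$ together with the fact, recorded in Theorem~\ref{dyltyprep}(b), that $M$ and $F$ differ only at the single point $1$. First I would collect the two ingredients I need from the ``moreover'' part of Theorem~\ref{dyltyprep}. Part (c) gives $\sigma(S)=\supp{M}$ and $\|S\|=\max\{0,\sup\supp{M}\}\Le r(T)^2$; since the inequality $\|S\|\Le r(T)^2$ is already in hand, it suffices to establish the reverse inequality, and in fact I will prove the sharper statement $r(T)^2=\sup\supp{M}$ with $\sup\supp{M}\Ge 1$. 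Part (b), via \eqref{f2m-semi}, gives $F(\varDelta)=(1-\chi_{\varDelta}(1))M(\varDelta)$, so that $M$ and $F$ agree on every Borel set not containing $1$, while $F(\{1\})=0$.

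Next I would carry out a short support computation. Because $M$ and $F$ coincide on Borel sets avoiding $1$, a point $x\neq 1$ lies in $\supp{M}$ if and only if it lies in $\supp{F}$; hence $\supp{M}\setminus\{1\}=\supp{F}\setminus\{1\}$. Moreover $\supp{F}\subseteq\supp{M}$: the only case to check is $1\in\supp{F}$, and then every open neighbourhood $U$ of $1$ satisfies $M(U)\Ge F(U)\neq 0$, so $1\in\supp{M}$. Consequently $\supp{M}\subseteq\supp{F}\cup\{1\}$ as well.

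Now I would treat the two hypotheses separately, the common target being $\sup\supp{F}\Ge 1$ so that \eqref{fontan6} applies. If $1$ is an accumulation point of $\sigma(S)\cap(0,1)=\supp{M}\cap(0,1)$, then the accumulating points, being different from $1$, lie in $\supp{F}$; since $\supp{F}$ is closed this forces $1\in\supp{F}$, whence $\supp{F}=\supp{M}$ and $\sup\supp{F}=\sup\supp{M}\Ge 1$. If instead $\sigma(S)\cap(1,\infty)=\supp{M}\cap(1,\infty)\neq\emptyset$, choose $x>1$ in $\supp{M}$; then $x\in\supp{F}$, so $\sup\supp{F}\Ge x>1$, and since $\supp{M}\subseteq\supp{F}\cup\{1\}$ with $\sup\supp{F}>1$ the two suprema again coincide. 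In both cases $\sup\supp{F}\Ge 1$, so \eqref{fontan6} yields $r(T)^2=\sup\supp{F}=\sup\supp{M}$; as this common value is $\Ge 1>0$, we get $\|S\|=\max\{0,\sup\supp{M}\}=\sup\supp{M}=r(T)^2$, as desired.

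The step I expect to require the most care is precisely the bookkeeping in the previous paragraph, namely verifying under each hypothesis that $\supp{M}$ and $\supp{F}$ have the same supremum. The subtlety is that $M$ may carry an atom at $1$ (indeed $M(\{1\})=2C$ by part (b)) which $F$ discards, so a priori $\sup\supp{M}$ and $\sup\supp{F}$ could differ; the role of the two hypotheses is exactly to guarantee that this potential discrepancy at $1$ does not affect the supremum, either by pulling $1$ into $\supp{F}$ as a limit point, or by producing a support point strictly above $1$ that dominates it.
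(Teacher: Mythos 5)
Your proof is correct and follows essentially the same route as the paper's: both identify $\sigma(S)=\supp{M}$ via Theorem~\ref{dyltyprep}(c), use \eqref{f2m-semi} to compare $\supp{M}$ with $\supp{F}$, and conclude with \eqref{fontan6}. The only difference is that you spell out in detail the support bookkeeping (that $\supp{M}$ and $\supp{F}$ can differ only at the point $1$, and that either hypothesis forces their suprema to agree and be $\Ge 1$), which the paper states tersely without justification.
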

   \begin{proof}
Let $(B,C,F)$ be the representing triplet
of $T$ and let $M$ be as in
Theorem~\ref{dyltyprep}(ii). Suppose
first that $1$ is an accumulation point
of $\sigma(S) \cap (0,1)$. It follows
from the first equality in
\eqref{wid-sup} and \eqref{f2m-semi} that
$\supp{M}=\supp{F}$ and $1\in \supp{F}$,
so by the second equality in
\eqref{wid-sup}, we have
   \begin{align*}
\|S\|=\sup \supp{F} \Ge 1.
   \end{align*}
In turn, if $\sigma(S) \cap
(1,\infty)\neq \emptyset$, then again by
the first equality in \eqref{wid-sup} and
\eqref{f2m-semi},
   \begin{align*}
1 < \sup\sigma(S) = \sup\supp{M} =
\sup\supp{F}.
   \end{align*}
In both cases, an application of
\eqref{fontan6} and $\sup\sigma(S) =
\|S\|$ yields $r(T)^2 = \|S\|$.
   \end{proof}
The next corollary enables as to determine the
mass of the measure $M$ at the point $0$
provided the CPD operator has the spectral
radius less than or equal to $1$.
   \begin{cor} \label{nofs-sup2}
Suppose $T\in \ogr{\hh}$ is a CPD operator and
$M$ is as in Theorem~{\em \ref{dyltyprep}(ii)}.
Then
   \begin{align} \label{stary-num}
\bscr_m(T) = \int_{\rbb_+} (1-x)^{m-2} M(\D x), \quad m\ge
2.
   \end{align}
In particular, the following assertions hold{\em :}
   \begin{enumerate}
   \item[(i)] $\bscr_{2k}(T)\Ge 0$ for all $k\in \zbb_+,$
   \item[(ii)]  if $r(T) \Le
1,$ then $\bscr_m(T)\Ge 0$ for all $m\in
\zbb_+\setminus\{1\}$,
   \item[(iii)] if $r(T) \Le
1,$ then the sequence $\{\bscr_m(T)\}_{m=2}^{\infty}$ is
monotonically decreasing and convergent to $M(\{0\})$ in
the strong operator topology.
   \end{enumerate}
   \end{cor}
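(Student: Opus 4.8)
The plan is to read off \eqref{stary-num} by feeding the single polynomial $p=(1-X)^m$ into the dilation representation \eqref{checpt-2}, and then to extract the three assertions by elementary sign analysis of the integrand together with the positivity of $M$. First I would invoke \eqref{nab-bla2} to write $\bscr_m(T)=(1-X)^m\lrangle{T}$ and apply \eqref{checpt-2} with $p=(1-X)^m$. For $m\Ge 2$ one has $p(1)=0$ and, since $p'=-m(1-X)^{m-1}$, also $p'(1)=0$; hence the first two terms of \eqref{checpt-2} vanish. It then remains to identify $\mathfrak{D}_1^2 p$: by \eqref{gim-11} and $p(1)=p'(1)=0$ we get $\mathfrak{D}_1^2 p=(1-X)^m/(X-1)^2=(1-X)^{m-2}$, so that substituting into \eqref{checpt-2} yields precisely $\bscr_m(T)=\int_{\rbb_+}(1-x)^{m-2}M(\D x)$ for $m\Ge 2$.

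For assertion (i), the case $k=0$ is trivial since $\bscr_0(T)=I\Ge 0$, while for $k\Ge 1$ the exponent $2k-2$ is even, so $(1-x)^{2k-2}\Ge 0$ for every $x\in\rbb_+$; as $M$ is a positive semispectral measure, the integral defining $\bscr_{2k}(T)$ is a nonnegative operator. For assertion (ii), the extra hypothesis $r(T)\Le 1$ confines the support of $M$ to $[0,1]$: indeed \eqref{fontan} gives $\supp{F}\subseteq[0,r(T)^2]\subseteq[0,1]$, and since by Theorem~\ref{dyltyprep}(b) the measure $M$ differs from $F$ only through a point mass at $1$, we obtain $\supp{M}\subseteq[0,1]$. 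On $[0,1]$ one has $(1-x)^{m-2}\Ge 0$ for all $m\Ge 2$, whence $\bscr_m(T)\Ge 0$; together with $\bscr_0(T)=I$ this covers all $m\in\zbb_+\setminus\{1\}$.

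For assertion (iii), monotonicity would follow by writing, for $m\Ge 2$,
$\bscr_m(T)-\bscr_{m+1}(T)=\int_{[0,1]}x(1-x)^{m-2}M(\D x)$,
whose integrand is nonnegative on $[0,1]$, so $\bscr_m(T)\Ge\bscr_{m+1}(T)$. For the limit, note that $0\Le(1-x)^{m-2}\Le 1$ on $[0,1]$ and $(1-x)^{m-2}\to\chi_{\{0\}}(x)$ pointwise as $m\to\infty$; Lebesgue's dominated convergence theorem applied to $\is{\bscr_m(T)h}{h}=\int_{[0,1]}(1-x)^{m-2}\is{M(\D x)h}{h}$ then gives $\is{\bscr_m(T)h}{h}\to\is{M(\{0\})h}{h}$ for every $h\in\hh$, i.e.\ weak operator convergence to $M(\{0\})$. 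The main (and only mildly delicate) obstacle is upgrading this to strong operator convergence: here I would invoke the standard fact that a monotonically decreasing sequence of self-adjoint operators that is bounded below—bounded below by $0$ via (ii)—converges in the strong operator topology to its infimum; since that strong limit is in particular a weak limit, it must coincide with the already-identified operator $M(\{0\})$.
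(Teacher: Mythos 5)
Your proposal is correct and follows essentially the same route as the paper: substitute $p=(1-X)^m$ into \eqref{checpt-2}, use \eqref{gim-11} and \eqref{nab-bla2} to obtain \eqref{stary-num}, and deduce (i)--(iii) from positivity of $M$, the support inclusion coming from \eqref{fontan} and \eqref{f2m-semi}, and dominated convergence. Your explicit upgrade from weak to strong convergence in (iii) (monotone decreasing, bounded below $\Rightarrow$ SOT convergence to the infimum) is exactly the standard fact the paper's terse citation of Lebesgue's theorem leaves implicit.
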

   \begin{proof}
Fix an integer $m\Ge 2$ and set $p=(1-X)^m$.
Then by \eqref{gim-11} we have $\mathfrak{D}_1^2
p = (1-X)^{m-2}$. Applying \eqref{nab-bla2} and
Theorem~\ref{dyltyprep}(ii), we get
\eqref{stary-num}. Assertion (i) is immediate
from \eqref{stary-num}, while assertions (ii)
and (iii) can be deduced from \eqref{fontan},
\eqref{f2m-semi} and Lebesgue's dominated
convergence theorem.
   \end{proof}
Concluding this subsection, we make a few
remarks related to
Theorem~\ref{dyltyprep} and
Corollary~\ref{nofs-sup2}.
   \begin{rem}  \label{waz-rem}
a) Let us begin by discussing in more detail the
relationship between $r(T)$ and $\vartheta =
\sup\supp{F}$, where $T\in \ogr{\hh}$ is a CPD
operator and $(B,C,F)$ represents $T$. As in the
proof of Theorem~\ref{dyltyprep}(c), we consider
three cases. If $\vartheta \Ge 1$, then by
\eqref{fontan6}, $1\Le \vartheta = r(T)^2$. If
$\vartheta < 1$ and $C\neq 0$, then by
\eqref{fontan5}, $\vartheta < 1 \Le r(T)^2$.

Suppose now that $\vartheta < 1$ and $C = 0$. First,
we consider the subcase when $D:=B+\int_{\rbb_+}
\frac{1}{1-x} F(\D x)\neq 0$. Then, there exists
$h_0\in \hh$ such that $\eta(h_0):= \is{Dh_0}{h_0}
\neq 0$. According to \eqref{cdr5}, we have
   \begin{align} \label{monot-3}
\|T^n h_0\|^2 = n\bigg(\frac{\|h_0\|^2}{n} +
\is{Bh_0}{h_0} + \int_{\rbb_+} \frac{Q_n(x)}{n}
\is{F(\D x)h_0}{h_0}\bigg), \quad n\in \nbb.
   \end{align}
By assumption that $\vartheta < 1$, we infer from
\eqref{monot-1}, \eqref{monot-2} and Lebesgue's
monotone convergence theorem that
   \begin{align*}
\text{$\is{Bh_0}{h_0} + \int_{\rbb_+} \frac{Q_n(x)}{n}
\is{F(\D x)h_0}{h_0} \longrightarrow \eta(h_0)$ as
$n\to \infty$.}
   \end{align*}
Since $\eta(h_0) \neq 0$, we deduce from
\eqref{monot-3} that $\eta(h_0) > 0$ and so by
Gelfand's formula for spectral radius we obtain
   \begin{align*}
r(T)^2 \Ge \limsup_{n\to\infty} \|T^n h_0\|^{2/n} \Ge
1 > \vartheta .
   \end{align*}
It remains to consider the subcase when $D=0$. Then by
\eqref{subn-norm}, \eqref{fontan} and
Theorem~\ref{glow-main}(v), $T$ is subnormal and
$\vartheta \Le r(T)^2 = \|T\|^2 \Le 1$ (see
Example~\ref{prz-do-na} and Remark~ \ref{rem-t0-ex}
for the continuation of this discussion).

b) It follows from assertions (a) and (c) of
Theorem~\ref{dyltyprep} that $\sigma(S)$ does
not depend on a triplet $(\kk,R,S)$ satisfying
\eqref{checpt-3} and \eqref{mini-tr}. This fact
can be also deduced from
Lem\-ma~\ref{minim-2}(b) by applying
\eqref{checpt-3} and \eqref{gim-11} to the
polynomials $p=(X-1)^2X^n$, where $n\in \zbb_+$.

c) Concerning \eqref{wid-sup}, observe that if
$T\in \ogr{\hh}$ is a $2$-isometry and $\hh \neq
\{0\}$, then $T$ is CPD and $1=r(T)^2 > \|S\|=0$
(use Proposition~\ref{sub-mzero-n},
\eqref{wid-sup} and \cite[Lemma~1.21]{Ag-St1}).

d) Regarding Corollary~\ref{nofs-sup2} (see also
Theorem~\ref{cpd-q}), it is worth recalling a
result due to Agler saying that an operator
$T\in \ogr{\hh}$ is a subnormal contraction if
and only if $\bscr_m(T)\Ge 0$ for all $m\in
\zbb_+$ (see \cite[Theorem~3.1]{Ag85}). Recently
Gu has shown that $\bscr_m(T)\Ge 0$ implies
$\bscr_{m-1}(T)\Ge 0$ for all positive odd
integers $m$ (see \cite[Theorem~2.5]{Gu15}). It
turns out that there are non-subnormal CPD
operators $T$ with $r(T)=1,$ so by
Corollary~\ref{nofs-sup2}(ii) for such $T$'s,
$\bscr_m(T)\Ge 0$ if and only if $m\in
\zbb_+\setminus\{1\}$ (see e.g.,
Example~\ref{prz-do-na}; cf.\ also
\cite[Section~9]{C-J-J-S19}).
   \hfill$\diamondsuit$
   \end{rem}
   \subsection{\label{Sec3.3-n}A simplified representation with
applications}
   First, following \break
Proposition~\ref{traj-pd-op}, we simplify the
previous representations of CPD operators.
   \begin{thm} \label{dyl-an}
For $T\in \ogr{\hh}$, the following conditions are
equivalent{\em :}
   \begin{enumerate}
   \item[(i)] $T$ is CPD,
   \item[(ii)] there exists a
semispectral measure $M\colon \borel{\rbb_+} \to
\ogr{\hh}$ with compact support such that
   \begin{align}  \label{cziki-1}
((X-1)^2q)\lrangle{T} = \int_{\rbb_+} q(x) M(\D x),
\quad q\in \cbb[X],
   \end{align}
   \item[(ii$^\prime$)] there exist a Hilbert space
$\kk$, $R\in \ogr{\hh,\kk}$ and $S\in \ogr{\kk}_+$
such that
   \begin{align} \label{dur-a}
((X-1)^2q)\lrangle{T} = R^* q(S) R, \quad q\in
\cbb[X],
   \end{align}
   \item[(iii)] there exists a
semispectral measure $M\colon \borel{\rbb_+} \to
\ogr{\hh}$ with compact support such that
   \begin{align} \label{cziki-2}
T^{*n}\bscr_2(T)T^n = \int_{\rbb_+} x^n M(\D x), \quad
n\in \zbb_+,
   \end{align}
   \item[(iii$^\prime$)] there exist a Hilbert space
$\kk$, $R\in \ogr{\hh,\kk}$ and $S\in \ogr{\kk}_+$
such that
   \begin{align} \label{dur-c}
T^{*n}\bscr_2(T)T^n = R^* S^n R, \quad n\in \zbb_+.
   \end{align}
   \end{enumerate}
Moreover, the measures in {\em (ii)} and {\em (iii)}
are unique and coincide with that in Theorem~{\em
\ref{dyltyprep}(ii)}{\em ;} the triplets $(\kk,R,S)$
in {\em (ii$^{\prime}$)} and {\em (iii$^{\prime}$)}
can be chosen to satisfy \eqref{mini-tr}.
   \end{thm}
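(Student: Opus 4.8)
The plan is to derive everything from Theorem~\ref{dyltyprep} together with the Naimark dilation theorem (Theorem~\ref{Naim-ark}) and Lemma~\ref{minim-2}(a), so that no new analytic content is needed. The key algebraic observation is that substituting $p=(X-1)^2 q$ into the hereditary formula \eqref{checpt-2} annihilates its first two terms: one has $p(1)=0$ and $p^{\prime}(1)=0$, while \eqref{gim-11} gives $\mathfrak{D}_1^2\big((X-1)^2 q\big)=q$. Hence Theorem~\ref{dyltyprep}(ii) immediately yields \eqref{cziki-1} with the very same measure $M$. Conversely, writing an arbitrary $p\in\cbb[X]$ as $p=p(1)+p^{\prime}(1)(X-1)+(X-1)^2\mathfrak{D}_1^2 p$ (an exact identity in $\cbb[X]$ by the definition of $\mathfrak{D}_1^2$) and applying the linear map $r\mapsto r\lrangle{T}$ together with $X^0\lrangle{T}=I$ and $(X-1)\lrangle{T}=-\bscr_1(T)$ (see \eqref{nab-bla2}) shows that \eqref{cziki-1} reproduces \eqref{checpt-2}. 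Thus (i)$\Leftrightarrow$(ii), the measure in (ii) coincides with that of Theorem~\ref{dyltyprep}(ii), and its uniqueness follows from Theorem~\ref{dyltyprep}(a).

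First I would dispose of the equivalences (ii)$\Leftrightarrow$(iii) and (ii$^{\prime}$)$\Leftrightarrow$(iii$^{\prime}$). Specializing $q=X^n$ in \eqref{cziki-1} and using \eqref{nab-bla} and \eqref{nab-bla2} to rewrite $((X-1)^2 X^n)\lrangle{T}=T^{*n}\bscr_2(T)T^n$ gives \eqref{cziki-2}; conversely, since the monomials span $\cbb[X]$ and both sides of \eqref{cziki-1} are linear in $q$ while $M$ is compactly supported (so every polynomial lies in $L^1(M)$), condition \eqref{cziki-2} propagates back to \eqref{cziki-1}. The identical argument, now with $R^* q(S) R$ and $R^* S^n R$ in place of the integrals, establishes (ii$^{\prime}$)$\Leftrightarrow$(iii$^{\prime}$) with the same triplet $(\kk,R,S)$.

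Next I would produce the dilation forms. For (ii)$\Rightarrow$(ii$^{\prime}$) with \eqref{mini-tr}, apply Theorem~\ref{Naim-ark} to $M$ to obtain $(\kk,R,E)$ satisfying \eqref{Nai1}--\eqref{Nai2}; as in the proof of Theorem~\ref{dyltyprep}(ii)$\Rightarrow$(iv), the identity $\supp{M}=\supp{E}$ forces $E$ to be compactly supported, so $S:=\int_{\rbb_+} x\,E(\D x)$ is a bounded positive operator. The Stone--von Neumann functional calculus then gives $q(S)=\int_{\rbb_+} q\,\D E$, whence $R^* q(S) R=\int_{\rbb_+} q\,\D M=((X-1)^2 q)\lrangle{T}$, i.e.\ \eqref{dur-a}, while Lemma~\ref{minim-2}(a) turns \eqref{Nai2} into \eqref{mini-tr}. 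For the reverse (ii$^{\prime}$)$\Rightarrow$(ii) one simply lets $E$ be the spectral measure of $S$ and sets $M(\varDelta)=R^*E(\varDelta)R$, which is a compactly supported semispectral measure satisfying \eqref{cziki-1}. Combining these with the chain (i)$\Leftrightarrow$(ii)$\Leftrightarrow$(iii) closes the loop of equivalences and delivers the final minimality assertion.

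The bulk of this argument is bookkeeping, and I expect no serious obstacle, since Theorem~\ref{dyltyprep} already carries the substantive content (existence and uniqueness of $M$, compactness of its support, and the Naimark-based dilation). The one place demanding care is confirming that passing between the ``global'' hereditary identity \eqref{checpt-2} and the ``reduced'' identity \eqref{cziki-1} is genuinely reversible: this hinges on the polynomial decomposition $p=p(1)+p^{\prime}(1)(X-1)+(X-1)^2\mathfrak{D}_1^2 p$ being exact and on the evaluations $p(1)=p^{\prime}(1)=0$ for $p=(X-1)^2 q$, together with $(X-1)\lrangle{T}=-\bscr_1(T)$, all of which must be invoked precisely so that the coefficients of $I$ and of $\bscr_1(T)$ match on both sides.
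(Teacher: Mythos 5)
Your proposal is correct and follows essentially the same route as the paper's own proof: both rest on the observations that $p(1)=p'(1)=0$ and $\mathfrak{D}_1^2 p=q$ for $p=(X-1)^2q$, the exact decomposition $p=p(1)+p'(1)(X-1)+(X-1)^2\mathfrak{D}_1^2 p$ together with $(X-1)\lrangle{T}=-\bscr_1(T)$, the Naimark theorem with Lemma~\ref{minim-2}(a) and the Stone--von Neumann calculus for passing between measures and triplets, and Theorem~\ref{dyltyprep}(a) for uniqueness. The only difference is ordering (the paper first proves (i)$\Leftrightarrow$(ii$^{\prime}$) at the level of dilation triplets, whereas you first prove (i)$\Leftrightarrow$(ii) at the level of semispectral measures), which is immaterial.
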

   \begin{proof}
(i)$\Rightarrow$(ii$^{\prime}$) Applying the
implication (i)$\Rightarrow$(iv) of
Theorem~\ref{dyltyprep} to the polynomial $p=(X-1)^2
q$ and using \eqref{gim-11}, we get a triplet $(\kk,
R, S)$ satisfying \eqref{dur-a} and \eqref{mini-tr}.

(ii$^{\prime}$)$\Rightarrow$(i) It follows from
\eqref{gim-11} that
   \begin{align*}
p = p(1) + p^{\prime}(1)(X-1) + (X-1)^2
\mathfrak{D}_1^2 p, \quad p\in \cbb[X].
   \end{align*}
Since the mapping $p\mapsto p\lrangle{T}$ is linear,
we obtain
      \allowdisplaybreaks
   \begin{align*}
p\lrangle{T} & = p(1) I - p^{\prime}(1)\bscr_1(T) +
\Big((X-1)^2 \mathfrak{D}_1^2 p\Big)\lrangle{T}
   \\
&\hspace{-1.7ex}\overset{\eqref{dur-a}} = p(1) I -
p^{\prime}(1)\bscr_1(T) + R^*(\mathfrak{D}_1^2 p)(S)R,
\quad p\in \cbb[X],
   \end{align*}
which means that \eqref{checpt-3} holds. Applying
Theorem~\ref{dyltyprep} gives (i).

(ii$^{\prime}$)$\Leftrightarrow$(iii$^{\prime}$) One
can easily check that these two conditions are
equivalent with the same triplet $(\kk,R,S)$ (use
\eqref{nab-bla2} and \eqref{nab-bla}). This together
with the first paragraph of this proof justifies the
second statement of the ``moreover'' part.

Arguing as in the proof of the equivalence
(ii)$\Leftrightarrow$(iv) of
Theorem~\ref{dyltyprep}, we deduce that the
equivalences
(ii)$\Leftrightarrow$(ii$^{\prime}$) and
(iii)$\Leftrightarrow$(iii$^{\prime}$) hold. The
first statement of the ``moreover'' part can be
inferred from Theorem~\ref{dyltyprep}(a) by
observing that conditions \eqref{checpt-2},
\eqref{cziki-1} and \eqref{cziki-2} are
equivalent (cf.\ the proof of the equivalence
(i)$\Leftrightarrow$(ii$^{\prime}$)). This
completes the proof.
   \end{proof}
Many classes of operators are closed under the
operation of taking powers. Among them are the
classes of normaloid, subnormal, $k$-isometric,
$k$-expansive, completely hyperexpansive and
alternatingly hyperexpansive operators (see
\cite[p.\ 99]{Fur}, \cite[Theorem~2.3]{Ja02} and
\cite[Theorem~2.3]{E-J-L06}). On the other hand,
the class of hyponormal operators does not share
this property (see \cite[Problem~209]{Hal}). As
the first application of Theorem~\ref{dyl-an},
we show that the class of CPD operators does
share this property. We also describe the
semispectral and the dilation representations
for powers of CPD operators.
   \begin{thm} \label{pow-dwa}
Suppose that $T\in \ogr{\hh}$ is a CPD operator
and $i\in \nbb \setminus \{1\}$. Then
   \begin{enumerate}
   \item[(i)] $T^i$ is CPD,
   \item[(ii)] if $M$ and $M_i$ are semispectral
measures that correspond respectively
to $T$ and $T^i$ via Theorem~{\em
\ref{dyltyprep}(ii)}, then
   \begin{align} \label{diszcz-2}
M_i (\varDelta) = \tilde M_i
(\psi_i^{-1} (\varDelta)), \quad
\varDelta \in \borel{\rbb_+},
   \end{align}
where $\tilde M_i \colon \borel{\rbb_+}
\to \ogr{\hh}$ is the semispectral
measure defined by
   \begin{align} \label{diszcz-1}
\tilde M_i(\varDelta) =
\int_{\varDelta} (1+ x + \ldots +
x^{i-1})^2 M(\D x), \quad \varDelta \in
\borel{\rbb_+},
   \end{align}
and $\psi_i\colon \rbb_+\to \rbb_+$ is
given by $\psi_i(x)=x^i$ for $x \in
\rbb_+$,
   \item[(iii)] the representing triplet
$(B_i,C_i,F_i)$ of $T^i$ can be described
by applying Theorem~{\em
\ref{dyltyprep}(b)} to $M_i$ in place of
$M$,
   \item[(iv)] if $(\kk,R,S)$ is as in Theorem~{\em
\ref{dyltyprep}(iv)}, then the triplet
$(\kk,R_i,S^i)$ with
   \begin{align} \label{zacyt-1}
R_i:=(I+S + \ldots + S^{i-1}) R,
   \end{align}
corresponds to $T^i$ via Theorem~{\em
\ref{dyltyprep}(iv)}.
   \end{enumerate}
   \end{thm}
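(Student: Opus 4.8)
The plan is to prove Theorem~\ref{pow-dwa} by reducing everything to the characterization \eqref{cziki-2} of Theorem~\ref{dyl-an}(iii), since that representation---which expresses $T^{*n}\bscr_2(T)T^n$ as the $n$-th moment of a compactly supported semispectral measure---is the cleanest one to propagate through the power map. The key algebraic identity I expect to carry the whole argument is that $\bscr_2(T^i)$ can be written in terms of $\bscr_2(T)$ via the hereditary calculus. Indeed, working in $\cbb[X]$, one has $(1-X)^2 = (1-X^i)^2 \cdot \big(\tfrac{1-X}{1-X^i}\big)^2 = (1-X^i)^2 (1+X+\cdots+X^{i-1})^{-2}$, which rearranges to the polynomial identity $(1-X^i)^2 = (1+X+\cdots+X^{i-1})^2 (1-X)^2$. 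Evaluating this through $p\mapsto p\lrangle{T}$ and recalling $\bscr_2(T) = (1-X)^2\lrangle{T}$ and $\bscr_2(T^i) = (1-X^i)^2\lrangle{T^i} = (1-Y)^2\lrangle{T^i}$ (with $Y=X^i$), I will obtain the crucial relation
\[
\bscr_2(T^i) = \big((1+X+\cdots+X^{i-1})^2 (1-X)^2\big)\lrangle{T}.
\]

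\textbf{First}, to prove (i) and simultaneously set up (ii), I would apply \eqref{cziki-1} from Theorem~\ref{dyl-an}(ii) to the operator $T$. For an arbitrary $q\in\cbb[X]$, substituting the polynomial $(1+X+\cdots+X^{i-1})^2\, q(X^i)$ in place of the generic multiplier, and using the identity above together with $\nabla_{T^i} = \nabla_T^{\,i}$ (so that $p(X^i)\lrangle{T} = p(Y)\lrangle{T^i}$ for $p\in\cbb[X]$), I expect to reach
\[
((Y-1)^2 q)\lrangle{T^i} = \int_{\rbb_+} q(x^i)\, (1+x+\cdots+x^{i-1})^2\, M(\D x).
\]
Recognizing the right-hand side, via the measure transport theorem (cf.\ \cite[Theorem~1.6.12]{Ash}), as $\int_{\rbb_+} q\, \D\big(\tilde M_i \circ \psi_i^{-1}\big)$ with $\tilde M_i$ defined by \eqref{diszcz-1} and $\psi_i(x)=x^i$, shows that $T^i$ satisfies condition (ii) of Theorem~\ref{dyl-an} with the compactly supported semispectral measure $M_i := \tilde M_i \circ \psi_i^{-1}$. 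This simultaneously yields (i) (by Theorem~\ref{dyl-an}) and the formula \eqref{diszcz-2}--\eqref{diszcz-1} in (ii), the latter pinned down by the uniqueness clause of the ``moreover'' part of Theorem~\ref{dyl-an}.

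\textbf{Then} (iii) is immediate: since $M_i$ is the semispectral measure attached to $T^i$ through Theorem~\ref{dyltyprep}(ii), the representing triplet $(B_i,C_i,F_i)$ is recovered by feeding $M_i$ into the conversion formulas of Theorem~\ref{dyltyprep}(b), exactly as stated. For (iv), I would start from the dilation triplet $(\kk,R,S)$ for $T$ satisfying \eqref{dur-c} and \eqref{mini-tr}, and verify \eqref{dur-c} for $T^i$ with the candidate triplet $(\kk,R_i,S^i)$, where $R_i=(I+S+\cdots+S^{i-1})R$. Concretely, $R_i^* (S^i)^n R_i = R^*(I+S+\cdots+S^{i-1})^2 S^{in} R$, and since $(I+S+\cdots+S^{i-1})^2 S^{in} = h(S)$ for the polynomial $h(x)=(1+x+\cdots+x^{i-1})^2 x^{in}$, applying \eqref{dur-c} for $T$ gives $R^* h(S) R = (h\lrangle{\cdot})$-expression that matches $T^{*in}\bscr_2(T)T^{in}$ reorganized into $T^{i*n}\bscr_2(T^i)T^{in}$ through the same polynomial identity. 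The minimality \eqref{mini-tr} for $(\kk,R_i,S^i)$ would then follow from \eqref{mini-tr} for $(\kk,R,S)$ together with Lemma~\ref{minim-2}(a), after checking that the two generated subspaces coincide.

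\textbf{The main obstacle} I anticipate is the bookkeeping around the substitution $Y=X^i$ and the interplay between the non-multiplicative map $p\mapsto p\lrangle{T}$ and the genuinely multiplicative structure of $\nabla_T$. The identity $p(X^i)\lrangle{T}=p(Y)\lrangle{T^i}$ must be justified carefully---it rests on $\nabla_{T^i}=\nabla_T^{\,i}$ and on the relation $p\lrangle{T}=p(\nabla_T)(I)$ from \eqref{nab-bla}---and one has to be certain that the polynomial factorization is applied inside the bracket at the level of $\cbb[X]$ before evaluation, never assuming multiplicativity of the bracket itself. Once that substitution principle is cleanly established, the three remaining parts are essentially transport-of-measure and functional-calculus computations with no conceptual difficulty.
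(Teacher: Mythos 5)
Your proposal is correct and follows essentially the same route as the paper's proof: the same polynomial identity $(1-X^i)^2=(1+X+\cdots+X^{i-1})^2(1-X)^2$ pushed through the hereditary calculus, the same transport of measure yielding $M_i=\tilde M_i\circ\psi_i^{-1}$ via Theorem~\ref{dyl-an} and its uniqueness clause, and the same candidate triplet $(\kk,R_i,S^i)$ with minimality settled by Lemma~\ref{minim-2}(a) using that $I+S+\cdots+S^{i-1}$ is invertible and commutes with the spectral data of $S$. (Only a notational slip: $\bscr_2(T^i)$ equals $(1-X^i)^2\lrangle{T}$, i.e.\ $(1-Y)^2\lrangle{T^i}$ under your substitution $Y=X^i$, not $(1-X^i)^2\lrangle{T^i}$; your displayed key relation is nevertheless the correct one.)
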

   \begin{proof}
(i)\&(ii) First, it is easily seen that
   \begin{align}  \label{ad-dyc1}
\bscr_2(T^i) = (1-X^i)^2 \langle T \rangle.
   \end{align}
Let $M$ be as in Theorem~\ref{dyltyprep}(ii). By
the ``moreover'' part of Theorem~\ref{dyl-an},
$M$ satisfies \eqref{cziki-1}. Clearly, the set
functions $\tilde M_i$ and $M_i$ defined by
\eqref{diszcz-1} and \eqref{diszcz-2},
respectively, are semispectral measures that are
compactly supported. Applying \eqref{form-ua}
and the measure transport theorem, we get (for
the definition of $\nabla_T$,
see~\eqref{nubile})
   \allowdisplaybreaks
   \begin{align*}
(T^{i})^{*n} \bscr_2(T^i) (T^{i})^{n} &
\overset{\eqref{ad-dyc1}}=
(\nabla_T)^{in} \left((1-X^i)^2 \langle
T \rangle\right)
   \\
& \overset{\eqref{nab-bla}} = \left(X^{in}
(1-X^i)^2\right) \langle T \rangle
   \\
& \hspace{1.7ex}= \left(X^{in} (1+ X + \ldots +
X^{i-1})^2(1-X)^2\right) \langle T \rangle
   \\
& \overset{\eqref{cziki-1}} = \int_{\rbb_+} x^{in} (1+
x + \ldots + x^{i-1})^2 M(\D x)
   \\
& \hspace{1.7ex} = \int_{\rbb_+} \psi_i(x)^n \tilde
M_i(\D x)
   \\
& \hspace{1.7ex} = \int_{\rbb_+} t^n M_i(\D t), \quad
n\in \zbb_+.
   \end{align*}
Using Theorem~\ref{dyl-an}(iii) and the
``moreover'' part of this theorem, we
see that (i) and (ii) hold.

(iii) Obvious.

(iv) Let $(\kk,R,S)$ be as in
Theorem~\ref{dyltyprep}(iv). Denote by
$E_{S}$ and $E_{S^i}$ the spectral
measures of $S$ and $S^i$,
respectively. In view of
\cite[Theorem~6.6.4]{B-S87}, we~have
   \begin{align} \label{pow-dra}
E_{S^i}(\varDelta) = E_{S}(\psi_i^{-1}
(\varDelta)), \quad \varDelta \in
\borel{\rbb_+}.
   \end{align}
According to the proof of the
implication (iii)$\Rightarrow$(ii) of
Theorem~\ref{dyltyprep},
   \begin{align} \label{nai-fakt}
M(\varDelta) = R^*E_{S}(\varDelta) R,
\quad \varDelta \in \borel{\rbb_+}.
   \end{align}
It follows from \eqref{mini-tr} and
Lemma~\ref{minim-2}(a) that
   \begin{align} \label{nai-fakt-2}
\kk = \bigvee
\big\{E_{S}(\varDelta)\ob{R}\colon
\varDelta\in \borel{\rbb_+}\big\}.
   \end{align}
Define the function $\zeta_i\colon
\rbb_+ \to \rbb_+$ by $\zeta_i(x)=1+ x
+ \ldots + x^{i-1}$ for $x\in \rbb_+$.
Using \eqref{diszcz-2} and
\eqref{diszcz-1} and applying the
Stone-von Neumann functional calculus,
we~get
   \allowdisplaybreaks
   \begin{align} \notag
\is{M_i(\varDelta)h}{h} & =
\Big\langle\int_{\psi_i^{-1}
(\varDelta)} \zeta_i(x)^2 M(\D
x)h,h\Big\rangle
      \\ \notag
&\hspace{-1.75ex}\overset{\eqref{form-ua}}=
\int_{\psi_i^{-1} (\varDelta)}
\zeta_i(x)^2 \langle M(\D x)h,h \rangle
   \\ \notag
&\hspace{-2.2ex}\overset{\eqref{nai-fakt}}=
\int_{\psi_i^{-1} (\varDelta)}
\zeta_i(x)^2 \Big\langle R^*E_{S}(\D
x)Rh,h\Big\rangle
   \\ \notag
&= \Big\langle R^* \int_{\rbb_+}
\chi_{\psi_i^{-1} (\varDelta)}(x)
\zeta_i(x)^2 E_{S}(\D x)Rh,h\Big\rangle
   \\ \notag
&= \Big\langle R^* \int_{\rbb_+}
\zeta_i(x) E_{S}(\D x) E_{S}(\psi_i^{-1}
(\varDelta)) \int_{\rbb_+} \zeta_i(x)
E_{S}(\D x) Rh,h\Big\rangle
   \\ \notag
&\hspace{-1.75ex}\overset{\eqref{pow-dra}}=
\Big\langle R^*(I+S + \ldots + S^{i-1})
E_{S^i}(\varDelta) (I+S + \ldots +
S^{i-1})Rh,h\Big\rangle
   \\   \label{ach-cha}
&\hspace{-1.75ex}
\overset{\eqref{zacyt-1}}=\is{R_i^*
E_{S^i}(\varDelta)R_ih}{h}, \quad h\in
\hh, \quad \varDelta \in
\borel{\rbb_+}.
   \end{align}
Since the operator $I+S + \ldots +
S^{i-1}$ commutes with $E_{S}$ and is
invertible in $\ogr{\kk}$, we obtain
   \begin{multline*}
\bigvee
\big\{E_{S^i}(\varDelta)\ob{R_i}\colon
\varDelta\in \borel{\rbb_+}\big\}
   \\
\overset{ \eqref{zacyt-1} \&
\eqref{pow-dra}}= \bigvee \big\{(I+S +
\ldots + S^{i-1})E_{S}(\psi_i^{-1}
(\varDelta)) \ob{R}\colon \varDelta\in
\borel{\rbb_+}\big\}
   \\
= (I+S + \ldots + S^{i-1}) \bigvee
\big\{E_{S}(\psi_i^{-1} (\varDelta))
\ob{R}\colon \varDelta\in
\borel{\rbb_+}\big\}
      \\
= (I+S + \ldots + S^{i-1}) \bigvee
\big\{E_{S}(\varDelta) \ob{R}\colon
\varDelta\in \borel{\rbb_+}\big\}
\overset{\eqref{nai-fakt-2}}= \kk.
   \end{multline*}
Hence, by Lemma~\ref{minim-2}(a), $\bigvee
\{(S^i)^n \ob{R_i} \colon n\in \zbb_+\} = \kk$.
Using (ii) and \eqref{ach-cha} and applying the
Stone-von Neumann functional calculus to the
operator $S^i$, we verify that equalities
\eqref{checpt-3} and \eqref{mini-tr} hold with
$(T^i, R_i, S^i)$ in place of $(T, R, S)$. This
shows (iv) and completes the proof.
   \end{proof}
The following corollary extends the
formula~\eqref{stary-num} of
Corollary~\ref{nofs-sup2} to the case of powers
of CPD operators.
   \begin{cor}
Suppose $T\in \ogr{\hh}$ is a CPD operator and
$M$ is as in Theorem~{\em \ref{dyltyprep}(ii)}.
Then
   \begin{align} \label{dar-ek-0}
\bscr_m(T^i) = \int_{\rbb_+}
(1-x^i)^{m-2} (1+ x + \ldots +
x^{i-1})^2 M(\D x), \quad m\ge 2, \,
i\Ge 1.
   \end{align}
   \end{cor}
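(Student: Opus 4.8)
The plan is to derive \eqref{dar-ek-0} directly from the semispectral representation \eqref{cziki-1} in Theorem~\ref{dyl-an} by an algebraic factorization of polynomials, bypassing the detour through the measure $M_i$ and the powers machinery. First I would record the identity
\begin{align*}
\bscr_m(T^i) = (1-X^i)^m \lrangle{T}, \quad m\Ge 2, \, i \Ge 1,
\end{align*}
which generalizes \eqref{ad-dyc1}. This is immediate from the definitions: expanding $(1-X^i)^m = \sum_{k=0}^m (-1)^k {m\choose k} X^{ik}$ and applying the linear map $p\mapsto p\lrangle{T}$ of \eqref{Il111} termwise produces $\sum_{k=0}^m (-1)^k {m\choose k} T^{*ik}T^{ik}$, which is precisely $\bscr_m(T^i)$ by \eqref{bmt}.

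The key step is then to split off the factor $(X-1)^2$ demanded by \eqref{cziki-1}. Using $1-X^i = (1-X)(1+X+\cdots+X^{i-1})$ and $(1-X)^2=(X-1)^2$, I would factor
\begin{align*}
(1-X^i)^m = (X-1)^2\, q, \quad q := (1-X^i)^{m-2}(1+X+\cdots+X^{i-1})^2,
\end{align*}
noting that $q\in\cbb[X]$ exactly because $m\Ge 2$. Applying \eqref{cziki-1} to this $q$ then yields
\begin{align*}
\bscr_m(T^i) = (1-X^i)^m\lrangle{T} = \big((X-1)^2 q\big)\lrangle{T} = \int_{\rbb_+} q(x)\, M(\D x),
\end{align*}
and substituting the explicit form of $q$ gives \eqref{dar-ek-0}. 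As a sanity check, for $i=1$ one has $1+X+\cdots+X^{i-1}=1$, so $q=(1-X)^{m-2}$ and the formula collapses to \eqref{stary-num}.

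This argument is essentially computational, so I do not expect a genuine obstacle; the one point demanding care is the first identity, because the map $p\mapsto p\lrangle{T}$ is generally \emph{not} multiplicative (cf.\ the discussion following \eqref{stand-in}). That identity is nonetheless valid since it only uses the linearity of $p\mapsto p\lrangle{T}$ applied to the monomial expansion of a single polynomial, with no product of hereditary evaluations intervening. A slightly longer alternative, which I would keep in reserve, is to invoke Theorem~\ref{pow-dwa}: apply \eqref{stary-num} to the CPD operator $T^i$ with its semispectral measure $M_i$, and transport the resulting integral back to $M$ via \eqref{diszcz-2}, \eqref{diszcz-1} and the measure transport theorem (working at the level of the scalar measures $\is{M_i(\cdot)h}h$ and then polarizing); this reproduces the same formula but less directly.
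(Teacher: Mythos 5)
Your proposal is correct, and it takes a genuinely different, more direct route than the paper. The paper proves \eqref{dar-ek-0} by passing through the powers machinery: it invokes Theorem~\ref{pow-dwa}(i)--(ii) to get that $T^i$ is CPD with semispectral measure $M_i$ described by \eqref{diszcz-2} and \eqref{diszcz-1}, establishes the transport formula \eqref{dar-ek} converting integrals against $M_i$ into integrals against $M$, and then applies \eqref{stary-num} to $T^i$ with $f(x)=(1-x)^{m-2}$ (the case $i=1$ being Corollary~\ref{nofs-sup2} itself, which is why the paper splits off that case). Your argument bypasses all of this: the identity $\bscr_m(T^i)=(1-X^i)^m\lrangle{T}$ uses only linearity of the hereditary calculus applied to the monomial expansion of one polynomial --- no products of hereditary evaluations occur, exactly as you note, so non-multiplicativity is harmless --- and it is the evident generalization of \eqref{ad-dyc1}; the factorization $(1-X^i)^m=(X-1)^2\,(1-X^i)^{m-2}(1+X+\cdots+X^{i-1})^2$ is legitimate precisely because $m\Ge 2$; and a single application of \eqref{cziki-1} finishes the proof, the only point worth citing explicitly being the ``moreover'' part of Theorem~\ref{dyl-an}, which guarantees that the measure in \eqref{cziki-1} is the same $M$ as in Theorem~\ref{dyltyprep}(ii). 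What your route buys is brevity, a uniform treatment of all $i\Ge 1$ with no case split, and independence from Theorem~\ref{pow-dwa} and the measure transport theorem. What the paper's route buys is that it proves more along the way: \eqref{dar-ek} holds for every $f\in L^1(M_i)$, i.e.\ it identifies the full semispectral measure of $T^i$ in terms of that of $T$, of which the corollary is merely the polynomial specialization. Your reserve alternative is, in substance, exactly the paper's proof.
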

   \begin{proof}
In view of Corollary~\ref{nofs-sup2}, it
suffices to consider the case $i \Ge 2$. By
assertions (i) and (ii) of
Theorem~\ref{pow-dwa}, $T^i$ is CPD and the
semispectral measure $M_i$ corresponding to
$T^i$ via Theorem~\ref{dyltyprep}(ii) is given
by \eqref{diszcz-2} and \eqref{diszcz-1}. Using
\eqref{form-ua} and the measure transport
theorem, we obtain
   \begin{gather} \notag
\text{\em for any Borel function $f\colon \rbb_+\to
\cbb$, $f\in L^1(M_i) \iff f\circ \psi_i \in
L^1(\tilde M_i)$,}
   \\ \label{dar-ek}
\int_{\rbb_+} f \D M_i = \int_{\rbb_+} f (x^i) (1+ x +
\ldots + x^{i-1})^2 M(\D x), \quad f\in L^1(M_i).
   \end{gather}
Applying \eqref{stary-num} to $T^i$ and
\eqref{dar-ek} to $f(x)=(1-x)^{m-2}$,
we get \eqref{dar-ek-0}.
   \end{proof}
As the second application of
Theorem~\ref{dyl-an}, we give a characterization
of CPD operators of class $\gqb$ (a class of
operators having upper triangular $2 \times 2$
block matrix form) by using the Taylor spectrum
approach developed in \cite{C-J-J-S19}. We also
describe the semispectral and the dilation
representations for such operators. According to
Corollary~\ref{nofs-sup2}(i), $\bscr_{2k}(T)\Ge
0$ for all $k\in \zbb_+$ whenever $T$ is CPD. We
will show in Theorem~\ref{cpd-q} below that the
single inequality $\bscr_{2k}(T)\Ge 0$ with
$k\Ge 1$ completely characterizes CPD operators
of class $\gqb$. Following \cite{C-J-J-S19}, we
say that $T\in \ogr{\hh}$ is {\em of class
$\gqb$} if it has a block matrix form
   \begin{align*}
T = \begin{bmatrix} {V } & {E}\\ {0} & {Q}
   \end{bmatrix}
   \end{align*}
with respect to an orthogonal decomposition $\hh=\hh_1
\oplus \hh_2$, where $\hh_1$ and $\hh_2$ are nonzero
Hilbert spaces and $V\in \ogr{\hh_1},$ $E\in
\ogr{\hh_2,\hh_1}$ and $Q\in \ogr{\hh_2}$ satisfy
   \begin{align} \label{pacz-1}
V^*V=I, \; V^*E=0, \; QE^*E=E^*EQ \; \textrm{ and } \;
QQ^*Q=Q^*QQ.
   \end{align}
(In particular, by the square root
theorem $|Q|$ and $|E|$ commute.) If this
is the case, we write
$T =  \big[\begin{smallmatrix} V & E \\
0 & Q \end{smallmatrix}\big] \in
\gqbh$. The Taylor spectrum of a pair
$(T_1,T_2)$ of commuting operators
$T_1,T_2\in \ogr{\hh}$ is denoted by
$\sigma(T_1,T_2)$. It is worth pointing
out that in view of
\cite[Theorem~3.3]{C-J-J-S19} for any
nonempty compact subset $\varGamma$ of
$\rbb_+^2$ and any separable infinite
dimensional Hilbert space $\hh_2$,
there exist a nonzero Hilbert
space $\hh_1$ and $T =  \big[\begin{smallmatrix} V & E \\
0 & Q \end{smallmatrix}\big] \in \gqbh$
$($relative to $\hh=\hh_1\oplus \hh_2$$)$ such
that $\sigma(|Q|,|E|) = \varGamma$. This
important fact enables us to find the spectral
region for conditional positive definiteness of
operators of class $\gqb$ (see
Theorem~\ref{cpd-q} and Figure~\ref{fig1}). For
a more thorough discussion of these topics the
reader is referred to \cite{C-J-J-S19}. Before
stating Theorem~\ref{cpd-q}, we prove an
auxiliary lemma which is of some independent
interest.
   \begin{lem} \label{jdof2com}
Let $A,B \in \ogr{\hh}$ be two commuting
normal operators. Then
   \begin{align} \label{jadra-1}
\jd{AB} & = \jd{A} + \jd{B},
   \\  \label{obrazy-1}
\overline{\ob{AB}} & = \overline{\ob{A}}
\cap \overline{\ob{B}}.
   \end{align}
   \end{lem}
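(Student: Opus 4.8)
The plan is to prove the two identities in order: first the kernel identity \eqref{jadra-1} by a direct orthogonal-decomposition argument, and then to read off the range identity \eqref{obrazy-1} by applying \eqref{jadra-1} to the adjoint pair and taking orthogonal complements. The only structural inputs I would use are the elementary facts that $\jd{A}=\jd{A^{*}}$, and hence $\jd{A}^{\perp}=\overline{\ob{A}}$, for a normal operator $A$, together with the commutation $AB=BA$; in particular no appeal to Fuglede's theorem or to a joint spectral measure is needed.

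For \eqref{jadra-1}, the inclusion $\jd{A}+\jd{B}\subseteq\jd{AB}$ is immediate: if $Ax=0$ then $ABx=BAx=0$, and if $Bx=0$ then $ABx=A(Bx)=0$, so both kernels lie in the linear subspace $\jd{AB}$. The substance is the reverse inclusion. First I would show that $\jd{A}$ reduces $B$: it is $B$-invariant because $x\in\jd{A}$ gives $A(Bx)=B(Ax)=0$, and its orthogonal complement $\jd{A}^{\perp}=\overline{\ob{A}}$ (this is where normality of $A$ enters) is $B$-invariant because $B\ob{A}\subseteq\ob{A}$ follows from $BAz=A(Bz)$ and then continuity passes to the closure. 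Consequently the orthogonal projection $P$ of $\hh$ onto $\jd{A}$ commutes with $B$. Now take $x\in\jd{AB}$; then $A(Bx)=0$, so $Bx\in\jd{A}=\ob{P}$, i.e.\ $(I-P)Bx=0$. Since $P$ commutes with $B$, this reads $B(I-P)x=0$, so $(I-P)x\in\jd{B}$, while $Px\in\jd{A}$. Writing $x=Px+(I-P)x$ exhibits $x\in\jd{A}+\jd{B}$, completing \eqref{jadra-1}.

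For \eqref{obrazy-1}, I would apply \eqref{jadra-1} to the pair $A^{*},B^{*}$, which are again commuting and normal: they are normal as adjoints of normal operators, and $A^{*}B^{*}=(BA)^{*}=(AB)^{*}=B^{*}A^{*}$. Since $A^{*}$ and $B^{*}$ commute we also have $(AB)^{*}=B^{*}A^{*}=A^{*}B^{*}$, so \eqref{jadra-1} gives $\jd{(AB)^{*}}=\jd{A^{*}B^{*}}=\jd{A^{*}}+\jd{B^{*}}$. Taking orthogonal complements and using the standard identities $\overline{\ob{T}}=\jd{T^{*}}^{\perp}$ (valid for every $T\in\ogr{\hh}$) and $(\mathscr{M}+\mathscr{N})^{\perp}=\mathscr{M}^{\perp}\cap\mathscr{N}^{\perp}$ yields $\overline{\ob{AB}}=\jd{(AB)^{*}}^{\perp}=\jd{A^{*}}^{\perp}\cap\jd{B^{*}}^{\perp}=\overline{\ob{A}}\cap\overline{\ob{B}}$, which is \eqref{obrazy-1}.

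The only genuinely delicate step is verifying that $P$ commutes with $B$, equivalently that $\jd{A}$ reduces $B$; this is the single place where normality of $A$ is invoked, through the identification $\jd{A}^{\perp}=\overline{\ob{A}}$. Once this invariance is in hand, the decomposition $x=Px+(I-P)x$ does all the remaining work, and I would expect the final write-up to be short, with no computation beyond these invariance checks.
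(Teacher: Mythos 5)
Your proof is correct, but for the kernel identity \eqref{jadra-1} it takes a genuinely different route from the paper's. The paper invokes the joint spectral measure $G$ of the commuting normal pair $(A,B)$ (via \cite[Theorem~5.21]{Sch12}) and computes all three kernels as ranges of spectral projections, namely $\jd{AB}=\ob{G(\{(z_1,z_2)\colon z_1z_2=0\})}$, $\jd{A}=\ob{G(\{0\}\times\cbb)}$ and $\jd{B}=\ob{G(\cbb\times\{0\})}$, after which \eqref{jadra-1} follows by splitting the set $\{z_1z_2=0\}$ into $\{0\}\times(\cbb\setminus\{0\})$ and $\cbb\times\{0\}$ and adding the corresponding ranges. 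You instead argue at the level of reducing subspaces: normality of $A$ gives $\jd{A}=\jd{A^*}$, hence $\jd{A}^{\perp}=\overline{\ob{A}}$, both $\jd{A}$ and $\overline{\ob{A}}$ are $B$-invariant by the commutation alone, so the orthogonal projection $P$ onto $\jd{A}$ commutes with $B$, and the decomposition $x=Px+(I-P)x$ finishes the reverse inclusion; this is sound at every step. Your approach is more elementary (no joint spectral measure, hence no implicit reliance on Fuglede-type results) and in fact proves slightly more: your argument for \eqref{jadra-1} never uses normality of $B$, so it yields $\jd{AB}=\jd{A}+\jd{B}$ for an arbitrary $B\in\ogr{\hh}$ commuting with a normal $A$, and since the adjoint step only needs $A^{*}$ normal, the same generality carries over to \eqref{obrazy-1}. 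For the range identity the two proofs coincide: both apply \eqref{jadra-1} to the adjoints and take orthogonal complements. What the paper's argument buys is uniformity with the spectral-measure machinery used throughout the rest of the paper; what yours buys is self-containedness and a marginally stronger statement.
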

   \begin{proof}
Let $G\colon \borel{\cbb^2}\to \ogr{\hh}$
be the joint spectral measure of $(A,B)$
(see \cite[Theorem~5.21]{Sch12}). Since
$G(\{(0,0)\}) \Le G(\cbb \times \{0\})$
and thus $\ob{G(\{(0,0)\})} \subseteq
\ob{G(\cbb \times \{0\})}$, we obtain
   \begin{align} \label{sum-dod}
\ob{G(\cbb\times \{0\})}
=\ob{G(\{(0,0)\})} + \ob{G(\cbb\times
\{0\})}.
   \end{align}
Applying the Stone-von Neumann functional
calculus and \eqref{sum-dod} yields
   \allowdisplaybreaks
   \begin{align} \notag
\jd{AB} &= \mathscr N\Big(\int_{\cbb^2}
z_1z_2 \D G(z_1,z_2)\Big)
   \\ \notag
&= \jd{G(\{(z_1,z_2)\in \cbb^2 \colon
z_1z_2 \neq 0\})}
   \\ \notag
&= \ob{G(\{(z_1,z_2)\in \cbb^2 \colon
z_1z_2 = 0\})}
   \\ \notag
&= \ob{G(\{0\}\times \cbb_{*})} +
\ob{G(\cbb\times \{0\})}
   \\ \notag
&=\ob{G(\{0\}\times \cbb_{*})} +
\ob{G(\{(0,0)\})} + \ob{G(\cbb\times
\{0\})}
   \\ \label{sum-dod-2}
&= \ob{G(\{0\}\times \cbb)} +
\ob{G(\cbb\times \{0\})},
   \end{align}
where $\cbb_{*}:=\cbb\setminus \{0\}$.
Similarly,
   \allowdisplaybreaks
   \begin{align}
\label{sum-dod-3} \jd{A} = \mathscr
N\Big(\int_{\cbb^2} z_1 \D
G(z_1,z_2)\Big) = \ob{G(\{0\}\times
\cbb)},
   \\ \label{sum-dod-4}
\jd{B} = \mathscr N\Big(\int_{\cbb^2} z_2
\D G(z_1,z_2)\Big) = \ob{G(\cbb\times
\{0\})}.
   \end{align}
Combining \eqref{sum-dod-2} with
\eqref{sum-dod-3} and \eqref{sum-dod-4},
we get \eqref{jadra-1}. Finally, applying
\eqref{jadra-1} to the adjoints of $A$
and $B$ and taking orthocomplements gives
\eqref{obrazy-1}.
   \end{proof}
   \begin{thm} \label{cpd-q}
Suppose that $T =  \big[\begin{smallmatrix} V & E \\
0 & Q \end{smallmatrix}\big] \in \gqbh$.
Then the following conditions are
equivalent{\em :}
   \begin{enumerate}
   \item[(i)] $T$ is CPD,
   \item[(ii)] $\sigma(|Q|,|E|) \subseteq \{(s,t)\in
\rbb_+^2\colon s^2 + t^2 \Le 1\} \cup
([1,\infty) \times \rbb_+)$,
   \item[(iii)] $\bscr_{2k}(T) \Ge 0$ for every
$($equivalently, for some$)$ $k\in \nbb$.
   \end{enumerate}
Moreover, if $T$ is CPD, then the following
assertions~hold{\em :}
   \begin{enumerate}
   \item[(a)] $A:=(I-|Q|^2-|E|^2)(I-|Q|^2)\in
\ogr{\hh_2}_+$, the operators $Q$, $|Q|$, $|E|$
and $A$ commute and
   \begin{align}  \label{sq-sem-m}
M(\varDelta) = 0 \oplus \sqrt{A} \,
P_{|Q|^2}(\varDelta) \sqrt{A}, \quad
\varDelta \in \borel{\rbb_+},
   \end{align}
where $M$ is as in Theorem~{\em
\ref{dyltyprep}(ii)} and $P_{|Q|^2}$ is the
spectral measure of $|Q|^2$,
   \item[(b)] the representing triplet
$(B,C,F)$ of $T$ is described by
Theorem~{\em \ref{dyltyprep}(b)},
   \item[(c)] the triplet $(\kk,R,S)$ defined
below corresponds to $T$ via Theorem~{\em
\ref{dyltyprep}(iv)}{\em :}
   \begin{gather} \label{adjos-1}
\kk := \overline{\ob{A}} =
\overline{\ob{I-|Q|^2-|E|^2}} \cap
\overline{\ob{I-|Q|}},
   \\ \label{adjos-2}
R(h_1 \oplus h_2) := \sqrt{A}h_2, \quad
h_1\in \hh_1, \, h_2\in \hh_2,
   \\ \notag
S := \big(|Q|\big|_{\kk}\big)^2 \quad
(\text{$\kk$ reduces $|Q|$}).
   \end{gather}
   \end{enumerate}
   \end{thm}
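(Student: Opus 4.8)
The plan is to reduce everything to the block structure of $T$ and then to the moment characterization of Theorem~\ref{dyl-an}. First I would carry out the explicit block computations. Writing $T^n = \big[\begin{smallmatrix} V^n & E_n \\ 0 & Q^n \end{smallmatrix}\big]$ with $E_n=\sum_{j=0}^{n-1}V^jEQ^{n-1-j}$, I note that $V^*V=I$ together with $V^*E=0$ (so $V^{*k}E=0$ for $k\Ge 1$) kills every off-diagonal block, giving $T^{*n}T^n = I_{\hh_1}\oplus(E_n^*E_n+|Q|^{2n})$, where I use quasinormality of $Q$ (which commutes with $|Q|$, so $Q^{*n}Q^n=|Q|^{2n}$). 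Computing $\bscr_2(T)=I-2T^*T+T^{*2}T^2$ block-wise, the $\hh_1$-entry vanishes and the $\hh_2$-entry collapses, via $Q^*|E|^2Q=|E|^2|Q|^2$ (from $QE^*E=E^*EQ$ in \eqref{pacz-1}) and $Q^{*2}Q^2=|Q|^4$, to the factored form $(I-|Q|^2)(I-|Q|^2-|E|^2)=A$. Thus $\bscr_2(T)=0\oplus A$. Since $A$ is a function of the commuting operators $|Q|^2,|E|^2$ it commutes with $Q$, so $T^{*n}\bscr_2(T)T^n=0\oplus Q^{*n}AQ^n=0\oplus A|Q|^{2n}$; and writing $(1-X)^m=(1-X)^{m-2}(1-X)^2$ and applying \eqref{nab-bla} and \eqref{nab-bla2} I get $\bscr_m(T)=(1-X)^{m-2}(\nabla_T)(\bscr_2(T))=0\oplus A(I-|Q|^2)^{m-2}$, hence $\bscr_{2k}(T)=0\oplus(I-|Q|^2-|E|^2)(I-|Q|^2)^{2k-1}$.

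Next I would settle the equivalences. Let $G$ be the joint spectral measure of the commuting positive pair $(|Q|,|E|)$, so $\supp G=\sigma(|Q|,|E|)$ and $\bscr_{2k}(T)\Ge 0$ if and only if the continuous real density $(1-s^2-t^2)(1-s^2)^{2k-1}$ is nonnegative on $\sigma(|Q|,|E|)$. Because the odd power $(1-s^2)^{2k-1}$ has the same sign as $1-s^2$, this condition does not depend on $k$ (this is exactly the ``for some $\equiv$ for every $k$'' clause) and reduces to $(1-s^2-t^2)(1-s^2)\Ge 0$ there. A direct case analysis on $s<1$, $s=1$, $s>1$ shows the latter holds on $\sigma(|Q|,|E|)$ precisely when $\sigma(|Q|,|E|)$ lies in the region of~(ii); this gives (ii)$\Leftrightarrow$(iii). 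For (i)$\Leftrightarrow$(ii): the implication (i)$\Rightarrow$(iii) is immediate from Corollary~\ref{nofs-sup2}(i), while (ii) forces $A=\bscr_2(T)|_{\hh_2}\Ge 0$, so $\sqrt A$ exists and $M(\varDelta):=0\oplus\sqrt A\,P_{|Q|^2}(\varDelta)\sqrt A$ is a positive compactly supported semispectral measure with $\int_{\rbb_+}x^n\,M(\D x)=0\oplus A|Q|^{2n}=T^{*n}\bscr_2(T)T^n$, whence $T$ is CPD by Theorem~\ref{dyl-an}(iii).

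For the ``moreover'' part, assertion (a) follows because the uniqueness clause of Theorem~\ref{dyl-an} identifies the measure just constructed with the one of Theorem~\ref{dyltyprep}(ii), giving \eqref{sq-sem-m} (the commutativity of $Q,|Q|,|E|,A$ being built in), and (b) is then a verbatim application of Theorem~\ref{dyltyprep}(b). For (c) I would verify directly that the triplet of \eqref{adjos-1}--\eqref{adjos-2} meets Theorem~\ref{dyltyprep}(iv): since $A$ commutes with $|Q|$, the subspace $\kk=\overline{\ob A}$ reduces $|Q|$, one has $R^*y=0\oplus\sqrt A\,y$, and $R^*S^nR(h_1\oplus h_2)=0\oplus\sqrt A|Q|^{2n}\sqrt A\,h_2=0\oplus A|Q|^{2n}h_2$, i.e.\ \eqref{dur-c} holds; moreover $\bigvee\{S^n\ob R\}\supseteq\overline{\ob R}=\overline{\ob{\sqrt A}}=\overline{\ob A}=\kk$, giving \eqref{mini-tr}. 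The displayed description of $\kk$ in \eqref{adjos-1} comes from $\overline{\ob{I-|Q|^2}}=\overline{\ob{I-|Q|}}$ (as $I+|Q|$ is invertible) together with \eqref{obrazy-1} of Lemma~\ref{jdof2com} applied to the commuting normal factors of $A$.

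I expect the two genuine points of substance to be the algebraic collapse of the $\hh_2$-block of $\bscr_2(T)$ to the product $(I-|Q|^2)(I-|Q|^2-|E|^2)$ — this is the identity that converts the operator problem into the geometric region~(ii) — and the standard but essential equivalence, via the joint spectral measure, between the operator inequality $\int f\,\D G\Ge 0$ and the pointwise sign of the continuous density $f$ on $\supp G=\sigma(|Q|,|E|)$. Everything else is routine bookkeeping once the block formulas $T^{*n}\bscr_2(T)T^n=0\oplus A|Q|^{2n}$ and $\bscr_{2k}(T)=0\oplus(I-|Q|^2-|E|^2)(I-|Q|^2)^{2k-1}$ are in hand.
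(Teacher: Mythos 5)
Your proposal is correct, and it reaches the theorem by a route that is genuinely more self-contained than the paper's. The paper obtains the key identities $\bscr_2(T)=0\oplus A$ and $T^{*n}\bscr_2(T)T^n=0\oplus |Q|^{2n}A$ by citing \cite[Proposition~3.10, Lemma~9.1 and eq.~(19)]{C-J-J-S19}, proves (i)$\Leftrightarrow$(ii) through the moment-theoretic chain Proposition~\ref{traj-pd-op} combined with Bisgaard's operator moment theorem \cite[Theorem~2]{Bi94} and \cite[Theorem~2.1(i) \& Lemma~4.10]{C-J-J-S19} (reducing CPD-ness of $T$ to the pointwise Stieltjes property of $\{\tau_n(s,t)\}_{n=0}^\infty$ on the Taylor spectrum), and then disposes of (ii)$\Leftrightarrow$(iii) by citing \cite[Theorem~9.2(i)]{C-J-J-S19}. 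You instead (1) derive the block formulas by hand (your identities $Q^*|E|^2Q=|Q|^2|E|^2$, $Q^{*n}Q^n=|Q|^{2n}$ and the factorization of the $\hh_2$-block of $\bscr_2(T)$ all check out), (2) prove (ii)$\Leftrightarrow$(iii) directly from $\bscr_{2k}(T)=0\oplus\int_{\rbb_+^2}(1-s^2-t^2)(1-s^2)^{2k-1}\,\D G$ together with the elementary fact that the integral of a continuous real function against a spectral measure is nonnegative iff the function is nonnegative on $\supp{G}=\sigma(|Q|,|E|)$ --- which also makes the ``for some $k$ $\equiv$ for every $k$'' clause transparent via the odd-power sign argument, and (3) close the cycle (i)$\Rightarrow$(iii)$\Leftrightarrow$(ii)$\Rightarrow$(i), using Corollary~\ref{nofs-sup2}(i) for the first implication and the explicit construction of $M$ plus Theorem~\ref{dyl-an}(iii) for the last; this last step is exactly the computation the paper performs only later, under hypothesis (i), to prove assertion (a), so your reorganization makes it serve double duty. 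What the paper's route buys is brevity (given the external references) and consistency with its moment-sequence theme; what yours buys is independence from Bisgaard's theorem and from the finer results of \cite{C-J-J-S19}, at the cost of a longer block computation. Your treatment of (a)--(c) --- the uniqueness clause of Theorem~\ref{dyl-an} to identify $M$ with \eqref{sq-sem-m}, direct verification of \eqref{dur-c} and \eqref{mini-tr} for the triplet $(\kk,R,S)$, and $\overline{\ob{I-|Q|^2}}=\overline{\ob{I-|Q|}}$ (via invertibility of $I+|Q|$; the paper argues via $\jd{I-|Q|^2}=\jd{I-|Q|}$) combined with Lemma~\ref{jdof2com} for \eqref{adjos-1} --- matches the paper's in all essentials.
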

   \begin{proof}
(i)$\Leftrightarrow$(ii) Using
\cite[Proposition~3.10 and
Lemma~9.1]{C-J-J-S19}, one can check that
   \begin{align} \label{aha-aha}
T^{*n}\bscr_2(T)T^n = \begin{bmatrix} 0 &
0 \\ 0 & Q^{*n} A Q^n
   \end{bmatrix}
, \quad n \in \zbb_+.
   \end{align}
By the square root theorem and \eqref{pacz-1},
the operators $Q$, $|Q|$, $|E|$ and $A$ commute.
Combined with \cite[(19)]{C-J-J-S19}, this
implies that $A=A^*$ and
   \begin{align}  \label{aha-aha-2}
Q^{*n} A Q^n = Q^{*n} Q^n A = |Q|^{2n} A
= \int_{\rbb_+^2} \tau_n \D G, \quad n\in
\zbb_+,
   \end{align}
where $G$ is the joint spectral measure
of $(|Q|, |E|)$ and $\tau_n\colon
\rbb_+^2 \to \rbb$ is given by
   \begin{align}  \label{aha-aha-3}
\tau_n(s,t) = (1-s^2-t^2)(1-s^2)s^{2n},
\quad s,t\in \rbb_+^2, n \in \zbb_+.
   \end{align}
It follows from Proposition~\ref{traj-pd-op},
\cite[Theorem~2]{Bi94}, \eqref{aha-aha} and
\eqref{aha-aha-2} that $T$ is CPD if and only if
$\{\int \tau_n(s,t) \is{G(\D s, \D
t)h}{h}\}_{n=0}^{\infty}$ is a Stieltjes moment
sequence for every $h\in \hh_2$. By
\cite[Theorem~2.1(i) \& Lemma~4.10]{C-J-J-S19},
the latter holds if and only if $\sigma(|Q|,|E|)
\subseteq \varXi$, where
   \begin{align*}
\varXi:=\left\{(s,t)\in \rbb_+^2\colon
\{\tau_n(s,t)\}_{n=0}^{\infty} \text{ is
a Stieltjes moment sequence}\right\}.
   \end{align*}
In view of \eqref{aha-aha-3}, it is
easily seen that
   \begin{align*}
\varXi & = \{(s,t)\in \rbb_+^2\colon
(1-s^2-t^2)(1-s^2) \Ge 0\}
   \\
& = \{(s,t)\in \rbb_+^2\colon s^2 + t^2
\Le 1\} \cup ([1,\infty) \times \rbb_+),
   \end{align*}
which shows that (i) and (ii) are
equivalent.

(ii)$\Leftrightarrow$(iii) This
equivalence is a direct consequence of
\cite[Theorem~9.2(i)]{C-J-J-S19}.

We now prove the ``moreover'' part.
Assume that (i) holds.

(a) Applying the spectral mapping theorem
(see e.g.,
\cite[Theorem~2.1]{C-J-J-S19}), we~get
   \begin{align*}
\sigma(A) \overset{\eqref{aha-aha-3}}=
\sigma(\tau_{0}(|Q|,|E|)) =
\tau_{0}(\sigma(|Q|,|E|)) \subseteq
\rbb_+,
   \end{align*}
which together with $A=A^*$ implies that $A\in
\ogr{\hh_2}_+$. Now, it is clear that the set
function $M\colon \borel{\rbb_+} \to \ogr{\hh}$
defined by \eqref{sq-sem-m} is a semispectral
measure with compact support. Recall that $A$
commutes with $|Q|$. Using this fact and
applying the Stone-von Neumann functional
calculus, we deduce from \eqref{aha-aha},
\eqref{aha-aha-2} and the square root theorem
that
   \allowdisplaybreaks
   \begin{align*}
\is{T^{*n}\bscr_2(T)T^n h}{h} & =
\is{Q^{*n} A Q^n h_2}{h_2}
   \\
& = \|(|Q|^2)^{n/2} \sqrt{A} h_2\|^2
   \\
& = \int_{\rbb_+} x^n
\is{\sqrt{A}P_{|Q|^2}(\D
x)\sqrt{A}h_2}{h_2}
   \\
& = \int_{\rbb_+} x^n \is{M(\D x)h}{h},
   \\
&
\hspace{-1.7ex}\overset{\eqref{form-ua}}=
\Big\langle \int_{\rbb_+} x^n M(\D x)h,
h\Big\rangle, \quad h=h_1 \oplus h_2 \in
\hh, \, n\in \zbb_+.
   \end{align*}
This shows that condition (iii) of
Theorem~\ref{dyl-an} holds. Applying the
``moreover'' part of this theorem completes the
proof of (a).

(b) Obvious.

(c) First, note that by \eqref{adjos-2},
   \begin{align} \label{Wik-wsp-1}
\overline{\ob{R}} =
\overline{\ob{\sqrt{A}}} =
\overline{\ob{A}}=\kk,
   \end{align}
so $R$ is well defined. Since $A$
commutes with $|Q|$, we see that $\kk$
reduces $|Q|$, and thus $S=
\big(|Q|\big|_{\kk}\big)^2\in
\ogr{\kk}_+$. Moreover, $\kk$ reduces
$P_{|Q|^2}$ to the spectral measure $P_S$
of $S$. Using \eqref{sq-sem-m},
\eqref{adjos-2} and \eqref{Wik-wsp-1}, we
easily obtain
   \begin{gather*}
M(\varDelta) = R^*P_S(\varDelta) R, \quad
\varDelta \in \borel{\rbb_+},
   \\
\kk = \bigvee
\big\{P_S(\varDelta)\ob{R}\colon
\varDelta\in \borel{\rbb_+}\big\}.
   \end{gather*}
Therefore, in view of the proof of the
implication (ii)$\Rightarrow$(iv) of
Theorem~\ref{dyltyprep}, the triplet $(\kk,R,S)$
satisfies condition (iv) of
Theorem~\ref{dyltyprep}.

It remains to prove the second equality
in \eqref{adjos-1}. To do this, first
observe that $\jd{I-|Q|^2}=\jd{I-|Q|}$,
which implies that
$\overline{\ob{I-|Q|^2}}=\overline{\ob{I-|Q|}}$.
From this and Lemma~\ref{jdof2com} it
follows easily that the second equality
in \eqref{adjos-1} holds. This completes
the proof.
   \end{proof}
The following is a direct consequence of
Theorem~\ref{cpd-q} and the proof of
\cite[Corollary~3.2]{C-J-J-S19}.
   \begin{cor} \label{qqg}
Let $\hh_1$ and $\hh_2$ be nonzero Hilbert
spaces such that
   \begin{align} \label{hajhad}
\dim \hh_1 \Ge \aleph_0 \text{ and } \dim \hh_1
\Ge \dim \hh_2,
   \end{align}
and let $X,Y\in \ogr{\hh_2}$ be commuting
positive selfadjoint operators such that
   \begin{align} \label{alres}
\sigma(X,Y) \subseteq \{(s,t)\in \rbb_+^2\colon
s^2 + t^2 \Le 1\} \cup ([1,\infty) \times
\rbb_+).
   \end{align}
Then there exist $V\in \ogr{\hh_1}$, $E\in
\ogr{\hh_2, \hh_1}$ and $Q\in \ogr{\hh_2}$ such
that $|Q|=X$, $|E|=Y$,
$\big[\begin{smallmatrix} V & E \\
0 & Q \end{smallmatrix}\big] \in \gqbh$ and $T:=
\big[\begin{smallmatrix} V & E \\
0 & Q \end{smallmatrix}\big]$ is a CPD operator
which satisfies assertion {\em (a)} of
Theorem~{\em \ref{cpd-q}}.
   \end{cor}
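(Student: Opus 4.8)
The plan is to construct an explicit triplet $(V,E,Q)$ realizing the prescribed moduli $|Q|=X$ and $|E|=Y$, arrange it to sit in $\gqbh$, and then read off conditional positive definiteness directly from Theorem~\ref{cpd-q} using the spectral hypothesis \eqref{alres}. First I would dispose of the quasinormal corner by simply taking $Q:=X$. Since $X\Ge 0$ is selfadjoint it is normal, hence quasinormal, so the relation $QQ^*Q=Q^*QQ$ in \eqref{pacz-1} holds trivially, and $|Q|=\sqrt{X^2}=X$ as required.

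Next I would build $E$ and $V$ from the cardinality hypotheses \eqref{hajhad} alone, following the construction in the proof of \cite[Corollary~3.2]{C-J-J-S19}. Because $\dim\hh_1$ is infinite and at least $\dim\hh_2$, I can split $\hh_1=\hh_1'\oplus\hh_1''$ with $\dim\hh_1'\Ge\dim\hh_2$ and $\dim\hh_1''=\dim\hh_1$. Fixing an isometry $J\colon\hh_2\to\hh_1'$ and setting $E:=JY$ gives $E^*E=YJ^*JY=Y^2$, so $|E|=Y$, while $\overline{\ob{E}}\subseteq\ob{J}\subseteq\hh_1'$. I then fix an isometry $V\colon\hh_1\to\hh_1''$ (available since $\dim\hh_1''=\dim\hh_1$), viewed as an element of $\ogr{\hh_1}$; it satisfies $V^*V=I$, and since $\ob{V}\subseteq\hh_1''\perp\hh_1'\supseteq\overline{\ob{E}}$ we obtain $V^*E=0$.

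It remains to verify the two mixed relations in \eqref{pacz-1}. The identity $V^*E=0$ is already in hand, and $QE^*E=E^*EQ$ reduces to $X$ commuting with $E^*E=Y^2$, which follows at once from the commutativity of $X$ and $Y$. Hence $T=\big[\begin{smallmatrix}V&E\\0&Q\end{smallmatrix}\big]\in\gqbh$ with $|Q|=X$ and $|E|=Y$, so $\sigma(|Q|,|E|)=\sigma(X,Y)$. Invoking hypothesis \eqref{alres} together with the implication (ii)$\Rightarrow$(i) of Theorem~\ref{cpd-q}, the operator $T$ is CPD, and assertion (a) of Theorem~\ref{cpd-q} then applies to $T$ automatically.

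I expect the only genuinely nontrivial point to be the cardinality bookkeeping in the splitting of $\hh_1$: one must ensure that $\hh_1$ has room both for the range of $E$ (of dimension at most $\dim\hh_2$) and for an orthogonal isometric copy of all of $\hh_1$. When $\dim\hh_1>\dim\hh_2$ this is immediate, and the delicate case $\dim\hh_1=\dim\hh_2=:\kappa$ is handled by partitioning a fixed orthonormal basis of $\hh_1$ into two blocks each of cardinality $\kappa$, using $\kappa+\kappa=\kappa$ for infinite $\kappa$. Everything else is either a one-line algebraic check or a direct citation of Theorem~\ref{cpd-q}.
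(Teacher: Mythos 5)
Your proposal is correct and is essentially the paper's own proof made explicit: the paper disposes of this corollary by citing Theorem~\ref{cpd-q} together with the construction in the proof of \cite[Corollary~3.2]{C-J-J-S19}, and your triplet ($Q=X$, $E=JY$ with $J\colon \hh_2\to\hh_1$ an isometry into one block of a splitting of $\hh_1$, and $V$ an isometry of $\hh_1$ with range orthogonal to $\overline{\ob{E}}$) is exactly such a class-$\gqb$ realization of the pair $(X,Y)$. All of your verifications --- quasinormality of $Q=X$, $V^*V=I$, $V^*E=0$, $QE^*E=XY^2=E^*EQ$, the cardinality splitting via $\kappa+\kappa=\kappa$, and the concluding appeal to the implication (ii)$\Rightarrow$(i) and the ``moreover'' part of Theorem~\ref{cpd-q} --- are sound.
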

   \begin{rem} \label{imprym}
Assuming that \eqref{hajhad} holds, the above
results allow to describe the ranges of the
mappings $\varPsi_{\hh}$ and
$\widetilde\varPsi_{\hh}$ restricted to
operators of class $\gqb$ (for the definitions
of $\varPsi_{\hh}$ and
$\widetilde\varPsi_{\hh}$, see \eqref{psur} and
\eqref{vsur}). First, we consider the case of
the mapping $\widetilde\varPsi_{\hh}$.
Take any $T =  \big[\begin{smallmatrix} V & E \\
0 & Q \end{smallmatrix}\big] \in \gqbh$ which is
CPD with $M$ as in Theorem~\ref{dyltyprep}(ii)
and set $X=|Q|$ and $Y=|E|$. Then by
Theorem~\ref{cpd-q}, $X$ and $Y$ are commuting
positive selfadjoint operators, they satisfy
\eqref{alres} and
   \begin{align} \label{sq-sem-m}
M(\varDelta) = 0 \oplus \sqrt{A} \,
P_{X^2}(\varDelta) \sqrt{A}, \quad \varDelta \in
\borel{\rbb_+},
   \end{align}
where
   \begin{align} \label{sq-sem-pi}
A =(I-X^2-Y^2)(I-X^2).
   \end{align}
Conversely, if $X,Y \in \ogr{\hh_2}$ are
commuting positive selfadjoint operators which
satisfy \eqref{alres}, then by
Corollary~\ref{qqg},
there is $T=\big[\begin{smallmatrix} V & E \\
0 & Q \end{smallmatrix}\big] \in \gqbh$ which is
CPD and satisfies \eqref{sq-sem-m} and
\eqref{sq-sem-pi}, where $M$ is as in
Theorem~\ref{dyltyprep}(ii).

The case of the mapping $\varPsi_{\hh}$ can be
deduced from the above description of
$\widetilde\varPsi_{\hh}$,
Subsection~\ref{Subs.1.3} and
\cite[Proposition~3.10(ii)]{C-J-J-S19}.
   \hfill $\diamondsuit$
   \end{rem}
   \begin{cor}
Suppose that $T =  \big[\begin{smallmatrix} V & E \\
0 & Q \end{smallmatrix}\big] \in \gqbh$ is CPD
and $S$ is as in Theorem~{\em \ref{cpd-q}(c)}.
Then the following conditions are equivalent{\em
:}
   \begin{enumerate}
   \item[(i)] $S=|Q|^2$,
   \item[(ii)] $1 \notin \sigma_{\mathrm
p}(|Q|^2+|E|^2)$ and $1 \notin
\sigma_{\mathrm p}(|Q|)$.
   \end{enumerate}
   \end{cor}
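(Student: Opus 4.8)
The plan is to translate condition (i) into a statement about the subspace $\kk$ and then to compute the kernel of $A$ by means of Lemma~\ref{jdof2com}. First I would record the correct reading of (i). By the very definition of $S$ in Theorem~\ref{cpd-q}(c), namely $S=\big(|Q|\big|_{\kk}\big)^2$ with $\kk$ reducing $|Q|$, the operator $S$ acts on $\kk$; hence the equality $S=|Q|^2$ can hold only if $\kk=\hh_2$, and this is plainly sufficient. Thus (i) is equivalent to $\kk=\hh_2$.

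Next I would invoke \eqref{adjos-1}, which gives $\kk=\overline{\ob{A}}$ with $A=(I-|Q|^2-|E|^2)(I-|Q|^2)\in\ogr{\hh_2}_+$. Since $A$ is positive, hence selfadjoint, $\overline{\ob{A}}=\jd{A}^{\perp}$, so $\kk=\hh_2$ if and only if $\jd{A}=\nul$. This reduces the problem to deciding when $A$ is injective.

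The key step is to evaluate $\jd{A}$. By Theorem~\ref{cpd-q}(a) the operators $|Q|$ and $|E|$ commute, so the two factors $I-|Q|^2-|E|^2$ and $I-|Q|^2$ of $A$ are commuting selfadjoint, in particular normal, operators; this is exactly the hypothesis of Lemma~\ref{jdof2com}, which then yields
   \begin{align*}
\jd{A}=\jd{I-|Q|^2-|E|^2}+\jd{I-|Q|^2}.
   \end{align*}
Consequently $\jd{A}=\nul$ if and only if both summands are trivial. Finally I would translate the two conditions: $\jd{I-|Q|^2-|E|^2}=\nul$ means precisely $1\notin\sigmap(|Q|^2+|E|^2)$, while $\jd{I-|Q|^2}=\jd{I-|Q|}$ (the identity already noted in the proof of Theorem~\ref{cpd-q}(c), valid because $|Q|\Ge 0$ forces the spectral projections of $|Q|$ and $|Q|^2$ at the point $1$ to coincide), so $\jd{I-|Q|^2}=\nul$ means $1\notin\sigmap(|Q|)$. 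Combining these gives the equivalence of (i) and (ii).

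The argument is essentially bookkeeping once Lemma~\ref{jdof2com} is available, so I do not anticipate any serious obstacle. The only points that require care are the correct interpretation of (i) as the assertion $\kk=\hh_2$, and the verification that the two factors of $A$ meet the commuting-normal hypothesis of Lemma~\ref{jdof2com}; both follow directly from Theorem~\ref{cpd-q}(a) and \eqref{adjos-1}.
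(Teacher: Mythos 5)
Your proof is correct and is essentially the argument the paper intends (the corollary is stated without proof, as a direct consequence of Theorem~\ref{cpd-q} and Lemma~\ref{jdof2com}): reading (i) as $\kk=\hh_2$, reducing via $\kk=\overline{\ob{A}}=\jd{A}^{\perp}$ to the injectivity of $A$, and splitting $\jd{A}$ with Lemma~\ref{jdof2com} (equivalently, using the second equality in \eqref{adjos-1}) is exactly the bookkeeping the authors had in mind. The two delicate points you flag --- the interpretation of $S=|Q|^2$ as forcing equality of the underlying spaces, and the identity $\jd{I-|Q|^2}=\jd{I-|Q|}$ --- are handled correctly.
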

   \begin{figure}
   \begin{tikzpicture}[scale=.37, transform shape]
\tikzset{vertex/.style =
{shape=circle,draw,minimum size=1em}}
\tikzset{edge/.style = {->,> = latex'}}

\node[] (1) at (-6.5, -6.5)
{$\scalebox{1.5}{(0, 0)}$};

\node[] (1) at (0.0, -6.5)
{$\scalebox{1.5}{(1, 0)}$};

\node[] (1) at (-6.9, 0.0)
{$\scalebox{1.5}{(0, 1)}$};

\draw[->, line width=0.5mm] (-6,-6) ->
(4,-6);

\draw[line width=0.5mm] (-6,-6) ->
(-6,0);

\draw[->, line width=0.05mm] (-6,0) ->
(-6,3);

\def\Radius{6}

\filldraw[fill opacity=0.7, line
width=0.5mm, fill=gray] (-6,-6) --
(0,-6) arc (0:90:6cm) -- cycle;
   \end{tikzpicture}
   \hspace{10ex}
   \begin{tikzpicture}[scale=.37, transform shape]
\tikzset{vertex/.style =
{shape=circle,draw,minimum size=1em}}
\tikzset{edge/.style = {->,> = latex'}}

\fill [gray] (0, -6) rectangle (4, 3);

\node[] (1) at (-6.5, -6.5)
{$\scalebox{1.6}{(0, 0)}$};

\node[] (1) at (0.3, -6.5)
{$\scalebox{1.6}{(1, 0)}$};

\node[] (1) at (-6.8, -0.1)
{$\scalebox{1.6}{(0, 1)}$};

\draw[->, line width=0.5mm] (-6,-6) ->
(4,-6);

\draw[line width=0.5mm] (0,-6) ->
(0,3);

\draw[->, line width=0.05mm] (-6,0) ->
(-6,3);

\draw[line width=0.5mm] (0,-6) ->
(0,3);

\def\Radius{6}

\filldraw[fill opacity=0.7, line
width=0.5mm, fill=gray] (-6,-6) --
(0,-6) arc (0:90:6cm) -- cycle;
   \end{tikzpicture}
\caption{Spectral regions for
subnormality (left) and conditional
positive definiteness (right) of
operators of class $\gqb.$}
   \label{fig1}
   \end{figure}
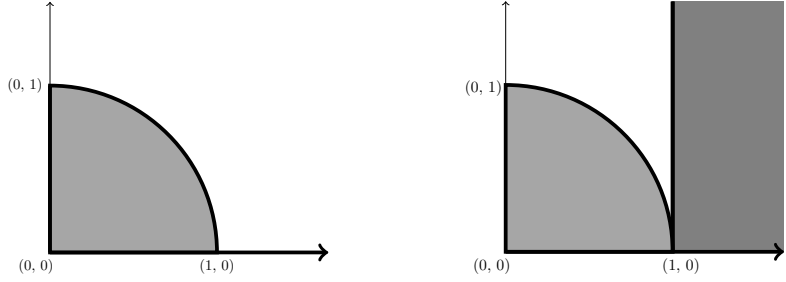
Regarding Theorem~\ref{cpd-q}, it is worth
mentioning that in view of
\cite[Theorem~1.2]{C-J-J-S19} the operator
$T =  \big[\begin{smallmatrix} V & E \\
0 & Q \end{smallmatrix}\big] \in \gqbh$
is subnormal if and only if
   \begin{align*}
\sigma(|Q|,|E|) \subseteq \{(s,t)\in
\rbb_+^2\colon s^2 + t^2 \Le 1\} \cup
([1,\infty) \times \{0\}).
   \end{align*}
For the reader's convenience, the
spectral regions for subnormality and
conditional positive definiteness of
operators of class $\gqb$ are
illustrated in Figure \ref{fig1}.
   \subsection{\label{Sec4.1}Subnormality} In view of Theorem~\ref{lamb},
any subnormal operator $T\in \ogr{\hh}$ has the
property that the sequence
$\{\|T^nh\|^2\}_{n=0}^{\infty}$ is PD for every
$h\in \hh$. As a consequence, any subnormal
operator is CPD. The converse implication is not
true in general (see \cite[Example~5.4]{Sto}).
In this subsection, we deal with the problem of
finding necessary and sufficient conditions for
subnormality written in terms of conditional
positive definiteness. Theorem~4.1 in
\cite{Sto}, which is the first result in this
direction formulated for $d$-tuples of
operators, shows that a contraction is subnormal
if and only if it is CPD. The main result of
this subsection, namely Theorem~\ref{glow-main},
generalizes \cite[Theorem~4.1]{Sto}. In
particular, it covers the case of strongly
stable operators (see Corollary~\ref{cpdalp}).

Our first goal is to characterize those CPD
operators that are subnormal in terms of the
parameters $B,C,F$ appearing in statement (ii)
of Theorem~\ref{cpdops}.
   \begin{thm} \label{subn-1}
Let $T\in \ogr{\hh}$. Then the following
statements are equivalent{\em :}
   \begin{enumerate}
   \item[(i)] $T$ is subnormal,
   \item[(ii)] $T$ is CPD and its representing triplet $(B,C,F)$
satisfies the following conditions{\em :}
   \begin{enumerate}
   \item[(ii-a)] $\frac{1}{(x-1)^2} \in L^1(F)$ and
$\int_{\rbb_+} \frac{1}{(x-1)^2} F(\D x) \Le I$,
   \item[(ii-b)] $\frac{1}{x-1} \in L^1(F)$ and
$B=\int_{\rbb_+} \frac{1}{x-1} F(\D x)$,
   \item[(ii-c)] $C=0$.
   \end{enumerate}
   \end{enumerate}
Moreover, if {\em (ii)} holds and $G$ is the
semispectral measure of $T$ {\em (see
\eqref{tobemom})}, then
   \allowdisplaybreaks
   \begin{align}   \notag
F& =M, \text{ where $M$ is as in Theorem~{\em
\ref{dyltyprep}(ii)},}
      \\ \notag
 B & = \int_{\rbb_+} (x-1) \, G\circ
 \phi^{-1}(\D x),
   \\   \label{ale-sp-a}
F(\varDelta) & = \int_{\varDelta} (x-1)^2
\,G\circ \phi^{-1}(\D x), \quad \varDelta \in
\borel{\rbb_+},
   \\  \notag
G\circ\phi^{-1}(\varDelta) & =\int_{\varDelta}
\frac{1}{(x-1)^2} F(\D x) + \delta_1(\varDelta)
\Big(I - \int_{\rbb_+} \frac{1}{(x-1)^2} F(\D
x)\Big), \quad \varDelta \in \borel{\rbb_+}.
   \end{align}
   \end{thm}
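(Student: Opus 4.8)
The plan is to reduce the entire statement to the scalar characterization of positive definiteness among CPD sequences (Theorem~\ref{dyszcz3}), applied coordinatewise to the sequences $\gammab_{T,h}=\{\|T^nh\|^2\}_{n=0}^{\infty}$, and then to lift scalar identities to operator identities by polarization. Two earlier facts make this feasible. By Lambert's characterization (Theorem~\ref{lamb}), $T$ is subnormal if and only if $\gammab_{T,h}$ is PD for every $h\in\hh$. And by the ``furthermore'' part of Theorem~\ref{cpdops}, if $T$ is CPD with representing triplet $(B,C,F)$, then $(\is{Bh}h,\is{Ch}h,\is{F(\cdot)h}h)$ is exactly the representing triplet of $\gammab_{T,h}$ for each $h$. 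So the whole proof is a dictionary between scalar data for $\gammab_{T,h}$ and operator data for $T$, with positive definiteness for all $h$ on one side and subnormality on the other.

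For (i)$\Rightarrow$(ii), since PD sequences are CPD and subnormality makes each $\gammab_{T,h}$ PD, $T$ is CPD; let $(B,C,F)$ be its representing triplet. Fixing $h$ and applying the implication (i)$\Rightarrow$(iii) of Theorem~\ref{dyszcz3} to $\gammab_{T,h}$ yields $\is{Ch}h=0$, $\int\frac{1}{(x-1)^2}\is{F(\D x)h}h\Le\|h\|^2$, and $\is{Bh}h=\int\frac{1}{x-1}\is{F(\D x)h}h$. The first, holding for all $h$ together with $C\Ge 0$, gives \mbox{(ii-c)}. The finiteness of $\int\frac{1}{(x-1)^2}\is{F(\D x)h}h$ for all $h$ is by definition the membership $\frac{1}{(x-1)^2}\in L^1(F)$ (recall $F(\{1\})=0$, so the integrand is defined $\is{F(\cdot)h}h$-a.e.), and then the inequality of quadratic forms gives the operator inequality $\int_{\rbb_+}\frac{1}{(x-1)^2}F(\D x)\Le I$, i.e.\ \mbox{(ii-a)}; Cauchy--Schwarz upgrades this to $\frac{1}{x-1}\in L^1(F)$, and matching the (real, hence self-adjoint) quadratic forms of $B$ and of $\int_{\rbb_+}\frac{1}{x-1}F(\D x)$ yields \mbox{(ii-b)}.

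For (ii)$\Rightarrow$(i), the three conditions \mbox{(ii-a)--(ii-c)} translate, through the same dictionary, into exactly the scalar hypotheses of Theorem~\ref{dyszcz3}(iii) for every $\gammab_{T,h}$; hence each $\gammab_{T,h}$ is PD and Theorem~\ref{lamb} gives subnormality. For the ``moreover'' part, under (ii) the operator $T$ is subnormal with semispectral measure $G$, and by \eqref{tobemom} the pushforward $G\circ\phi^{-1}$ represents $T^{*n}T^n$, so $\mu_h:=\is{G\circ\phi^{-1}(\cdot)h}h$ is the (compactly supported, and by Lemma~\ref{csmad} unique) representing measure of $\gammab_{T,h}$. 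The three displayed operator identities for $B$, $F$ and $G\circ\phi^{-1}$ are then obtained from the scalar identities \eqref{zeg3}, \eqref{zeg2} and \eqref{zeg1} respectively, rewritten in terms of $\mu_h$, $\nu_h=\is{F(\cdot)h}h$ and $b_h=\is{Bh}h$, and polarized: since each integrand is bounded on the relevant compact support, both sides define bounded operators determined by their quadratic forms. Finally $F=M$ follows from $C=0$ together with \eqref{f2m-semi} and Theorem~\ref{dyltyprep}(b), which give $M(\{1\})=2C=0$ and hence $F(\varDelta)=(1-\chi_{\varDelta}(1))M(\varDelta)=M(\varDelta)$ for all $\varDelta$.

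I do not expect a deep obstacle; the care lies entirely in the passages where scalar (quadratic-form) statements are promoted to operator statements. Specifically, one must check that pointwise-in-$h$ integrability is exactly $L^1(F)$-membership, and that the self-adjointness of $\int_{\rbb_+}\frac{1}{x-1}F(\D x)$ (coming from the real integrand against the positive-operator-valued $F$) is what allows one to conclude the operator equality $B=\int_{\rbb_+}\frac{1}{x-1}F(\D x)$ rather than mere agreement of diagonal forms. The same polarization and boundedness bookkeeping, applied to \eqref{zeg1}--\eqref{zeg3}, is the only genuinely computational ingredient.
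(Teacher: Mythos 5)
Your proposal is correct and follows essentially the same route as the paper: reduce to Theorem~\ref{dyszcz3} applied to each sequence $\gammab_{T,h}$ via Lambert's theorem and the ``furthermore'' part of Theorem~\ref{cpdops}, lift the scalar conclusions to operator statements through \eqref{form-ua} and polarization, and get $F=M$ from (ii-c) together with Theorem~\ref{dyltyprep}(b). One cosmetic remark: in the ``moreover'' part the integrand $\frac{1}{(x-1)^2}$ is \emph{not} bounded near $1$, so the boundedness of $\int_{\varDelta}\frac{1}{(x-1)^2}F(\D x)$ should be justified by (ii-a) and \eqref{form-ua} rather than by boundedness of the integrand on a compact support, but this does not affect the argument.
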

   \begin{proof}
(i)$\Rightarrow$(ii) It follows from Theorem~
\ref{lamb} that $\{\|T^n h\|^2\}_{n=0}^{\infty}$ is a
Stieltjes moment sequence for every $h\in \hh$. Hence,
by Theorem~\ref{dyszcz3}, we have
   \allowdisplaybreaks
   \begin{align} \label{ineq1}
&\int_{\rbb_+} \frac{1}{(x-1)^2} \is{F(\D x)h}{h} \Le
\|h\|^2, & \quad h\in \hh,
   \\ \label{ineq2}
&\is{Bh}{h}=\int_{\rbb_+} \frac{1}{x-1} \is{F(\D
x)h}{h}, & \quad h\in \hh,
   \\ \label{ineq3}
&\is{Ch}{h}=0, & \quad h\in \hh.
   \end{align}
It follows from \eqref{form-ua}, \eqref{ineq1}
and \eqref{ineq3} that conditions \mbox{(ii-a)}
and \mbox{(ii-c)} are satisfied. In turn,
\eqref{ineq1} yields $\frac{1}{x-1} \in L^1(F)$.
Combined with \eqref{ineq2}, this
implies~\mbox{(ii-b)}.

(ii)$\Rightarrow$(i) Applying \eqref{form-ua}
and Theorem~\ref{dyszcz3} again, we deduce that
the sequence $\{\|T^n h\|^2\}_{n=0}^{\infty}$ is
PD for all $h\in \hh$. Hence, by Theorem~
\ref{lamb}, $T$ is subnormal.

The ``moreover'' part can be deduced
straightforwardly from \eqref{tobemom}
and the corresponding part of
Theorem~\ref{dyszcz3} (that $F=M$
follows from (ii-c) and
Theorem~\ref{dyltyprep}(b)). This
completes the proof.
   \end{proof}
   \begin{cor}
Let $T\in \ogr{\hh}$ be a subnormal
operator, $G$ be the semispectral measure
of $T$, $N$ be the minimal normal
extension of $T$ and $F$ be as in
Theorem~{\em \ref{cpdops}(ii)}. Then
   \begin{enumerate}
   \item[(i)]
$r(T)=\|T\|=\sup\big\{|z|\colon z \in \supp{G}\big\}$,
   \item[(ii)] $\sigma(N)=\supp{G}$ and $\sigma(N^*N) =
\{|z|^2\colon z \in \supp{G}\}$,
   \item[(iii)] if $G(\tbb)=0$, where
$\tbb=\{z\in \cbb \colon |z|=1\}$, then
   \begin{enumerate}
   \item[(iii-a)] the measures  $F$ and $G\circ \phi^{-1}$ are mutually
absolutely continuous,
   \item[(iii-b)] $\sigma(N^*N)=\supp{F}$,
   \item[(iii-c)] $\|T\|^2 =
\sup\supp{F}$.
   \end{enumerate}
   \end{enumerate}
   \end{cor}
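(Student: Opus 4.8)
The plan is to establish (ii) first, since both (i) and (iii) rest on identifying $\supp{G}$ with the spectrum of the minimal normal extension $N$. Recall that $G(\varDelta)=PE(\varDelta)|_{\hh}$, where $E$ is the spectral measure of $N$ and $P$ is the orthogonal projection of $\kk$ onto $\hh$, so that $\is{G(\varDelta)h}{h}=\|E(\varDelta)h\|^2$ for $h\in\hh$. Consequently an open set $U\subseteq\cbb$ is $G$-null exactly when $E(U)h=0$ for every $h\in\hh$; since $E(U)$ commutes with each $E(\varDelta)$ and annihilates $\hh$, it annihilates $\bigvee\{E(\varDelta)\hh\colon\varDelta\in\borel{\cbb}\}=\kk$ by minimality of $N$, whence $E(U)=0$. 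Thus the largest open $G$-null set equals the largest open $E$-null set, giving $\supp{G}=\supp{E}=\sigma(N)$. The second identity in (ii) then follows from the spectral mapping theorem applied to the normal operator $N$: as $N^*N=\int_{\cbb}|z|^2\,E(\D z)$, we obtain $\sigma(N^*N)=\{|z|^2\colon z\in\sigma(N)\}=\{|z|^2\colon z\in\supp{G}\}$.

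For (i) I would argue directly from $G$, using that $G$ is normalized, so $\mu_h:=\is{G(\cdot)h}{h}$ is a probability measure for $\|h\|=1$. Writing $\rho:=\sup\{|z|\colon z\in\supp{G}\}$, the identity $T^{*n}T^n=\int_{\cbb}|z|^{2n}G(\D z)$ (cf.\ the line preceding \eqref{tobemom}) gives $\|Th\|^2=\int_{\cbb}|z|^2\,\D\mu_h\Le\rho^2$, hence $\|T\|\Le\rho$. For the reverse bound, for each $\varepsilon>0$ the open set $\{|z|>\rho-\varepsilon\}$ meets $\supp{G}$, so there is a unit vector $h$ with $\mu_h(\{|z|>\rho-\varepsilon\})>0$, and then $\|T^nh\|^2\Ge(\rho-\varepsilon)^{2n}\mu_h(\{|z|>\rho-\varepsilon\})$ yields $r(T)\Ge\rho-\varepsilon$; letting $\varepsilon\to0$ and combining with $r(T)\Le\|T\|\Le\rho$ forces $r(T)=\|T\|=\rho$ (alternatively one may invoke \eqref{subn-norm}). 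This proves (i).

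Finally, for (iii) I would feed the hypothesis $G(\tbb)=0$ into the ``moreover'' part of Theorem~\ref{subn-1}. Since $\phi^{-1}(\{1\})=\tbb$, we have $G\circ\phi^{-1}(\{1\})=G(\tbb)=0$; substituting $\varDelta=\{1\}$ into the expression for $G\circ\phi^{-1}$ there and using $F(\{1\})=0$ to kill the integral over $\{1\}$ collapses the $\delta_1$-term and shows $\int_{\rbb_+}\frac{1}{(x-1)^2}F(\D x)=I$, so that $G\circ\phi^{-1}(\varDelta)=\int_{\varDelta}\frac{1}{(x-1)^2}F(\D x)$ for every $\varDelta$. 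Together with $F(\varDelta)=\int_{\varDelta}(x-1)^2\,G\circ\phi^{-1}(\D x)$, the densities $(x-1)^2$ and $\frac{1}{(x-1)^2}$ are reciprocal and strictly positive off the common null point $x=1$, whence $F(\varDelta)=0\iff G\circ\phi^{-1}(\varDelta)=0$; this is the mutual absolute continuity of (iii-a). Mutually absolutely continuous measures share the same closed support, so $\supp{F}=\supp{G\circ\phi^{-1}}$, and since $\supp{G}$ is compact and $\phi$ continuous, the pushforward support is $\supp{G\circ\phi^{-1}}=\phi(\supp{G})=\{|z|^2\colon z\in\supp{G}\}=\sigma(N^*N)$ by (ii), giving (iii-b). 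Part (iii-c) is then immediate: by (i), $\|T\|^2=\rho^2=\sup\{|z|^2\colon z\in\supp{G}\}=\sup\supp{F}$.

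The main obstacle is not any single computation but two bookkeeping points that must be handled with care: the minimality argument that upgrades $G(U)=0$ to $E(U)=0$ (which is what makes $\supp{G}=\sigma(N)$ rather than merely $\supp{G}\subseteq\sigma(N)$), and, in (iii), the degeneration of the densities $(x-1)^2$ and $\frac{1}{(x-1)^2}$ at $x=1$, where one must exploit that both $F$ and $G\circ\phi^{-1}$ carry no mass to justify dropping the $\delta_1$-term and to conclude mutual absolute continuity.
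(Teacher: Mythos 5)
Your proof is correct; it reaches the same conclusions by a more self-contained route than the paper, which at several points simply cites the literature. For $\sigma(N)=\supp{G}$ the paper quotes \cite[Proposition~4]{Ju-St}, whereas you re-prove it via minimality: $E(U)$ commutes with all spectral projections and annihilates $\hh$, hence annihilates $\bigvee\{E(\varDelta)\hh\colon \varDelta\in\borel{\cbb}\}=\kk$; this is precisely the mechanism that upgrades $G(U)=0$ to $E(U)=0$, as you note. For (i) the paper argues $r(T)=\|T\|$ by \eqref{subn-norm} and then $\|T\|=\|N\|=r(N)=\sup\{|z|\colon z\in\supp{G}\}$ using \cite[Corollary~II.2.17]{Con91}; your moment argument (the bound $\|T\|\Le\rho$ from the integral representation, and $r(T)\Ge\rho-\varepsilon$ from a unit vector charging $\{|z|>\rho-\varepsilon\}$) avoids both of those facts and re-derives normaloidness of $T$ as a byproduct, at the cost of a slightly longer computation. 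Your treatment of (iii-a) is only a cosmetic variant of the paper's: the paper gets $F\ll G\circ\phi^{-1}$ from \eqref{ale-sp-a} and the converse from positivity of $(x-1)^2$ off $\{1\}$ together with $G\circ\phi^{-1}(\{1\})=G(\tbb)=0$, while you first extract $\int_{\rbb_+}\frac{1}{(x-1)^2}\,F(\D x)=I$ and then read off mutual absolute continuity from the two reciprocal integral formulas; both handle the mass at $x=1$ correctly. Finally, for (iii-b) the paper cites \cite[Lemma~3(5)]{Ci-St07} for $\supp{G\circ\phi^{-1}}=\phi(\supp{G})$, which you instead prove directly, correctly using compactness of $\supp{G}$ to ensure that $\phi(\supp{G})$ is closed. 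In sum, the paper's proof is shorter because it leans on known results, while yours is essentially from first principles, and every step checks out.
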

   \begin{proof}
The first equality in (i) is a
consequence of \eqref{subn-norm}. It
follows from \cite[Proposition~4]{Ju-St}
that
   \begin{align} \label{dwa-sup-ty}
\sigma(N)=\supp{G},
   \end{align}
which gives the first equality in (ii). Using
\cite[Corollary~II.2.17]{Con91}, we obtain
   \begin{align*}
\|T\|=\|N\| \overset{\eqref{subn-norm}} = r(N)
\overset{\eqref{dwa-sup-ty}}= \sup\{|z|\colon z \in
\supp{G}\}.
   \end{align*}
This yields the second equality in (i). The second
equality in (ii) follows from \eqref{dwa-sup-ty} and
\cite[eq.\ (14), p.\ 158]{B-S87}. It remains to prove
(iii). According to \eqref{ale-sp-a}, $F$ is
absolutely continuous with respect to $G\circ
\phi^{-1}$. In turn, if $\varDelta \in \borel{\rbb_+}$
is such that $F(\varDelta)=0$, then \eqref{ale-sp-a}
implies that $G\circ \phi^{-1}(\varDelta \setminus
\{1\})=0$. Since by assumption $G\circ
\phi^{-1}(\{1\})=0$, we see that $G\circ
\phi^{-1}(\varDelta)=0$. This means that the measures
$F$ and $G\circ \phi^{-1}$ are mutually absolutely
continuous, therefore (iii-a) holds. As a consequence,
$\supp{F} = \supp{G\circ \phi^{-1}}$. Combined with
\cite[Lemma~3(5)]{Ci-St07}, this implies (iii-b).
Finally, (iii-c) is a direct consequence of (i), (ii)
and (iii-b).
   \end{proof}
   \begin{cor} \label{subn-m0}
Let $T\in \ogr{\hh}$ be a subnormal
operator and $M$ be as in Theorem~{\em
\ref{dyltyprep}(ii)}. Then $M=0$ if and
only if $T$ is an isometry.
   \end{cor}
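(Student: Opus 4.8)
The plan is to route everything through the representing triplet $(B,C,F)$ of $T$ and to exploit the explicit dictionary between $(B,C,F)$ and the measure $M$ furnished by Theorem~\ref{dyltyprep}(b), namely $C=\frac{1}{2}M(\{1\})$ together with the relation \eqref{f2m-semi}, and finally the identification \eqref{sosua} of the fibre $\varPsi_{\hh}^{-1}(\{(0,0,0)\})$ with the isometries. First I would dispose of the easy direction: if $T$ is an isometry, then $T^{*n}T^n=I$ for all $n$, so the representing triplet of $T$ is $(0,0,0)$, and in particular $C=0$ and $F=0$. Reading $C=\frac{1}{2}M(\{1\})$ backwards gives $M(\{1\})=0$, and \eqref{f2m-semi} forces $M(\varDelta)=F(\varDelta)=0$ for every Borel $\varDelta$ avoiding the point $1$; splitting off $\{1\}$ then yields $M=0$ on all of $\borel{\rbb_+}$.

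For the converse I would assume $M=0$. Since $T$ is subnormal it is CPD, hence it has a representing triplet $(B,C,F)$, and Theorem~\ref{dyltyprep}(b) at once gives $C=\frac{1}{2}M(\{1\})=0$ and $F(\varDelta)=(1-\chi_{\varDelta}(1))M(\varDelta)=0$ for all $\varDelta$. The crucial point is that these two facts do \emph{not} by themselves force $B=0$: indeed $B+C=-\bscr_1(T)$, so $B=T^{*}T-I$ may well be nonzero (this is exactly the situation of a strict $2$-isometry). This is precisely where subnormality must be used. Applying Theorem~\ref{subn-1} to the subnormal operator $T$, condition (ii-b) gives $B=\int_{\rbb_+}\frac{1}{x-1}F(\D x)$, which vanishes because $F=0$. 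Hence $(B,C,F)=(0,0,0)$, and by \eqref{sosua} the operator $T$ is an isometry.

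The main obstacle, and the reason subnormality is indispensable, is that the vanishing of $M$ encodes only $\bscr_2(T)=0$ (equivalently, via \eqref{cziki-2}, $\int_{\rbb_+}x^n M(\D x)=0$ for every $n$), i.e.\ that $T$ is a $2$-isometry, and a $2$-isometry need not be an isometry. If one prefers a self-contained route in place of Theorem~\ref{subn-1}, the gap can be closed directly: with $C=0$ and $F=0$, formula \eqref{cdr5} collapses to $T^{*n}T^n=I+nB$ with $B=B^{*}$, so $\|T^nh\|^2=\|h\|^2+n\is{Bh}{h}$; nonnegativity for all $n$ forces $B\Ge 0$ (expansivity), while the at most linear growth of $\|T^n\|^2$ gives $r(T)\Le 1$, and subnormality gives normaloidness \eqref{subn-norm}, so $\|T\|=r(T)\Le 1$. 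Combining $\|Th\|\Ge\|h\|$ with $\|Th\|\Le\|h\|$ shows that $T$ is an isometry, which also recovers $B=0$.
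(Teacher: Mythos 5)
Your proposal is correct, and its principal route is essentially the paper's own: the paper also proves the hard direction ($M=0\Rightarrow$ isometry) by invoking Theorem~\ref{subn-1} for the subnormal operator $T$ — there $F=M$ and conditions (ii-b), (ii-c) give $B=C=0$ at once — and then reads off $T^{*n}T^n=I$ from \eqref{cdr5}; your version extracts $C=0$ and $F=0$ from the dictionary in Theorem~\ref{dyltyprep}(b) before applying (ii-b), which is only a cosmetic rearrangement. For the easy direction the paper argues slightly differently (it checks that \eqref{checpt-2} holds with $M=0$ using $p\lrangle{T}=p(1)I$ and $\bscr_1(T)=0$, then appeals to the uniqueness assertion of Theorem~\ref{dyltyprep}(a)), whereas you pass through the representing triplet $(0,0,0)$ and the relations $C=\tfrac12 M(\{1\})$, \eqref{f2m-semi}; both are correct uniqueness arguments. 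What is genuinely different — and worth keeping — is your self-contained alternative for the hard direction: from $C=0$, $F=0$ you get $T^{*n}T^n=I+nB$, whence $B\Ge 0$ (so $T$ is expansive) and $\|T^n\|^2\Le 1+n\|B\|$ (so $r(T)\Le 1$), and then normaloidness \eqref{subn-norm} of subnormal operators gives $\|T\|=r(T)\Le 1$, forcing $\|Th\|=\|h\|$. This bypasses Theorem~\ref{subn-1} entirely (and with it the scalar moment-theoretic machinery of Theorem~\ref{dyszcz3} on which it rests), at the price of using \eqref{subn-norm}; it also isolates the real content of the corollary, correctly identified in your remark: by Proposition~\ref{sub-mzero-n}(i), $M=0$ just says $T$ is a $2$-isometry, and the statement amounts to "a subnormal $2$-isometry is an isometry."
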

   \begin{proof}
If $M=0$, then by Theorem~\ref{subn-1},
$B=C=0$, so by \eqref{cdr5}, $T$ is an
isometry. Conversely, if $T$ is an
isometry, an application of the identity
$p\lrangle{T}=p(1)I$, $p\in \cbb[X]$,
gives \eqref{checpt-2} with $M=0$.
   \end{proof}
Theorem~\ref{glow-main} below gives new
necessary and sufficient conditions for
subnormality. Condition (v) of this theorem
comprises the case $D=0$ which is not covered by
Theorem~\ref{rt=1}.
   \begin{thm} \label{glow-main}
Let $T\in\ogr{\hh}$. Then the following conditions are
equivalent{\em :}
   \begin{enumerate}
   \item[(i)] $T$ is a subnormal contraction,
   \item[(ii)] $T$ is a CPD contraction,
   \item[(iii)] $T$ is CPD and the telescopic series
   \begin{align*}
\sum_{n=0}^{\infty} \big(\|T^{n+1}h\|^2 -
\|T^nh\|^2\big)
   \end{align*}
is convergent in $\rbb$ for every $h \in \hh$,
   \item[(iv)] $T$ is CPD and\/\footnote{It follows from
Propositions~\ref{traj-pd-op} and
\ref{zen-jesz-nie} that, under the assumption
that $T$ is CPD, \eqref{quas-tels} is equivalent
to
   \begin{align*}
\sup_{n\in \zbb_+}(\|T^{n+1}h\|^2 - \|T^nh\|^2)
\Le 0, \quad h\in \hh.
   \end{align*}
   }
   \begin{align}  \label{quas-tels}
\lim_{n\to\infty}(\|T^{n+1}h\|^2 - \|T^nh\|^2) = 0,
\quad h\in \hh,
   \end{align}
   \item[(v)] condition {\em (ii)} of Theorem~{\em
\ref{cpdops}} holds with $C=0$, $D=0$,
$F([1,\infty))=0$ and $\frac{1}{(1-x)^2}
\in L^1(F)$, where $D:=B+\int_{[0,1)}
\frac{1}{1-x} F(\D x)$ $($or equivalently
if all of this holds with
``$\frac{1}{1-x} \in L^1(F)$'' in place
of ``$\frac{1}{(1-x)^2} \in L^1(F)$''$)$,
   \item[(vi)]  condition {\em (ii)} of Theorem~{\em \ref{boundiff}}
holds with $D=0$.
   \end{enumerate}
   \end{thm}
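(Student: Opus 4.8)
The plan is to prove the core cycle (i)$\Rightarrow$(ii)$\Rightarrow$(iii)$\Rightarrow$(iv)$\Rightarrow$(i) and then to attach the remaining two conditions through the chain (iv)$\Leftrightarrow$(vi)$\Leftrightarrow$(v), with Theorem~\ref{boundiff} serving as the bridge. The implication (i)$\Rightarrow$(ii) is immediate, since subnormal operators are CPD by Theorem~\ref{lamb} and contractivity is part of the hypothesis. For (ii)$\Rightarrow$(iii), contractivity makes $\{\|T^nh\|^2\}_{n=0}^{\infty}$ nonincreasing and bounded below by $0$, hence convergent, so the telescoping partial sums $\|T^{N+1}h\|^2-\|h\|^2$ converge; and (iii)$\Rightarrow$(iv) holds because the terms of a convergent series tend to $0$.

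The substantive step is (iv)$\Rightarrow$(i), which I would carry out scalarwise. Fix $h\in\hh$ and consider $\gammab_{T,h}$ from \eqref{gth}. This is a CPD sequence of exponential growth with nonnegative entries, and the measure in its representing triplet is $\is{F(\cdot)h}{h}$, which by \eqref{buff2} is supported in $\rbb_+$. Condition (iv) says precisely that $\lim_{n\to\infty}(\triangle\gammab_{T,h})_n=0$, so Corollary~\ref{pd2cpd} applies and shows that $\gammab_{T,h}$ is PD and monotonically decreasing. Taking the monotonicity at $n=0$ gives $\|Th\|\Le\|h\|$, so $T$ is a contraction, while positive definiteness of $\gammab_{T,h}$ for every $h$ gives subnormality by Theorem~\ref{lamb}. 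This closes the cycle, so (i)--(iv) are equivalent.

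For (iv)$\Leftrightarrow$(vi) I would use Theorem~\ref{boundiff}. If (iv) holds, then since $\{T^{*(n+1)}T^{n+1}-T^{*n}T^n\}_{n=0}^{\infty}$ is monotonically increasing by Proposition~\ref{traj-pd-op}, the vanishing of the diagonal limit forces \eqref{zal}; thus Theorem~\ref{boundiff}(ii) holds, and its operator $D$ is the {\sc wot}-limit of the increments by part (iv) of that theorem. Polarizing the diagonal convergence \eqref{quas-tels} shows this {\sc wot}-limit is $0$, so $D=0$, which is (vi). Conversely, (vi) is Theorem~\ref{boundiff}(ii) with $D=0$; that theorem then gives that $T$ is CPD and that the increments tend to $0$ in {\sc wot}, whence \eqref{quas-tels}, i.e.\ (iv).

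Finally, (v)$\Leftrightarrow$(vi) is a translation between the triplet $(B,C,F)$ of Theorem~\ref{cpdops} and the pair $(D,F)$ of Theorem~\ref{boundiff}. Starting from (v), I would substitute \eqref{rnx-1} into \eqref{cdr5} to rewrite $\int_{[0,1)}Q_n(x)F(\D x)$ and obtain $T^{*n}T^n=I+nD-\int_{[0,1)}\frac{1-x^n}{(1-x)^2}F(\D x)$ with $D=B+\int_{[0,1)}\frac{1}{1-x}F(\D x)$; imposing $D=0$ gives Theorem~\ref{boundiff}(ii-c), while the reverse passage uses Theorem~\ref{boundiff}(iii)--(iv) to recover the triplet data. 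The point demanding care is the ``equivalently'' clause: on $[0,1)$ one has $\frac{1}{1-x}\Le\frac{1}{(1-x)^2}$, so the stronger integrability implies the weaker, whereas in the reverse direction $T^{*n}T^n\Ge 0$ yields $\int_{[0,1)}\frac{1-x^n}{(1-x)^2}F(\D x)\Le I$ for all $n$, and Lebesgue's monotone convergence theorem promotes this to $\int_{[0,1)}\frac{1}{(1-x)^2}F(\D x)\Le I<\infty$, i.e.\ $\frac{1}{(1-x)^2}\in L^1(F)$. This monotone-convergence upgrade, and the consistent handling of the two parametrizations, is the main obstacle; once it is settled, the rest is routine bookkeeping among the representations already established.
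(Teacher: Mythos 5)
Your argument is correct, and on the core cycle it coincides with the paper's own proof: (i)$\Rightarrow$(ii)$\Rightarrow$(iii)$\Rightarrow$(iv) follows from the monotone convergence of $\{\|T^nh\|^2\}_{n=0}^{\infty}$ when $T$ is a contraction, and (iv)$\Rightarrow$(i) is obtained by applying Corollary~\ref{pd2cpd} to $\gammab_{T,h}$ together with Theorems~\ref{cpdops} and~\ref{lamb}, exactly as the paper does. Where you genuinely depart from the paper is in how (v) and (vi) are attached. The paper runs the cycle (iv)$\Rightarrow$(v)$\Rightarrow$(vi)$\Rightarrow$(iv): Theorem~\ref{boundiff} yields the variant of (v) with $\frac{1}{1-x}\in L^1(F)$, and the stronger condition $\frac{1}{(1-x)^2}\in L^1(F)$ is then imported from Theorem~\ref{subn-1}(ii-a), using the already established implication (iv)$\Rightarrow$(i) to know that $T$ is subnormal. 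You instead prove (iv)$\Leftrightarrow$(vi) and (v)$\Leftrightarrow$(vi), and you obtain the stronger integrability without any appeal to subnormality: from Theorem~\ref{boundiff}(ii-c) with $D=0$ and the positivity $T^{*n}T^n\Ge 0$ you get $\int_{[0,1)}\frac{1-x^n}{(1-x)^2}\is{F(\D x)h}{h}\Le\|h\|^2$ for all $n\in\zbb_+$ and $h\in\hh$, and since $\frac{1-x^n}{(1-x)^2}$ increases pointwise to $\frac{1}{(1-x)^2}$ on $[0,1)$, Lebesgue's monotone convergence theorem gives $\frac{1}{(1-x)^2}\in L^1(F)$ together with $\int_{[0,1)}\frac{1}{(1-x)^2}F(\D x)\Le I$. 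This is more elementary and self-contained, exhibiting the equivalence of the two variants of (v) as a consequence of the representation formula and positivity alone; the paper's route is shorter given that Theorem~\ref{subn-1} is already available, and it makes visible that the bound $\int\frac{1}{(1-x)^2}F(\D x)\Le I$ is precisely the subnormality normalization. Your remaining ingredients --- deriving \eqref{zal} from \eqref{quas-tels} via the monotonicity of the increments (Proposition~\ref{traj-pd-op}), concluding $D=0$ because a selfadjoint {\sc wot}-limit with vanishing quadratic form is zero, and noting that on $[0,1)$ the strong integrability implies the weak --- are the same facts the paper uses, partly implicitly (cf.\ the footnote to condition (iv), and the Cauchy--Schwarz step in the paper's proof of (v)$\Rightarrow$(vi)).
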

   \begin{proof}
The implications
(i)$\Rightarrow$(ii)$\Rightarrow$(iii)$\Rightarrow$(iv)
are obvious because if $T$ is a
contraction, then the sequence
$\{\|T^nh\|^2\}_{n=0}^{\infty}$, being
monotonically decreasing, is convergent
in $\rbb_+$ for all $h\in \hh$.

(iv)$\Rightarrow$(i) This implication can be deduced
from Corollary~\ref{pd2cpd} (applied to
$\gammab_{T,h}$) and Theorems~\ref{cpdops} and
\ref{lamb}.

(iv)$\Rightarrow$(v) It follows from
Theorem~\ref{boundiff} that (iv) implies the variant
of (v) with ``$\frac{1}{1-x} \in L^1(F)$''. That
$\frac{1}{(1-x)^2} \in L^1(F)$ is a consequence of
Theorem~\ref{subn-1} and the fact that (iv) implies
(i).

(v)$\Rightarrow$(vi) Assume that the variant of (v)
with ``$\frac{1}{(1-x)^2} \in L^1(F)$'' holds. Then,
by the Cauchy-Schwarz inequality, $\frac{1}{1-x} \in
L^1(F)$. Observe that (cf.\ \eqref{adycto3})
   \allowdisplaybreaks
   \begin{align*}
T^{*n}T^n & \overset{\eqref{cdr5}}= I + n \left(B +
\int_{[0,1)} \frac{Q_n(x)}{n} F(\D x)\right)
   \\
& \overset{\eqref{rnx-1}} = I - \int_{[0,1)}
\frac{1-x^n}{(x-1)^2} F(\D x), \quad n\in \nbb.
   \end{align*}
This implies that the pair $(D,F)$ with $D=0$
satisfies condition (ii) of
Theorem~\ref{boundiff}.

(vi)$\Rightarrow$(iv) One can apply
Theorem~\ref{boundiff}.
   \end{proof}
There are other ways to prove some implications
of Theorem~\ref{glow-main}. Namely, one can show
the implication (iv)$\Rightarrow$(ii) by using
the ``moreover'' part of
Proposition~\ref{traj-pd-op}. In turn, the
implication (ii)$\Rightarrow$(i) can be deduced
from \cite[Theorem~3.1]{Ag85} and
Corollary~\ref{nofs-sup2}(ii). The implication
(ii)$\Rightarrow$(i) (with a different proof) is
a part of the conclusion of
\cite[Theorem~4.1]{Sto}. Observe also that by
Theorem~\ref{glow-main}, an operator $T \in
\ogr{\hh}$ is subnormal if and only if there
exists $\alpha \in \cbb \setminus \{0\}$ such
that the operator $\alpha T$ satisfies any of
the equivalent conditions (ii)-(vi) of
Theorem~\ref{glow-main}.
   \begin{cor} \label{cpdalp}
Let $T\in \ogr{\hh}$ obey any of the following
conditions{\em :}
   \begin{enumerate}
   \item[(i)] the sequence $\{\|T^n h\|^2\}_{n=0}^{\infty}$
is convergent in $\rbb_+$ for every $h\in
\hh$,
   \item[(ii)] $T$ is strongly stable,
i.e., $\lim_{n\to\infty} \|T^n h\| = 0$
for every $h\in \hh$
$($\cite{Kub,Kub-b}$)$,
   \item[(iii)] $r(T) < 1$.
   \end{enumerate}
Then $T$ is CPD if and only if $T$ is subnormal.
   \end{cor}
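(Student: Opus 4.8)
The plan is to reduce everything to Theorem~\ref{glow-main}, handling the two directions separately. The backward implication is free and needs none of the three hypotheses: any subnormal operator is CPD. Indeed, by Theorem~\ref{lamb} subnormality of $T$ makes each sequence $\{\|T^nh\|^2\}_{n=0}^{\infty}$ a Stieltjes moment sequence, hence PD, and PD sequences are CPD by definition. So in each of the cases (i)--(iii) I only need to prove that a CPD operator satisfying the stated hypothesis is subnormal.

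Next I would observe that the three hypotheses form a descending chain $\text{(iii)}\Rightarrow\text{(ii)}\Rightarrow\text{(i)}$, so it suffices to treat the weakest of them, namely (i). For $\text{(iii)}\Rightarrow\text{(ii)}$: if $r(T)<1$, then by Gelfand's formula $r(T)=\lim_{n\to\infty}\|T^n\|^{1/n}$ one can pick $\theta$ with $r(T)<\theta<1$ and $\|T^n\|\Le \theta^n$ for all large $n$, so $\|T^nh\| \Le \theta^n\|h\| \to 0$ for every $h\in\hh$, which is strong stability. For $\text{(ii)}\Rightarrow\text{(i)}$: strong stability gives $\|T^nh\|^2\to 0$, so in particular $\{\|T^nh\|^2\}_{n=0}^{\infty}$ converges in $\rbb_+$ for every $h$.

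Finally, under hypothesis (i) the key observation is that convergence of $\{\|T^nh\|^2\}_{n=0}^{\infty}$ for every $h$ is \emph{exactly} the convergence in $\rbb$ of the telescopic series $\sum_{n=0}^{\infty}(\|T^{n+1}h\|^2-\|T^nh\|^2)$, since its $N$th partial sum equals $\|T^Nh\|^2-\|h\|^2$. Hence a CPD operator $T$ obeying (i) satisfies condition (iii) of Theorem~\ref{glow-main}, and that theorem immediately yields that $T$ is a subnormal contraction, in particular subnormal. (Equivalently, a convergent sequence has consecutive differences tending to $0$, which is condition (iv) of Theorem~\ref{glow-main} via \eqref{quas-tels}.)

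There is essentially no obstacle here: all of the analytic content is already carried by Theorem~\ref{glow-main}, and the corollary is a matter of recognizing that each of (i)--(iii) feeds into the telescopic-convergence condition. The only step requiring a little care is the elementary reduction $\text{(iii)}\Rightarrow\text{(ii)}\Rightarrow\text{(i)}$ via Gelfand's formula, after which the conclusion is immediate.
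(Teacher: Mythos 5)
Your proof is correct and is essentially the argument the paper intends: the corollary is stated right after Theorem~\ref{glow-main} with no separate proof precisely because, as you observe, each of (i)--(iii) reduces (via the chain $r(T)<1\Rightarrow$ strong stability $\Rightarrow$ convergence of $\{\|T^nh\|^2\}_{n=0}^{\infty}$) to the telescopic-convergence condition (iii) (equivalently \eqref{quas-tels}, condition (iv)) of that theorem, while the converse direction is the standing fact that subnormal operators are CPD. Nothing is missing; your reduction and the appeal to Gelfand's formula are exactly the intended details.
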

   \begin{cor} \label{scalcpd}
Let $T\in \ogr{\hh}$. Then the
following are equivalent{\em :}
   \begin{enumerate}
   \item[(i)] $T$ is subnormal,
   \item[(ii)] $\alpha T$ is CPD for all $\alpha \in
\cbb$,
   \item[(iii)] zero is an accumulation point of
the set of all $\alpha\in \cbb\setminus \{0\}$
for which $\alpha T$ is CPD,
   \item[(iv)] there exists $\alpha \in \cbb\setminus \{0\}$
such that $|\alpha| \, r(T) < 1$ and $\alpha T$
is CPD.
   \end{enumerate}
   \end{cor}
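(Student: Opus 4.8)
The plan is to recognize this corollary as the operator-theoretic incarnation of Theorem~\ref{Gyeon}, with Lambert's characterization (Theorem~\ref{lamb}) serving as the bridge between the scalar and operator settings. The crucial bookkeeping observation is that for $\alpha \in \cbb$ and $h \in \hh$ one has $\|(\alpha T)^n h\|^2 = |\alpha|^{2n} \|T^n h\|^2 = \theta^n (\gammab_{T,h})_n$ with $\theta := |\alpha|^2 \Ge 0$ (recall \eqref{gth}); hence $\alpha T$ is CPD if and only if $\{\theta^n (\gammab_{T,h})_n\}_{n=0}^{\infty}$ is CPD for every $h \in \hh$. Moreover, by Gelfand's formula each $\gammab_{T,h}$ has exponential growth, since $\limsup_{n\to\infty} |(\gammab_{T,h})_n|^{1/n} = \limsup_{n\to\infty} \|T^n h\|^{2/n} \Le r(T)^2 < \infty$, so Theorem~\ref{Gyeon} is applicable to each $\gammab_{T,h}$.

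With these identifications in hand, I would prove the cycle (i)$\Rightarrow$(ii)$\Rightarrow$(iii)$\Rightarrow$(iv)$\Rightarrow$(i). For (i)$\Rightarrow$(ii), Theorem~\ref{lamb} gives that $\gammab_{T,h}$ is PD for every $h$, whence by the implication (i)$\Rightarrow$(ii) of Theorem~\ref{Gyeon} the sequence $\{\theta^n (\gammab_{T,h})_n\}_n$ is CPD for every $\theta \in \rbb$, in particular for every $\theta = |\alpha|^2$; thus $\alpha T$ is CPD for all $\alpha \in \cbb$. The implication (ii)$\Rightarrow$(iii) is immediate, since under (ii) the set appearing in (iii) equals $\cbb \setminus \{0\}$, which plainly accumulates at $0$. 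For (iii)$\Rightarrow$(iv) I would simply select from that accumulating set a nonzero $\alpha$ whose modulus is so small that $|\alpha|\, r(T) < 1$ (this is automatic when $r(T) = 0$, and possible otherwise because elements of arbitrarily small modulus are available).

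The substantive implication is (iv)$\Rightarrow$(i). Given $\alpha \neq 0$ with $|\alpha|\, r(T) < 1$ and $\alpha T$ CPD, I would fix $h \in \hh$ and set $\theta = |\alpha|^2 \neq 0$. Then $\{\theta^n (\gammab_{T,h})_n\}_n$ is CPD and $|\theta| \cdot \limsup_{n\to\infty} |(\gammab_{T,h})_n|^{1/n} \Le |\alpha|^2 r(T)^2 = (|\alpha|\, r(T))^2 < 1$, so condition (iv) of Theorem~\ref{Gyeon} holds for $\gammab_{T,h}$; by that theorem $\gammab_{T,h}$ is PD. Since $h$ was arbitrary, Theorem~\ref{lamb} yields subnormality of $T$. (Alternatively, one could invoke Corollary~\ref{cpdalp}(iii) applied to $\alpha T$, observing that $r(\alpha T) = |\alpha|\, r(T) < 1$, and then use that a nonzero scalar multiple of a subnormal operator is subnormal.)

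The main point to watch is that the scaling parameter $\theta = |\alpha|^2$ ranges only over $[0,\infty)$, whereas Theorem~\ref{Gyeon} is stated for all real $\theta$. This causes no difficulty, because the cycle uses only the ``rich'' direction (i)$\Rightarrow$(ii) of Theorem~\ref{Gyeon}, which produces conditional positive definiteness for every real $\theta$ and hence a fortiori for nonnegative ones, together with the ``sparse'' direction (iv)$\Rightarrow$(i), which requires merely a single nonzero $\theta$ supplied by the hypothesis. No implication demanding conditional positive definiteness at negative values of $\theta$ is ever invoked, so the restriction to $\theta \Ge 0$ is harmless.
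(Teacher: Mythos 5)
Your proof is correct, but its main route differs from the paper's. The paper offers no standalone argument for Corollary~\ref{scalcpd}: it is placed immediately after Theorem~\ref{glow-main} and the remark that $T$ is subnormal if and only if some $\alpha T$ ($\alpha\neq 0$) satisfies conditions (ii)-(vi) of that theorem, so the intended derivation is exactly what you relegate to your parenthetical alternative --- for (iv)$\Rightarrow$(i), note $r(\alpha T)=|\alpha|\,r(T)<1$, apply Corollary~\ref{cpdalp}(iii) to conclude that $\alpha T$ is subnormal, and use scalability of subnormality; for (i)$\Rightarrow$(ii), a scalar multiple of a subnormal operator is subnormal, hence CPD. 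That chain rests on Theorem~\ref{glow-main}, whose proof runs through Corollary~\ref{pd2cpd} and the operator representation Theorem~\ref{cpdops}. Your primary argument instead bypasses the operator-level machinery of Section~\ref{Sec3} entirely: you work pointwise with the scalar sequences $\gammab_{T,h}$, check the exponential growth via Gelfand's formula, and invoke Theorem~\ref{Gyeon} (whose condition (iv) is tailor-made for $\theta=|\alpha|^2$) together with Lambert's Theorem~\ref{lamb} as the sole bridge between scalars and operators. The bookkeeping identity $\|(\alpha T)^n h\|^2=\theta^n(\gammab_{T,h})_n$, the selection of small $\alpha$ in (iii)$\Rightarrow$(iv), and your closing observation that only nonnegative $\theta$ ever arise (so the restriction from $\rbb$ to $\rbb_+$ is harmless, since only the directions (i)$\Rightarrow$(ii) and (iv)$\Rightarrow$(i) of Theorem~\ref{Gyeon} are used) are all correct, so the argument is complete. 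What your route buys is directness and economy: it exhibits Corollary~\ref{scalcpd} as the literal operator incarnation of Theorem~\ref{Gyeon}, independent of Theorem~\ref{glow-main} and of representing triplets. What the paper's route buys is that the corollary comes for free from the stronger Theorem~\ref{glow-main}, which simultaneously yields the telescopic characterizations and the companion Corollaries~\ref{cpdalp} and~\ref{cpdalp-c}.
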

   \begin{cor} \label{cpdalp-c}
Suppose $T\in \ogr{\hh}$ is a non-subnormal CPD
operator. Then $r(T) \Ge 1$ and $\alpha T$ is
not CPD for any complex number $\alpha$ such
that
   \begin{align*}
0 < |\alpha| < \frac{1}{r(T)}.
   \end{align*}
   \end{cor}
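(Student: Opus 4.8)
The plan is to derive both assertions directly from Corollary~\ref{scalcpd}, whose equivalence (i)$\Leftrightarrow$(iv) already encapsulates all the analytic work (ultimately resting on Theorem~\ref{glow-main}); here I would only need its contrapositive form. The organizing observation is that $r(\alpha T) = |\alpha|\, r(T)$ for every $\alpha \in \cbb$, so that the scaling hypotheses in the statement translate immediately into the normalization condition $|\alpha|\, r(T) < 1$ appearing in Corollary~\ref{scalcpd}(iv).

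First I would establish $r(T) \Ge 1$ by contradiction. Suppose $r(T) < 1$. Taking $\alpha = 1$, we have $|\alpha|\, r(T) = r(T) < 1$ and $\alpha T = T$ is CPD by hypothesis, so condition (iv) of Corollary~\ref{scalcpd} is satisfied. The implication (iv)$\Rightarrow$(i) of that corollary then forces $T$ to be subnormal, contradicting the assumption that $T$ is non-subnormal. Hence $r(T) \Ge 1$, and in particular $r(T) > 0$, so that $\frac{1}{r(T)}$ is a well-defined positive real number; this is precisely what makes the second assertion meaningful. Next I would rule out conditional positive definiteness of $\alpha T$ in the prescribed annulus: fixing $\alpha \in \cbb$ with $0 < |\alpha| < \frac{1}{r(T)}$ and supposing toward a contradiction that $\alpha T$ is CPD, we have $\alpha \neq 0$ and $|\alpha|\, r(T) < 1$, so condition (iv) of Corollary~\ref{scalcpd} holds with this $\alpha$. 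Again (iv)$\Rightarrow$(i) yields that $T$ is subnormal, contradicting the hypothesis, whence no such $\alpha$ can make $\alpha T$ CPD.

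Since each step is a one-line application of an already proven equivalence, I do not expect any genuine obstacle: the entire substance resides in Corollary~\ref{scalcpd}, and behind it in the characterization of subnormal contractions via conditional positive definiteness in Theorem~\ref{glow-main}. The only point requiring a moment's care is the bookkeeping that $r(T) \Ge 1$ must be secured \emph{before} the expression $\frac{1}{r(T)}$ in the second assertion is invoked, which is why I would prove the two parts in this order rather than combining them.
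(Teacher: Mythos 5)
Your proof is correct and is essentially the paper's own argument: the paper states Corollary~\ref{cpdalp-c} without proof as an immediate consequence of Corollary~\ref{scalcpd}, and your argument is exactly the contrapositive of the implication (iv)$\Rightarrow$(i) there, applied first with $\alpha=1$ to get $r(T)\Ge 1$ and then with an arbitrary $\alpha$ satisfying $0<|\alpha|<\frac{1}{r(T)}$. The ordering point you flag (securing $r(T)\Ge 1$ before invoking $\frac{1}{r(T)}$) is handled correctly.
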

Regarding Corollary~\ref{cpdalp-c}, we refer the
reader to Example~\ref{prz-do-na} for an example
of a non-subnormal CPD operator with $r(T) = 1$.
Below we apply the above to certain translations
of quasinilpotent operators (cf.\ \cite{Hal}).
   \begin{cor} \label{quasi-1-nil}
Let $N\in\ogr{\hh}$ and $\alpha\in \cbb$~be such
that $r(N)=0$ and $|\alpha| < 1$. Then $\alpha I
+ N$ is CPD if and only if~$N=0$.
   \end{cor}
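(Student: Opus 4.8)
The plan is to reduce the problem to subnormality via the spectral–radius criterion and then use the fact that subnormal operators are normaloid to annihilate the quasinilpotent part. The backward implication requires nothing: if $N=0$, then $\alpha I + N = \alpha I$ is normal, hence subnormal, hence CPD. So the whole content lies in the forward direction.

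For the forward direction, write $T = \alpha I + N$ and assume $T$ is CPD. We may assume $\hh \neq \{0\}$, since otherwise $N=0$ trivially. The first step is to locate the spectrum of $T$: because $r(N)=0$ and $\sigma(N)$ is nonempty, we have $\sigma(N) = \{0\}$, and the elementary spectral translation identity $\sigma(\alpha I + N) = \alpha + \sigma(N)$ yields $\sigma(T) = \{\alpha\}$, so that $r(T) = |\alpha| < 1$. The second step is to invoke Corollary~\ref{cpdalp}(iii): a CPD operator whose spectral radius is strictly less than $1$ is subnormal. Hence $T$ is subnormal.

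The final step recovers $N=0$. A scalar translate of a subnormal operator is subnormal — if $M$ is a normal extension of $T$, then $M - \alpha I$ is a normal extension of $N = T - \alpha I$ — so $N$ itself is subnormal. By \eqref{subn-norm} subnormal operators are normaloid, whence $\|N\| = r(N) = 0$, that is, $N=0$.

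The argument is short once the preceding results are available, and I do not expect a genuine obstacle. The only points requiring care are the standard facts being chained together: the nonemptiness of the spectrum (which is why we pass to $\hh \neq \{0\}$), the spectral translation identity $\sigma(\alpha I + N) = \alpha + \sigma(N)$, and the normaloidness of subnormal operators. The conceptual heart is the interplay of two hypotheses — $|\alpha| < 1$ forces $r(T)<1$ and thus subnormality through Corollary~\ref{cpdalp}(iii), while $r(N)=0$ is exactly the condition under which normaloidness collapses the subnormal operator $N$ to zero. One should note that both hypotheses are essential: Example~\ref{zplusnil} shows that when $|\alpha|=1$ the conclusion fails with $N \neq 0$, which is consistent with the breakdown of the spectral–radius step for $r(T) \Ge 1$.
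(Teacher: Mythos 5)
Your proof is correct and follows essentially the same route as the paper's: the paper also computes $r(\alpha I + N) = |\alpha| < 1$, invokes Corollary~\ref{cpdalp} to get subnormality, and then uses \eqref{subn-norm} (normaloidness) to conclude $\|N\| = r(N) = 0$. Your write-up merely makes explicit the intermediate steps (nonemptiness of the spectrum, the translation identity, and that a scalar translate of a subnormal operator is subnormal) that the paper compresses into one sentence.
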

   \begin{proof}
If $\alpha I + N$ is CPD, then, since $r(\alpha
I+N) = |\alpha| <1$, we infer from
Corollary~\ref{cpdalp} and \eqref{subn-norm}
that $\|N\|=r(N)=0$, which shows that $N=0$.
   \end{proof}
Concerning Corollary~\ref{quasi-1-nil}, note
that if $N$ is a nilpotent operator with index
of nilpotency $2$ and $\alpha \in \cbb$ is such
that $|\alpha| = 1$, then by
\cite[Theorem~2.2]{Ber-Mar-No13}, $\alpha I+N$
is a strict $3$-isometry, so by
Proposition~\ref{sub-mzero-n}, $\alpha I+N$ is
CPD. It is an open question as to whether there
exists a quasinilpotent operator $N$ which is
not nilpotent and such that $I+N$ is CPD.

According to the above discussion, the class of
CPD operators is not scalable, i.e., it is not
closed under the operation of multiplying by
nonzero complex scalars. Among non-scalable
classes of operators are those which consist of
$m$-isometric and $2$-hyperexpansive operators
(see \cite[Lemma~1.21]{Ag-St1} and
\cite[Lemma~1]{Rich}, respectively). On the
other hand, the classes of normaloid, hyponormal
and subnormal operators are scalable (see
\cite{Fur} for more examples).

Condition \eqref{quas-tels} of
Theorem~\ref{glow-main} gives rise to a link
between the conditional positive definiteness of
a (bounded) operator $T$ and the subnormality of
(in general unbounded) unilateral weighted shift
operators $W_{T,h}$, $h\in \hh$, defined below.
Given an operator $T\in \ogr{\hh}$ and a vector
$h\in \hh$, we denote by $W_{T,h}$ the
unilateral weighted shift in $\ell^2$ with
weights $\{\E^{\frac 12(\|T^{n+1} h\|^2 -
\|T^{n}h\|^2)}\}_{n=0}^{\infty}$, that is
$W_{T,h}= U D_{T,h}$, where $U\in \ogr{\ell^2}$
is the unilateral shift and $D_{T,h}$ is the
diagonal (normal) operator in $\ell^2$ with the
diagonal $\{\E^{\frac 12(\|T^{n+1} h\|^2 -
\|T^{n}h\|^2)}\}_{n=0}^{\infty}$ (with respect
to the standard orthonormal basis of $\ell^2$).
Then for every $h\in \hh$,
   \begin{align} \label{ogr-iff}
   \begin{minipage}{65ex}
{\em $W_{T,h} \in \ogr{\ell^2}$ if and
only if $\sup_{n\in \zbb_+}
(\|T^{n+1}h\|^2 - \|T^nh\|^2) <
\infty${\em ;} if this is the case,
then $\|W_{T,h}\|^2= \E^{\sup_{n\in
\zbb_+} (\|T^{n+1}h\|^2 -
\|T^nh\|^2)}$.}
   \end{minipage}
   \end{align}
In view of \eqref{ogr-iff}, the weighted shift
$W_{T,h}$ is bounded for all $h\in \hh$ if and
only if $T$ satisfies condition \eqref{zal} of
Proposition~\ref{unif-bund}. As discussed in
Remark~\ref{manyrem}, there are CPD operators
$T$ for which $W_{T,h}$ is unbounded for all
nonzero vectors $h\in \hh$ and $r(T)=1$. We show
below that subnormal contractions $T$ are
precisely those for which all weighted shifts
$W_{T,h}$, $h\in \hh$, are bounded, subnormal
and of norm one. For the definition and basic
facts about unbounded subnormal operators we
refer the reader to
\cite{St-Sz85,St-Sz89,St-Sz89-III}.
   \begin{pro} \label{cpd-exo}
Let $T\in\ogr{\hh}$. Then the following
assertions hold{\em :}
   \begin{enumerate}
   \item[(i)] $T$ is CPD
if and only if $W_{T,h}$ is a $($possibly
unbounded\/$)$ subnormal operator for all $h\in
\hh$,
   \item[(ii)]  the following conditions are
equivalent{\em :}
   \begin{enumerate}
   \item[(ii-a)] $T$ is a subnormal
contraction,
   \item[(ii-b)] $T$ is a CPD
contraction,
   \item[(ii-c)] $W_{T,h}$ is subnormal and $\|W_{T,h}\|=1$ for all $h\in
\hh$,
   \item[(ii-d)] $W_{T,h}$ is CPD
and $\|W_{T,h}\|=1$ for all $h\in \hh$.
   \end{enumerate}
   \end{enumerate}
   \end{pro}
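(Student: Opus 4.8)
The plan is to reduce everything to scalar facts about the sequence $(\gammab_{T,h})_n = \|T^n h\|^2$ via a single moment identity for the weighted shift $W_{T,h}$. Since $W_{T,h}=UD_{T,h}$ sends $e_n$ to $\E^{\frac12(\|T^{n+1}h\|^2-\|T^n h\|^2)}e_{n+1}$, telescoping the product of the first $n$ weights gives
\begin{align*}
\|W_{T,h}^n e_0\|^2 = \E^{\|T^n h\|^2 - \|h\|^2}, \quad n\in\zbb_+,\ h\in\hh. \tag{$\ast$}
\end{align*}
The second ingredient I would invoke is the characterization of subnormality of a (possibly unbounded) injective unilateral weighted shift: such a shift is subnormal if and only if its moment sequence $\{\|W^n e_0\|^2\}_{n=0}^{\infty}$ is a Stieltjes moment sequence (this is the unbounded theory of \cite{St-Sz85,St-Sz89,St-Sz89-III}). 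Combining this with $(\ast)$, and discarding the irrelevant positive factor $\E^{-\|h\|^2}$, subnormality of $W_{T,h}$ becomes exactly the statement that $\{\E^{\|T^n h\|^2}\}_{n=0}^{\infty}$ is a Stieltjes moment sequence.

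For (i), the forward direction uses Schoenberg's equivalence (Lemma~\ref{cpdpd2}): if $T$ is CPD then $\gammab_{T,h}$ is CPD, so $\{\E^{\|T^n h\|^2}\}$ is PD; applying the same to $\gammab_{T,Th}$ shows $\{\E^{\|T^{n+1}h\|^2}\}$ is PD, and Theorem~\ref{Stiech} upgrades this to $\{\E^{\|T^n h\|^2}\}$ being a Stieltjes moment sequence, i.e.\ $W_{T,h}$ is subnormal. For the converse the key device is the quadratic homogeneity $\|T^n(\sqrt t\,h)\|^2 = t\,\|T^n h\|^2$: assuming $W_{T,g}$ subnormal for every $g$ and applying $(\ast)$ to $g=\sqrt t\,h$, one gets that $\{\E^{t\|T^n h\|^2}\}$ is PD for every $t>0$, whence $\gammab_{T,h}$ is CPD by Schoenberg. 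As $h$ is arbitrary, $T$ is CPD.

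For (ii), the equivalence (ii-a)$\Leftrightarrow$(ii-b) is Theorem~\ref{glow-main}, and I would close the remaining conditions by the cycle (ii-b)$\Rightarrow$(ii-c)$\Rightarrow$(ii-d)$\Rightarrow$(ii-b), using the norm formula \eqref{ogr-iff} throughout, which makes $\|W_{T,h}\|=1$ equivalent to $\sup_{n}(\|T^{n+1}h\|^2-\|T^n h\|^2)=0$. If $T$ is a CPD contraction, then $\{\|T^n h\|^2\}$ is nonincreasing and bounded below by $0$, so its increments are $\Le 0$ and tend to $0$, forcing the supremum to be $0$ and $\|W_{T,h}\|=1$; with part~(i) this gives (ii-c). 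Since subnormal operators are CPD, (ii-c)$\Rightarrow$(ii-d) is immediate. Finally, if $W_{T,h}$ is CPD with $\|W_{T,h}\|=1$, then $W_{T,h}$ is a CPD contraction, hence subnormal by Theorem~\ref{glow-main}; thus $W_{T,h}$ is subnormal for all $h$, so $T$ is CPD by~(i), while the $n=0$ weight forces $\|Th\|\Le\|h\|$, i.e.\ $T$ is a contraction — yielding (ii-b).

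The step I expect to be the main obstacle, or at least the one demanding the most care, is the unbounded subnormality characterization used in the reduction: because Lambert's theorem (Theorem~\ref{lamb}) genuinely fails for unbounded operators (\cite{J-J-S12-jfa,B-J-J-S17}), one cannot naively transplant it, and must rely on the fact that a \emph{classical} unilateral weighted shift has the cyclic vector $e_0$ generating the whole space, so that the Stieltjes-moment condition does characterize subnormality even when $W_{T,h}$ is unbounded (which happens precisely when $\sup_n(\|T^{n+1}h\|^2-\|T^n h\|^2)=\infty$, cf.\ \eqref{ogr-iff}). Once this is granted, the rest is the scalar machinery of Section~\ref{Sec2} together with the homogeneity trick $g=\sqrt t\,h$.
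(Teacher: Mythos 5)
Your proposal is correct and follows essentially the same route as the paper's proof: the moment identity $\|W_{T,h}^n e_0\|^2 = \E^{\|T^nh\|^2-\|h\|^2}$, Schoenberg's Lemma~\ref{cpdpd2} combined with the substitution $\sqrt{t}\,h$, Theorem~\ref{Stiech} applied after replacing $h$ by $Th$, and the Stochel--Szafraniec criterion for subnormality of (possibly unbounded) weighted shifts are exactly the ingredients the paper uses for part (i), and part (ii) is likewise settled through Theorem~\ref{glow-main}, part (i) and \eqref{ogr-iff}. The only cosmetic difference is organizational: the paper obtains (ii-a)$\Leftrightarrow$(ii-b) and (ii-c)$\Leftrightarrow$(ii-d) directly from Theorem~\ref{glow-main} and then links (ii-b)$\Leftrightarrow$(ii-c), whereas you close the same implications in a cycle.
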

   \begin{proof}
(i) By using Lemma~\ref{cpdpd2} and considering
$\sqrt{t} h$ instead of $h$, we deduce that $T$
is CPD if and only if the sequence
$\{\E^{\|T^{n} h\|^2}\}_{n=0}^{\infty}$ is PD
for all $h\in \hh$. Replacing $h$ by $Th$ and
using Theorem~\ref{Stiech}, we see that the
latter holds if and only if $\{\E^{\|T^{n}
h\|^2}\}_{n=0}^{\infty}$ is a Stieltjes moment
sequence for all $h\in \hh$. Finally, applying
\cite[Theorem~4]{St-Sz89} (with
\cite[Remark~3.1.4]{J-J-S12}), we conclude that
$T$ is CPD if and only if $W_{T,h}$ is subnormal
for all $h\in \hh$.

(ii) The equivalences
(ii-a)$\Leftrightarrow$(ii-b) and
(ii-c)$\Leftrightarrow$(ii-d) follow
from the equivalence
(i)$\Leftrightarrow$(ii) of Theorem~
\ref{glow-main}. Noting first that the
sequence $\{\|T^n
h\|^2\}_{n=0}^{\infty}$ is convergent
in $\rbb_+$ for all $h\in \hh$ whenever
$T$ is a contraction and then using (i)
and \eqref{ogr-iff}, we get the
equivalence
(ii-b)$\Leftrightarrow$(ii-c). This
completes the proof.
   \end{proof}
   \begin{table}[h]
   \begin{center}
   \begin{tabularx}{.88\textwidth}
{|X||c|c|} \hline {\bf \makecell{$T$ is CPD \\
and satisfies \ding{192}, \ding{193} or
\ding{194}}} & {\bf $\overset{?}\implies$ $T$
subnormal} & {\bf $r(T)$} \tabularnewline \hline
\hline \ding{192} \makecell{$\exists h\colon
W_{T,h}$ is not bounded} & NO & $ \Ge 1$
\tabularnewline \hline \ding{193}
\makecell{$\forall h\colon W_{T,h}$ is bounded
and $D \neq 0$} & NEVER & $\Le 1$
\tabularnewline \hline \ding{194}
\makecell{$\forall h\colon W_{T,h}$ is bounded
and $D = 0$} & YES & $\Le 1$ \tabularnewline
\hline
   \end{tabularx}
\vspace{2ex}
   \end{center}
   \caption{ When does conditional positive
definiteness imply subnormality?}
   \label{Table}
   \end{table}

We now recapitulate our considerations in
Table~\ref{Table}. Note that if $T\in \ogr{\hh}$
is CPD and $W_{T,h}$ is bounded for all $h\in
\hh$, then by \eqref{ogr-iff} and
Proposition~\ref{unif-bund}, the limit $D:=
(\mbox{\sc wot})\lim_{n \to \infty}
\{T^{*(n+1)}T^{n+1} -
T^{*n}T^n\}_{n=0}^{\infty}$ exists. This is
especially true in cases \ding{193} and
\ding{194}. To get row \ding{192} apply the
Gelfand's formula for spectral radius and
\eqref{ogr-iff}; row \ding{193} follows from
\eqref{subn-norm}, \eqref{ogr-iff} and
Proposition~\ref{zen-jesz-nie}; row \ding{194}
is a consequence of \eqref{ogr-iff} and
Propositions~\ref{traj-pd-op} and \ref{cpd-exo}
(row \ding{194} also follows from
\eqref{ogr-iff} and Theorems~\ref{boundiff}
and~\ref{glow-main}).

   We close this subsection with a new
characterization of completely hyperexpansive
operators. It can be deduced from
\cite[Theorem~2]{At2} and Lemma~\ref{cpdpd2} by
arguing as in the proof of
Proposition~\ref{cpd-exo}(i). Despite the formal
similarity, the characterizations given in
Propositions~\ref{cpd-exo} and \ref{chyp-exo}
are radically different, because all unilateral
weighted shifts appearing in
Proposition~\ref{chyp-exo} are contractive.
   \begin{pro} \label{chyp-exo}
An operator $T\in\ogr{\hh}$ is completely
hyperexpansive if and only if the
unilateral weighted shift on $\ell^2$
with weights $\{\E^{\frac 12(\|T^n h\|^2
- \|T^{n+1}h\|^2)}\}_{n=0}^{\infty}$ is
subnormal for all $h\in \hh$.
   \end{pro}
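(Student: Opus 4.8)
The plan is to adapt, nearly verbatim, the proof of Proposition~\ref{cpd-exo}(i), replacing the conditional positive definiteness of $\{\|T^nh\|^2\}_{n=0}^{\infty}$ by that of $\{-\|T^nh\|^2\}_{n=0}^{\infty}$. The entry point is the characterization recalled earlier from \cite[Theorem~2]{At2}: an operator $T\in\ogr{\hh}$ is completely hyperexpansive if and only if the sequence $\{-\|T^nh\|^2\}_{n=0}^{\infty}$ is CPD for every $h\in\hh$. Applying the Schoenberg characterization of Lemma~\ref{cpdpd2} to this sequence and absorbing the parameter $t>0$ by considering $\sqrt{t}\,h$ in place of $h$ (so that $\|T^n(\sqrt{t}\,h)\|^2=t\|T^nh\|^2$), I would first establish that $T$ is completely hyperexpansive if and only if $\{\E^{-\|T^nh\|^2}\}_{n=0}^{\infty}$ is PD for every $h\in\hh$.

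Next I would upgrade ``PD'' to ``Stieltjes moment sequence''. Replacing $h$ by $Th$ turns $\{\E^{-\|T^nh\|^2}\}_{n=0}^{\infty}$ into its shift $\{\E^{-\|T^{n+1}h\|^2}\}_{n=0}^{\infty}$; hence if the former is PD for every $h$, then both it and its shift are PD, and Theorem~\ref{Stiech} shows that $\{\E^{-\|T^nh\|^2}\}_{n=0}^{\infty}$ is a Stieltjes moment sequence for every $h$. The converse is immediate, since every Stieltjes moment sequence is PD. Thus $T$ is completely hyperexpansive if and only if $\{\E^{-\|T^nh\|^2}\}_{n=0}^{\infty}$ is a Stieltjes moment sequence for every $h\in\hh$.

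It then remains to identify this condition with subnormality of the weighted shift in the statement. Denoting by $V_{T,h}$ the unilateral weighted shift with weights $\{\E^{\frac12(\|T^nh\|^2-\|T^{n+1}h\|^2)}\}_{n=0}^{\infty}$, a telescoping computation gives $\|V_{T,h}^n e_0\|^2=\E^{\|h\|^2}\,\E^{-\|T^nh\|^2}$ for all $n\in\zbb_+$, where $e_0$ is the first standard basis vector of $\ell^2$. Since a positive scalar multiple of a Stieltjes moment sequence is again a Stieltjes moment sequence, $\{\|V_{T,h}^n e_0\|^2\}_{n=0}^{\infty}$ is Stieltjes precisely when $\{\E^{-\|T^nh\|^2}\}_{n=0}^{\infty}$ is. Invoking \cite[Theorem~4]{St-Sz89} (together with \cite[Remark~3.1.4]{J-J-S12}), which characterizes subnormality of a (possibly unbounded) unilateral weighted shift via the Stieltjes moment property of its moment sequence, I would conclude that $V_{T,h}$ is subnormal if and only if $\{\E^{-\|T^nh\|^2}\}_{n=0}^{\infty}$ is a Stieltjes moment sequence. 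Chaining the three equivalences yields the assertion.

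I expect no deep obstacle here, only careful bookkeeping: one must check that the sign flip in passing from $\{\|T^nh\|^2\}$ to $\{-\|T^nh\|^2\}$ does not disturb the $\sqrt{t}\,h$ rescaling underlying Schoenberg's lemma, and that the multiplicative constant $\E^{\|h\|^2}$ produced by the telescoping is harmless for the Stieltjes property. The one point genuinely requiring attention is the operator-theoretic framework: as in Proposition~\ref{cpd-exo}(i) one should allow possibly unbounded subnormal weighted shifts. Under complete hyperexpansivity the weights are in fact at most $1$ (since $\bscr_1(T)\Le 0$ forces $\|T^{n+1}h\|\Ge\|T^nh\|$), so those shifts are contractions; but for the reverse implication no such a priori bound is available, which is exactly why the general characterization of \cite[Theorem~4]{St-Sz89} is needed.
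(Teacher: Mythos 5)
Your proposal is correct and coincides with the paper's own (sketched) proof: the paper deduces Proposition~\ref{chyp-exo} from \cite[Theorem~2]{At2} and Lemma~\ref{cpdpd2} ``by arguing as in the proof of Proposition~\ref{cpd-exo}(i)'', which is exactly the chain you spell out — the $\sqrt{t}\,h$ rescaling in Schoenberg's lemma, upgrading PD to Stieltjes via the $h\mapsto Th$ shift and Theorem~\ref{Stiech}, and then the unbounded weighted-shift subnormality criterion of \cite[Theorem~4]{St-Sz89} together with \cite[Remark~3.1.4]{J-J-S12}. Your closing observation that the shifts are contractive precisely because complete hyperexpansivity forces $\bscr_1(T)\Le 0$ likewise matches the paper's remark preceding the proposition.
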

   \section{A functional calculus and related matters}
   \subsection{\label{Sec4.1-n}A functional calculus}
We begin by discussing the space
$L^{\infty}(M)$. Suppose $M\colon \borel{\rbb_+}
\to \ogr{\hh}$ is a semispectral measure. We
denote by $L^{\infty}(M)$ the Banach space of
all equivalence classes of $M$-essentially
bounded complex Borel functions on $\rbb_+$
equipped with the $M$-essential supremum norm
(see \cite[Appendix]{Sto3}; see also
\cite[Section~12.20]{Rud73}). We customarily
regard elements of $L^{\infty}(M)$ as functions
that are identified by the equality a.e.\ $[M]$,
the latter meaning ``almost everywhere with
respect to $M$''. In particular, the norm on
$L^{\infty}(M)$ takes the form
   \begin{align*}
\|f\|_{L^{\infty}(M)} = \min\{\alpha\in
\rbb_+\colon M(\{x\in \rbb_+\colon |f(x)| >
\alpha\})=0\}, \quad f\in L^{\infty}(M).
   \end{align*}
The relationship between $L^{\infty}(M)$ and the classical
$L^{\infty}(\mu)$ is explained below.
   \begin{align} \label{do-zacyt}
   \begin{minipage}{70ex}
{\em If $\mu$ is a Borel measure on $\rbb_+$, then
$L^{\infty}(M) = L^{\infty}(\mu)$ if and only if $M$ and
$\mu$ are mutually absolutely continuous{\em }; if this is
the case, then $\|f\|_{L^{\infty}(M)} =
\|f\|_{L^{\infty}(\mu)}$ for every $f\in L^{\infty}(M)$.}
   \end{minipage}
   \end{align}
As shown in Example~\ref{notabc} below, it may not be
possible to find a Borel probability measure on
$\rbb_+$ with respect to which a given semispectral
measure is absolutely continuous.
   \begin{exa} \label{notabc}
Let $\varOmega$ be any uncountable bounded subset of
$\rbb_+$ and let $E\colon \borel{\rbb_+} \to
\ogr{\hh}$ be the spectral measure given by
   \begin{align*}
E(\varDelta) = \bigoplus_{x \in \varOmega}
\chi_{\varDelta}(x) I_{\hh_x}, \quad \varDelta \in
\borel{\rbb_+},
   \end{align*}
where each $\hh_x$ is a nonzero Hilbert space.
Clearly, the following holds.
   \begin{align} \label{last-o}
\text{If $\varDelta \in \borel{\rbb_+}$, then
$E(\varDelta)=0$ if and only if $\varDelta \cap
\varOmega=\emptyset$.}
   \end{align}
Suppose to the contrary that $E$ is absolutely
continuous with respect to a finite Borel measure
$\mu$ on $\rbb_+$. Then by \eqref{last-o}, $\mu(\{x\})
> 0$ for every $x\in \varOmega$, which is impossible
because $\mu$ is finite and $\varOmega$ is uncountable
(see \cite[Problem~12, p.\ 12]{Ash}). Plainly, $E$ is
compactly supported and $\supp{E}=\bar \varOmega$.
   \hfill $\diamondsuit$
   \end{exa}
The situation described in Example~ \ref{notabc}
cannot happen when $\hh$ is separable. What is more,
the following statement holds.
   \begin{align} \label{hrum-1}
   \begin{minipage}{70ex}
{\em Suppose $\hh$ is separable and $M\colon \borel{\rbb_+}
\to \ogr{\hh}$ is a nonzero semispectral measure. Then
there exists a Borel probability measure $\mu$ on $\rbb_+$
such that $M$ and $\mu$ are mutually absolutely
continuous.}
   \end{minipage}
   \end{align}
To see this, take an orthonormal basis $\{e_j\}_{j\in J}$
of $\hh$, where $J$ is a countable index set. Let
$\{a_j\}_{j\in J}$ be any system of positive real numbers
such that
   \begin{align} \label{nomal-iz}
\sum_{j \in J} a_j \is{M(\rbb_+)e_j}{e_j} =1.
   \end{align}
(This is possible because $M\neq 0$.) Define the Borel
measure $\mu$ on $\rbb_+$ by
   \begin{align*}
\mu(\varDelta) = \sum_{j\in J} a_j
\is{M(\varDelta)e_j}{e_j}, \quad \varDelta \in
\borel{\rbb_+}.
   \end{align*}
By \eqref{nomal-iz}, $\mu$ is a probability measure.
If $\varDelta \in \borel{\rbb_+}$ is such that
$\mu(\varDelta)=0$, then
   \begin{align*}
0=\is{M(\varDelta)e_j}{e_j}=\|M(\varDelta)^{1/2}e_j
\|^2, \quad j\in J,
   \end{align*}
which implies that $M(\varDelta)=0$. Thus $E$ is
absolutely continuous with respect to $\mu$. That
$\mu$ is absolutely continuous with respect to $M$ is
immediate.

We now prove the following fact.
   \begin{align}
   \begin{minipage}{70ex} \label{linft-con}
{\em If $M\colon \borel{\rbb_+} \to \ogr{\hh}$ is a
nonzero compactly supported semispectral measure, then
$\|f\|_{L^{\infty}(M)}=\|f|_{\varOmega}\|_{C(\varOmega)}$
for every $f\in L^{\infty}(M)$ such that
$f|_{\varOmega} \in C(\varOmega)$, where
$\varOmega:=\supp{M}$.}
   \end{minipage}
   \end{align}
Indeed, the inequality ``$\Le$'' is obvious. If
$\alpha \in \rbb_+$ is such that
   \begin{align*}
M(\{x \in \rbb_+\colon |f(x)|>\alpha\})=0,
   \end{align*}
then $M(\{x \in \varOmega\colon |f(x)|>\alpha\})=0$
and, because the set $\{x \in \varOmega \colon
|f(x)|>\alpha\}$ is open in $\varOmega$, we deduce
that $|f(x)|\Le \alpha$ for all $x\in \varOmega$,
which after taking infimum over such $\alpha$'s yields
the inequality ``$\Ge$''. This proves
\eqref{linft-con}.

As a consequence of \eqref{linft-con}, we have
   \begin{align*}
   \begin{minipage}{75ex}
{\em if $f,g \in L^{\infty}(M)$ are such that
$f|_{\varOmega},g|_{\varOmega} \in C(\varOmega)$ and
$f=g$ a.e.\ $[M]$, then $f|_{\varOmega} =
g|_{\varOmega}$.}
   \end{minipage}
   \end{align*}
The above discussion shows that (still under the
assumptions of \eqref{linft-con}) the map which sends
a function $g\in C(\varOmega)$ to the equivalence
class of any of extensions of $g$ to a complex Borel
function on $\rbb_+$ is an isometry from
$C(\varOmega)$ to $L^{\infty}(M)$. Therefore,
$C(\varOmega)$ can be regarded as a closed vector
subspace of $L^{\infty}(M)$; this fact plays an
important role in Theorem~\ref{dyl-an2}(v) below. As
shown in \eqref{hrum-1} and \eqref{do-zacyt}, if $\hh$
is separable and $M\neq 0$, then
$L^{\infty}(M)=L^{\infty}(\mu)$ for some Borel
probability measure on $\rbb_+$, so $C(\varOmega)$ is
a separable closed vector subspace of
$L^{\infty}(\mu)$ (see \cite[Theorem~V.6.6]{con2}),
while, in general, $L^{\infty}(\mu)$ is not separable
(see \cite[Problem~2, p.\ 62]{Tay-La80}). As is easily
seen, the above facts (except for separability of
$C(\varOmega)$) are true for regular Borel
semispectral measures on topological Hausdorff spaces.

   We are now ready to construct an
$L^{\infty}(M)$-functional calculus that is built up
on the basis of Agler's hereditary functional
calculus.
   \begin{thm} \label{dyl-an2}
Suppose that $T\in \ogr{\hh}$ is a CPD operator.
Let $M\colon \borel{\rbb_+} \to \ogr{\hh}$ be a
compactly supported semispectral measure
satisfying \eqref{checpt-2}. Then the map
$\varLambda_{T}\colon L^{\infty}(M) \to
\ogr{\hh}$ given by
   \begin{align} \label{elem-rz}
\varLambda_{T}(f) = \int_{\rbb_+} f \D M, \quad f \in
L^{\infty}(M),
   \end{align}
is continuous and linear. It has the following
properties{\em :}
   \begin{enumerate}
   \item[(i)]
$\varLambda_{T}(q) = ((X-1)^2q)\lrangle{T}$ for every
$q\in \cbb[X]$,
   \item[(ii)] $\varLambda_{T}$ is positive\/\footnote{Since
$\varLambda_{T}$ is a positive map on a commutative
$C^*$-algebra $L^{\infty}(M)$, the Stinespring theorem
implies that $\varLambda_{T}$ is completely positive
(see \cite[Theorem~4]{Stin55}).}, i.e.,
$\varLambda_{T}(f) \Ge 0$ whenever $f\in
L^{\infty}(M)$ and $f\Ge 0$ a.e.\ ~$[M]$,
   \item[(iii)] there exist a Hilbert space
$\kk$, $R\in \ogr{\hh,\kk}$ and $S\in \ogr{\kk}_+$
such that {\em \eqref{mini-tr}} holds and
   \begin{align} \label{dilat-R}
\varLambda_{T}(f) = R^* f(S) R, \quad f \in
L^{\infty}(M),
   \end{align}
   \item[(iv)] $\|\varLambda_{T}\|=\|\bscr_2(T)\|$,
   \item[(v)] if $M$ is nonzero and $\varOmega:=\supp{M},$ then
$\cbb[X]$ is dense in $C(\varOmega)$ in the
$L^{\infty}(M)$-norm,
$\varLambda_{T}|_{C(\varOmega)}\colon C(\varOmega) \to
\ogr{\hh}$ is a unique continuous linear map
satisfying {\em (i)},
$\|\varLambda_{T}|_{C(\varOmega)}\|=\|\varLambda_{T}\|$~
and
   \begin{align} \label{nier-wilo-1}
\bigg\|\sum_{j=0}^n \alpha_j
T^{*j}\bscr_2(T)T^j\bigg\| \Le \|\bscr_2(T)\|
\sup_{x\in \varOmega}\bigg|\sum_{j=0}^n \alpha_j
x^j\bigg|, \quad \{\alpha_j\}_{j=0}^n \subseteq \cbb,
n\in \zbb_+.
   \end{align}
   \end{enumerate}
   \end{thm}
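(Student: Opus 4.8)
The plan is to derive the entire statement from the dilation already furnished by Theorem~\ref{dyltyprep} together with the Stone--von Neumann functional calculus for the positive operator appearing there. First I would fix a triplet $(\kk,R,S)$ as in Theorem~\ref{dyltyprep}(iv), so that $S\in\ogr{\kk}_+$ with spectral measure $E$ satisfies \eqref{mini-tr} and $M(\varDelta)=R^*E(\varDelta)R$ for $\varDelta\in\borel{\rbb_+}$ (Naimark). Linearity of $\varLambda_T$ is immediate from \eqref{form-ua}, and property (i) is nothing but \eqref{cziki-1} of Theorem~\ref{dyl-an}, so it costs no extra work; in particular $\varLambda_T(x^n)=T^{*n}\bscr_2(T)T^n$ by \eqref{cziki-2}, and $\varLambda_T(1)=M(\rbb_+)=\bscr_2(T)=R^*R$ (using $E(\rbb_+)=I_{\kk}$).

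The key technical step is to show that $M$ and $E$ have the same null sets, so that $L^{\infty}(M)=L^{\infty}(E)$ isometrically and $f(S):=\int_{\rbb_+}f\,\D E$ is well defined for every $f\in L^{\infty}(M)$. Since $E(\varDelta)$ is an orthogonal projection, $M(\varDelta)=R^*E(\varDelta)R=(E(\varDelta)R)^*(E(\varDelta)R)$, whence $M(\varDelta)=0$ if and only if $E(\varDelta)R=0$; the direction $E(\varDelta)=0\Rightarrow M(\varDelta)=0$ is trivial. For the converse I would use commutativity of the spectral projections to see that $E(\varDelta)R=0$ forces $E(\varDelta)$ to annihilate $E(\varDelta')\ob{R}$ for every $\varDelta'$, and then conclude $E(\varDelta)=0$ from minimality \eqref{mini-tr} via Lemma~\ref{minim-2}(a). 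Granting this, the dilation \eqref{dilat-R} holds for all $f\in L^{\infty}(M)$ because $\is{R^*f(S)Rh}{h}=\int f\,\D\is{E(\cdot)Rh}{Rh}=\int f\,\D\is{M(\cdot)h}{h}=\is{\varLambda_T(f)h}{h}$, which gives (iii). Continuity and the bound $\|\varLambda_T(f)\|\Le\|R\|^2\|f(S)\|=\|\bscr_2(T)\|\,\|f\|_{L^{\infty}(M)}$ then follow at once, using $\|R\|^2=\|R^*R\|=\|\bscr_2(T)\|$ and $\|f(S)\|=\|f\|_{L^{\infty}(E)}=\|f\|_{L^{\infty}(M)}$.

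Positivity (ii) I would read off directly: for $f\Ge0$ a.e.\ $[M]$ the scalar measure $\is{M(\cdot)h}{h}$ is positive, so $\is{\varLambda_T(f)h}{h}=\int f\,\D\is{M(\cdot)h}{h}\Ge0$ for every $h\in\hh$. For the norm identity (iv), the upper bound $\|\varLambda_T\|\Le\|\bscr_2(T)\|$ is the estimate of the previous paragraph, while the reverse follows by testing on the constant $f\equiv1$, since $\varLambda_T(1)=\bscr_2(T)$ and $\|1\|_{L^{\infty}(M)}=1$ (the degenerate case $M=0$, equivalently $\bscr_2(T)=0$, is trivial).

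Finally, for (v) I would assume $M\neq0$ and set $\varOmega=\supp{M}$, a compact subset of $\rbb_+$. By \eqref{linft-con} the inclusion $C(\varOmega)\hookrightarrow L^{\infty}(M)$ is isometric with $\|g\|_{L^{\infty}(M)}=\sup_{\varOmega}|g|$, so density of $\cbb[X]$ in $C(\varOmega)$ is exactly the Weierstrass theorem. The map $\varLambda_T|_{C(\varOmega)}$ is continuous and satisfies (i); uniqueness is forced because (i) already pins the value on the dense subspace $\cbb[X]|_{\varOmega}$, the value $((X-1)^2q)\lrangle{T}=\int q\,\D M$ depending only on $q|_{\varOmega}$. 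The equality $\|\varLambda_T|_{C(\varOmega)}\|=\|\varLambda_T\|$ again comes from testing on $1\in C(\varOmega)$, and \eqref{nier-wilo-1} is the estimate $\|\varLambda_T(p)\|\Le\|\varLambda_T\|\,\|p\|_{L^{\infty}(M)}=\|\bscr_2(T)\|\sup_{\varOmega}|p|$ applied to $p=\sum_{j}\alpha_jX^j$, using $\varLambda_T(p)=\sum_{j}\alpha_jT^{*j}\bscr_2(T)T^j$ from \eqref{cziki-2}. I expect the only genuine obstacle to be the null-set equivalence $L^{\infty}(M)=L^{\infty}(E)$ in the second paragraph; the remaining assertions are bookkeeping layered on top of Theorems~\ref{dyltyprep} and~\ref{dyl-an}.
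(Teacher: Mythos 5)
Your proposal is correct and takes essentially the same route as the paper's proof: Naimark dilation of $M$, the null-set equivalence giving $L^{\infty}(M)=L^{\infty}(E)$ isometrically, the Stone--von Neumann calculus for \eqref{dilat-R}, the norm equality via $\varLambda_{T}(X^0)=\bscr_2(T)=R^*R$, and \eqref{linft-con} plus Weierstrass for (v). The only difference is that you prove the null-set equivalence yourself (commutativity of the spectral projections plus minimality via Lemma~\ref{minim-2}(a)), whereas the paper outsources exactly this step to the proof of \cite[Theorem~4.4]{Ja02}; your argument is the one behind that citation, so nothing is lost.
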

   \begin{proof}
Let $(\kk,R,E)$ be as in Theorem~\ref{Naim-ark}. Since
for every $\varDelta \in \borel{\rbb_+}$,
$E(\varDelta)=0$ if and only if $M(\varDelta)=0$ (see
the proof of \cite[Theorem~4.4]{Ja02}), we get
   \begin{align} \label{baba-1}
\text{$L^{\infty}(M)=L^{\infty}(E)$ and
$\|f\|_{L^{\infty}(M)} = \|f\|_{L^{\infty}(E)}$ for every
$f\in L^{\infty}(M)$.}
   \end{align}
Set $S=\int_{\rbb_+} x E(\D x)$. Applying
\eqref{baba-1} and the Stone-von Neumann functional
calculus, we deduce from \eqref{elem-rz} that
\eqref{dilat-R} is valid and consequently
   \begin{align*}
\|\varLambda_{T}(f)\| \Le \|R\|^2\|f(S)\| = \|R\|^2
\|f\|_{L^{\infty}(E)} = \|R\|^2 \|f\|_{L^{\infty}(M)},
\quad f\in L^{\infty}(M).
   \end{align*}
Hence, $\varLambda_{T}$ is a continuous positive
linear map such that
   \begin{align} \label{dor-s}
\|\varLambda_{T}\|\Le \|R\|^2.
   \end{align}
Applying \eqref{checpt-2} to $p=(X-1)^2q$, we deduce
that $\varLambda_{T}$ satisfies (i). Substituting
$q=f=X^0$ into (i) and \eqref{dilat-R}, we infer from
\eqref{nab-bla2} that
   \begin{align} \label{dzi-d}
\bscr_2(T)=\varLambda_{T}(X^0)=R^*R,
   \end{align}
which together with \eqref{dor-s} yields
$\|\varLambda_{T}\|=\|R\|^2 = \|\bscr_2(T)\|$. Thus,
in view of Lemma~\ref{minim-2}(a), (i)-(iv) hold.

It remains to prove (v). Assume that $M$ is
nonzero. It follows from \eqref{linft-con} and
the Stone-Weierstrass theorem (or the classical
Weierstrass theorem combined with Tietze
extension theorem) that $\cbb[X]$ is dense in
$C(\varOmega)$ in the $L^{\infty}(M)$-norm and
so $\varLambda_{T}|_{C(\varOmega)}\colon
C(\varOmega) \to \ogr{\hh}$ is a unique
continuous linear map satisfying condition~(i).
Since by \eqref{nab-bla2} and \eqref{nab-bla},
   \begin{align} \label{Geoje}
\sum_{j=0}^n \alpha_j T^{*j}\bscr_2(T)T^j =
((X-1)^2q)\lrangle{T}\overset{\mathrm{(i)}}=\varLambda_{T}(q),
   \end{align}
where $q= \sum_{j=0}^n \alpha_j X^j$, we can easily
deduce \eqref{nier-wilo-1} from (iv) and
\eqref{linft-con}. Using \eqref{dzi-d} and (iv) again,
we conclude that
$\|\varLambda_{T}|_{C(\varOmega)}\|=\|\varLambda_{T}\|$.
This proves (v) and thus completes the proof.
   \end{proof}
Before stating a corollary to Theorem~\ref{dyl-an2},
we recall that a monic polynomial $p\in \cbb[X]$ of
degree at least one takes the form (see \cite[p.\
252]{hun74})
   \begin{align}  \label{pazdz-list00}
p=(X-z_1) \cdots (X-z_n),
   \end{align}
where $z_1, \ldots, z_n\in \cbb$. What is more, $p$
can be written as
   \begin{align} \label{pazdz-list01}
p = \sum_{j=0}^n (-1)^{n-j} s_{n-j}(z_1, \ldots,
z_n)X^j,
   \end{align}
where $s_0=1$ and $s_1, \ldots, s_n$ are the
elementary symmetric functions in complex variables
$z_1, \ldots, z_n$ given by
   \begin{align*}
s_j(z_1, \ldots, z_n) = \sum_{1 \Le i_1 < \ldots < i_j
\Le n} z_{i_1} \cdots z_{i_j} \text{ for } z_1,
\ldots, z_n\in \cbb \text{ and } j=1, \dots, n.
   \end{align*}
   \begin{cor} \label{wni-pol}
Assume that $T\in \ogr{\hh}$ is CPD and $M$ and
$\varOmega$ are as in Theorem~{\em
\ref{dyl-an2}(v)}. Then for every $n\in \nbb$,
   \allowdisplaybreaks
   \begin{align}  \notag
\Big\|\sum_{j=0}^n (-1)^j s_{n-j}(\boldsymbol z)
T^{*j}\bscr_2(T)T^j\Big\| & \Le \|\bscr_2(T)\|
\sup_{x\in \varOmega} \prod_{j=1}^n |x-z_j|,
   \\ \label{pazdz-list1}
& \hspace{12.5ex}\boldsymbol z = (z_1, \ldots, z_n)
\in \cbb^n,
   \\ \label{pazdz-list2}
\Big \|\sum_{j=0}^n \binom{n}{j} (-1)^j z^{n-j}
T^{*j}\bscr_2(T)T^j \Big\| &\Le \|\bscr_2(T)\|
\Big(\sup_{x\in \varOmega} |x-z|\Big)^n, \quad z \in
\cbb,
   \\ \label{pazdz-list3}
\hspace{-10ex}\|\bscr_{n+2}(T)\| &\Le \|\bscr_2(T)\|
\Big(\sup_{x\in \varOmega} |x-1|\Big)^n.
   \end{align}
   \end{cor}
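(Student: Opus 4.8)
The plan is to obtain all three estimates from a single engine, namely the inequality \eqref{nier-wilo-1} of Theorem~\ref{dyl-an2}(v), which says that $\big\|\sum_{j=0}^n \alpha_j T^{*j}\bscr_2(T)T^j\big\| \Le \|\bscr_2(T)\| \sup_{x\in\varOmega}\big|\sum_{j=0}^n \alpha_j x^j\big|$ for any $\{\alpha_j\}_{j=0}^n\subseteq\cbb$. Equivalently, via \eqref{Geoje}, the operator $\sum_{j=0}^n \alpha_j T^{*j}\bscr_2(T)T^j$ equals $((X-1)^2q)\lrangle{T}=\varLambda_{T}(q)$ with $q=\sum_{j=0}^n\alpha_j X^j$, so each of \eqref{pazdz-list1}--\eqref{pazdz-list3} will follow by feeding a suitably chosen polynomial $q$ into this functional calculus and reading off the resulting sup-norm over $\varOmega$.

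For \eqref{pazdz-list1} I would take $q$ to be the monic polynomial $p=(X-z_1)\cdots(X-z_n)$ of \eqref{pazdz-list00}. By \eqref{pazdz-list01} its coefficients are $\alpha_j=(-1)^{n-j}s_{n-j}(\boldsymbol z)$. Since $(-1)^j=(-1)^n(-1)^{n-j}$, the operator $\sum_{j=0}^n(-1)^j s_{n-j}(\boldsymbol z)\,T^{*j}\bscr_2(T)T^j$ is just $(-1)^n$ times $\sum_{j=0}^n\alpha_j T^{*j}\bscr_2(T)T^j$, and as $|(-1)^n|=1$ the two have the same operator norm. Applying \eqref{nier-wilo-1} to these $\alpha_j$ therefore yields $\|\bscr_2(T)\|\sup_{x\in\varOmega}|p(x)|=\|\bscr_2(T)\|\sup_{x\in\varOmega}\prod_{j=1}^n|x-z_j|$, which is exactly \eqref{pazdz-list1}.

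Inequality \eqref{pazdz-list2} is then the specialization $z_1=\cdots=z_n=z$ of \eqref{pazdz-list1}, using $s_{n-j}(z,\ldots,z)=\binom{n}{j}z^{n-j}$ and $\prod_{j=1}^n|x-z|=|x-z|^n$; alternatively one may run the same argument directly on $q=(X-z)^n$. For \eqref{pazdz-list3} I would put $z=1$ in \eqref{pazdz-list2}. The one point needing care is the identification of the left-hand side: with $z=1$ the sum becomes $\sum_{j=0}^n\binom{n}{j}(-1)^j T^{*j}\bscr_2(T)T^j$, and by the pseudo-multiplicativity \eqref{nab-bla} together with \eqref{nab-bla2} this equals $(1-X)^n(\nabla_T)(\bscr_2(T))=\big((1-X)^n(1-X)^2\big)\lrangle{T}=(1-X)^{n+2}\lrangle{T}=\bscr_{n+2}(T)$. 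Hence the left side is $\|\bscr_{n+2}(T)\|$ and the right side is $\|\bscr_2(T)\|(\sup_{x\in\varOmega}|x-1|)^n$, giving \eqref{pazdz-list3}.

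The work here is entirely routine once the dictionary between hereditary expressions and the $L^\infty(M)$-functional calculus is in place; I do not anticipate any genuine obstacle. The only bookkeeping to be careful about is the sign reconciliation $(-1)^j$ versus $(-1)^{n-j}$, which is harmless under the norm, and the translation of $\sum_{j=0}^n\binom{n}{j}(-1)^j T^{*j}\bscr_2(T)T^j$ into $\bscr_{n+2}(T)$ through \eqref{nab-bla} and \eqref{nab-bla2}.
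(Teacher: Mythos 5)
Your proposal is correct and follows essentially the same route as the paper: apply \eqref{nier-wilo-1} to the polynomial \eqref{pazdz-list01} (using \eqref{pazdz-list00}) to get \eqref{pazdz-list1}, specialize to equal roots for \eqref{pazdz-list2}, and identify $\sum_{j=0}^n\binom{n}{j}(-1)^jT^{*j}\bscr_2(T)T^j$ with $\bscr_{n+2}(T)$ via the hereditary calculus (\eqref{nab-bla2}, \eqref{nab-bla}) to get \eqref{pazdz-list3}. Your explicit handling of the sign discrepancy $(-1)^j$ versus $(-1)^{n-j}$ and the derivation of the $\bscr_{n+2}(T)$ identity merely spell out details the paper leaves implicit.
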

   \begin{proof} Applying \eqref{nier-wilo-1} to the
polynomial \eqref{pazdz-list01} and using
\eqref{pazdz-list00}, we get \eqref{pazdz-list1}. The
estimate \eqref{pazdz-list2} is a direct consequence
of \eqref{pazdz-list1}. Finally, the estimate
\eqref{pazdz-list3} follows from \eqref{pazdz-list2}
applied to $z=1$ and the identity
   \begin{align*}
\bscr_{n+2}(T) = \sum_{j=0}^n \binom{n}{j} (-1)^j
T^{*j}\bscr_2(T)T^j, \quad n\in \zbb_+.
   \end{align*}
which can be proved straightforwardly by using
Agler's hereditary functional calculus (see
\eqref{nab-bla2} and \eqref{Geoje}). The
estimate \eqref{pazdz-list3} can also be
inferred from identity \eqref{stary-num} and
statements (viii) and (ix) of
\cite[Theorem~A.1]{Sto3}.
   \end{proof}
In the case of CPD operators,
Lemma~\ref{ide-fin} takes the following form for
$q_0=(X-1)^2$.
   \begin{pro} \label{ide-fin-2}
Let $T\in \ogr{\hh}$ be a CPD operator and let
$M$ be as in Theorem~{\em \ref{dyl-an2}}. Then
the set
   \begin{align*}
\ideal=\{q\in \cbb[X]\colon ((X-1)^2q)\lrangle{T}=0\},
   \end{align*}
is the ideal in $\cbb[X]$ generated by the polynomial
$w_T\in \cbb[X]$ defined by
   \begin{align*}
w_T =
   \begin{cases}
0 & \text{if $\varOmega$ is infinite},
   \\[1ex]
\prod_{u\in \varOmega} (X-u) & \text{if $\varOmega$ is
finite and nonempty},
   \\[1ex]
X^0 & \text{if $\varOmega=\emptyset$, or equivalently if
$T$ is a $2$-isometry},
   \end{cases}
   \end{align*}
where $\varOmega:=\supp{M}$. Moreover, if
$\varOmega=\{u_1, \ldots,u_n\}$, where $n\in \nbb$ and
$u_1, \ldots, u_n$ are distinct, then the following
identity holds
   \begin{align*}
   \sum_{j=0}^n (-1)^j s_{n-j}(u_1, \dots, u_n)
T^{*j}\bscr_2(T)T^j =0.
   \end{align*}
   \end{pro}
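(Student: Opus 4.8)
The plan is to pin down $\ideal$ completely by reading it off from the semispectral measure $M$, using the identity \eqref{cziki-1} of Theorem~\ref{dyl-an} (whose measure coincides with that of Theorem~\ref{dyltyprep}(ii), hence with the $M$ of the present proposition): for every $q\in\cbb[X]$ one has $((X-1)^2 q)\lrangle{T}=\int_{\rbb_+} q\,\D M$. Consequently $q\in\ideal$ if and only if $\int_{\rbb_+} q\,\D M=0$, and the whole problem reduces to establishing the set-theoretic identity
\[
\ideal=\{q\in\cbb[X]\colon q|_{\varOmega}=0\},\qquad \varOmega:=\supp{M},
\]
after which the generator $w_T$ can be named in each of the three cases.

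First I would dispose of the inclusion ``$\supseteq$'': if $q|_{\varOmega}=0$, then $q=0$ a.e.\ $[M]$ because $M(\rbb_+\setminus\varOmega)=0$, so $\int_{\rbb_+}q\,\D M=0$ and $q\in\ideal$. The substance is the reverse inclusion, for which the key move is to upgrade ``$\int q\,\D M=0$'' to ``$\int |q|^2\,\D M=0$''. Here the ideal structure of $\ideal$ is decisive: since $\ideal$ is an ideal (Lemma~\ref{ide-fin}, or directly from \eqref{nab-bla}, which gives $p(\nabla_T)\big(((X-1)^2 q)\lrangle{T}\big)=((X-1)^2 pq)\lrangle{T}$), membership $q\in\ideal$ forces $q^{*}q\in\ideal$, whence by \eqref{cziki-1}
\[
\int_{\rbb_+}|q|^2\,\D M=\big((X-1)^2 q^{*}q\big)\lrangle{T}=0 .
\]
Evaluating the quadratic form yields $\int_{\rbb_+}|q(x)|^2\is{M(\D x)h}h=0$ for every $h\in\hh$; as the integrand is nonnegative, this gives $M(\{q\neq0\})=0$, and since $\{q\neq0\}$ is open this means $\varOmega\subseteq\{q=0\}$, i.e.\ $q|_{\varOmega}=0$.

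With the identity in hand the three cases are routine. If $\varOmega$ is infinite, no nonzero polynomial vanishes on it, so $\ideal=\{0\}$, the ideal generated by $w_T=0$. If $\varOmega=\{u_1,\dots,u_n\}$ is finite and nonempty, then $q|_{\varOmega}=0$ is equivalent to divisibility of $q$ by $\prod_{j}(X-u_j)=w_T$ (the roots being simple), so $\ideal$ is generated by $w_T$. If $\varOmega=\emptyset$, then $M=0$; this is equivalent to $T$ being a $2$-isometry, since \eqref{stary-num} with $m=2$ gives $\bscr_2(T)=M(\rbb_+)$, and here $\ideal=\cbb[X]$ is generated by $w_T=X^0$. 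For the ``moreover'' part I would expand the generator $w_T=\prod_j(X-u_j)$ via \eqref{pazdz-list01} and invoke \eqref{Geoje}: since $w_T\in\ideal$ we have $((X-1)^2 w_T)\lrangle{T}=0$, which reads $\sum_{j=0}^n(-1)^{n-j}s_{n-j}(u_1,\dots,u_n)\,T^{*j}\bscr_2(T)T^j=0$; cancelling the global factor $(-1)^n$ gives the asserted identity.

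The main obstacle is precisely the passage from $\int q\,\D M=0$ to $\int|q|^2\,\D M=0$: semispectral integrals are not multiplicative, so this cannot be forced by naive manipulation, and it is the ideal property furnished by Agler's calculus that makes it go through. (Alternatively, one may route the argument through the dilation $(\kk,R,S)$ of Theorem~\ref{dyl-an}(ii$^{\prime}$): by \eqref{dur-a} we have $R^{*}q(S)R=0$ and, since $X^n q\in\ideal$, also $R^{*}S^{n}q(S)R=0$ for all $n$; taking the conjugate polynomial gives $(q(S)R)^{*}(q(S)R)=R^{*}q(S)^{*}q(S)R=0$, so $q(S)R=0$, and then $q(S)S^{n}R=S^{n}q(S)R=0$, whence $q(S)=0$ on $\bigvee\{S^{n}\ob{R}\colon n\in\zbb_+\}=\kk$ by \eqref{mini-tr}; thus $q$ vanishes on $\sigma(S)=\supp{M}=\varOmega$ by \eqref{wid-sup}.)
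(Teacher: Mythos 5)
Your proof is correct and follows essentially the same route as the paper's: both rest on the identity $((X-1)^2q)\lrangle{T}=\int_{\rbb_+}q\,\D M$ and on the decisive trick of using the ideal property supplied by Lemma~\ref{ide-fin} to replace $q\in\ideal$ by $q^*q\in\ideal$, so that positivity of $M$ forces vanishing on $\varOmega=\supp{M}$. The only organizational difference is that the paper applies this trick solely to a generator $w$ (whose existence it takes from the principality assertion of Lemma~\ref{ide-fin}) and then identifies $w_T$ with $w$ by mutual divisibility, whereas you apply it to every element of $\ideal$, obtain the vanishing-ideal description $\ideal=\{q\in\cbb[X]\colon q|_{\varOmega}=0\}$, and read off the generator and the ``moreover'' identity directly — the same mechanism, repackaged.
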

   \begin{proof}
First note that by \eqref{elem-rz} and
Theorem~\ref{dyl-an2}(i),
   \begin{align} \label{wt-tc2}
\int_{\varOmega} p(x) M(\D x) = \varLambda_{T}(p)
=((X-1)^2 p)\lrangle{T}, \quad p\in \cbb[X].
   \end{align}
It follows from Lemma~\ref{ide-fin} that $\ideal$ is
an ideal in $\cbb[X]$ generated by some polynomial
$w\in \cbb[X]$. Applying \eqref{wt-tc2} to $p=w^*w$,
we see that $\int_{\varOmega} |w(x)|^2 M(\D x)=0$, and
so, by \eqref{form-ua}, we have
   \begin{align}  \label{wt-tc}
w|_{\varOmega}=0.
   \end{align}

We now consider three cases.

{\sc Case 1} The set $\varOmega$ is infinite.

Since nonzero polynomials may have only finite number
of roots, we deduce from \eqref{wt-tc} that $w=0$.

{\sc Case 2} The set $\varOmega$ is empty (or
equivalently, by Proposition~\ref{sub-mzero-n}, $T$ is
a $2$-isometry).

Then, in view of \eqref{wt-tc2}, $\ideal=\cbb[X]$ and
so $X^0$ generates the ideal $\ideal$.

{\sc Case 3} The set $\varOmega$ is finite and
nonempty.

Set $w_T=\prod_{u\in \varOmega} (X-u)$. Clearly, by
\eqref{wt-tc2}, $w_T\in \ideal$. It follows from the
fundamental theorem of algebra (see
\cite[Theorem~V.3.19]{hun74}) and \eqref{wt-tc} that
the polynomial $w_T$ divides $w$. Since $w$ generates
the ideal $\ideal$, $w$ divides $w_T$ and so
$w_T=\alpha w$, where $\alpha \in \cbb\setminus
\{0\}$. This means that $w_T$ generates $\ideal$.

The ``moreover'' part is a direct consequence of
\eqref{pazdz-list1}.
   \end{proof}
Theorem~\ref{dyl-an2}(i), Proposition~\ref{ide-fin-2}
and \eqref{linft-con} lead to the following corollary.
   \begin{cor}
Let $T\in \ogr{\hh}$ be a CPD operator and let
$M$ and $\varLambda_{T}$ be as in Theorem~{\em
\ref{dyl-an2}}. Then the following assertions
hold{\em :}
   \begin{enumerate}
   \item[(i)] the map $\cbb[X] \ni q  \mapsto ((X-1)^2q)\lrangle{T} \in
\ogr{\hh}$ is injective if and only if $\supp{M}$ is
infinite,
   \item[(ii)] if $q \in \cbb[X]$ is such
that $\varLambda_{T}(q)=0$, then $q=0$ a.e.\ $[M]$,
which means that the restriction of $\varLambda_{T}$
to equivalence classes of polynomials is injective.
   \end{enumerate}
   \end{cor}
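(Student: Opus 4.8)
The plan is to read both assertions off the description of the ideal $\ideal=\{q\in\cbb[X]\colon((X-1)^2q)\lrangle{T}=0\}$ furnished by Proposition~\ref{ide-fin-2}, using Theorem~\ref{dyl-an2}(i) as the bridge. Since $\varLambda_{T}(q)=((X-1)^2q)\lrangle{T}$ for every $q\in\cbb[X]$ by Theorem~\ref{dyl-an2}(i), the kernel of the map in (i) is exactly $\ideal$, and the hypothesis $\varLambda_{T}(q)=0$ in (ii) is equivalent to $q\in\ideal$. Thus everything reduces to understanding $\ideal$.

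For (i): the map $q\mapsto((X-1)^2q)\lrangle{T}$ is linear, hence injective if and only if $\ideal=\{0\}$. By Proposition~\ref{ide-fin-2}, $\ideal$ is the principal ideal generated by $w_T$, and the ideal $(w)$ in $\cbb[X]$ equals $\{0\}$ precisely when $w=0$. Inspecting the three cases defining $w_T$, we see that $w_T=0$ exactly when $\varOmega:=\supp{M}$ is infinite, whereas in the finite nonempty case $w_T=\prod_{u\in\varOmega}(X-u)\neq 0$ and in the empty case $w_T=X^0\neq 0$. Hence injectivity holds if and only if $\supp{M}$ is infinite, which is the claim.

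For (ii): assuming $\varLambda_{T}(q)=0$, equivalently $q\in\ideal$, I would first produce $q|_{\varOmega}=0$ and then promote this to vanishing a.e.\ $[M]$ via \eqref{linft-con}. Writing $q=p\,w_T$ with $p\in\cbb[X]$: when $\varOmega$ is finite and nonempty, $w_T$ vanishes on $\varOmega$, so $q|_{\varOmega}=0$; when $\varOmega$ is infinite, $w_T=0$ forces $q=0$. In the surviving case $M\neq 0$, the restriction of the polynomial $q$ to the compact set $\varOmega$ is continuous, so \eqref{linft-con} gives $\|q\|_{L^{\infty}(M)}=\|q|_{\varOmega}\|_{C(\varOmega)}=0$, i.e.\ $q=0$ a.e.\ $[M]$. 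By linearity this says precisely that $\varLambda_{T}$ is injective on equivalence classes of polynomials.

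The only point requiring care, and the sole obstacle, is the degenerate case $M=0$ (equivalently $\varOmega=\emptyset$, i.e.\ $T$ a $2$-isometry), where \eqref{linft-con} does not apply; there the conclusion $q=0$ a.e.\ $[M]$ is trivially true. A clean alternative for (ii) that dispenses with all case distinctions is to exploit that $\ideal$ is an ideal: $q\in\ideal$ gives $q^*q\in\ideal$, whence $\int_{\rbb_+}|q|^2\,\D M=\varLambda_{T}(q^*q)=((X-1)^2q^*q)\lrangle{T}=0$ by \eqref{elem-rz} and Theorem~\ref{dyl-an2}(i); then \eqref{form-ua} yields $\int_{\rbb_+}|q(x)|^2\is{M(\D x)h}{h}=0$ for every $h\in\hh$, and positivity of the integrand forces $q=0$ a.e.\ $[M]$, mirroring the step \eqref{wt-tc} in the proof of Proposition~\ref{ide-fin-2}. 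No genuine computation is needed beyond these read-offs.
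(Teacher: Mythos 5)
Your proposal is correct and follows exactly the route the paper intends: the paper derives this corollary directly from Theorem~\ref{dyl-an2}(i), Proposition~\ref{ide-fin-2} and \eqref{linft-con}, which is precisely the combination you use (identify the kernel with the ideal $\ideal$, read off when its generator $w_T$ vanishes, and upgrade $q|_{\varOmega}=0$ to $q=0$ a.e.\ $[M]$ via \eqref{linft-con}, with the degenerate case $M=0$ handled trivially). Your alternative positivity argument via $q^*q\in\ideal$ is a fine variant of the step \eqref{wt-tc} in the proof of Proposition~\ref{ide-fin-2}, but it does not change the substance of the approach.
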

   \subsection{\label{appl-to}Analytic implementations}
In Subsection~\ref{Sec4.1-n} we were
discussing the action of the functional
calculus established in
Theorem~\ref{dyl-an2} on polynomials. In
this subsection we concentrate on showing
how this functional calculus may work in
the case of real analytic functions.

Let $T$, $M$ and $\varLambda_{T}$ be as in
Theorem~\ref{dyl-an2}. Assume that $M$ is nonzero. Set
$\varOmega=\supp{M}.$ Suppose that
$\sum_{n=0}^{\infty} a_n x^n$ is a power series in the
real variable $x$ with complex coefficients $a_n$
such~ that
   \begin{align} \label{ple2}
\limsup_{n\to \infty} |a_n|^{1/n} <
\frac{1}{\sup{\varOmega}} \qquad \Big(\text{with
$\frac{1}{0}=\infty$}\Big).
   \end{align}
Then the series $\sum_{n=0}^{\infty} a_n x^n$ is uniformly
convergent on $[0,\sup{\varOmega}]$ to a continuous
function on $\varOmega$, say $f$. Hence by
Theorem~\ref{dyl-an2}, we have
   \begin{align}  \label{ple1}
\varLambda_{T}(f)= \sum_{n=0}^{\infty} a_n
\varLambda_{T}(X^n) \overset{\eqref{Geoje}}=
\sum_{n=0}^{\infty} a_n T^{*n}\bscr_2(T)T^n.
   \end{align}
Let $(\kk,R,S)$ be as Theorem~\ref{dyl-an2}(iii). In
particular, \eqref{mini-tr} holds and
   \begin{align*}
\varLambda_{T}(f) \overset{\eqref{dilat-R}}= R^* f(S)
R.
   \end{align*}
Combined with \eqref{ple1}, this implies that
   \begin{align} \label{ple4}
\sum_{n=0}^{\infty} a_n T^{*n}\bscr_2(T)T^n = R^* f(S) R.
   \end{align}
It follows from Theorem~\ref{dyl-an2}(i) and
\eqref{dilat-R} that \eqref{dur-a} holds. According to
the proof of the implication
(ii$^{\prime}$)$\Rightarrow$(i) of
Theorem~\ref{dyl-an}, \eqref{checpt-3} holds, so by
Theorem~\ref{dyltyprep}(c), $\sigma(S)=\varOmega$ and
$\|S\|=\sup{\varOmega}$. Since the map $C(\sigma(S))
\ni g \longmapsto g(S) \in \ogr{\hh}$ is a unital
isometric $*$-homomorphism (see \cite[Theorem~
VIII.2.6]{con2}), we get (see also \eqref{linft-con}
and \eqref{baba-1})
   \begin{align} \label{Neumann}
f(S) = \sum_{n=0}^{\infty} a_n S^n.
   \end{align}

Concerning \eqref{ple1}, note that
   \begin{align*}
\sum_{n=0}^{\infty} a_n T^{*n}\bscr_2(T)T^n =
\sum_{n=0}^{\infty} a_n \nabla_T^n (\bscr_2(T)),
   \end{align*}
where $\nabla_T$ is as in \eqref{nubile}. Since
$r(\nabla_T)=r(T)^2$ (a general fact which
follows from Gelfand's formula for spectral
radius), we deduce that the series
$\sum_{n=0}^{\infty} a_n \nabla_T^n$ converges
in $\ogr{\ogr{\hh}}$ if $\limsup_{n\to \infty}
|a_n|^{1/n} < \frac{1}{r(T)^2}$. The last
inequality is in general stronger than
\eqref{ple2} because by
Theorem~\ref{dyltyprep}(c),
   \begin{align*}
\frac{1}{r(T)^2} \Le \frac{1}{\sup
\varOmega}.
   \end{align*}

Let us now discuss two important cases. We begin with
$a_n=z^n$ for every $n\in \zbb_+$, where $z\in \cbb$.
Then the above considerations lead to
   \begin{align}  \label{dif-a}
\sum_{n=0}^{\infty} z^n T^{*n}\bscr_2(T)T^n
\overset{(\dag)}= R^* (I-z S)^{-1} R, \quad z\in \cbb, \,
|z|< \frac{1}{\sup{\varOmega}},
   \end{align}
where $(\dag)$ follows from \eqref{ple2},
\eqref{ple4}, \eqref{Neumann} and the Carl Neumann
theorem (see \cite[Theorem~10.7]{Rud73}). In
particular, the following estimate holds (see
\eqref{dzi-d})
   \begin{align*}
\Big \|\sum_{n=0}^{\infty} z^n T^{*n}\bscr_2(T)T^n\Big
\| \Le
|z^{-1}|\frac{\|\bscr_2(T)\|}{\mathrm{dist}(z^{-1},
\varOmega)}, \quad z\in \cbb, \, 0<|z| <
\frac{1}{\sup{\varOmega}}.
   \end{align*}
In view of the previous paragraph and \eqref{dif-a},
we have
   \begin{align*}
(\boldsymbol I-z \nabla_T)^{-1} (\bscr_2(T))= R^* (I-z
S)^{-1} R, \quad z\in \cbb, \, |z|< \frac{1}{r(T)^2},
   \end{align*}
where $\boldsymbol I$ is the identity map on
$\ogr{\hh}$. Note also that if \eqref{dif-a}
holds, then by differentiating the operator
valued functions appearing on both sides of the
equality in \eqref{dif-a} $n$ times at $0$, we
obtain \eqref{dur-c}, which by
Theorem~\ref{dyl-an} implies that $T$ is CPD.

It is a matter of routine to show that for an
operator $T\in \ogr{\hh}$ the operator valued
function appearing on the left-hand side of
\eqref{dif-a}, call it $\varPsi$, is uniquely
determined by the requirement that it be an
analytic $\ogr{\hh}$-valued function defined on
an open disk $\dbb_r = \{z\in \cbb \colon |z| <
r\}$ for some $r\in (0,\infty)$ such~that
   \begin{align} \label{most-1}
\varPsi(z) = \bscr_2(T) + zT^*\varPsi(z)T, \quad z\in
\dbb_r.
   \end{align}
In other words, we have proved that $T$ is CPD
if and only if there exists $r\in (0,\infty)$
such that the analytic function $\varPsi$
associated with $T$ via \eqref{most-1} satisfies
the following equation
   \begin{align*}
\varPsi(z) = R^* (I-z S)^{-1} R, \quad z\in \dbb_r,
   \end{align*}
for some triplet $(\kk,R,S)$ consisting of a Hilbert
space $\kk$, an operator $R\in \ogr{\hh,\kk}$ and a
positive operator $S\in \ogr{\kk}$ such that $r \|S\|
\Le 1$.

In turn, if $a_n=\frac{z^n}{n!}$ for every $n \in
\zbb_+$, where $z\in \cbb$, then
   \begin{align*}
\sum_{n=0}^{\infty} \frac{z^n}{n!} T^{*n}\bscr_2(T)T^n
\overset{(\ddag)}= R^* \E^{zS} R, \quad z\in \cbb,
   \end{align*}
where $(\ddag)$ is a consequence of \eqref{ple2},
\eqref{ple4} and \eqref{Neumann}, or equivalently that
   \begin{align*}
\E^{z \nabla_T}(\bscr_2(T)) = R^* \E^{zS} R, \quad
z\in \cbb.
   \end{align*}
In particular, we have
   \begin{align} \label{dur-b}
\sum_{n=0}^{\infty} \frac{\I^n x^n}{n!} T^{*n}\bscr_2(T)T^n
= R^* \E^{\I x S} R, \quad x\in \rbb.
   \end{align}
Since $\{\E^{\I x S}\}_{x\in \rbb}$ is a uniformly
continuous group of unitary operators, we obtain
   \begin{align*}
\bigg\|\sum_{n=0}^{\infty} \frac{\I^n x^n}{n!}
T^{*n}\bscr_2(T)T^n\bigg\| \overset{\eqref{dur-b}} \Le
\|R\|^2 \overset{\eqref{dzi-d}}= \|\bscr_2(T)\|, \quad
x\in \rbb,
   \end{align*}
or equivalently
   \begin{align*}
\|\E^{ix\nabla_T}(\bscr_2(T))\| \Le \|\bscr_2(T)\|,
\quad x \in \rbb.
   \end{align*}
As in the previous case, we observe that if
\eqref{dur-b} holds, then by differentiating the
operator valued functions appearing on both
sides of the equality in \eqref{dur-b} $n$ times
at $0$, we obtain \eqref{dur-c}, which as we
know implies that $T$ is CPD.
   \subsection{Small supports}
In view of Subsection~\ref{appl-to}, the natural
question arises of when the closed support of
the semispectral measure $M$ associated with a
given CPD operator $T$ via
Theorem~\ref{dyltyprep}(ii) is equal to
$\emptyset$, $\{0\}$ or $\{1\}$. Surprisingly,
the answers to this seemingly simple question
that are given in Propositions~\ref{sub-mzero-n}
and \ref{kop-2izo} (see also
Corollary~\ref{subn-m0}) lead to three
relatively broad classes of operators, including
$2$- and $3$-isometries. The fact that
$3$-isometries are CPD was already proved in
\cite[Proposition~2.7]{Cha-Sh}. In
Proposition~\ref{sub-mzero-n} below, $F$ and $M$
denote the semispectral measures appearing in
Theorems~\ref{cpdops}(ii) and
\ref{dyltyprep}(ii), respectively.
   \begin{pro} \label{sub-mzero-n}
Let $T\in \ogr{\hh}$. Then
   \begin{enumerate}
   \item[(i)] if $T$ is CPD, then $M=0$
if and only if $\varLambda_{T}=0$, or equivalently if
and only if $T$ is a $2$-isometry,
   \item[(ii)] the following conditions are
equivalent{\em :}
   \begin{enumerate}
   \item[(a)] $T$ is CPD
and $F=0$,
   \item[(b)] $T$ is CPD and $\supp{M}\subseteq \{1\}$,
   \item[(c)] $T^{*n}T^n = I - n \bscr_1(T) + \frac{n(n-1)}{2}\bscr_2(T)$
for all $n\in \zbb_+$,
   \item[(d)] $T$ is a $3$-isometry.
   \end{enumerate}
   \end{enumerate}
Moreover, if an $m$-isometry is CPD, then it is
a $3$-isometry.
   \end{pro}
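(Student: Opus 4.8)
The plan is to prove the three assertions in order, leaning heavily on the representation formula \eqref{cdr5} and on Theorem~\ref{dyltyprep}. For part (i), I would start from the observation that the functional calculus $\varLambda_{T}$ of Theorem~\ref{dyl-an2} is given by $\varLambda_{T}(f)=\int_{\rbb_+} f\,\D M$, so $M=0$ is equivalent to $\varLambda_{T}=0$ trivially. To connect this with the $2$-isometry condition, I would use Theorem~\ref{dyl-an2}(i), namely $\varLambda_{T}(q)=((X-1)^2 q)\lrangle{T}$, applied to $q=X^0$: by \eqref{nab-bla2} this gives $\varLambda_{T}(X^0)=(1-X)^2\lrangle{T}=\bscr_2(T)$. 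Since $\varLambda_{T}$ is a positive continuous linear map with $\|\varLambda_{T}\|=\|\bscr_2(T)\|$ (Theorem~\ref{dyl-an2}(iv)), we have $\varLambda_{T}=0 \iff \bscr_2(T)=0 \iff T$ is a $2$-isometry. Alternatively, and perhaps more transparently, I would substitute $C=\tfrac12 M(\{1\})=0$ and $F=0$ (the latter forced by $M=0$ via \eqref{f2m-semi}) into \eqref{cdr5} to get $T^{*n}T^n=I+nB$; then $\bscr_2(T)=0$ follows from Theorem~\ref{dyltyprep}(b), which gives $B+C=-\bscr_1(T)$ together with the structure of $M$.

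For part (ii), the plan is to verify a cycle of implications. For $\text{(a)}\Leftrightarrow\text{(b)}$, I would use the relation \eqref{f2m-semi}, $F(\varDelta)=(1-\chi_{\varDelta}(1))M(\varDelta)$, which shows $F=0$ precisely when $M$ is concentrated at $\{1\}$, i.e.\ $\supp{M}\subseteq\{1\}$. For $\text{(b)}\Rightarrow\text{(c)}$, if $\supp{M}\subseteq\{1\}$ then $F=0$, and the representation \eqref{uniq-rep} (or equivalently \eqref{cdr5} with $F=0$) collapses to
\begin{align*}
T^{*n}T^n = I - n\Big(\bscr_1(T)+\tfrac12 M(\{1\})\Big) + \tfrac{n^2}{2}M(\{1\}),
\end{align*}
and since $M(\{1\})=2C=\bscr_2(T)$ by Theorem~\ref{dyltyprep}(b), substituting yields exactly the claimed formula in (c). For $\text{(c)}\Rightarrow\text{(d)}$, I would apply the third difference $\triangle^3$ (or directly expand $\bscr_3(T)=\sum_{k=0}^3(-1)^k\binom{3}{k}T^{*k}T^k$) to the quadratic-in-$n$ expression in (c); since any quadratic polynomial in $n$ is annihilated by $\triangle^3$, the telescoping gives $\bscr_3(T)=0$, so $T$ is a $3$-isometry. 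Finally, for $\text{(d)}\Rightarrow\text{(a)}$, I would invoke Lemma~\ref{ide-fin} / Remark~\ref{m-plus-k}: a $3$-isometry satisfies $\bscr_3(T)=(1-X)^3\lrangle{T}=0$, hence $T^{*n}T^n$ is a polynomial of degree at most $2$ in $n$, which by Theorem~\ref{cpdops} shows $T$ is CPD with representing measure $F=0$ (the polynomial part absorbs all the growth and leaves no integral term).

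For the final ``moreover'' statement, that a CPD $m$-isometry is automatically a $3$-isometry, I would argue as follows. If $T$ is an $m$-isometry, then $\bscr_m(T)=0$, so by Remark~\ref{m-plus-k} we have $\bscr_k(T)=0$ for all $k\Ge m$; in particular $T$ is a $k$-isometry for every $k\Ge m$. Being an $m$-isometry, $T^{*n}T^n$ is a polynomial in $n$ of degree at most $m-1$. Combining CPD-ness with Proposition~\ref{ojoj1} applied to the scalar sequences $\{\|T^n h\|^2\}_{n=0}^\infty=\{p_h(n)\}$ forces each such polynomial $p_h$ to have degree at most $2$ (with nonnegative leading coefficient when the degree is exactly $2$). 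Since this holds for every $h\in\hh$, the operator sequence $\{T^{*n}T^n\}$ has polynomial degree at most $2$, hence $\bscr_3(T)=\triangle^3$ applied to a degree-$\le 2$ polynomial vanishes, so $T$ is a $3$-isometry.

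The main obstacle I anticipate is keeping the bookkeeping between the two triplets straight: the passage among $(B,C,F)$, the measure $M$, and the various combinatorial identities like $Q_n(1)=\tfrac{n(n-1)}{2}$ must be applied consistently, and one must be careful that the ``CPD with $F=0$'' characterization really pins down the growth to be polynomial of degree $\le 2$ rather than merely bounding it. The cleanest route through $\text{(d)}\Rightarrow\text{(a)}$ is to reduce everything to the scalar statement of Proposition~\ref{ojoj1} fibrewise in $h$ and then reassemble via the uniqueness of representing triplets in Theorem~\ref{cpdops}; the genuinely delicate point is verifying that a $3$-isometry's defect $\bscr_2(T)$ is nonnegative (which it is, by Corollary~\ref{nofs-sup2}(i)), so that the resulting $C=\tfrac12\bscr_2(T)\Ge 0$ is admissible as a representing parameter.
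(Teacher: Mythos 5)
Your route is essentially the paper's: part (i) via $\varLambda_{T}(X^0)=\bscr_2(T)$ and the norm identity $\|\varLambda_{T}\|=\|\bscr_2(T)\|$ from Theorem~\ref{dyl-an2}(iv); the equivalence (a)$\Leftrightarrow$(b) via \eqref{f2m-semi}; (c)$\Rightarrow$(d) by the fact that $\triangle^3$ annihilates quadratics in $n$; (d)$\Rightarrow$(c)$\Rightarrow$(a) via Remark~\ref{m-plus-k} and the binomial expansion of $((X-1)+1)^n\lrangle{T}$; and the ``moreover'' part via the polynomial form of $T^{*n}T^n$ for $m$-isometries combined with Proposition~\ref{ojoj1}. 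All of this agrees with the paper's proof.

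There is, however, one genuine flaw, located exactly at the point you yourself flag as delicate. In (d)$\Rightarrow$(a) you must exhibit an admissible representing triplet, which requires $C=\frac{1}{2}\bscr_2(T)\Ge 0$, and you justify this nonnegativity by Corollary~\ref{nofs-sup2}(i). But that corollary is stated and proved only for CPD operators: its hypothesis is that $T$ is CPD and $M$ is the measure of Theorem~\ref{dyltyprep}(ii). Since CPD-ness of the $3$-isometry is precisely what (d)$\Rightarrow$(a) is supposed to establish, the appeal is circular. The repair is elementary and is what the paper does: once the identity in (c) is in hand, the left-hand side $T^{*n}T^n$ is nonnegative for every $n$, so dividing $I - n\bscr_1(T)+\frac{n(n-1)}{2}\bscr_2(T)\Ge 0$ by $n^2$ and letting $n\to\infty$ yields $\frac{1}{2}\bscr_2(T)\Ge 0$ with no CPD hypothesis; then $B:=-\bscr_1(T)-\frac{1}{2}\bscr_2(T)$, $C:=\frac{1}{2}\bscr_2(T)$, $F:=0$ satisfy Theorem~\ref{cpdops}(ii), giving (a). A second, minor inaccuracy: in (b)$\Rightarrow$(c) you read $M(\{1\})=\bscr_2(T)$ off Theorem~\ref{dyltyprep}(b), which only gives $M(\{1\})=2C$; the identification $2C=\bscr_2(T)$ needs the extra (routine) step of evaluating \eqref{cdr5} with $F=0$ at $n=0,1,2$, or of invoking \eqref{stary-num} with $m=2$.
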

   \begin{proof}
(i) The first equivalence in (i) follows from
\eqref{elem-rz} by considering characteristic
functions, while the second is a direct consequence of
Theorem~\ref{dyl-an2}(iv).

(ii) The implication (a)$\Rightarrow$(c) follows from
\eqref{cdr5}. Straightforward computations shows that
the implication (c)$\Rightarrow$(d) holds. If (d)
holds, then for all $n\Ge 2$,
   \begin{align*}
T^{*n}T^n=((X-1)+1)^n\lrangle{T} = \sum_{j=0}^n
\binom{n}{j} (X-1)^j\lrangle{T} \overset{(*)}=
\sum_{j=0}^2 \binom{n}{j} (-1)^j\bscr_j(T),
   \end{align*}
where $(*)$ follows from Remark~\ref{m-plus-k}. This
yields~(c). If (c) holds, then the right-hand side of
the equality in (c) is nonnegative for all $n\in
\zbb_+$, which implies that $\bscr_2(T) \Ge 0$.
Clearly \eqref{cdr5} holds with $B=-(\bscr_1(T) +
\frac 12 \bscr_2(T))$, $C=\frac 12 \bscr_2(T)$ and
$F=0$, so by Theorem~\ref{cpdops}, (a) holds. By
\eqref{f2m-semi}, (a) and (b) are equivalent.

The ``moreover'' part follows from
\cite[Theorem~3.3]{J-J-S20} and
Proposition~\ref{ojoj1}.
   \end{proof}
As shown below the class of $2$-isometries is
the intersection of the classes of CPD and
$2$-hyperexpansive operators.
   \begin{pro} \label{cpd-ch}
If $T\in \ogr{\hh}$, then the following conditions are
equivalent{\em :}
   \begin{enumerate}
   \item[(i)] $T$ is a $2$-isometry,
   \item[(ii)] $T$ is completely hyperexpansive and CPD,
   \item[(iii)] $T$ is $2$-hyperexpansive and CPD.
   \end{enumerate}
   \end{pro}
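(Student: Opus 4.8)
The plan is to establish the cycle of implications (i)$\Rightarrow$(ii)$\Rightarrow$(iii)$\Rightarrow$(i), the decisive observation being that conditional positive definiteness and $2$-hyperexpansiveness push the operator $\bscr_2(T)$ in opposite directions, so that under both hypotheses it must vanish.

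First, (ii)$\Rightarrow$(iii) is immediate: complete hyperexpansiveness requires $\bscr_m(T)\Le 0$ for every $m\in\nbb$, so in particular $\bscr_2(T)\Le 0$, i.e.\ $T$ is $2$-hyperexpansive, while the CPD hypothesis is retained verbatim. Next, for (iii)$\Rightarrow$(i), which I regard as the cleanest and most conceptual step, I would combine two facts pointing in opposite directions. On one hand, $2$-hyperexpansiveness gives $\bscr_2(T)\Le 0$. On the other hand, since $T$ is CPD, Corollary~\ref{nofs-sup2}(i) applied with $k=1$ yields $\bscr_2(T)\Ge 0$. Hence $\bscr_2(T)=0$, which is precisely the assertion that $T$ is a $2$-isometry.

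Finally, (i)$\Rightarrow$(ii) carries the bulk of the work and is where I expect the only genuine friction. Assume $T$ is a $2$-isometry, so $\bscr_2(T)=0$. To see that $T$ is CPD, note that the sequence $\{T^{*n}\bscr_2(T)T^n\}_{n=0}^{\infty}$ is identically zero, hence trivially a Stieltjes moment sequence (represented by the zero semispectral measure); by the equivalence (i)$\Leftrightarrow$(iii) of Proposition~\ref{traj-pd-op}, $T$ is CPD. To see that $T$ is completely hyperexpansive, I would verify $\bscr_m(T)\Le 0$ in two ranges. For $m\Ge 2$, Remark~\ref{m-plus-k} applied to the $2$-isometry $T$ gives $\bscr_m(T)=0\Le 0$.

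The remaining case $m=1$ is the real point and the main obstacle: I must show $\bscr_1(T)=I-T^*T\Le 0$, i.e.\ that $2$-isometries are expansive. Here I would observe that vanishing of $\bscr_2(T)$ means the second differences of the nonnegative sequence $\{\|T^nh\|^2\}_{n=0}^{\infty}$ vanish, so this sequence is affine in $n$; an affine sequence on $\zbb_+$ that stays nonnegative must have nonnegative slope, whence $\|Th\|^2\Ge\|h\|^2$ for every $h\in\hh$, giving $\bscr_1(T)\Le 0$. Equivalently, one may invoke the ``moreover'' part of Proposition~\ref{traj-pd-op}, by which $\{T^{*(n+1)}T^{n+1}-T^{*n}T^n\}_{n=0}^{\infty}$ is monotonically increasing; for a $2$-isometry this sequence is constant, and $\inf_{n}(\|T^{n+1}h\|^2-\|T^nh\|^2)=-\is{\bscr_1(T)h}{h}$ must then be nonnegative, again yielding $\bscr_1(T)\Le 0$. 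Combining the two ranges shows that $T$ is completely hyperexpansive, which closes the cycle and completes the proof.
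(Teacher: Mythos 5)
Your proof is correct, and its skeleton --- the cycle (i)$\Rightarrow$(ii)$\Rightarrow$(iii)$\Rightarrow$(i) with the decisive clash $\bscr_2(T)\Ge 0$ (from CPD) versus $\bscr_2(T)\Le 0$ (from $2$-hyperexpansivity) --- is exactly the paper's. For (iii)$\Rightarrow$(i) the two arguments are the same fact in two phrasings: the paper applies \eqref{stary-num} with $m=2$ to get $M(\rbb_+)=\bscr_2(T)\Le 0$, hence $M=0$ and $\bscr_2(T)=0$, while you quote Corollary~\ref{nofs-sup2}(i) (which is precisely \eqref{stary-num} with $m=2$) and skip the detour through $M$. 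The genuine divergence is in (i)$\Rightarrow$(ii). For the CPD half, the paper cites Proposition~\ref{sub-mzero-n} (a $2$-isometry is a $3$-isometry, hence CPD), whereas you feed the zero sequence $\{T^{*n}\bscr_2(T)T^n\}_{n=0}^{\infty}$ into Proposition~\ref{traj-pd-op}(iii); both are one-line citations, yours arguably more direct. For the expansivity half ($\bscr_1(T)\Le 0$, the case $m=1$ of complete hyperexpansivity), the paper invokes Richter's lemma that $2$-hyperexpansive operators are expansive, while you reprove the special case needed here from scratch: vanishing second differences make $\{\|T^nh\|^2\}_{n=0}^{\infty}$ affine in $n$, and a nonnegative affine sequence has nonnegative slope. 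What each buys: the paper's route is shorter by outsourcing to \cite[Lemma~1]{Rich}; yours is self-contained and makes visible \emph{why} $2$-isometries are expansive, at the cost of a few extra lines. Both handle $m\Ge 2$ identically via Remark~\ref{m-plus-k}.
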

   \begin{proof}
(i)$\Rightarrow$(ii) By \cite[Lemma~1]{Rich}, any
$2$-hyperexpansive operator $T$ is expansive, i.e.,
$\bscr_1(T) \Le 0$. This and Remark~\ref{m-plus-k}
implies that any $2$-isometry is completely
hyperexpansive. In view of
Proposition~\ref{sub-mzero-n}, (ii) is valid.

(ii)$\Rightarrow$(iii) This is obvious.

(iii)$\Rightarrow$(i) Applying \eqref{stary-num} to
$m=2$, we see that $M(\rbb_+)\Le 0$, which implies
that $M=0$, so again by \eqref{stary-num} with $m=2$,
$\bscr_2(T)=0$ showing that $T$ is a $2$-isometry.
This completes the proof.
   \end{proof}
The implication (ii)$\Rightarrow$(i) of
Proposition~\ref{cpd-ch} follows also
from \cite[Theorem~ 2]{At2}. The above
enables us to make several comments
related to Theorem~\ref{boundiff} and
Proposition~\ref{wzrostkwadr}.
   \begin{rem} \label{manyrem}
a) First, note that each $2$-isometry $T\in
\ogr{\hh}$ satisfies condition (i) of
Theorem~\ref{boundiff}. Indeed, by
Proposition~\ref{sub-mzero-n}, $T$ is CPD and
   \begin{align*}
T^{*(n+1)}T^{n+1} - T^{*n}T^n = B, \quad n\in \zbb_+,
   \end{align*}
where $B=-\bscr_1(T)$.

b) Suppose that $T\in \ogr{\hh}$ is a strict
$3$-isometry. Then, by
Proposition~\ref{sub-mzero-n}, $T$ is CPD.
However, $T$ does not satisfy condition
\eqref{zal}. In fact, we can show more. By
\eqref{stary-num} and
Proposition~\ref{sub-mzero-n},
   \begin{align*}
\text{$\bscr_2(T) \Ge 0$ and $T^{*(n+1)}T^{n+1} -
T^{*n}T^n = -\bscr_1(T) + n\bscr_2(T)$ for all $n\in
\zbb_+$.}
   \end{align*}
This yields
   \begin{align} \label{fishsoon}
\sup_{n\in \zbb_+} (\|T^{n+1}h\|^2 - \|T^nh\|^2) =
   \begin{cases}
-\is{\bscr_1(T)h}h & \text{if } h\in \jd{\bscr_2(T)},
   \\[.5ex]
\infty & \text{if } h\in \hh \setminus
\jd{\bscr_2(T)}.
   \end{cases}
   \end{align}
Since $T$ is not a $2$-isometry, $\jd{\bscr_2(T)} \neq
\hh$, so $T$ does not satisfy \eqref{zal}.

c) It turns out that there are strict
$3$-isometries $T$ such that
$\jd{\bscr_2(T)}=\{0\}$. Indeed, let $W$ be the
unilateral weighted shift on $\ell^2$ with
weights
$\big\{\frac{\sqrt{n+3}}{\sqrt{n+1}}\big\}_{n=0}^{\infty}$.
It follows from \cite[Proposition~ 8]{At91} and
\cite[Lemma~1.21]{Ag-St1} that $W$ is a strict
$3$-isometry for which $r(W)=1$. We claim that
   \begin{align} \label{fishsoon2}
\jd{\bscr_2(W)}=\{0\}.
   \end{align}
Indeed, it is a matter of routine to verify that
$\bscr_2(W)$ is the diagonal operator (with respect to
the the standard orthonormal basis of $\ell^2$) with
the diagonal
$\big\{\frac{2}{(n+1)(n+2)}\big\}_{n=0}^{\infty}$,
which yields \eqref{fishsoon2}. In particular,
\eqref{fishsoon2} implies that $W$ is a strict
$3$-isometry and, by \eqref{fishsoon},
   \begin{align*}
\sup_{n\in \zbb_+} (\|W^{n+1}h\|^2 - \|W^nh\|^2) =
\infty, \quad h\in \ell^2 \setminus \{0\}.
   \end{align*}

d) Let $W$ be the unilateral weighted shift as in c).
Then $W$ is a $3$-isometry and, by
Proposition~\ref{sub-mzero-n}, we have
   \begin{align} \label{swieta-1}
W^{*n}W^n = I + n B + n^2 C, \quad n\in \zbb_+,
   \end{align}
where $B=-\big(\bscr_1(W) +
\frac{1}{2}\bscr_2(W)\big)$ and
$C=\frac{1}{2}\bscr_2(W)$. We easily check that
$B$ and $C$ are diagonal operators with
diagonals
$\big\{\frac{2n+3}{(n+1)(n+2)}\big\}_{n=0}^{\infty}$
and
$\big\{\frac{1}{(n+1)(n+2)}\big\}_{n=0}^{\infty}$,
respectively, so $B\Ge 0$, $C\Ge 0$ and
$\jd{B}=\jd{C}=\{0\}$. By \eqref{swieta-1},
$\|W^n\| \Le \alpha \cdot n$ for all $n\in
\nbb$, where $\alpha = \sqrt{1+\|B\|+\|C\|}$. We
show that there are no $\varepsilon \in
(0,\infty)$ and $\beta\in \rbb_+$ such that
$\|W^n\| \Le \beta \cdot n^{1-\varepsilon}$ for
all $n\in \nbb$. Indeed, otherwise we have
   \begin{align*}
\is{Ch}{h} \Le \frac{\is{(I+n B+n^2 C)h}{h} }{n^2}
\overset{\eqref{swieta-1}}= \frac{\|W^nh\|^2 }{n^2}
\Le \frac{\beta^2 \|h\|^2}{n^{2\varepsilon}}, \quad
n\in \nbb, \, h\in \ell^2,
   \end{align*}
which contradicts $\jd{C}=\{0\}$. \hfill
$\diamondsuit$
   \end{rem}
We now turn to the case when $\supp{M}=\{0\}$.
We first prove a result that is of some
independent interest (see
\cite[Proposition~8]{Cu90} for the case of
unilateral weighted shifts).
   \begin{lem}\label{restr-subn}
Suppose that the restriction of an operator
$T\in\ogr{\hh}$ to $\overline{\ob{T}}$ is subnormal.
Then $T$ is subnormal if and only if
   \begin{align} \label{wkw-www}
\text{$\int_{\rbb_+} \frac{1}{t} \D\mu_h(t) \Le 1$ for
all $h\in \hh$ such that $\|h\|=1$,}
   \end{align}
where $\mu_h$ stands for the $($unique$)$ representing
measure of the Stieltjes moment sequence $\{\|T^{n+1}
h\|^2\}_{n=0}^{\infty}$.
   \end{lem}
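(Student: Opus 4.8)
The plan is to reduce everything to Lambert's characterization (Theorem~\ref{lamb}) and then to carry out a measure-theoretic bookkeeping that transfers the representing measure $\mu_h$ of $\{\|T^{n+1}h\|^2\}_{n=0}^{\infty}$ to a candidate representing measure of $\{\|T^{n}h\|^2\}_{n=0}^{\infty}$. First I would record that $\mathscr R:=\overline{\ob{T}}$ is invariant for $T$, so the hypothesis that $T|_{\mathscr R}$ is subnormal makes sense; since $Th\in\ob{T}\subseteq\mathscr R$ for every $h\in\hh$, Theorem~\ref{lamb} applied to the subnormal operator $T|_{\mathscr R}$ shows that $\{\|T^{n}(Th)\|^2\}_{n=0}^{\infty}=\{\|T^{n+1}h\|^2\}_{n=0}^{\infty}$ is a Stieltjes moment sequence for every $h$. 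As $T$ is bounded this sequence has exponential growth, so by Lemma~\ref{csmad} its representing measure $\mu_h$ is unique and compactly supported in $\rbb_+$; thus $\mu_h$ is well defined and $\|T^{n+1}h\|^2=\int_{\rbb_+}t^n\,\D\mu_h(t)$ for all $n\in\zbb_+$.

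Next, by Theorem~\ref{lamb} it suffices to prove, for each fixed $h$, the pointwise equivalence that $\{\|T^{n}h\|^2\}_{n=0}^{\infty}$ is a Stieltjes moment sequence if and only if $\int_{\rbb_+}\frac1t\,\D\mu_h(t)\Le\|h\|^2$. For the forward direction I would take a representing measure $\rho_h$ of $\{\|T^nh\|^2\}_{n=0}^{\infty}$ (unique and compactly supported by Lemma~\ref{csmad}) and compare moments: for every $n$ one has $\int_{\rbb_+}t^n\,\D(t\rho_h)(t)=\|T^{n+1}h\|^2=\int_{\rbb_+}t^n\,\D\mu_h(t)$, so the determinacy in Lemma~\ref{csmad} forces the measure identity $t\,\D\rho_h(t)=\D\mu_h(t)$ on $\rbb_+$. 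This yields $\mu_h(\{0\})=0$ and $\D\rho_h=\frac1t\,\D\mu_h$ on $(0,\infty)$, whence $\int_{\rbb_+}\frac1t\,\D\mu_h=\rho_h((0,\infty))\Le\rho_h(\rbb_+)=\|h\|^2$. For the converse I would define $\rho_h(\varDelta)=\int_{\varDelta\cap(0,\infty)}\frac1t\,\D\mu_h(t)+\big(\|h\|^2-\int_{(0,\infty)}\frac1t\,\D\mu_h(t)\big)\delta_0(\varDelta)$; the hypothesis guarantees that the coefficient of $\delta_0$ is nonnegative (and, under the convention $\frac10=\infty$, it forces $\mu_h(\{0\})=0$), so $\rho_h$ is a genuine finite compactly supported measure, and a direct moment computation gives $\int_{\rbb_+}t^n\,\D\rho_h=\|T^nh\|^2$ for all $n$ (the case $n=0$ from the total mass, the case $n\Ge1$ from $t^n\cdot\frac1t=t^{n-1}$ together with the identity $\|T^{n+1}h\|^2=\int_{\rbb_+}t^{n}\,\D\mu_h(t)$).

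Finally I would close the loop by homogeneity: since $\|T^{n+1}(ch)\|^2=|c|^2\|T^{n+1}h\|^2$, uniqueness of representing measures gives $\mu_{ch}=|c|^2\mu_h$, so the family of inequalities $\int_{\rbb_+}\frac1t\,\D\mu_h\Le\|h\|^2$ ranging over all $h\in\hh$ is equivalent to the normalized family $\int_{\rbb_+}\frac1t\,\D\mu_h\Le1$ over unit vectors $h$, which is exactly \eqref{wkw-www}. Combining this with the equivalence of the previous paragraph and Theorem~\ref{lamb} then yields the stated characterization.

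I expect the main obstacle to be the careful handling of the atom of $\mu_h$ at the origin: the identity $t\,\D\rho_h=\D\mu_h$ is what simultaneously rules out $\mu_h(\{0\})>0$ and prescribes the density $\frac1t$, and I must ensure the convention $\frac10=\infty$ is used consistently so that \eqref{wkw-www} genuinely encodes both the finiteness of $\int_{(0,\infty)}\frac1t\,\D\mu_h$ and the bound by $\|h\|^2$. The determinacy supplied by Lemma~\ref{csmad} (compact support, hence a unique representing measure) is the crucial ingredient making the moment comparison rigorous, and it is what lets me pass freely between $\rho_h$, $t\rho_h$, and $\mu_h$.
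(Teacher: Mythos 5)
Your proposal is correct and follows essentially the same route as the paper: apply Theorem~\ref{lamb} to $T|_{\overline{\ob{T}}}$ together with Lemma~\ref{csmad} to see that each $\{\|T^{n+1}h\|^2\}_{n=0}^{\infty}$ is a determinate Stieltjes moment sequence, then reduce subnormality of $T$ (again via Theorem~\ref{lamb}) to the per-vector backward-extension criterion \eqref{wkw-www}. The only difference is that the paper outsources that last equivalence to \cite[Lemma~6.1.2]{J-J-S12}, whereas you prove it inline; your moment-comparison argument (forcing $t\,\D\rho_h(t)=\D\mu_h(t)$ by determinacy, and conversely constructing $\rho_h$ with density $\tfrac{1}{t}$ plus a nonnegative atom at $0$) is exactly the standard proof of that cited lemma, so your write-up is a self-contained version of the paper's argument rather than a different one.
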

   \begin{proof}
Applying Theorem~\ref{lamb} to
$T|_{\overline{\ob{T}}}$ and using
Lemma~\ref{csmad}, we see that the sequence
$\{\|T^{n+1} h\|^2\}_{n=0}^{\infty}$ is a
determinate Stieltjes moment sequence for every
$h\in \hh$. By Theorem~\ref{lamb}, $T$ is
subnormal if and only if for every $h\in \hh$
for which $\|h\|=1$, the sequence $\{\|T^n
h\|^2\}_{n=0}^{\infty}$ is a Stieltjes moment
sequence, or equivalently, by \cite[Lemma
6.1.2]{J-J-S12}, if and only if condition
\eqref{wkw-www} holds.
   \end{proof}
   \begin{pro}\label{kop-2izo}
For $T\in \ogr{\hh}$, the following conditions are
equivalent{\em :}
   \begin{enumerate}
   \item[(i)] $T$ is CPD
and $\supp{M} = \{0\}$, where $M$ is as in
Theorem~{\em \ref{dyl-an2}},
   \item[(ii)] $\bscr_2(T)T=0$, $\bscr_2(T) \Ge 0$ and
$\bscr_2(T) \neq 0$,
   \item[(iii)] $T^{*n}T^n=I -\bscr_2(T) + n
(\bscr_2(T) - \bscr_1(T))$ for all $n \in \nbb$,
$\bscr_2(T) \Ge 0$ and $\bscr_2(T) \neq 0$,
   \item[(iv)] $T$ satisfies Theorem~{\em \ref{boundiff}(ii)} with
$\supp{F}=\{0\}$.
   \end{enumerate}
Moreover, if {\em (i)} holds, then
   \begin{enumerate}
   \item[(a)] $r(T) = 1$ whenever $T\neq 0$,
   \item[(b)] $T$ is subnormal if and only if
$\bscr_1(T)T=0$ and $\|T\| \Le 1${\em ;} if this is
the case, then $\|T\| =1$ provided $T\neq 0$.
   \end{enumerate}
   \end{pro}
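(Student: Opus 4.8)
The plan is to prove the equivalences in the cycle (i)$\Rightarrow$(ii)$\Leftrightarrow$(iii), (iii)$\Rightarrow$(i), (iii)$\Leftrightarrow$(iv), and then to settle the two ``moreover'' assertions. The unifying observation is that, by the ``moreover'' part of Theorem~\ref{dyl-an} together with \eqref{cziki-2}, the measure $M$ attached to a CPD operator $T$ satisfies $T^{*n}\bscr_2(T)T^n=\int_{\rbb_+} x^n M(\D x)$ for all $n\in\zbb_+$. Hence $\supp{M}=\{0\}$ forces $M=\bscr_2(T)\delta_0$, where $\bscr_2(T)=M(\rbb_+)\Ge 0$ by positivity of the semispectral measure and $\bscr_2(T)\neq 0$ (otherwise $\supp{M}=\emptyset$). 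Reading the identity at $n=0$ gives $\bscr_2(T)=M(\{0\})$, and at $n=1$ gives $T^*\bscr_2(T)T=0$; taking the square root of $\bscr_2(T)\Ge 0$ then yields $\bscr_2(T)T=0$. This is exactly the content of (i)$\Rightarrow$(ii).

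For the algebraic core (ii)$\Leftrightarrow$(iii) I would argue purely by finite differences, without reference to conditional positive definiteness. Writing $D_n=T^{*(n+1)}T^{n+1}-T^{*n}T^n$, one has the standard identity $D_{n+1}-D_n=T^{*n}\bscr_2(T)T^n$. Under (ii) the hypothesis $\bscr_2(T)T=0$ (hence $T^*\bscr_2(T)=0$) makes $T^{*n}\bscr_2(T)T^n=0$ for $n\Ge 1$, so $D_n$ is constant equal to $D_1=D_0+\bscr_2(T)=\bscr_2(T)-\bscr_1(T)$ for $n\Ge 1$; telescoping from $D_0=T^*T-I=-\bscr_1(T)$ produces the displayed formula of (iii). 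Conversely, that formula is affine in $n$ for $n\Ge 1$, so its second differences vanish there, i.e.\ $T^{*n}\bscr_2(T)T^n=0$ for $n\Ge 1$; at $n=1$ this reads $T^*\bscr_2(T)T=0$, whence $\bscr_2(T)T=0$ by positivity, giving (ii). To return to (i), I would use (iii) to compute $\{T^{*n}\bscr_2(T)T^n\}_{n=0}^{\infty}=\bscr_2(T),0,0,\dots$, which is the operator Stieltjes moment sequence of $\bscr_2(T)\delta_0$; by Proposition~\ref{traj-pd-op} this makes $T$ CPD, and by uniqueness of $M$ (Theorem~\ref{dyl-an}) we get $M=\bscr_2(T)\delta_0$, so $\supp{M}=\{0\}$.

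The equivalence (iii)$\Leftrightarrow$(iv) I would read off Theorem~\ref{boundiff} by taking $F=\bscr_2(T)\delta_0$ and $D=\bscr_2(T)-\bscr_1(T)$: conditions (ii-a), (ii-b) are immediate since $F$ is supported at $0$, while substituting this $F$ and $D$ into (ii-c) collapses the integral to $\bscr_2(T)$ for $n\Ge 1$ and to $0$ for $n=0$, reproducing exactly the formula of (iii). In the reverse direction, $\supp{F}=\{0\}$ together with the ``moreover'' parts (iii)--(iv) of Theorem~\ref{boundiff} forces $C=0$ and $B+\int_{[0,1)}\frac{1}{1-x}F(\D x)=D\Ge 0$, from which the formula of (iii) follows. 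For assertion (a) I would first note that the formula of (iii) makes $\|T^n\|^2=\|T^{*n}T^n\|$ grow at most linearly, so $r(T)\Le 1$ by Gelfand's formula. For the reverse inequality, set $D=\bscr_2(T)-\bscr_1(T)\Ge 0$: if $D\neq 0$ then $\frac{1}{n}T^{*n}T^n\to D$ in norm gives $\|T^n\|^2/n\to\|D\|>0$, so $\|T^n\|^{1/n}\to 1$ (one may equally invoke Theorem~\ref{rt=1}, legitimate since $\frac{1}{(1-x)^2}\in L^1(F)$ trivially when $\supp{F}=\{0\}$); if $D=0$, the formula gives $T^{*n}T^n=T^*T$ for all $n\Ge 1$, so $\|T^n\|=\|T\|>0$ is constant when $T\neq 0$ and again $\|T^n\|^{1/n}\to 1$. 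In all cases $r(T)=1$.

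Finally, for (b) the key computation is the operator identity $\bscr_2(T)-\bscr_1(T)=-\,T^*\bscr_1(T)T$, immediate from $\bscr_1(T)=I-T^*T$ and $\bscr_2(T)=I-2T^*T+T^{*2}T^2$. If $\bscr_1(T)T=0$ this gives $D=\bscr_2(T)-\bscr_1(T)=0$; together with $C=0$, $F([1,\infty))=0$ and $\frac{1}{(1-x)^2}\in L^1(F)$ (all automatic here), Theorem~\ref{glow-main}(v) shows that $T$ is a subnormal contraction, in particular subnormal with $\|T\|\Le 1$. Conversely, a subnormal $T$ is normaloid, so $\|T\|=r(T)=1\Le 1$ by (a) (which also gives the last clause $\|T\|=1$ for $T\neq 0$); being then a subnormal contraction, Theorem~\ref{glow-main}(v) forces $D=0$, i.e.\ $\bscr_2(T)=\bscr_1(T)$, and combined with $\bscr_2(T)T=0$ from (ii) this yields $\bscr_1(T)T=0$. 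The point requiring the most care --- and the main obstacle --- is the bookkeeping that pins down the measures ($M=\bscr_2(T)\delta_0$, $F=M$, $C=0$) and the verification of $r(T)\Ge 1$ in the degenerate subcase $D=0$, which is precisely why I treat (a) by separating the cases $D\neq 0$ and $D=0$.
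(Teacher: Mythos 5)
Your proof is correct. For the equivalences (i)--(iv) and for part (a) it follows essentially the paper's route: the paper also constructs $M=\bscr_2(T)\delta_0$, takes $(D,F)=(\bscr_2(T)-\bscr_1(T),\,\bscr_2(T)\delta_0)$ in Theorem~\ref{boundiff}(ii), and proves (a) by the same case split ($D\neq 0$ via Theorems~\ref{boundiff} and \ref{rt=1}, $D=0$ via constancy of $T^{*n}T^n$). The only cosmetic deviations there are that you obtain (ii)$\Leftrightarrow$(iii) by pure telescoping of second differences, whereas the paper derives (iii) from the representing triplet via \eqref{cdr5} and $Q_n(0)=n-1$, and that you route (iii)$\Rightarrow$(i) through Proposition~\ref{traj-pd-op} rather than verifying \eqref{cziki-2} directly---these amount to the same computation. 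The genuine divergence is part (b). The paper proves ``subnormal $\Rightarrow \bscr_1(T)T=0$'' by combining Theorem~\ref{lamb} with the affine formula for $\|T^nh\|^2$ coming from (iii) and the external result \cite[Lemma~4.7]{C-J-J-S19} to force $T^*\bscr_1(T)T=0$, then uses contractivity ($\bscr_1(T)\Ge 0$) to upgrade this to $\bscr_1(T)T=0$; for the converse it observes that $\bscr_1(T)T=0$ makes $T|_{\overline{\ob{T}}}$ an isometry and applies Lemma~\ref{restr-subn} with $\mu_h=\|Th\|^2\delta_1$. You instead run both directions through Theorem~\ref{glow-main}(v), using the triplet bookkeeping $B=-\bscr_1(T)$, $C=0$, $F=M=\bscr_2(T)\delta_0$ and the identity $\bscr_2(T)-\bscr_1(T)=-T^*\bscr_1(T)T$. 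This is a legitimate and arguably cleaner shortcut: it avoids both \cite[Lemma~4.7]{C-J-J-S19} and Lemma~\ref{restr-subn}, stays entirely inside results already established in the paper, and shows as a by-product that under (i) the hypothesis $\|T\|\Le 1$ in the ``if'' direction of (b) is redundant, since $\bscr_1(T)T=0$ alone already forces $T$ to be a subnormal contraction via Theorem~\ref{glow-main}(v)$\Rightarrow$(i). What the paper's longer route buys is independence of (b) from Theorem~\ref{glow-main}: its verification is a direct moment-theoretic one.
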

   \begin{proof}
(i)$\Rightarrow$(ii) Substituting $q=X$
into \eqref{Geoje} yields
$T^*\bscr_2(T)T=0$. By
Corollary~\ref{nofs-sup2}, $\bscr_2(T) =
M(\rbb_+) \Ge 0$. Putting this all
together implies (ii).

(ii)$\Rightarrow$(i) Note that the set function
$M\colon \borel{\rbb_+} \to \ogr{\hh}$ defined
by $M(\varDelta)=\chi_{\varDelta}(0) \bscr_2(T)$
for $\varDelta \in \borel{\rbb_+}$ is a
semispectral measure such that $\supp{M} =
\{0\}$. Clearly \eqref{cziki-2} holds, so by
Theorem~\ref{dyl-an}, $T$ is CPD and
\eqref{checpt-2} is valid.

(i)$\Rightarrow$(iii) Let $(B,C,F)$ be the
representing triplet of $T$. According to
Theorem~\ref{dyltyprep}(b), $F=M$, $C=0$ and
$B=-\bscr_1(T)$, so by \eqref{cdr5} and
Corollary~\ref{nofs-sup2},
   \begin{align*}
T^{*n}T^n & = I - n \bscr_1(T) + Q_n(0) \bscr_2(T)
   \\
& = I - n \bscr_1(T) + (n-1) \bscr_2(T), \quad n \in
\nbb.
   \end{align*}
This together with the implication
(i)$\Rightarrow$(ii) gives (iii).

(iii)$\Rightarrow$(iv) As above, the set function
$F\colon \borel{\rbb_+} \to \ogr{\hh}$ defined by
$F(\varDelta) = \chi_{\varDelta}(0)\bscr_2(T)$ for
$\varDelta \in \borel{\rbb_+}$ is a semispectral
measure for which $\supp{F}=\{0\}$. Set $D=\bscr_2(T)
- \bscr_1(T)$. It is easily seen that $D$ and $F$
satisfy Theorem~\ref{boundiff}(ii).

(iv)$\Rightarrow$(i) Apply Theorems~\ref{boundiff} and
\ref{dyltyprep}(b).

We now prove the ``moreover'' part.

(a) If $D\neq 0$, then by
Theorems~\ref{boundiff} and \ref{rt=1},
$r(T)=1$. Suppose that $D=0$. Then by (iii),
$T^{*n}T^n=I -\bscr_2(T)$ for all $n \in \nbb$.
This together with $T\neq 0$ implies that $I
-\bscr_2(T) \neq 0$, so by Gelfand's formula for
spectral radius $r(T)=1$.

(b) Suppose first that $T$ is subnormal. It follows
from (iii) that for every $h\in \hh$,
   \begin{align} \label{tenh-bi1}
\|T^n h\|^2 = \is{(I -\bscr_2(T))h}{h} + n
\is{(\bscr_2(T) - \bscr_1(T))h}{h}, \quad n\in \nbb.
   \end{align}
By Theorem~\ref{lamb}, $\{\|T^{n+1}
h\|^2\}_{n=0}^{\infty}$ is a Stieltjes
moment sequence for every $h\in \hh$.
Combined with \eqref{tenh-bi1} and
\cite[Lemma~4.7]{C-J-J-S19}, this implies
that
   \begin{align*}
\is{(\bscr_2(T) - \bscr_1(T))h}{h}=0, \quad h \in \hh,
   \end{align*}
or equivalently that $T^*\bscr_1(T)T=0$. By (a) and
\eqref{subn-norm}, $T$ is a contraction (in fact,
$\|T\| = 1$ if $T\neq 0$), so $\bscr_1(T) \Ge 0$ and
consequently $\bscr_1(T)T=0$.

In turn, if $\|T\| \Le 1$ and $\bscr_1(T)T=0$, then
$T$ is a contraction whose restriction to
$\overline{\ob{T}}$ is an isometry, so an application
of Lemma~\ref{restr-subn} with $\mu_h:=\|Th\|^2
\delta_1$ shows that $T$ is subnormal. This completes
the proof.
   \end{proof}
Now we give an example of an operator satisfying
condition (i) of Proposition~\ref{kop-2izo}. In
particular, we show that the class of operators
satisfying this condition can contain both
(non-isometric) subnormal and non-subnormal
operators.
   \begin{exa} \label{prz-do-na}
Fix real numbers $a\in (0,\infty)$ and $b\in
[1,\infty)$ such that
   \begin{align}  \label{a-s-s}
\theta:=1 - 2 a + a b >0.
   \end{align}
Define the sequence $\{\lambda_n\}_{n=0}^{\infty}
\subseteq (0,\infty)$ by
   \begin{align*}
   \lambda_n =
   \begin{cases}
\sqrt{a} & \text{ if } n=0,
   \\[.5ex]
\sqrt{\frac{1 + n(b-1)}{1 + (n-1)(b-1)}} & \text{ if }
n \Ge 1.
   \end{cases}
   \end{align*}
Let $W_{a,b}$ be the unilateral weighted shift
on $\ell^2$ with weights
$\{\lambda_n\}_{n=0}^{\infty}$. It follows from
\cite[Lemma~6.1 \& Proposition~6.2(iii)]{Ja-St}
that $W_{a,b}\in \ogr{\ell^2}$ and
   \begin{align} \label{w-nor-b}
\|W_{a,b}\|^2 = \max\big\{a, b\big\}.
   \end{align}
One can also verify that $\bscr_2(W_{a,b})$ is
the diagonal operator (with respect to the the
standard orthonormal basis of $\ell^2$) with the
diagonal $(\theta, 0, 0,\ldots)$. This together
with \eqref{a-s-s} implies that
$\bscr_2(W_{a,b})W_{a,b}=0$, $\bscr_2(W_{a,b})
\Ge 0$ and $\|\bscr_2(W_{a,b})\|=\theta > 0$. In
view of Proposition~\ref{kop-2izo}, the operator
$W_{a,b}$ satisfies condition (i) of this
proposition. From Propositions~\ref{sub-mzero-n}
and \ref{kop-2izo} it follows that $W_{a,b}$ is
a CPD operator which is not $m$-isometric for
any $m\in \nbb$. If $a > 1$, we see that
$\|W_{a}\|= \sqrt{a} > 1$ and
$\|\bscr_2(W_{a})\| = (a-1)^2$, where
$W_{a}:=W_{a,a}$. Since, by Proposition~
\ref{kop-2izo}(a), $r(W_{a})=1$ for every $a\in
(1,\infty)$, we deduce that
   \begin{align*}
\text{$W_{a}$ is not normaloid for all $a
> 1$ and $\lim_{a \to \infty} \|W_{a}\| =
\lim_{a \to \infty} \|\bscr_2(W_{a})\| =
\infty$.}
   \end{align*}
In turn, if $a\in (0,1)$ and $b=1$, then one can
verify that $\bscr_1(W_{a,1})W_{a,1}=0$ and by
\eqref{w-nor-b}, $\|W_{a,1}\| = 1$, so by
Proposition~\ref{kop-2izo}, the operator
$W_{a,1}$ is subnormal and $r(W_{a,1})=1$.
   \hfill $\diamondsuit$
   \end{exa}
We conclude this subsection with a remark
related to Proposition~\ref{kop-2izo} and
Example~\ref{prz-do-na}.
   \begin{rem} \label{rem-t0-ex}
Suppose that $T\in \ogr{\hh}$ is nonzero and
satisfies condition (i) of
Proposition~\ref{kop-2izo} (the zero operator on
nonzero $\hh$ does satisfy (i)). By Proposition~
\ref{sub-mzero-n}, $T$ is not an $m$-isometry
for any $m\in \nbb$. According to condition
(iii) of Proposition~\ref{kop-2izo}, $T^{*n}T^n$
is a polynomial in $n$ if $n$ varies over $\nbb$
however not when $n$ varies over $\zbb_+$.
Indeed, otherwise, since a nonzero polynomial
may have only finite number of roots, we deduce
from (iii) that $I=I -\bscr_2(T)$, which
contradicts $\bscr_2(T)\neq 0$. In other words,
in view of \cite[p.\ 389]{Ag-St1} (see also
\cite[Corollary~3.5]{J-J-S20}), the requirement
that $T^{*n}T^n$ be a polynomial in $n$ if $n$
varies over $\nbb$ is not enough for $T$ to be
an $m$-isometry no matter what is $m$. Finally
note that $T$ falls under Case 3 of the proof of
Theorem~\ref{dyltyprep}(c) and the discussion
performed in Remark~ \ref{waz-rem}a). Indeed, by
Theorem~\ref{dyltyprep}(b),
Proposition~\ref{kop-2izo}(a) and
Corollary~\ref{nofs-sup2}, we see that
$B=-\bscr_1(T)$, $C=0$, $F=M$, $\vartheta: =
\sup\supp{F}=0$, $r(T)=1$ and
   \begin{align*}
D:=B+\int_{\rbb_+} \frac{1}{1-x} F(\D x) = \bscr_2(T)
- \bscr_1(T).
   \end{align*}
Moreover, in view of Example~\ref{prz-do-na}, both
cases $D=0$ and $D\neq 0$ can appear. \hfill
$\diamondsuit$
   \end{rem}
   \textbf{Acknowledgement}. The authors would
like to express their deepest thanks to the
anonymous reviewer for reading the article
carefully and catching any ambiguities, as well
as for suggestions and questions that made the
article more readable and reader friendly. A
part of this paper was written while the first
and the third author visited Kyungpook National
University during the autumn of 2019. They wish
to thank the faculty and the administration of
this unit for their warm hospitality.
   \bibliographystyle{amsalpha}
   
   \end{document}